\NewDocumentCommand{\eulerian}{omm}
 {%
  \IfNoValueTF{#1}
   {\genfrac<>{0pt}{}{#2}{#3}}%
   {\mathchoice{\genfrac<>{0pt}{}{#2}{#3}_{\!\!#1}}
               {\genfrac<>{0pt}{}{#2}{#3}_{\mkern-2mu#1}}
               {\genfrac<>{0pt}{}{#2}{#3}_{\mkern-2mu#1}}
               {\genfrac<>{0pt}{}{#2}{#3}_{\mkern-2mu#1}}%
   }
 }
\NewDocumentCommand{\stirling}{omm}
 {%
  \IfNoValueTF{#1}
   {\genfrac[]{0pt}{}{#2}{#3}}%
   {\mathchoice{\genfrac[]{0pt}{}{#2}{#3}_{\!\!#1}}
               {\genfrac[]{0pt}{}{#2}{#3}_{\mkern-2mu#1}}
               {\genfrac[]{0pt}{}{#2}{#3}_{\mkern-2mu#1}}
               {\genfrac[]{0pt}{}{#2}{#3}_{\mkern-2mu#1}}%
   }
 }
\DeclareRobustCommand{\Stirling}{\genfrac\{\}{0pt}{}} 
\newcommand{\ve}{{\varepsilon}}
\def\dd#1{\,\mathrm{d}{#1}}
\def\le{\leqslant}
\def\ge{\geqslant}
\def\tr#1{\lfloor #1\rfloor}
\def\ltr#1{\bigl\lfloor #1\bigr\rfloor}
\def\cl#1{\lceil #1\rceil}
\def\lpa#1{\bigl({#1}\bigr)}
\def\Lpa#1{\Bigl({#1}\Bigr)}
\def\llpa#1{\biggl({#1}\biggr)}
\newtheorem{thm}{Theorem}
\newtheorem{lmm}{Lemma}
\newtheorem{prop}{Proposition}
\newtheorem{cor}{Corollary}
\newtheorem{defi}{Definition}
\definecolor{yg}{RGB}{235,255,204}
\definecolor{lightgray}{gray}{0.5}
\def\ET#1{\mathscr{E}\langle\!\langle #1\rangle\!\rangle}
\def\EET#1#2{\mathscr{E}\left\langle\!\left\langle
    #1,#2\right\rangle\!\right\rangle}
\def\GET#1#2{\mathscr{E}_{#1}\left\langle\!\left\langle
    #2\right\rangle\!\right\rangle}
\def\GT#1#2{\mathscr{E}_{#1}\langle\!\langle #2\rangle\!\rangle}    
\newcommand{\overbar}[1]{\mkern 1.5mu\overline{\mkern-1.5mu#1\mkern-1.5mu}\mkern 1.5mu}
\newcommand{\nom}[2]{$ \mathscr{N}( #1 , #2) $ }
\def\deq{\stackrel{d}{\thickapprox}}
\def\cid{\stackrel{d}{\longrightarrow}}
\newsavebox\myboxA
\newsavebox\myboxB
\newlength\mylenA
\newcommand*\xbar[2][0.7]{%
    \sbox{\myboxA}{$\m@th#2$}%
    \setbox\myboxB\null
    \ht\myboxB=\ht\myboxA%
    \dp\myboxB=\dp\myboxA%
    \wd\myboxB=#1\wd\myboxA
    \sbox\myboxB{$\m@th\overline{\copy\myboxB}$}
    \setlength\mylenA{\the\wd\myboxA}
    \addtolength\mylenA{-\the\wd\myboxB}%
    \ifdim\wd\myboxB<\wd\myboxA%
       \rlap{\hskip 0.5\mylenA\usebox\myboxB}{\usebox\myboxA}%
    \else
        \hskip -0.5\mylenA\rlap{\usebox\myboxA}{\hskip 0.5\mylenA\usebox\myboxB}%
    \fi}
\begin{document}

\title{An asymptotic distribution theory for Eulerian 
recurrences\\ with applications}

\author[1]{Hsien-Kuei Hwang\corref{cor1}\fnref{fn1}}
\ead{hkhwang@stat.sinica.edu.tw} 
\author[2]{Hua-Huai Chern\fnref{fn2}}
\ead{felix@mail.ntou.edu.tw}
\author[3]{Guan-Huei Duh}
\ead{arthurduh1@gmail.com}   

\cortext[cor1]{Corresponding author}
\fntext[fn1]{Partially supported by an Investigator Award from  
Academia Sinica under the Grant AS-IA-104-M03.}
\fntext[fn2]{Partially supported by Ministry of Science and Technology (Taiwan) under the Grant MOST-106-2115-M-019-001.}
\address[1]{Institute of Statistical Science, Academia Sinica,
    Taipei 115, Taiwan}
\address[2]{Department of Computer Science, National Taiwan Ocean 
    University, Keelung 202, Taiwan}
\address[3]{Institute of Statistical Science, Academia Sinica,
    Taipei 115, Taiwan}


\begin{abstract}
We study linear recurrences of Eulerian type of the form 
\[
    P_n(v) = (\alpha(v)n+\gamma(v))P_{n-1}(v)
    +\beta(v)(1-v)P_{n-1}'(v)\qquad(n\ge1),
\]
with $P_0(v)$ given, where $\alpha(v), \beta(v)$ and $\gamma(v)$ are
in most cases polynomials of low degrees. We characterize the various
limit laws of the coefficients of $P_n(v)$ for large $n$ using the
method of moments and analytic combinatorial tools under varying
$\alpha(v), \beta(v)$ and $\gamma(v)$, and apply our results to more
than two hundred of concrete examples when $\beta(v)\ne0$ and more
than three hundred when $\beta(v)=0$ that we gathered from the
literature and from Sloane's OEIS database. The limit laws and the 
convergence rates we worked out are almost all new and include 
normal, half-normal, Rayleigh, beta, Poisson, negative binomial,
Mittag-Leffler, Bernoulli, etc., showing the surprising richness and
diversity of such a simple framework, as well as the power of the
approaches used.
\end{abstract}

\begin{keyword}
Eulerian numbers, Eulerian polynomials, recurrence relations, 
generating functions, limit theorems, Berry-Esseen bound, partial 
differential equations, singularity analysis, quasi-powers 
approximation, permutation statistics, derivative polynomials, 
asymptotic normality, singularity analysis, method of 
moments, Mittag-Leffler function, Beta distribution. 
\end{keyword}

\maketitle

\tableofcontents

\section{Introduction}

The Eulerian numbers, first introduced and presented by Leonhard
Euler in 1736 (and published in 1741; see \cite{Euler1741} and
\cite[Art.\ 173--175]{Euler1755}) in series summations, have been
widely studied because of their natural occurrence in many different
contexts, ranging from finite differences to combinatorial
enumeration, from probability distribution to numerical analysis,
from spline approximation to algorithmics, etc.; see the books
\cite{Bona2004, Foata1970, Knuth1998, Petersen2015, Sandor2004,
Sobolev1997, Stanley2012} and the references therein for more
information. See also the historical accounts in the papers
\cite{Carlitz1958, Janson2013, Takacs1979, Warren1996}. Among the
large number of definitions and properties of the Eulerian numbers
$\eulerian{n}{k}$, the one on which we base our analysis is the 
recurrence
\begin{align}\label{A173018}
    P_n(v) = (vn+1-v)P_{n-1}(v)
    +v(1-v)P_{n-1}'(v)\qquad(n\ge1),
\end{align}
with $P_0(v)=1$, where $P_n(v)=\sum_{0\le k\le n}\eulerian{n}{k}v^k$. 
In terms of the coefficients, this recurrence translates into 
\begin{align}\label{eulerian-nk}
    \eulerian{n}{k}
    = (k+1)\eulerian{n-1}{k} +(n-k)\eulerian{n-1}{k-1}
    \qquad(n,k\ge1),
\end{align}
with $\eulerian{n}{k}=0$ for $k<0$ or $k\ge n$ except that  
$\eulerian{0}{0}:=1$. We extend the recurrence \eqref{A173018} by 
considering the more general \emph{Eulerian recurrence}
\begin{align}\label{Pnv-general}
    P_n(v) = (\alpha(v)n+\gamma(v))P_{n-1}(v)
    +\beta(v)(1-v)P_{n-1}'(v)\qquad(n\ge1),
\end{align}
with $P_0(v)$, $\alpha(v), \beta(v)$ and $\gamma(v)$ given (they are
often but not limited to polynomials). We are concerned with the
limiting distribution of the coefficients of $P_n(v)$ for large $n$
when the coefficients are nonnegative. Both normal and non-normal
limit laws will be mostly derived by the \emph{method of moments}
under varying $\alpha(v), \beta(v)$ and $\gamma(v)$. While the
extension \eqref{Pnv-general} seems straightforward, the study of the
limit laws is justified by the large number of applications and
various extensions. We will also solve the corresponding partial
differential equation (PDE) satisfied by the exponential generating
function (EGF) of $P_n$ whenever possible, and show how the use of
EGFs largely simplifies the classification of the extensive list of
examples we compiled, as well as the finer \emph{approximation
theorems} established by the complex analysis, in addition to the 
quick \emph{limit theorems} offered by the method of moments.

The history of Eulerian numbers is notably marked by many
\emph{rediscoveries} of previously known results, often in different
guises, which is indicative of their importance and usefulness. In
particular, Carlitz pointed out in his 1959 paper \cite{Carlitz1958}
that ``\emph{an examination of Mathematical Reviews for the past ten
years will indicate that they [Eulerian numbers and polynomials] have
been frequently rediscovered.}'' Later Schoenberg \cite[p.\
22]{Schoenberg1973} even described in his book on spline
interpolation that ``\emph{[Eulerian-Frobenius polynomials] were
rediscovered more recently by nearly everyone working on spline
interpolation.}'' We will give a simple synthesis of the approaches
used in the literature capable of establishing the asymptotic
normality of the Eulerian numbers, showing partly why rediscoveries
are common. We do not aim to be exhaustive in this synthesis of
approaches (very difficult due to the large literature), but will
rather content ourselves with a methodological and comparative
discussion.


\begin{wraptable}[11]{r}{7cm}
\begin{tabular}{c|ccccccc}
	$n\backslash k$
	& $0$ & $1$ & $2$ & $3$ & $4$ & $5$ \\ \hline
    $0$ & $1$ &&&&& \\
    $1$ & $1$ &&&&& \\
    $2$ & $1$ & $1$ &&&& \\
    $3$ & $1$ & $4$ & $1$ &&& \\
	$4$ & $1$ & $11$ & $11$ & $1$ && \\ 
    $5$ & $1$ & $26$ & $66$ & $26$ & $1$ & \\ 
    $6$ & $1$ & $57$ & $302$ & $302$ & $57$ & $1$ \\
\end{tabular}
\medskip
\caption{The first few rows of $\eulerian{n}{k}$.}
\label{tab-eulerian}
\end{wraptable}

In addition to their first appearance in series summation or
successive differentiation
\[
    \sum_{j\ge0}j^n v^j 
    = (v\mathbb{D}_v)^n\frac1{1-v} 
    = \frac{vP_n(v)}{(1-v)^{n+1}},
\]
the Eulerian numbers also emerge in many statistics on permutations
such as the number of descents (or runs) whose first few rows are
given on the right table; see \cite{Comtet1974, Graham1994,
Stanley2012} and Sloane's OEIS pages on
\href{https://oeis.org/A008292}{A008292},
\href{https://oeis.org/A123125}{A123125} and
\href{https://oeis.org/A173018}{A173018} for more information and
references. The earliest reference we found dealing with descents
(called ``inversions \'el\'ementaires'') in permutations is Andr\'e's
1906 paper \cite{Andre1906}; see also \cite{MacMahon1908,
Schrutka1941}. On the other hand, von Schrutka's 1941 paper
\cite{Schrutka1941} mentions the connection between descents in
permutations and a few other known expressions for Eulerian numbers; 
although he does not cite explicitly Euler's work, the references 
given there, notably Frobenius's 1910 paper \cite{Frobenius1910} and
Saalsch\"utz's 1893 book \cite{Saalschutz1893}, indicate the 
connecting link, which was later made explicit in Carlitz and 
Riordan's 1953 paper \cite{Carlitz1953}. Moreover, Carlitz and his 
collaborators have made broad contributions to Eulerian numbers and
permutation statistics, leading to more unified and extensive
developments of modern theory of Eulerian numbers; see
\cite{Petersen2015,Stanley2012}.

Each row sum in Table~\ref{tab-eulerian} is equal to $n!$. It is 
natural to define the random variable $X_n$ by
\[
    \mathbb{P}(X_n=k) = \frac{1}{n!}\,\eulerian{n}{k},
	\quad\text{or}\quad
	\mathbb{E}\lpa{v^{X_n}} = \frac{P_n(v)}{P_n(1)},
\]
where $P_n(v)$ satisfies \eqref{A173018}. Here $\mathbb{E}(v^{X_n})$
denotes the probability generating function of $X_n$. From a
distributional point of view, we observe a distinctive feature of
Eulerian numbers here: \emph{they have a higher concentration near
the middle} when compared for example with the binomial coefficients
(which is also symmetric). In particular, the fifth row (in the above 
table) of the probability distribution reads $(\frac1{24}, 
\frac{11}{24}, \frac{11}{24}, \frac1{24})$, while that of the 
corresponding binomial distribution reads 
$(\frac18,\frac38,\frac38,\frac18)$; see Figure~\ref{eulerian-bino} 
for a graphical illustration.

\begin{figure}[!ht]
\begin{center}
\renewcommand{\arraystretch}{1.7}
\begin{tabular}{c|c} \hline
Eulerian distribution $\frac{1}{n!}\,\eulerian{n}{k}$ &
Binomial distribution $\frac1{2^{n-1}}\binom{n-1}{k}$\\ \hline
$\eulerian{n}{k} = (k+1)\eulerian{n-1}{k}
 +(n-k)\eulerian{n-1}{k-1}$ &
$\binom{n}{k} = \binom{n-1}{k}+\binom{n-1}{k-1}$\\ 
\includegraphics[trim=0 0 0 -1cm,height=3cm]{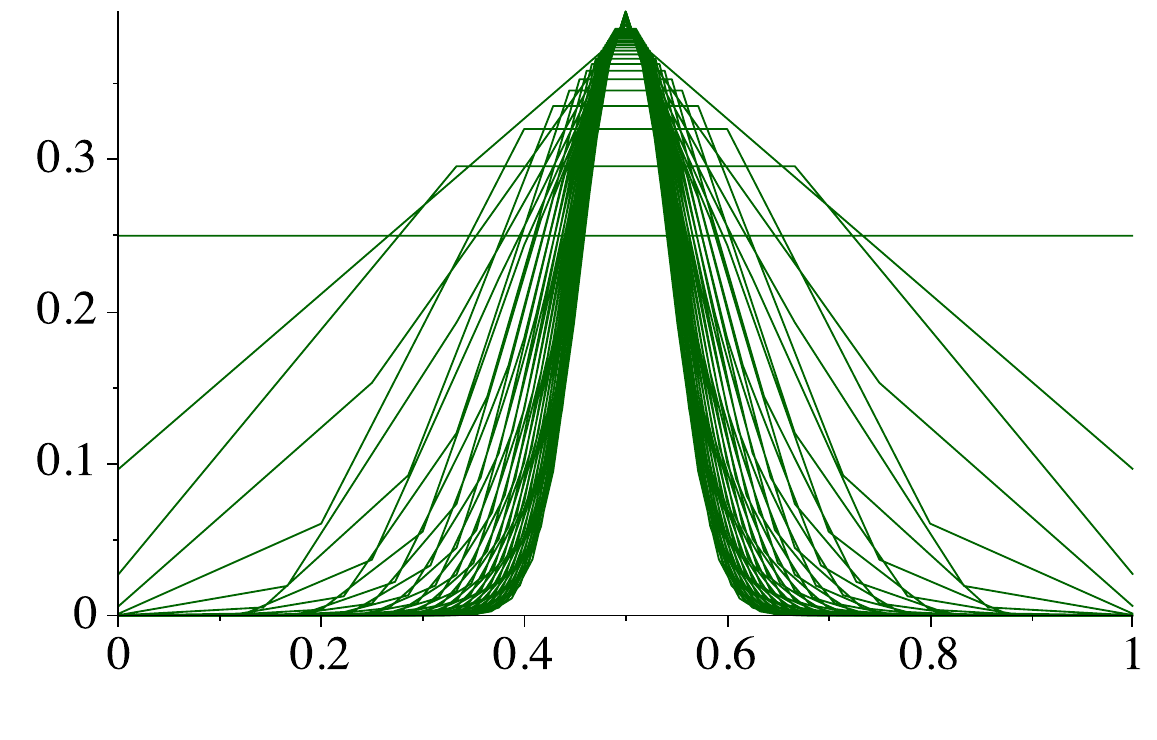} 
&
\includegraphics[trim=0 0 0 -1cm,height=3cm]{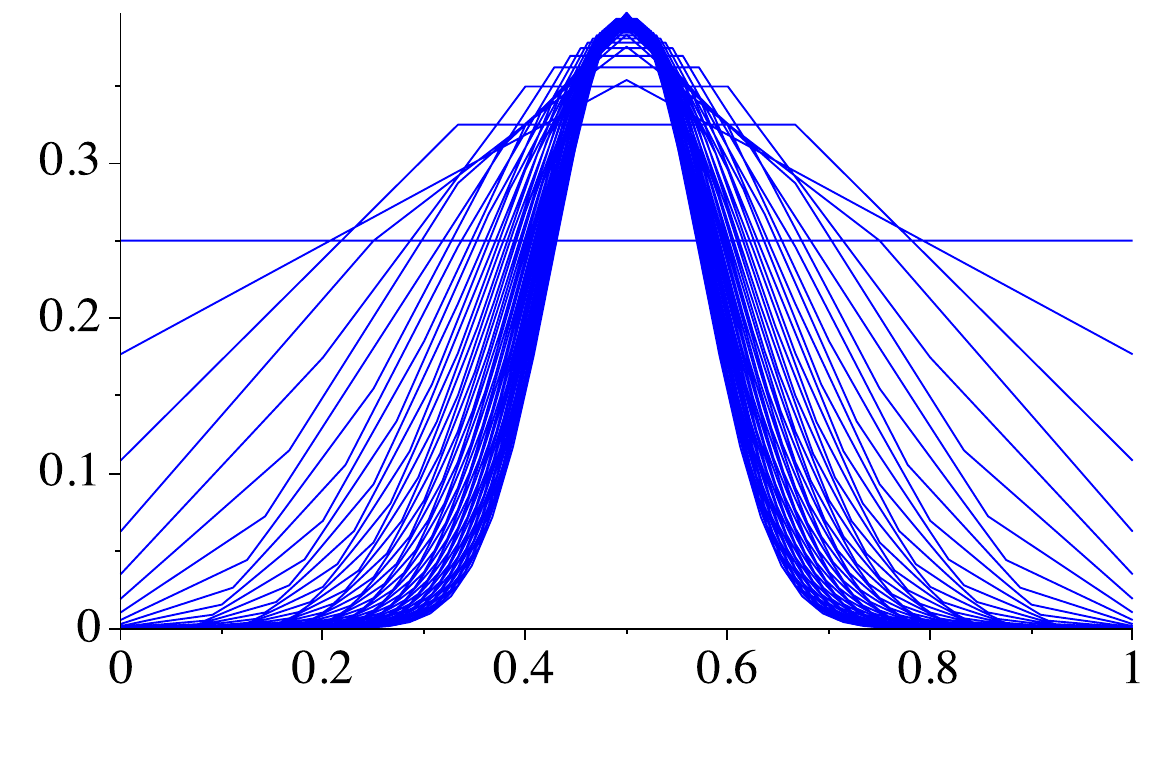} 
\\ \hline
\end{tabular}
\end{center}
\caption{A comparison between Eulerian and binomial distributions for
$2,\dots, 50$ (magnified by standard variations and normalized into
the unit interval). The higher concentration of Eulerian
distributions near their mean values is visible.}
\label{eulerian-bino}
\end{figure}

Such a high concentration in distribution may be ascribed to the
large multiplicative factors $k+1$ and $n-k$ when $k$ is near
$\frac12n$ in \eqref{eulerian-nk}, leading to the ``rich gets richer"
effect for terms near the mode of the distribution. More precisely,
it is known that $X_n$ is asymptotically normally distributed (in the
sense of convergence in distribution) with mean asymptotic to
$\frac12 n$ and variance to $\frac1{12}n$; the variance is smaller
than the binomial variance $\frac14 n$, which partially reflects the 
high concentration. For brevity, we will write (CLT standing for 
\emph{central limit theorem})
\begin{align}\label{eulerian-clt}
    X_n \sim \mathscr{N}\lpa{\tfrac12n,\tfrac1{12}n} 
	\quad\text{for the CLT}\quad
	\sup_{x\in\mathbb{R}}\left|
	\mathbb{P}\left(\frac{X_n-\frac12n}{\sqrt{\frac1{12} n}}
	\le x\right)-\Phi(x)\right| \to 0,
\end{align}
and $\mathbb{E}(X_n)\sim \frac12n$ and 
$\mathbb{V}(X_n)\sim\frac1{12}n$, 
where $\Phi(x)$ denotes the standard normal distribution function
\[
	\Phi(x) := \frac1{\sqrt{2\pi}}
	\int_{-\infty}^x e^{-\frac12t^2}\dd t
    \qquad(x\in\mathbb{R}).
\]
Such an \emph{asymptotic normality with small variance} will be 
constantly observed throughout the examples we will examine.

Due to the multifaceted appearance of Eulerian numbers, it is no 
wonder that the limit result \eqref{eulerian-clt} has been proved by 
many different approaches in miscellaneous guises; see 
Table~\ref{clt-approaches} for some of them. 
\begin{table}[!ht]
\begin{center}
	\begin{tabular}{llll}
		\multicolumn{4}{c}{{}} \\
		\multicolumn{1}{c}{Approach} &
		\multicolumn{1}{c}{First reference} &
		\multicolumn{1}{c}{Year} &
		\multicolumn{1}{c}{See also} \\ \hline
		Sum of Uniform$[0,1]$ 
		&  Laplace \cite{Laplace1812} & 1812
		&  \cite{Hensley1982,Tanny1973} \\ 
		Sum of $\nearrow$ or $\searrow$ indicators 
		& Wolfowitz \cite{Wolfowitz1944} & 1944
		&  \cite{Dwass1973,Esseen1985} \\ 
		Method of moments 
		& Mann \cite{Mann1945} & 1945
		& \cite{David1962} \\ 
		Spline \& characteristic functions
		& Curry \& Schoenberg 
		\cite{Curry1966} & 1966
		&\cite{Chen2004,Xu2011} \\  
		Real-rootedness  &
		Carlitz et al. \cite{Carlitz1972} & 1972
		& \cite{Pitman1997,Warren1996}\\ 
		Complex-analytic 
		& Bender \cite{Bender1973} & 1973
		& \cite{Flajolet1993,Hwang1994} \\ 
		Stein's method
		& Chao et al.\ \cite{Chao1996} & 1996
		&  \cite{Chuntee2017,Conger2007,Fulman2004}	\\ \hline
	\end{tabular} 
\end{center}
\caption{A list of some approaches used to establish the asymptotic 
normality \eqref{eulerian-clt} of Eulerian numbers.} 
\label{clt-approaches}
\end{table}


The normal limit law \eqref{eulerian-clt} in the form of descents in
permutations appeared first in 1945 by Mann \cite{Mann1945} where a
method of moments based on the recurrence \eqref{eulerian-nk}
was employed, proving the empirical observation made in
\cite{Moore1943}. A similar approach was worked out in David and
Barton \cite{David1962} where they showed that all cumulants of $X_n$
are linear with explicit leading coefficients. A more general
treatment of runs up and down in permutations had already been given
by Wolfowitz \cite{Wolfowitz1944} in 1944, where he relied 
instead his analysis on decomposing the random variables $X_n$ into a
\emph{sum of indicators} and then on applying \emph{Lyapunov's
criteria for CLT} by computing the fourth central moments; see 
\cite{Fischer2011}. These publications have remained little known in 
combinatorics literature mainly because they were published in a 
statistical journal.


On the other hand, the asymptotic normality \eqref{eulerian-clt} had
been established earlier than 1944 in other forms, although the links 
to Eulerian numbers were only known later. The earliest connection we
found is in Laplace's \emph{Th\'eorie analytique des probabilit\'es},
first version published in 1812 \cite{Laplace1812}. The connection is
through the expression (already known to Euler \cite[Art.\ 
173]{Euler1755})
\[
    \frac{1}{n!}\, \eulerian{n}{k}
	= \frac1{n!}\sum_{0\le j\le k+1}\binom{n+1}{j}(-1)^j
	(k+1-j)^n\qquad(n\ge0),
\]
and the distribution of the sum of $n$ independent and identically 
distributed uniform $[0,1]$ random variables $U_1,\dots,U_n$:
\begin{align}\label{Snx}
    \mathbb{P}(U_1+\cdots+U_n\le t)
	=\frac1{n!}\sum_{0\le j\le t} \binom{n}{j}(-1)^j
	(t-j)^n.
\end{align}
It then follows that (see \cite{Hensley1982, Janson2013,
Pitman1997, Stanley1977, Tanny1973})
\[
    \mathbb{P}(X_n\le t) = \mathbb{P}(U_1+\cdots+U_n\le t+1),
\]
and the asymptotic normality of $X_n$ follows from that of the sum of 
uniform random variables, which was first derived by Laplace in 
\cite{Laplace1812} by large powers of characteristic functions,  
Fourier inversion and a saddle-point approximation (or Laplace's 
method). 

Concerning the expression \eqref{Snx} (the sum on the right-hand side
already appeared in \cite{Euler1755}), sometimes referred to as
Laplace's formula (see for example \cite{Diaconis1987}), we found
that it appears (up to a minor normalization) in Simpson's 1756 paper
\cite{Simpson1756} where the sum of continuous uniforms is treated as
the limit of sum of discrete uniforms; see also his book
\cite{Simpson1757}. The underlying question, closely connected to the
counts of repeated tossing of a general dice, has a very long history
and rich literature in the early development of probability theory.
In particular, Simpson's treatment finds its roots in de Moivre's
extension of Bernoulli's binomial distribution, ``\emph{which in turn
was derived from Newton's binomial theorem and before that from
Pascal's arithmetic triangle---this approach may have the most
impressive provenance of any in probability theory}'' (quoted from
Stigler \cite[P.\ 92]{Stigler1986}). Interestingly, de Moivre's
approach also constitutes one of the very early uses of generating
functions; see \cite[Ch.\ 2]{Stigler1986}. The same expression
\eqref{Snx} was derived in the 1770s by Lagrange, Laplace and later
by many others, notably in spline and related areas; see
\cite{Chui1992, Schoenberg1973}. See also the books
\cite{Fischer2011, Hald1998, Petersen2015} for more information.
Coincidentally, expressions very similar to \eqref{Snx} also emerged
in Laplace's analysis of series expansions; see \cite{Laplace1777}.
But he did not mention the connection to Eulerian numbers.

The sum-of-indicators approach used by Wolfowitz is very useful 
due to its simplicity but the more classical Lyapunov 
condition is later replaced by limit theorems for $2$-dependent 
indicators; see \cite{Dwass1973,Esseen1985,Hoeffding1948}. Also it is 
possible to derive finer properties such as large deviations; see 
\cite{Esseen1985}. 

Instead of decomposing the Eulerian distribution as a sum of
\emph{dependent} Bernoulli variates, a much more successful and
fruitful approach in combinatorics is to express it as a sum of
\emph{independent} Bernoullis based on the property that all roots of
its generating polynomial $P_n(v)$ (see \eqref{A173018}) are real and
negative; see \cite{Carlitz1972,Frobenius1910,Warren1996}.
More precisely, $P_n(v)$ has the decomposition \cite{Frobenius1910}
\[
    P_n(v) = \prod_{1\le j\le n}(\zeta_{n,j}+v),
\]
where $\zeta_{n,j}\in\mathbb{R}^+$. It follows that $X_n = \sum_{1\le
j\le n}\xi_{n,j}$, where $\xi_{n,j}$ is a Bernoulli with probability
$\frac1{1+\zeta_{n,j}}$ of assuming $1$. Then Harper's approach
\cite{Harper1967} to establishing the asymptotic normality
\eqref{eulerian-clt} consists in showing that the variance tends to
infinity, which amounts to checking Lyapunov's condition because the
summands are bounded. This was carried out for Eulerian distribution
by Carlitz et al.\ in \cite{Carlitz1972}. For a slightly more general
context (all roots lying in the negative half-plane), see Hayman's
influential paper \cite{Hayman1956} and R\'enyi's synthesis
\cite{Renyi1967,Renyi1967a}. See also the surveys \cite{Branden2015,
Brenti1989, Brenti1994a, Liu2007, Pitman1997, Stanley1989} for the
usefulness of this real-rootedness approach.

We describe two other approaches listed in Table~\ref{clt-approaches}
that are closely connected to our study here, leaving aside other
ones such as spline functions, matched asymptotics, and Stein's
method; see \cite{Chao1996, Chen2004, Chuntee2017, Conger2007,
Curry1966, Fulman2004, Giladi1994, Xu2011} for more information. For
the connection to P\'olya's urn models, see \cite{Flajolet2006,
Freedman1965, Oden2006} and Section~\ref{ss-polya}. See also the very
recent papers \cite{Fulman2019,Kim2018,Kim2019} for a kind of
saddle-point approach and \cite{Ozdemir2019} for an approach via
martingales.

A general study of asymptotic normality based on complex-analytic 
approach was initiated by Bender \cite{Bender1973} where in the 
particular case of Eulerian numbers he used the relation for the 
exponential generating function (EGF)
\begin{align}\label{eulerian-egf}
    F(z,v) := \sum_{n\ge0}\frac{P_n(v)}{n!}\, z^n
    = \frac{1-v}{e^{(v-1)z}-v},
\end{align}
and observes that the dominant simple pole $z=\rho(v) :=
\frac1{1-v}\log\frac1v$ ($\rho(1) := 1$) provides the essential
information we need for establishing the asymptotic normality
\eqref{eulerian-clt} since for large $n$
\[
    \frac{P_n(e^s)}{n!} 
    =  e^{-s}\left(\frac{e^s-1}{s}\right)^{n+1}
    +\text{exponentially smaller terms},
\]
uniformly for $|s|\le \ve$. The uniformity then guarantees that the
characteristic functions of the centered and normalized random
variables tend to that of the standard normal distribution, implying
\eqref{eulerian-clt} by \emph{L\'evy's continuity theorem} (see
\cite[\S~C.5]{Flajolet2009}). This approach provides not only a limit
theorem, but also much finer properties such as local limit theorems
and large deviations in many situations, as already clarified in
\cite{Bender1973} and later publications such as \cite{Flajolet1993, 
Gao1992, Hwang1994}. In general, the characterization of limit
laws or other stochastic properties through a detailed study of the
singularities of the corresponding generating functions, coupling
with suitable analytic tools, proved very powerful and successful;
see \cite{Canfield2015, Flajolet2009, Gao1992, Hwang1994, 
Odlyzko1995} for more information. Note that $F$ satisfies the PDE
\[
     (1-vz)\partial_z F -v(1-v)\partial_v F = F,
\]
the resolution of which adding another interesting dimension to the 
richness of Eulerian recurrences, which we will briefly explore 
in Section~\ref{ss-pde}.  

While each of these approaches has its own strengths and weaknesses,
a large portion of the asymptotic normality results for recursively
defined polynomials in the combinatorics literature rely on Harper's
real-rootedness approach. Also many powerful criteria for justifying
the real-rootedness of a sequence of polynomials have been developed
over the years; see for example \cite{Branden2015,
Brenti1989, Brenti1994a, Liu2007, Pitman1997, Stanley1989}. However, 
the real-rootedness property is an exact one and is very sensitive to
minor changes. For example, if we change the factor $vn+1-v$ to
$vn+(1+v)^2$ in the recurrence \eqref{A173018}, then all coefficients
remain positive but complex roots are abundant as can be seen from
Figure~\ref{fig-complex-roots}. On the other hand, by our theorem
below, the coefficients still follow the same CLT
\eqref{eulerian-clt} (with the same asymptotic mean and asymptotic
variance). Historically, the proof of the first moment convergence
theorem by Markov relies on the (real) zeros of Hermite polynomials;
see \cite{Frechet1931}.
\begin{figure}[!ht]
\begin{center}
\includegraphics[height=4cm]{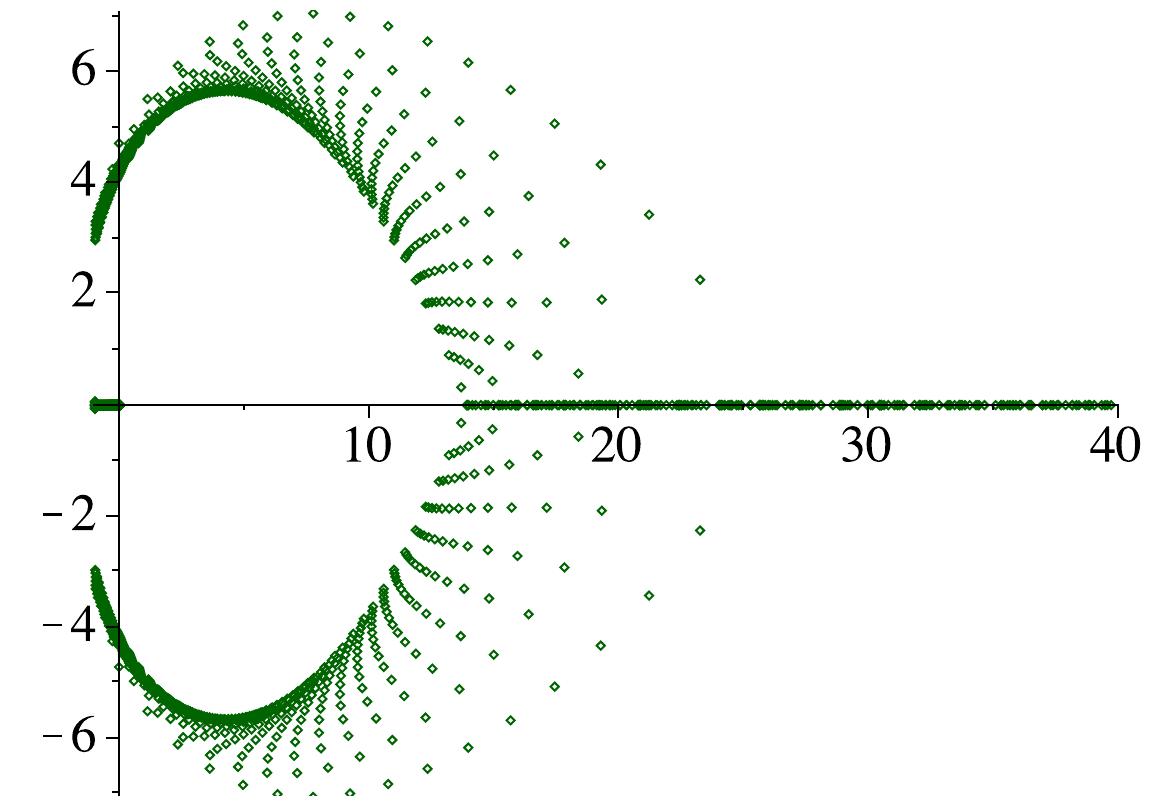}
\includegraphics[height=4cm]{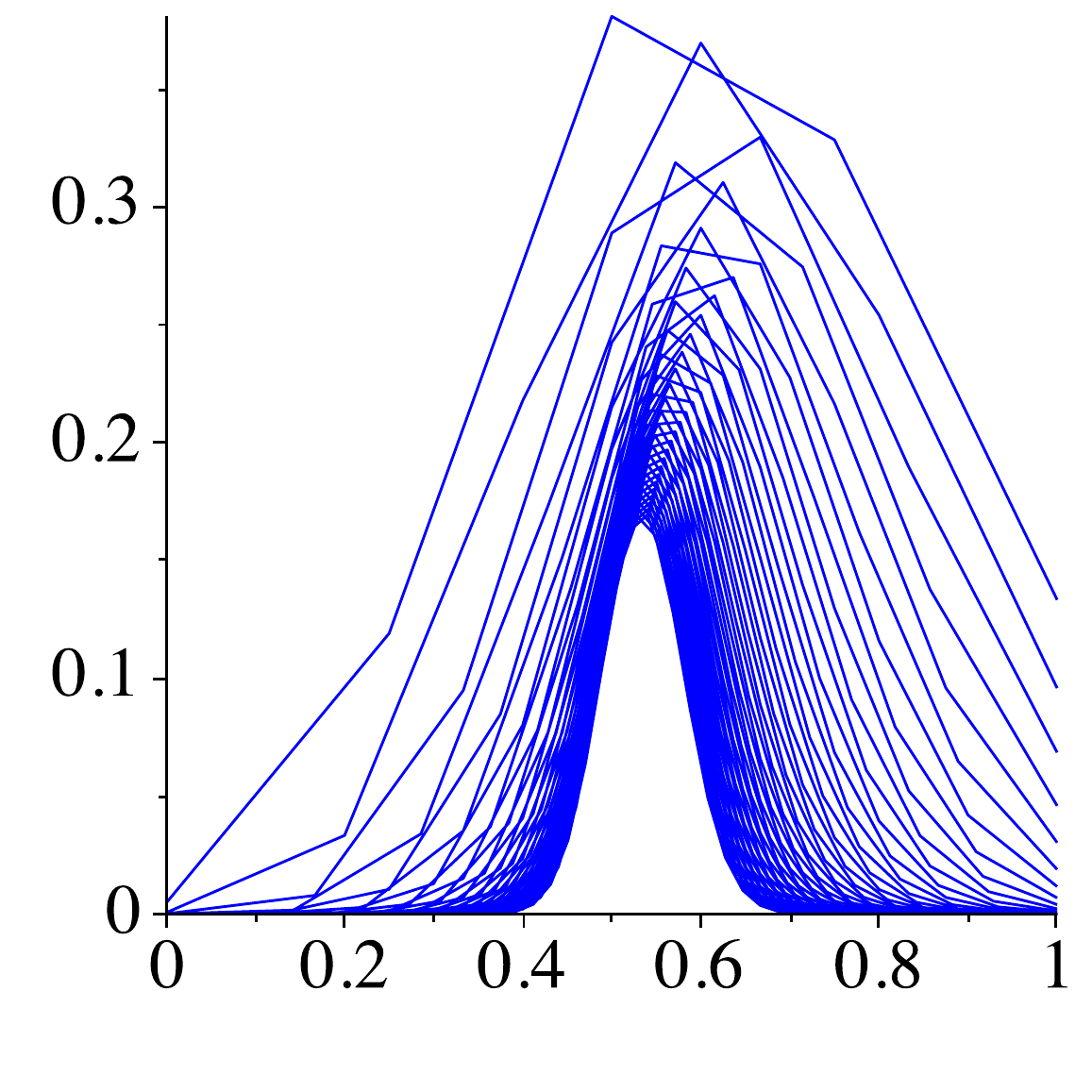}
\end{center}
\caption{Left: zero distributions of the polynomials $R_n(v) = 
(vn+(1+v)^2)R_{n-1}(v) + v(1-v)R_{n-1}'(v)$ for $n\ge1$ with 
$R_0(v)=1$; right: the corresponding histograms. Here 
$n=2,3,\dots,50$. The EGF equals $\left(\frac{1-v}
{1-ve^{(1-v)z}}\right)^5e^{(1-v)z-ve^{(1-v)z}+v}$; see 
Section~\ref{ss-pde}.} \label{fig-complex-roots}
\end{figure}

On the other hand, the closed-form expression \eqref{eulerian-egf}
for the EGF represents another exact property and may not be
available in more general cases \eqref{Pnv-general}, especially when
the corresponding PDE is difficult to solve. A simple example is the
sequence OEIS \href{https://oeis.org/A244312}{A244312} for which 
\begin{align}\label{A244312}
    P_n(v) = 
	\begin{cases}
		(vn-1)P_{n-1}(v)+v(1-v)P_{n-1}'(v), 
		&\text{if $n$ is even},\\
		(vn-v)P_{n-1}(v)+v(1-v)P_{n-1}'(v),
		&\text{if $n$ is odd},
	\end{cases}
	\qquad(n\ge2),
\end{align}
with $P_1(v)=v$. The same $\mathscr{N}\lpa{\frac12n,\frac1{12}n}$ 
can be proved by the method of moments (see 
Section~\ref{ss-euler-clt}), but it is less clear how to solve the 
corresponding PDE ($F$ being the EGF of $P_n$)
\begin{align}\label{A244312-egf}
    (1-vz)\partial_z F(z,v) +(1-v)\frac{F(z,v)-F(-z,v)}{2}
    =v(1-v)\partial_v F(z,v)+v.
\end{align}

One of our aims of this paper is to show the usefulness of the method
of moments for general recurrences such as \eqref{Pnv-general}. More
precisely, we will derive in the next section a CLT for
\eqref{Pnv-general} under reasonably weak conditions on $\alpha(v),
\beta(v)$ and $\gamma(v)$. While our limit result seems conceptually
less deep (when compared with, say the real-rootedness properties),
it is very effective and easy to apply; indeed, its effectiveness 
will be testified by more than three hundred of polynomials in
later sections. The list of examples we compiled is by far the most
comprehensive one (although not exhaustive).

On the other hand, although the method of moments has been employed
before in similar contexts (see \cite{Bagchi1985, David1962,
Freedman1965, Mann1945}), our manipulation of the recurrence (via
developing the ``asymptotic transfer") is simpler and more
systematic; see also \cite{Hwang2003} for the developments for other
divide-and-conquer recurrences. In addition to the method of moments,
we will also explore the usefulness of the complex-analytic
approach for Eulerian recurrences. In particular, we obtain optimal
convergence rates in the CLTs, using tools developed in Flajolet and
Sedgewick's authoritative book \cite{Flajolet2009} on Analytic
Combinatorics. We will then extend the same method of moments to
characterize non-normal limit laws in Sections~\ref{sec-nnll} with
applications given in later sections. Extensions along many different
directions are discussed in Section~\ref{sec-extensions}, and the
simpler framework when $\beta(v)=0$ (in \eqref{Pnv-general}) in
Section~\ref{sec-beta-is-0} for completeness, some examples of this
framework being collected in Appendix~\ref{App-Bino}. 
Section~\ref{sec-conclusions} concludes this paper.


\medskip

\noindent \textbf{Notations.} Throughout this paper, $P_n(v)$ is a
generic symbol whose expression may differ from one occurrence to
another, and $Q_n(v)$ always denotes the reciprocal polynomial
(reading each row coefficients of $P_n(v)$ from right to left) of
$P_n(v)$, except in Section~\ref{sec:multi}. The EGF of $P_n$ is 
always denoted by $F(z,v)$. For convenience, the Eulerian recurrence
\begin{align}\label{Pnv-gen}
	\begin{cases}
	    P_n(v) = a_n(v) P_{n-1}(v) + b_n(v) (1-v) P_{n-1}'(v)
		\qquad(n\ge1)\\
		P_0(v) \text{ given}
	\end{cases}	
\end{align}
will be abbreviated as $P_n\in\ET{a_n(v),b_n(v)}$ or 
$P_n\in\ET{a_n(v),b_n(v); P_0(v)}$ if we want to specify the initial 
condition. When the initial condition on $P_r(v)$, say $P_r(v)=1+v$, 
is given with $r\ge1$, we write $P_n\in\GT{r}{a_n(v),b_n(v);1+v}$, 
with the understanding that the recurrence starts from $n\ge r+1$. 

\medskip

\noindent \textbf{Web forms.} All examples in this paper (with a
total of 628 items in which 594 are in OEIS) are compiled and
maintained at the two webpages \cite{Hwang2019} 
(\eqref{Pnv-gen} with $b_n(v)\ne0$) and \cite{Hwang2019-2}
(\eqref{Pnv-gen} with $b_n(v)=0$), with types, links, numerical 
tables and other properties. 

\section{A normal limit theorem}
\label{sec-clt}

We consider in this section the limiting distribution (for large $n$)
of the coefficients of linear type Eulerian recurrence $P_n(v)$:
\[
    \ET{\alpha(v)n+\gamma(v), \beta(v);P_0(v)},
\]
where $\alpha(v), \beta(v)$, $\gamma(v)$ and $P_0(v)$ are any 
functions analytic in $|v|\le 1$, and we assume that all Taylor 
coefficients $[v^k]P_n(v)$ are nonnegative for $k, n\ge0$. If 
$[v^k]P_n(v)\ge0$ for $n\ge n_0$ with $n_0>0$, then we can consider 
the shifted functions $R_n(v) := P_{n+n_0}(v)$, which satisfy the 
same form \eqref{Pnv-gen} but with $\gamma(v)$ replaced by
$n_0\alpha(v)+\gamma(v)$. So without loss of generality, we assume
that $n_0=0$ and $P_n(1)>0$ for $n\ge0$ for which a sufficient 
condition is $[v^k]P_n(v)\ge0$ and $P_n(v)\not\equiv0$ for $k,n\ge0$. 

For simplicity, we write $\alpha=\alpha(1)$ and similarly for $\beta$ 
and $\gamma$. By \eqref{Pnv-general}, we see that 
\[
	P_n(1) 
	= (\alpha n+\gamma)P_{n-1}(1)
	= P_0(1)\prod_{1\le j\le n}(\alpha j+\gamma)
	= P_0(1) \alpha^n\frac{\Gamma\lpa{n+1+\frac\gamma\alpha}}
	{\Gamma\lpa{1+\frac\gamma\alpha}};
\]
thus $P_n(1)$ is independent of $\beta(v)$, and the factor
``$1-v$'' in front of $P_{n-1}'(v)$ in \eqref{Pnv-gen} makes the
recurrence satisfied by the moments easier to handle. Note that the
assumption that $P_n(1)>0$ for $n\ge0$ implies that $\alpha+\gamma>0$.

Define the random variables $X_n$ by
\begin{align}\label{Xnk-general}
   \mathbb{P}(X_n=k) = \frac{[v^k]P_n(v)}{P_n(1)}
\qquad(k,n\ge0).
\end{align}

\begin{thm}[Asymptotic normality of $X_n$] \label{thm-clt}
Assume that the sequence of functions $P_n(v)$ is defined 
recursively by \eqref{Pnv-gen} satisfying (i) $[v^k]P_n(v)\ge0$ and 
$P_n(v)\not\equiv0$ for $k,n\ge0$, and (ii) $P_0(v)$, $\alpha(v)$, 
$\beta(v)$ and $\gamma(v)$ analytic in $|v|\le1$. If, furthermore,
\begin{align} \label{thm-clt-positivity}
    \alpha+2\beta>0\quad\text{and}\quad \sigma^2>0, 
\end{align}
where
\begin{align}\label{mu-var}
    \mu := \frac{\alpha'(1)}{\alpha+\beta}
    \quad\text{and}\quad
    \sigma^2 
    := \mu+\frac{\alpha''(1)-2\mu\beta'(1)-\alpha\mu^2}
    {\alpha+2\beta},
\end{align}
then the sequence of random variables $X_n$, defined by 
\eqref{Xnk-general}, satisfies $X_n \sim \mathscr{N}(\mu n, 
\sigma^2n)$, namely, $X_n$ is asymptotically normally distributed 
with the mean and the variance asymptotic to $\mu n$ and $\sigma^2 
n$, respectively. 
\end{thm}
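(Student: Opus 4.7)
My plan is to apply the \emph{method of moments}: it suffices to show that for each fixed $k\ge 1$, the moments of $(X_n-\mu n)/\sqrt{n}$ converge to those of $\mathscr{N}(0,\sigma^2)$, since the normal law is determined by its moments. The structural reason this approach is clean here is that the factor $(1-v)$ in front of $P_{n-1}'(v)$ vanishes at $v=1$: so differentiating \eqref{Pnv-gen} $k$ times by Leibniz's rule and setting $v=1$ produces a recurrence for $P_n^{(k)}(1)$ involving $P_{n-1}^{(j)}(1)$ only for $0\le j\le k$, with no spurious $j=k+1$ term. Using $\bigl(\beta(v)(1-v)\bigr)^{(j)}\bigl|_{v=1}=-j\beta^{(j-1)}(1)$ for $j\ge1$, one obtains schematically
\[
  P_n^{(k)}(1) = \bigl(\alpha n+\gamma-k\beta\bigr)P_{n-1}^{(k)}(1)
  + k\bigl(\alpha'(1)n+\gamma'(1)-(k-1)\beta'(1)\bigr)P_{n-1}^{(k-1)}(1)+\cdots,
\]
where the omitted terms involve $P_{n-1}^{(j)}(1)$ for $j\le k-2$ with coefficients bounded in $n$.

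Dividing by $P_n(1)=(\alpha n+\gamma)P_{n-1}(1)$ converts this into a recurrence for the factorial moments $f_n^{(k)}:=\mathbb{E}\lpa{(X_n)_k}$, with a multiplicative factor of shape $1-\tfrac{k\beta}{\alpha n}+O(n^{-2})$ and a forcing term driven by $f_{n-1}^{(k-1)}$. I would then develop an \emph{asymptotic transfer} lemma: given the leading asymptotic form of $f_{n}^{(k-1)}$, it returns the leading asymptotic form of $f_n^{(k)}$ by solving the first-order recurrence explicitly (telescoping, then extracting the dominant term via Euler--Maclaurin or direct summation). The base case $k=1$, by postulating $f_n^{(1)}\sim \mu n$ and matching constants, forces $\mu(\alpha+\beta)=\alpha'(1)$, giving $\mu=\alpha'(1)/(\alpha+\beta)$.

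To obtain the variance and higher central moments I would switch to the centered moments $c_n^{(k)}:=\mathbb{E}\lpa{(X_n-\mu n)^k}$, because the factorial moments all grow like $\mu^k n^k$ and the Gaussian information lies in the subleading $n^{k/2}$ correction. A convenient device is to work with the shifted polynomial $\widetilde P_n(v):=v^{-\lfloor \mu n\rfloor}P_n(v)$ or, equivalently, to apply the differentiation trick directly to $\mathbb{E}(e^{s(X_n-\mu n)})$ expanded in powers of $s$; either way the recurrence for $c_n^{(k)}$ has the same multiplicative structure but with cancellations that make the forcing of order $n^{(k-1)/2}$. Induction on $k$ then yields $c_n^{(2k)}\sim (2k-1)!!\,\sigma^{2k}n^{k}$ and $c_n^{(2k+1)}=o(n^{k+1/2})$, where at $k=2$ solving the recurrence gives $\sigma^2$ with the denominator $\alpha+2\beta$ predicted in \eqref{mu-var}, which explains the hypothesis $\alpha+2\beta>0$.

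The hard part is not conceptual but the careful bookkeeping: one must verify that the asymptotic transfer delivers the leading constant with the correct error bound at each inductive step, that the $P_{n-1}^{(j)}(1)$ with $j\le k-2$ truly contribute at subleading order, and that the transition from factorial to centered moments does not introduce cancellations that reduce the variance to zero (precisely the role of the hypothesis $\sigma^2>0$). Once moment convergence is established, Fr\'echet--Shohat's theorem gives $X_n\sim \mathscr{N}(\mu n,\sigma^2 n)$ in the strong sense of \eqref{eulerian-clt}.
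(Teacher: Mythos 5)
Your plan is essentially the paper's own proof: both exploit the vanishing of the factor $1-v$ at $v=1$ to turn the recurrence \eqref{Pnv-gen} into a first-order recurrence for the moments, solve the mean recurrence explicitly to get $\mu=\alpha'(1)/(\alpha+\beta)$, establish an asymptotic transfer lemma for $x_n=\bigl(1-\tfrac{m\beta}{\alpha n+\gamma}\bigr)x_{n-1}+y_n$, induct on the order of the central moments to show they match the Gaussian ones (with $\alpha+2\beta>0$ entering exactly where you predict, in the transfer for the variance), and conclude by the Fr\'echet--Shohat moment convergence theorem. The only cosmetic difference is that the paper centers at the exact mean $\mu_n$ rather than at $\mu n$, which makes the first central moment vanish identically and keeps the odd-moment bookkeeping ($M_{n,2\ell-1}=O(n^{\ell-1})$) cleaner than your $o(n^{k+1/2})$ bound would be.
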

Indeed, we will prove convergence of all moments. 

Observe first that $P_0(v)$, $\alpha(v), \beta(v)$ and $\gamma(v)$
need not be polynomials, although in almost all our examples they
are; see \S~\ref{sss-conway} for an example with
$\gamma(v)=\frac{1-v}{1+v}$. Also the two constants $\mu$ and
$\sigma^2$ depend only on $\alpha(v)$ and $\beta(v)$, but not on
$\gamma(v)$; neither do they depend on the initial condition
$P_0(v)$. This offers the flexibility of varying $\gamma(v)$ without
changing the normal limit law, as we did in Introduction 
(Figure~\ref{fig-complex-roots}), provided that $[v^k]P_n(v)\ge0$. 
Furthermore, our conditions are very easy to check in all cases we 
will discuss. Finally, recurrences similar to ours have been studied 
in the literature; see for example \cite{Dominici2011, Dubeau1995,
Hitczenko2018, Warren1999} and the references therein.

The same method of proof can be extended to the cases when
the factor $\alpha(v) n+\gamma(v)$ of $P_{n-1}(v)$ in \eqref{Pnv-gen}
also contains higher powers of $n$. See Section~\ref{sec-extensions} 
for extensions along many different lines. 

In connection with the inequalities in \eqref{thm-clt-positivity}, we 
have the order relations for the mean and the variance:
\[
    \begin{cases}
    	\text{if }\alpha+\beta<0 
		\text{ or }-\frac{\beta}{\alpha}>1,
		\text{ then } 
		\mathbb{E}(X_n)\sim C n^{-\frac{\beta}{\alpha}},\\
    	\text{if }\alpha+2\beta<0 
		\text{ or }-\frac{\beta}{\alpha}>\frac12,
		\text{ then } 
		\mathbb{V}(X_n)\sim C' n^{-\frac{2\beta}{\alpha}},
    \end{cases}
\]
where $C$ and $C'$ are constants depending on $P_0(v), \alpha(v),
\beta(v)$ and $\gamma(v)$. In general, we expect that the limit law 
is no more normal when $\alpha+2\beta<0$. The same moments approach 
can be extended to such a case, but we leave this aside in this paper 
for simplicity of presentation (also because of few examples). For 
similar contexts in urn models, see \cite{Bagchi1985, Janson2004, 
Mahmoud2008}. 

We will prove Theorem~\ref{thm-clt} by the method of moments. We 
assume, throughout this section, that $\alpha>0$. 

\subsection{Mean value of $X_n$}
Consider now the moment generating function
\[
    M_n(s) := \frac{P_n(e^s)}{P_n(1)}.
\]
By \eqref{Pnv-gen}, for $n\ge1$
\begin{align}\label{Qns}
	M_n(s) = \frac{\alpha(e^s)n+\gamma(e^s)}
	{\alpha n+\gamma}\, M_{n-1}(s)
	-\frac{\beta(e^s)(1-e^{-s})}
	{\alpha n+\gamma}\,M_{n-1}'(s),
\end{align}
with $M_0(s) = \frac{P_0(e^s)}{P_0(1)}$. The mean value can then be 
computed by the recurrence
\begin{align}\label{mu-rr}
    \mu_n := M_n'(0) =
    \left(1-\frac{\beta}{\alpha n+\gamma}\right)\mu_{n-1}
    +\frac{\alpha'(1)n+\gamma'(1)}{\alpha n+\gamma}
    \qquad(n\ge1),
\end{align}
with $\mu_0=M_0'(0)=\frac{P_0'(1)}{P_0(1)}$. 

For our asymptotic purpose, we will use the following approximations. 
\begin{prop}[Asymptotics of $\mu_n$] The mean $\mu_n$ of $X_n$ 
can be approximated as follows.
\begin{itemize}
\item If $-\frac\beta\alpha<1$, then 
\begin{align} \label{mu-as}
    \mu_n 
    = \frac{\alpha'(1)}
    {\alpha+\beta}\, n +
    \begin{cases}
        O\lpa{1+n^{-\frac\beta\alpha}}, & \text{if }\beta\ne0;\\
        O\lpa{\log n}, &\text{if }\beta=0.
    \end{cases}    
\end{align}

\item If $-\frac\beta\alpha=1$, then
\[
    \mu_n = \frac{\alpha'(1)}{\alpha}\,n\log n 
    +C_0n + O(\log n),
\]
where ($\psi$ denoting the digamma function)
\[
    C_0 := \frac{u_0\alpha+\gamma'(1)}{\alpha+\gamma}
    -\frac{\alpha'(1)}{\gamma}
    -\frac{\alpha'(1)}{\alpha}\left(1+\psi
    \left(\frac\gamma\alpha\right)\right).
\]

\item If $-\frac\beta\alpha>1$, then 
\[
    \mu_n = C_1 n^{-\frac\beta\alpha}
    \left(1+O\lpa{n^{-1}}\right)+O(n), 
\]
where
\[
    C_1 := \frac{\Gamma\lpa{1+\frac{\gamma}{\alpha}}}
    {\Gamma\lpa{1+\frac{\gamma-\beta}{\alpha}}}
    \left(\mu_0- \frac{\gamma'(1)}{\beta} 
    -\frac{\alpha'(1)(\beta-\gamma)}
    {\beta(\alpha+\beta)}\right).
\]
\end{itemize}	
\end{prop}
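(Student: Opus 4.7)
My plan is to treat \eqref{mu-rr} as a first-order inhomogeneous linear recurrence and read off the three asymptotic regimes from its explicit solution. Setting $a_n:=1-\tfrac{\beta}{\alpha n+\gamma}=\tfrac{\alpha n+\gamma-\beta}{\alpha n+\gamma}$, $b_n:=\tfrac{\alpha'(1)n+\gamma'(1)}{\alpha n+\gamma}$ and $\pi_n:=\prod_{j=1}^n a_j$, the product telescopes to a ratio of Gamma functions,
\[
    \pi_n
    =\frac{\Gamma\lpa{n+1+\tfrac{\gamma-\beta}{\alpha}}\,\Gamma\lpa{1+\tfrac{\gamma}{\alpha}}}
          {\Gamma\lpa{1+\tfrac{\gamma-\beta}{\alpha}}\,\Gamma\lpa{n+1+\tfrac{\gamma}{\alpha}}}
    =C_\ast\, n^{-\beta/\alpha}\lpa{1+O(n^{-1})},
\]
with $C_\ast:=\Gamma\lpa{1+\tfrac{\gamma}{\alpha}}/\Gamma\lpa{1+\tfrac{\gamma-\beta}{\alpha}}$ by Stirling's formula, and the general solution is $\mu_n=\pi_n\mu_0+\pi_n\sum_{1\le k\le n}b_k/\pi_k$. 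Everything then comes down to evaluating this last sum in the three regimes.

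The decisive algebraic input, valid whenever $\beta\ne0$, is the telescoping identity
\[
    \frac{1}{\pi_k(\alpha k+\gamma)}
    =\frac{1}{\beta}\llpa{\frac{1}{\pi_k}-\frac{1}{\pi_{k-1}}},
\]
which follows at once from $\pi_k(\alpha k+\gamma)=\pi_{k-1}(\alpha k+\gamma-\beta)$. When $\alpha+\beta\ne0$ I would first absorb the linear part by writing $\mu_n=\mu n+r_n$ with $\mu:=\alpha'(1)/(\alpha+\beta)$; a direct substitution reduces \eqref{mu-rr} to $r_n=a_nr_{n-1}+c/(\alpha n+\gamma)$ with $c:=\gamma'(1)-(\gamma-\beta)\alpha'(1)/(\alpha+\beta)$, and the telescoping identity above collapses the explicit solution to the closed form $r_n=c/\beta+\pi_n(\mu_0-c/\beta)$. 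Case~1 ($-\beta/\alpha<1$, $\beta\ne0$) is then immediate since $\pi_n=O\lpa{n^{-\beta/\alpha}}$ yields $r_n=O\lpa{1+n^{-\beta/\alpha}}$; the sub-case $\beta=0$ makes $\pi_n\equiv1$ and reduces to the harmonic estimate $\sum_{k\le n}(\alpha k+\gamma)^{-1}=O(\log n)$. Case~3 ($-\beta/\alpha>1$) uses the very same closed form: $\pi_n$ now dominates $n$, so $r_n=\pi_n(\mu_0-c/\beta)+O(1)$ produces the announced $C_1n^{-\beta/\alpha}$ with $C_1=C_\ast(\mu_0-c/\beta)$, matching the proposition verbatim, while the displaced $\mu n$ supplies the $O(n)$ remainder.

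The boundary regime $-\beta/\alpha=1$ (i.e.\ $\beta=-\alpha$) must be handled separately because $\alpha+\beta=0$ invalidates the linear ansatz. Fortunately, $\pi_n$ then collapses to the linear polynomial $(\alpha n+\alpha+\gamma)/(\alpha+\gamma)$, and $b_k/\pi_k$ splits by partial fractions into a harmonic piece $\propto 1/(k+\gamma/\alpha)$ and a summable tail. Using the standard expansion $\sum_{k\le n}1/(k+x)=\log n+\gamma_{\!E}+\psi(x+1)+O(n^{-1})$ together with $\psi(x+1)=\psi(x)+1/x$, the sum yields $\tfrac{\alpha'(1)}{\alpha}\log n$ plus an explicit constant; multiplication by $\pi_n\sim\alpha n/(\alpha+\gamma)$ then produces the $\tfrac{\alpha'(1)}{\alpha}n\log n$ leading term, and combining $\pi_n\mu_0$ with the residues of the partial fractions and the constants from the harmonic expansion gives the displayed $C_0$, the summand $1+\psi(\gamma/\alpha)$ being precisely the signature of that harmonic asymptotics.

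The main obstacle is bookkeeping, not technique: pinning down $C_0$ in the logarithmic case requires combining several constants (from the partial-fraction decomposition, from the digamma expansion, and from $\pi_n\mu_0$) without sign or normalisation error, and verifying that the closed form for $r_n$ in Case~3 agrees with the stated $C_1$ after expanding $C_\ast$. Beyond this, the argument is routine Stirling plus a single telescoping identity plus elementary sum estimates.
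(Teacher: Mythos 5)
Your proposal follows essentially the same route as the paper: the paper likewise solves the first-order recurrence \eqref{mu-rr} in closed form (its \eqref{mun-linear} is exactly your $\mu n+c/\beta+\pi_n\lpa{\mu_0-c/\beta}$, with $\pi_n$ the Gamma-quotient homogeneous solution, and your $C_\ast(\mu_0-c/\beta)$ is the stated $C_1$) and then reads off the three regimes from the asymptotics of the Gamma ratio and the digamma function; your mean-shift-plus-telescoping derivation and the separate partial-fraction treatment of $\alpha+\beta=0$ are only a different bookkeeping of the same computation. One correction in the boundary case $-\beta/\alpha=1$: the expansion you quote is misstated — it should be $\sum_{1\le k\le n}(k+x)^{-1}=\psi(n+1+x)-\psi(1+x)=\log n-\psi(x+1)+O(n^{-1})$, not $\log n+\gamma_E+\psi(x+1)+O(n^{-1})$ (test $x=0$, where the sum is $\log n+\gamma_E+O(n^{-1})$ and $-\psi(1)=\gamma_E$); with the corrected sign and $\psi(x+1)=\psi(x)+1/x$ the constant does collapse to the displayed $C_0$ (note the paper's ``$u_0$'' there is a typo for $\mu_0$), in agreement with the paper's exact digamma formula for the case $\alpha+\beta=0$.
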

\begin{proof}
We can solve the first-order difference equation \eqref{mu-rr} and 
obtain for $n\ge0$: 
\begin{itemize} 
\item if $\beta(\alpha+\beta)\ne0$, then  
\begin{align} \label{mun-linear}
    \begin{split}
    \mu_n &= \frac{\alpha'(1)}{\alpha+\beta}\, n
    + \frac{\gamma'(1)}{\beta}
    +\frac{\alpha'(1)(\beta-\gamma)}{\beta(\alpha+\beta)}\\
    &\qquad + \frac{\Gamma\lpa{1+\frac{\gamma}{\alpha}}
    \Gamma\lpa{n+1+\frac{\gamma-\beta}{\alpha}}}
    {\Gamma\lpa{1+\frac{\gamma-\beta}{\alpha}}
    \Gamma\lpa{n+1+\frac{\gamma}{\alpha}}}
    \left(\mu_0- \frac{\gamma'(1)}{\beta} 
    -\frac{\alpha'(1)(\beta-\gamma)}
    {\beta(\alpha+\beta)}\right);
    \end{split}
\end{align}
\item if $\beta=0$, then 
\[
    \mu_n = \frac{\alpha'(1)}{\alpha}\, n
    +\frac{\alpha\gamma'(1)-\alpha'(1)\gamma}{\alpha^2}
    \left(\psi\left(n+1+\frac\gamma\alpha\right)
    -\psi\left(1+\frac\gamma\alpha\right)\right) +\mu_0;
\]

\item if $\alpha+\beta=0$, then 
\begin{align*}
    \mu_n &= (\alpha(n+1)+\gamma)
    \left(\frac{\alpha'(1)}{\alpha^2}\left(
    \psi\left(n+1+\frac\gamma\alpha\right)
    -\psi\left(1+\frac\gamma\alpha\right)\right)
    +\frac{\mu_0}{\alpha+\gamma}\right)\\
    &\qquad+\left(\frac{\gamma'(1)}{\alpha+\gamma}-
    \frac{\alpha'(1)}{\alpha}\right)n.
\end{align*}
\end{itemize}
The asymptotic approximations of the Proposition then follow 
from these relations. Note that $-\frac\beta\alpha\gtreqqless 1$ is 
equivalent to $\alpha+\beta\lesseqqgtr 0$. 
\end{proof}

\begin{cor} \label{cor-D-bdd} 
The asymptotic estimate $\mu_n\sim \mu n$ is equivalent to  
$\mu_n-\mu_{n-1}\sim\mu$. 
\end{cor}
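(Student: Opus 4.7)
The plan is to prove both implications, but to note upfront that the two are far from equivalent for arbitrary sequences (e.g.\ $\mu_n=\mu n+(-1)^n$ satisfies $\mu_n\sim\mu n$ but the differences do not converge). What makes the corollary work is the explicit first-order recurrence \eqref{mu-rr}, which I will rewrite in difference form as
\[
    \mu_n-\mu_{n-1}
    = -\frac{\beta}{\alpha n+\gamma}\,\mu_{n-1}
    + \frac{\alpha'(1)n+\gamma'(1)}{\alpha n+\gamma}
    \qquad(n\ge1).
\]
This identity is what links the two statements.

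For the direction $(\mu_n-\mu_{n-1}\sim\mu)\Rightarrow(\mu_n\sim\mu n)$, I would simply apply the Stolz--Ces\`aro theorem (or, equivalently, sum the telescoping identity): if $\mu_n-\mu_{n-1}\to\mu$, then
\[
    \frac{\mu_n}{n}
    =\frac{\mu_0}{n}+\frac1n\sum_{1\le j\le n}(\mu_j-\mu_{j-1})
    \longrightarrow\mu,
\]
so $\mu_n\sim\mu n$ (assuming $\mu\ne 0$, which is the nondegenerate case of interest; otherwise the statement degenerates to $\mu_n=o(n)$, and the same computation applies). This direction uses no special structure beyond the definition of $\mu_n$ as a sum of its increments.

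For the reverse direction $(\mu_n\sim\mu n)\Rightarrow(\mu_n-\mu_{n-1}\sim\mu)$, I would substitute the hypothesis into the difference form above. Since $\mu_{n-1}\sim\mu n$, the right-hand side satisfies
\[
    \mu_n-\mu_{n-1}
    \longrightarrow -\frac{\beta}{\alpha}\,\mu+\frac{\alpha'(1)}{\alpha}
    =\frac{\alpha'(1)(\alpha+\beta)-\beta\alpha'(1)}{\alpha(\alpha+\beta)}
    =\frac{\alpha'(1)}{\alpha+\beta}=\mu,
\]
using the definition of $\mu$ in \eqref{mu-var}. This is the key algebraic check: the value of $\mu$ is exactly engineered so that the recurrence is compatible with $\mu_n\sim\mu n$ having constant-increment asymptotics.

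The main obstacle, if any, is really conceptual rather than technical: one must resist applying a general Tauberian argument (there isn't one) and instead exploit that the recurrence \eqref{mu-rr} is a contraction perturbation of the identity with forcing term of order $1$. Everything else is a one-line substitution plus Stolz--Ces\`aro.
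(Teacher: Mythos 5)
Your proof is correct and follows essentially the same route as the paper: the easy implication is the general telescoping/Stolz--Ces\`aro fact, and the substantive direction is obtained exactly as in the paper by substituting $\mu_{n-1}\sim\mu n$ into the difference form of \eqref{mu-rr} and verifying that $-\frac{\beta}{\alpha}\mu+\frac{\alpha'(1)}{\alpha}=\frac{\alpha'(1)}{\alpha+\beta}=\mu$. The only difference is that you spell out the easy direction and the degenerate case $\mu=0$, which the paper leaves implicit.
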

\begin{proof}
Note that in general situations $\mu_n-\mu_{n-1}\sim \mu$ implies 
$\mu_n\sim \mu n$ but not vice versa. In our setting, this follows 
from rewriting \eqref{mu-rr} as
\[
    \mu_n-\mu_{n-1}
    = -\frac{\beta \mu_{n-1}}{\alpha n+\gamma}
    +\frac{\alpha'(1)n+\gamma'(1)}{\alpha n+\gamma},
\]
which, by the assumption $\mu_n\sim\mu n$, yields
\[
    \mu_n-\mu_{n-1}
    \sim -\frac{\beta}\alpha\,\mu+\frac{\alpha'(1)}{\alpha}
    =\mu.
\]
\end{proof}

\subsection{Recurrence relation for higher central moments}

Assume from now on $\alpha+2\beta>0$. Then $\alpha+\beta>0$ (since 
$\alpha>0$), so that $\mu_n$ is linear by \eqref{mu-as} with 
$\mu_n-\mu_{n-1}=O(1)$. The higher moments can then be computed 
through the moment generating function of the centered random 
variables
\[
    \xbar{M}_n(s) := M_n(s)e^{-\mu_ns},
\]
which, by \eqref{Qns}, satisfies the recurrence 
\begin{align} \label{bar-Mns}
    \xbar{M}_n(s)
	&= \frac{e^{-\Delta_n s}}{\alpha n+\gamma}\left(
	\left(\begin{array}{l}
	    \alpha(e^s)n+\gamma(e^s)\\
		-\mu_{n-1}\beta(e^s)(1-e^{-s})
	\end{array}\right) \xbar{M}_{n-1}(s)
	-\beta(e^s)(1-e^{-s})
	\, \xbar{M}_{n-1}'(s)\right),
\end{align}
for $n\ge1$, where $\Delta_n := \mu_n-\mu_{n-1}=O(1)$ by 
Corollary~\ref{cor-D-bdd}. Write now
\[
    \xbar{M}_n(s) = \sum_{m\ge0}\frac{M_{n,m}}{m!}\,s^m,
\]
where $M_{n,m} = \mathbb{E}(X_n-\mu_n)^m$, and 
\begin{align}\label{abc}
    e^{-\Delta_n s}\alpha(e^s) 
	= \sum_{j\ge0}\frac{\alpha_j}{j!}\,s^j,\quad
	e^{-\Delta_n s}
	\beta(e^s)(1-e^{-s}) = \sum_{j\ge1}\frac{\beta_j}{j!}\,s^j,
	\quad e^{-\Delta_n s}
	\gamma(e^s) = \sum_{j\ge0}\frac{\gamma_j}{j!}\,s^j,
\end{align}
where all the coefficients depend on $n$ and are bounded. Note that 
we have the relations $M_{n,0}=1$, $M_{n,1}=0$, $\alpha_0=\alpha, 
\beta_1=\beta$ and $\gamma_0=\gamma$.

\begin{lmm} The $m$th central moment $M_{n,m}$ of $X_n$ satisfies the 
recurrence 
\begin{align}\label{Mnm-rr}
    M_{n,m} = \left(1-\frac{m\beta}{\alpha n+\gamma}\right)
	M_{n-1,m} + N_{n,m} \qquad(m\ge2),
\end{align}
where
\begin{align}\label{N-nm}
    \begin{split}
        N_{n,m} := \frac1{\alpha n+\gamma}
    	&\left(\sum_{2\le j\le m}\binom{m}{j}\lpa{
    	(\alpha_j n+\gamma_j-\beta_j \mu_{n-1}}
    	M_{n-1,m-j} \right.\\ &\left.
    	-\sum_{2\le j<m}\binom{m}{j}\beta_j
    	M_{n-1,m+1-j}  \right).
    \end{split}
\end{align}
\end{lmm}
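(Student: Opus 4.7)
The plan is to extract $[s^m/m!]$ from both sides of the functional recurrence \eqref{bar-Mns} after substituting the stated power-series expansions of $e^{-\Delta_n s}\alpha(e^s)$, $e^{-\Delta_n s}\beta(e^s)(1-e^{-s})$ and $e^{-\Delta_n s}\gamma(e^s)$. Multiplying \eqref{bar-Mns} through by $\alpha n+\gamma$ and distributing $e^{-\Delta_n s}$ inside the bracket puts the right-hand side into a pair of Cauchy products of power series in $s$, which is the natural object from which to read off $M_{n,m}$.

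Applying Cauchy's formula and noting that $\xbar{M}_{n-1}'(s)=\sum_{m\ge 0} M_{n-1,m+1}\, s^m/m!$, I would obtain
\[
    (\alpha n+\gamma)M_{n,m}
    = \sum_{0\le j\le m}\binom{m}{j}
      (\alpha_j n+\gamma_j-\mu_{n-1}\beta_j)M_{n-1,m-j}
      -\sum_{1\le j\le m}\binom{m}{j}\beta_j M_{n-1,m+1-j}.
\]
Splitting off $j=0$ from the first sum (using $\beta_0=0$) yields $(\alpha n+\gamma)M_{n-1,m}$, and splitting off $j=1$ from the second sum (using $\beta_1=\beta$) yields $-m\beta\, M_{n-1,m}$. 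After dividing by $\alpha n+\gamma$, these two contributions already produce the coefficient $1-m\beta/(\alpha n+\gamma)$ of $M_{n-1,m}$ required by \eqref{Mnm-rr}.

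The crux, though brief, is to show that the residual $j=1$ contribution of the first sum vanishes. Since $\alpha_1=\alpha'(1)-\alpha\Delta_n$ and $\gamma_1=\gamma'(1)-\gamma\Delta_n$, this residual equals
\[
    m\bigl(\alpha'(1)n+\gamma'(1)-\beta\mu_{n-1}
    -\Delta_n(\alpha n+\gamma)\bigr)M_{n-1,m-1},
\]
which is identically zero by the mean recurrence \eqref{mu-rr} rewritten as $\Delta_n(\alpha n+\gamma)=\alpha'(1)n+\gamma'(1)-\beta\mu_{n-1}$. This cancellation is exactly what the centering $\xbar{M}_n(s)=M_n(s)e^{-\mu_n s}$ was designed to achieve, and it is the only nonroutine step. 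Finally, because $M_{n-1,1}=0$, the $j=m$ term of the $\beta_j$-sum vanishes too, so its range can be shortened to $2\le j<m$; gathering what remains yields \eqref{Mnm-rr} with $N_{n,m}$ as displayed in \eqref{N-nm}.
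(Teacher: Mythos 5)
Your proof is correct and follows essentially the same route as the paper's: extract the coefficient of $s^m$ from \eqref{bar-Mns} and kill the $j=1$ contribution via the mean recurrence \eqref{mu-rr} (the paper phrases this cancellation as $N_{n,1}=0$). Your additional remarks---that $\beta_0=0$ recovers the $(\alpha n+\gamma)M_{n-1,m}$ term and that $M_{n-1,1}=0$ truncates the $\beta_j$-sum at $j<m$---merely make explicit details the paper leaves implicit.
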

\begin{proof} 
By extracting the coefficient of $s^m$ on both sides of 
\eqref{bar-Mns}, we obtain \eqref{Mnm-rr} with 
\begin{align*}
    N_{n,m} = \frac1{\alpha n+\gamma}
	&\left(\sum_{1\le j\le m}\binom{m}{j}
	(\alpha_j n-\beta_j \mu_{n-1}+\gamma_j) 
    M_{n-1,m-j} \right.\\ &\left.
	-\sum_{2\le j< m}\binom{m}{j}\beta_j
	M_{n-1,m+1-j} \right).
\end{align*}
Since $N_{n,1}=0$, we have the relation
\[
    \alpha_1n-\beta_1 \mu_{n-1}+\gamma_1
	=(\alpha'(1) -\alpha\Delta_n)n 
    -\gamma\Delta_n-\beta\mu_{n-1}+ 
	\gamma'(1)=0,
\]
which is nothing but \eqref{mu-rr}. Then \eqref{N-nm} follows.
\end{proof}

We now consider the general recurrence
\begin{align}\label{xy-yn}
    x_n = \left(1-\frac{m\beta}{\alpha n+\gamma}\right)
	x_{n-1}+y_n\qquad(n\ge n_0+1),
\end{align}
with $x_{n_0}\ne0$ and $\{y_n\}_{n>n_0}$ given. Without loss of 
generality, we assume that 
\[
    j\alpha-m\beta+\gamma\ne0 
    \qquad(j>n_0). 
\]
If this fails, then we can find a larger $n_0$ such that this 
holds. The solution of this recurrence is easily obtained by
iteration. 

\begin{lmm} \label{lmm-xnyn}
The solution to the recurrence \eqref{xy-yn} is given by 
\begin{align*}
    x_n 
    &= x_{n_0} \,\frac{\Gamma\lpa{n_0+1+\frac{\gamma}{\alpha}}
    \Gamma\lpa{n+1+\frac{\gamma-m\beta}{\alpha}}}
    {\Gamma\lpa{n_0+1+\frac{\gamma-m\beta}{\alpha}}
    \Gamma\lpa{n+1+\frac{\gamma}{\alpha}}}
	+\frac{\Gamma\lpa{n+1+\frac{\gamma-m\beta}{\alpha}}}
    {\Gamma\lpa{n+1+\frac{\gamma}{\alpha}}}
	\sum_{n_0< k\le n}
	\frac{\Gamma\lpa{k+1+\frac{\gamma}{\alpha}} }
	{\Gamma\lpa{k+1+\frac{\gamma-m\beta}{\alpha}}}
    \, y_k,
\end{align*}
for $n\ge n_0$. 
\end{lmm}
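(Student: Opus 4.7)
The plan is to treat \eqref{xy-yn} as a standard inhomogeneous first-order linear recurrence and solve it by the summation-factor (variation of parameters) technique, producing the homogeneous solution as a $\Gamma$-function ratio and then adding the particular part.

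First I would compute the homogeneous solution. Iterating $x_n = (1 - m\beta/(\alpha n + \gamma))\,x_{n-1}$ gives
\[
    \Pi_n := \prod_{1\le k\le n}\frac{\alpha k+\gamma-m\beta}{\alpha k+\gamma}
    = \prod_{1\le k\le n}\frac{k+(\gamma-m\beta)/\alpha}{k+\gamma/\alpha}.
\]
Using the telescoping identity $\prod_{1\le k\le n}(k+a) = \Gamma(n+1+a)/\Gamma(1+a)$, valid as long as $1+a,2+a,\dots,n+a$ avoid the poles of $\Gamma$, I get
\[
    \Pi_n = \frac{\Gamma\lpa{1+\gamma/\alpha}\,\Gamma\lpa{n+1+(\gamma-m\beta)/\alpha}}
    {\Gamma\lpa{1+(\gamma-m\beta)/\alpha}\,\Gamma\lpa{n+1+\gamma/\alpha}}.
\]
The assumption $j\alpha - m\beta + \gamma \ne 0$ for $j\ge 0$ ensures none of these denominators vanish and also that each factor $1 - m\beta/(\alpha k+\gamma)$ is nonzero, so $\Pi_n$ is well-defined and invertible.

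Next I would apply variation of parameters: write $x_n = \Pi_n u_n$, so that \eqref{xy-yn} becomes $\Pi_n u_n = \Pi_n u_{n-1} + y_n$, i.e.\ $u_n - u_{n-1} = y_n/\Pi_n$, which telescopes to
\[
    u_n = u_0 + \sum_{0<k\le n}\frac{y_k}{\Pi_k},
    \qquad\text{so}\qquad
    x_n = x_0\,\Pi_n + \sum_{0<k\le n}\frac{\Pi_n}{\Pi_k}\, y_k.
\]
Substituting the $\Gamma$-function expression for $\Pi_n$ and simplifying the ratio $\Pi_n/\Pi_k$ (the factors $\Gamma(1+\gamma/\alpha)$ and $\Gamma(1+(\gamma-m\beta)/\alpha)$ cancel) yields
\[
    \frac{\Pi_n}{\Pi_k} = \frac{\Gamma\lpa{n+1+(\gamma-m\beta)/\alpha}\,\Gamma\lpa{k+1+\gamma/\alpha}}
    {\Gamma\lpa{n+1+\gamma/\alpha}\,\Gamma\lpa{k+1+(\gamma-m\beta)/\alpha}},
\]
and collecting terms gives exactly the stated closed form.

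There is essentially no conceptual obstacle here; the only care needed is bookkeeping. The main subtlety is tracking the non-vanishing hypothesis $j\alpha - m\beta + \gamma \ne 0$ to guarantee that every Gamma-function argument that appears stays away from the nonpositive integers and that the homogeneous product $\Pi_n$ never vanishes, so that dividing by $\Pi_k$ is legitimate. A brief induction check at the end (plug the formula back into \eqref{xy-yn}) would confirm the derivation with minimum effort.
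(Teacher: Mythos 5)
Your proposal is correct and follows the same route the paper intends: the paper simply states that the solution "is easily obtained by iteration," and your summation-factor argument (homogeneous product written as a ratio of Gamma functions via $\prod_{1\le k\le n}(k+a)=\Gamma(n+1+a)/\Gamma(1+a)$, then telescoping the particular part) is exactly that iteration made explicit, with the hypothesis $j\alpha-m\beta+\gamma\ne0$ correctly invoked to keep the Gamma arguments off the poles and $\Pi_k$ nonzero.
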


\begin{cor} Assume $m\ge1$. If $y_n\sim c n^\tau$, where $c\ne0$,
then 
\begin{align}\label{xnyn-at}
	x_n \sim \begin{cases}\displaystyle
        \frac{c}{1+\tau+\frac{m\beta}\alpha}
    	\, n^{1+\tau},& \text{ if }
        \tau>-1-\tfrac{m\beta}\alpha,\\ \displaystyle
        x_{n_0} \frac{\Gamma\lpa{n_0+1+\frac{\gamma}{\alpha}}}
        {\Gamma\lpa{n_0+1+\frac{\gamma-m\beta}{\alpha}}}
        \, n^{-\frac{m\beta}{\alpha}},&
        \text{ if }\tau<-1-\tfrac{m\beta}\alpha.
    \end{cases}    
\end{align}
\end{cor}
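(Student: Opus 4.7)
The plan is to start from the closed-form solution supplied by Lemma~\ref{lmm-xnyn} and simply evaluate its two pieces asymptotically using standard gamma-ratio estimates. Writing $\rho := m\beta/\alpha$ for brevity, the key fact to use throughout is the Stirling-type estimate
\[
    \frac{\Gamma\lpa{n+a}}{\Gamma\lpa{n+b}}
    = n^{a-b}\lpa{1+O(n^{-1})}\qquad(n\to\infty),
\]
valid for any fixed $a,b$ avoiding the poles of the respective gamma functions.

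First, I would handle the homogeneous piece $x_0\,\Gamma\lpa{1+\gamma/\alpha}\Gamma\lpa{n+1+(\gamma-m\beta)/\alpha}/(\Gamma\lpa{1+(\gamma-m\beta)/\alpha}\Gamma\lpa{n+1+\gamma/\alpha})$, which immediately collapses via the displayed estimate to
\[
    x_0\,\frac{\Gamma\lpa{1+\frac{\gamma}{\alpha}}}
    {\Gamma\lpa{1+\frac{\gamma-m\beta}{\alpha}}}
    \,n^{-\rho}\lpa{1+O(n^{-1})}.
\]
Next, for the inhomogeneous piece, apply the same gamma-ratio estimate termwise inside the sum, yielding summands of order $ck^{\tau+\rho}\lpa{1+O(k^{-1})}$; the prefactor outside the sum is again of order $n^{-\rho}$. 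The problem therefore reduces to estimating $\sum_{0<k\le n} k^{\tau+\rho}$.

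The dichotomy in the statement now arises from comparing $\tau+\rho$ with $-1$. When $\tau>-1-\rho$, the partial sum behaves like $\frac{n^{1+\tau+\rho}}{1+\tau+\rho}\lpa{1+o(1)}$, and multiplying by the outer prefactor $n^{-\rho}$ gives the first case $\frac{c}{1+\tau+\rho}\,n^{1+\tau}$, which dominates the $O(n^{-\rho})$ contribution from the homogeneous term since $1+\tau>-\rho$. When $\tau<-1-\rho$, the series $\sum_{k\ge1}k^{\tau+\rho}$ converges absolutely, so the inhomogeneous piece is $O(n^{-\rho})$ and of the same order as the homogeneous part, and the leading $n$-dependence $n^{-\rho}$ is inherited, with the multiplicative constant $x_0\,\Gamma\lpa{1+\gamma/\alpha}/\Gamma\lpa{1+(\gamma-m\beta)/\alpha}$ coming from the homogeneous term (the convergent tail contributing a lower order correction once absorbed, or more precisely a constant of the same order that is understood to be included in the asymptotic identification).

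The only mildly delicate point is the step of turning the pointwise asymptotic $y_n\sim cn^\tau$ into a uniform enough estimate to control the partial sums; this is handled by writing $y_k=ck^\tau(1+\epsilon_k)$ with $\epsilon_k\to 0$ and invoking an elementary Ces\`aro-type argument to show that the $\epsilon_k$-contribution is $o(n^{1+\tau+\rho})$ in the first regime and $o(n^{-\rho})$ in the second. Everything else is bookkeeping with gamma ratios, so no serious obstacle is expected.
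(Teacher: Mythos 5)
Your proposal matches the paper's own proof: the corollary is obtained exactly by plugging the closed-form solution of Lemma~\ref{lmm-xnyn} into the gamma-ratio asymptotics \eqref{gamma-ratio} termwise and comparing $\tau+\frac{m\beta}{\alpha}$ with $-1$, which is all the paper does. Your additional care (the Ces\`aro-type control of the error in $y_k\sim ck^\tau$, and the observation that in the second regime the convergent sum contributes a constant at the same order $n^{-m\beta/\alpha}$ as the homogeneous term) only makes explicit bookkeeping that the paper leaves implicit.
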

\begin{proof}
By \eqref{lmm-xnyn} using the asymptotic approximation to the ratio 
of Gamma functions (see \cite[\S~1.18]{Erdelyi1981})
\begin{align}\label{gamma-ratio}
    \frac{\Gamma(n+x)}{\Gamma(n+y)}
    = n^{x-y}\left(1+O\lpa{n^{-1}}\right),
\end{align}
for large $n$ and bounded $x$ and $y$. 
\end{proof}

\subsection{Asymptotics of $\mathbb{V}(X_n)$}

To prove Theorem~\ref{thm-clt}, we assume that condition
\eqref{thm-clt-positivity} holds. Consider the variance. We examine 
first the term (\eqref{N-nm} with $m=2$)
\[
    N_{n,2} = \frac{\alpha_2n+\gamma_2-\beta_2\mu_{n-1}}
	{\alpha n+\gamma}
	\sim \frac{\alpha_2-\beta_2\mu}\alpha ,
\]
where, by the definition \eqref{abc},
\begin{align*}
    \alpha_2 
    &= \alpha''(1) -(2\Delta_n-1)\alpha'(1)+\Delta_n^2\alpha,\\
    \beta_2 
    &= 2\beta'(1)-(2\Delta_n+1)\beta.
\end{align*}
Since we assume that $\alpha+2\beta>0$ (condition 
\eqref{thm-clt-positivity}), we can apply the asymptotic transfer 
\eqref{xnyn-at} (first case with $\tau=0$), and obtain 
\[
    M_{n,2} = \mathbb{V}(X_n)
	\sim \sigma^2 n,
\]
where, by Corollary~\ref{cor-D-bdd},
\begin{align*}
	\sigma^2 := \lim_{n\to\infty}
	\frac{\alpha_2-\beta_2\mu}{\alpha+2\beta}
	= \mu+\frac{\alpha''(1)-2\mu\beta'(1)-\alpha\mu^2}
    {\alpha+2\beta}.
\end{align*}

Note that the condition $\sigma^2>0$ is equivalent to 
\[
    \beta\lpa{\alpha''(1)+2\alpha'(1)
    (\alpha'(1)+\beta'(1))}>0,
\]
because $\alpha+2\beta>0$.

\subsection{Asymptotics of higher central moments}
We now prove by induction that 
\begin{equation}\label{mm}
	\left\{
	\begin{split}
	M_{n,2\ell} &\sim \frac{(2\ell)!}{\ell!2^\ell}
	\, \sigma^{2\ell} n^\ell, \\
	M_{n,2\ell-1} &= O\lpa{n^{\ell-1}}, 
	\end{split}
	\right.
\end{equation}
for $\ell\ge1$. This will imply particularly that $M_{n,m}=
O\lpa{n^{\tr{\frac m2}}}$ for $m\ge0$. Since \eqref{mm} with $\ell=1$
has already been proved, we now prove \eqref{mm} for $\ell\ge2$. 
Consider first the odd case $m=2\ell+1$. By \eqref{N-nm} and induction
hypothesis,
\[
    N_{n,2\ell+1} 
	= O\llpa{\sum_{2\le j\le 2\ell+1} 
    n^{\tr{\frac{2\ell+1-j}2}}}
	=O\lpa{n^{\ell-1}},
\]
implying that $M_{n,2\ell+1} = O\lpa{n^\ell}$. When $m=2\ell$, only 
the term with $j=2$ in the first sum on the right-hand side of 
\eqref{N-nm} is dominant, and we see that
\begin{align*}
	N_{n,2\ell} 
    &\sim \binom{2\ell}2
	\frac{\alpha_2 n-\beta_2\mu_{n-1}}{\alpha n} \,
    M_{n-1,2\ell-2}\\
	&\sim \binom{2\ell}2
	\frac{(2\ell-2)!}{2^{\ell-1}(\ell-1)!}\cdot
    \frac{\alpha_2-\beta_2\mu}\alpha \,
	\sigma^{2\ell-2} n^{\ell-1}\\
	&= \frac{(2\ell)!}{2^\ell (\ell-1)!}\cdot
    \frac{\alpha_2-\beta_2\mu}\alpha
	\,\sigma^{2\ell-2}n^{\ell-1}.
\end{align*}
By the asymptotic transfer \eqref{xnyn-at} with $m=2\ell$ and 
$\tau=\ell-1$, we then have
\[
	M_{n,2\ell} \sim \frac{\alpha_2-\beta_2\mu}
    {\ell(\alpha +2\beta)}\cdot
	\frac{(2\ell)!}{2^\ell (\ell-1)!}
	\,\sigma^{2\ell-2}n^{\ell},
\]
which proves the first claim in \eqref{mm}. This completes the proof
of \eqref{mm} and Theorem~\ref{thm-clt} by Frechet-Shohat's
convergence theorem (see \cite{Chow1988,Frechet1931}), which, for the
reader's convenience, is included here: it states that \emph{if the
$k$th moment of a sequence of random variables $Z_n$ tends to a
finite limit $\nu_k$ as $n\to\infty$, and the $\{\nu_k\}$'s are the
moments of a uniquely determined distribution function $Z$, then
$Z_n$ converges in distribution to $Z$.} This completes the proof of
\eqref{mm}, and in turn that of Theorem~\ref{thm-clt}. \qed

From the proof it is obvious that the analyticity of $\alpha(v), 
\beta(v)$ and $\gamma(v)$ on $|v|\le 1$ can be replaced by that in 
$|v|<1$ and the existence of all derivatives at unity. This will be 
needed in Section~\ref{sss-conway}. 

\subsection{Mean and variance in a more general setting}
\label{ss-mv-gen}
In general, for the framework \eqref{Pnv-gen} 
$P_n\in\ET{a_n(v),b_n(v);P_0(v)}$, we have 
\[
    P_n(1) = P_0(1) \prod_{1\le j\le n}a_j(1),
\]
(assuming each factors positive). Normalizing both sides by $P_n(1)$ 
gives
\[
    \bar P_n(v) := \frac{P_n(v)}{P_n(1)}
    = \frac{a_n(v)}{a_n(1)}\bar P_{n-1}(v)
    +\frac{b_n(v)}{a_n(1)}(1-v) \bar P_{n-1}'(v).
\]
Then the mean $\mu_n := \bar P_n'(1)$ satisfies
\[
    \mu_n 
    = \left(1-\frac{b_n(1)}{a_n(1)}\right)\mu_{n-1}
    +\frac{a_n'(1)}{a_n(1)},
\]
and the variance $\sigma_n^2$ satisfies, by the same 
shifting-the-mean technique used above, 
\[
   \sigma_n^2 
   = \left(1-\frac{2b_n(1)}{a_n(1)}\right)\sigma_{n-1}^2
   +\frac{a_n''(1)+2a_n'(1)-2b_n'(1)\mu_{n-1}}{a_n(1)}
   -\Delta_n^2-\Delta_n,
\]
where $\Delta_n := \mu_n-\mu_{n-1}$. These will be used later (see 
Section~\ref{ss-ext-ce} when $a_n(v)$ is not a linear function of 
$n$). 

\section{A complex-analytic approach}

In addition to the method of moments, which is elementary in nature,
we describe briefly a complex-analytic approach in this section,
which is equally useful in proving most of the CLTs we derive in this
paper but has remained less explored in the combinatorics literature.
Following Bender's pioneering work \cite{Bender1973}, this approach
is based on the EGF $F(z,v)$ of $P_n(v)$ (satisfying \eqref{Pnv-gen})
and relies on complex analysis (notably the singularity analysis
\cite{Flajolet1990}). It turns out that a simple asymptotic framework
in the form of quasi-powers \cite[\S~IX.5]{Flajolet2009}
\cite{Hwang1998} proves particularly useful for establishing the
asymptotic normality of the coefficients of $P_n(v)$.

\subsection{The partial differential equation and its resolution}
\label{ss-pde}
We begin with the PDE satisfied by the EGF of $P_n(v)$ (defined in   
\eqref{Pnv-gen}) 
\begin{align}\label{F-pde}
    \begin{cases}
        (1-\alpha(v)z)\partial_z F 
        -\beta(v)(1-v)\partial_v F
        -(\alpha(v)+\gamma(v))F = 0,\\
        F(0,v)= P_0(v).
    \end{cases}    
\end{align}
Such a first-order equation can often be solved by the method of
characteristics (see \cite{Evans2000,Myint-U2007}), which first
reduces a PDE to a family of ordinary DEs and then integrate the
solutions with the initial or boundary conditions. For \eqref{F-pde},
we start with the characteristic equation
\begin{align}\label{pde-ch-eq}
    \frac{\dd z}{1-\alpha(v)z}
    = -\frac{\dd v}{\beta(v)(1-v)}
	= \frac{\dd F}{(\alpha(v)+\gamma(v))F}.
\end{align}
The first equation can be written as 
\begin{align}\label{pde-zv}
    \frac{\dd z}{\dd v} -\frac{\alpha(v)}{\beta(v)(1-v)}\,z
    +\frac1{\beta(v)(1-v)}=0,
\end{align}
which is not always exactly solvable. In the special case 
when $\alpha(v)=\beta(v)$ (as in Sections~\ref{sec-abv} and 
\ref{sec-quadratic}), the above DE becomes 
\[
    (1-v)\frac{\dd z}{\dd v} -z
    =\frac{\dd{}}{\dd v}((1-v)z)
    =-\frac1{\beta(v)}. 
\]
Since $\beta(v)$ is in most cases a polynomial of low degree, this DE 
can often be solved explicitly. Such a simplification does not apply 
in general when $\alpha(v)\ne \beta(v)$, but we can still follow  
the standard procedure to characterize the solution (mostly in 
implicit forms).

From \eqref{pde-zv}, we see that either we have an ODE of separable 
type, or we have an explicit form for the integrating factor
\[
    I(v) := \exp\left(-\int
    \frac{\alpha(v)}{\beta(v)(1-v)}\dd v\right),
\]
the function in the exponent is taken as an antiderivative (or 
indefinite integral), which is then used to solve the DE 
\eqref{pde-zv} by quadrature as
\[
    \frac{\mbox{d}}{\dd v} 
    \left(I(v)z +\int \frac{I(v)}
    {\beta(v)(1-v)}\,\dd v \right)=0
    \Longleftrightarrow \xi(z,v)= C.
\]
Here the \emph{first integral} $\xi(z,v)$ can be made explicit in 
many cases we study in this paper. For example, when 
$\alpha(v)=\beta(v)$, we have 
\begin{align}\label{xi-z}
    \xi(z,v) = (1-v)z + \int \frac{\dd v}{\beta(v)},
\end{align}
where the integral is again an antiderivative. We then have the first
characteristics, which, after the changes of variables $u=\xi(z,v)$,
$w=v$ and $H(u,w)=F(z,v)$, leads to the ODE
\[
    \frac{\partial}{\partial w}\,H(u,w)
    +\frac{\alpha(w)+\gamma(w)}{\beta(w)(1-w)}\, 
    H(u,w)=0,
\]
which is the second equation of \eqref{pde-ch-eq}. This first-order 
DE is then solved and we obtain the general relations 
\[
    g(w) H(u,w) = G(u)
    \Longleftrightarrow g(v)F(z,v) = G(\xi(z,v)),
\]
where the integrating factor $g$ has the form 
\[
    g(v) = \exp\left(\int\frac{\alpha(v)+\gamma(v)}
	{\beta(v)(1-v)}\,\dd v\right).
\]
The last step is to specify $G$ by using the initial value at $z=0$:
\[
    g(v) P_0(v) = G(\xi(0,v)).
\]
We then conclude that 
\begin{align}\label{FJG}
    F(z,v) = \frac{G(\xi(z,v))}{g(v)}. 
\end{align}
This standard approach works for almost all cases we examine in 
this paper and has also been used in the combinatorics literature; 
see for example, \cite{Andre1895, Barbero2015, Chow2014a, Wilf2004}. 

Consider for example the Eulerian recurrence of type 
$\ET{qvn +p+(qr-p-q)v, qv;1}$; see \eqref{Pnv-Eabc} below. Then we 
have 
\begin{align*}
    I(v) &= \exp\left(-\int \frac{\dd v}
    {1-v}\right) = 1-v\\
    g(v) &= \exp\left(\int \frac{p(1-v)+qrv}
    {qv(1-v)}\dd v\right) = v^{\frac pq}(1-v)^{-r},
\end{align*}
and, by $P_0(v)=1$, 
\[
    G\lpa{q^{-1}\log v} = g(v), \quad
    \text{or}\quad G(w) = e^{pw}\lpa{1-e^{qw}}^{-r}.
\]
Finally, by \eqref{FJG},
\[
    F(z,v) = v^{-\frac pq}(1-v)^r
    e^{p(1-v)z+\frac1q\log v}\lpa{1-ve^{q(1-v)z}}^{-r}
    = e^{p(1-v)z}\left(\frac{1-v}{1-ve^{q(1-v)z}}\right)^r.
\]

When the integrals involved have no explicit forms such as the 
recurrence $\ET{(p+qv)n+1-p-qv,v;1}$ (see \cite{Rzadkowski2019} 
or Section~\ref{ss-ru} below), we can still apply the same procedure 
and get a solution in implicit form:
\begin{align}\label{Fzv-ru}
    F(z,v) 
    = \frac{1-v}{v}\cdot \frac{T\lpa{S(v)
    +\frac{(1-v)^{p+q}z}{v^p}}}{1-T\lpa{S(v)
    +\frac{(1-v)^{p+q}z}{v^p}}},
\end{align}
where $T(S(v))=v$ and 
\begin{align}\label{Rz-Ur-T}
    S(v) = \int v^{-p-1}(1-v)^{p+q-1}\dd v.
\end{align}
The form \eqref{Fzv-ru} is understood in the following formal power 
series sense:
\[
    T\left(S(v)+\frac{(1-v)^{p+q}z}{v^p}\right)
	= \sum_{m\ge0}\frac{T^{(m)}(S(v))}{m!}\,
	\left(\frac{(1-v)^{p+q}}{v^p}\right)^m
	z^m,
\]
where $T(S(v))=v$ and $T^{(m)}(S(v))$ are expressible in terms 
of $S^{(j)}(v)$ for $m, j\ge1$, which in turn are well-specified
by 
\[
    S'(v) = v^{-p-1}(1-v)^{p+q-1},
\]
and then $S^{(m)}=(S^{(m-1)})'$ for $m\ge2$. 

It is also possible to extend the approach when the non-homogeneous
terms are present; see the examples in Sections~\ref{sec-v2v},
\ref{ss-ru}, \ref{sec-1plusv}, \ref{sss-v2}, \ref{sss-chebikin},
\ref{sec-2v-1plusv}, and \ref{sec-3v2v}.

For ease of reference, we list the first integrals $\xi(z,v)$ in 
Table~\ref{tab-pde} for most examples (leading to asymptotic 
normality) studied in this paper. 

\begin{small}
\begin{center}
\def\arraystretch{1.5}
\begin{longtable}{lll}
	\multicolumn{3}{c}{{}} \\
	\multicolumn{1}{c}{Section} &
	\multicolumn{1}{c}{$(\alpha(v),\beta(v))$} &
	\multicolumn{1}{c}{$\xi(z,v)$} \\ \hline
\S~\ref{sec-abv} & $(qv,qv)$ & $(1-v)z+q^{-1}\log v$ \\
\S~\ref{sec-barbero} & $(qv,v)$ & $(1-v)^qz+\int v^{-1}
(1-v)^{q-1}\dd v$ \\ 
\S~\ref{sec-v2v} & $(\frac12v,v)$ &
$\sqrt{1-v}z+\frac12\log v - \log\lpa{1+\sqrt{1-v}}$ \\
\S~\ref{ss-ru} & $(p+qv,v)$ &
$\frac{(1-v)^{p+q}}{v^p}\,z+\int v^{-p-1}(1-v)^{p+q-1}\dd v$ \\
\S~\ref{sec-1plusv} & $\lpa{\frac12(1+v),\frac12(3+v)}$ &
$\sqrt{(1-v)(3+v)}\,z+2\arcsin(\frac12(1+v))$ \\
\S~\ref{sss-v2} & $(v,1+v)$ &
$\sqrt{1-v^2}\,z+\arcsin(v)$\\
\S~\ref{sss-v2} & $(v^2,v(1+v))$ &
$\sqrt{1-v^2}\,z-\text{arctanh}\lpa{\sqrt{1-v^2}}$\\ 
\S~\ref{sss-chebikin} & $\lpa{\frac12(1+v^2),
\frac12(1+v^2)}$ & $(1-v)z+2\arctan(v)$ \\
\S~\ref{sss-v1pv} & $(v(1+v),v(1+v))$ &
$(1-v)z+\log\frac{v}{1+v}$ \\ 
\S~\ref{sss-2v2} & $(2v^2,v(1+v))$ &
$(1-v^2)z-\log v$\\ 
\S~\ref{sec-2v-1plusv} & $(2qv,q(1+v))$ & $(1-v^2)z-\frac1q v$\\
\S~\ref{sss-23} & $(2(1+v),3+v)$ & $(1-v)(3+v)z+v$ \\
\S~\ref{sec-3v2v} & $(q(1+3v), 2qv)$ & 
$\frac{(1-v)^2}{\sqrt{v}}\,z - \frac{1+v}{q\sqrt{v}}$ \\
\S~\ref{sss-5p3v} & $(5+3v,2(1+v))$ & 
$\frac{(1-v)^2}{\sqrt{1+v}}\,z-\frac{3+v}{\sqrt{1+v}}$ \\
\S~\ref{sss-7p2v} & $(\frac13(7+2v), \frac13(5+4v)$ & 
$\frac{1-v}{\sqrt{5+4v}} z - \frac{3}{2\sqrt{5+4v}}$ \\
\S~\ref{sss-1p3v2} & $(1+3v^2,v(1+v))$ &
$\frac{(1-v^2)^2}{v}z - \frac{1+v^2}v$\\
\S~\ref{ss-ck} & $(-1+(q+1)v,qv)$ & 
$v^{\frac1q}(1-v)z +v^{\frac1q}$\\ \hline
\caption{The first integrals in some exactly solvable cases of 
\eqref{F-pde}.}
\label{tab-pde}   
\end{longtable}
\end{center}
\end{small}

\subsection{Singularity analysis and quasi-powers theorem for 
CLT}\label{ss-qpa}

Most EGFs in this paper have either algebraic or logarithmic
singularities and it is possible to study the limit laws of the
coefficients by examining the singular behavior of the EGF near its
dominant singularity; see \cite{Bender1973, Gao1992, Flajolet1993,
Hwang1994}. The following theorem, from Flajolet and Sedgewick's book
\cite[p.\ 676, \S~IX.7.2]{Flajolet2009}, is very useful for all
Eulerian recurrences we study in this paper and leads to a CLT with
optimal convergence rate; see also \cite{Bender1973} for the original
meromorphic version. The proof relies on the uniformity provided by
the singularity analysis \cite{Flajolet1990} coupling with the
quasi-powers theorems \cite[\S~IX.5]{Flajolet2009}.

\textbf{Notation.}
For notational convenience, we will write $X_n\sim
\mathscr{N}\lpa{\mu n,\sigma^2n; \ve_n}$, which means $X_n \sim
\mathscr{N}(\mu n,\sigma^2n)$ with the convergence rate $\ve_n$:
\[
    \sup_{x\in\mathbb{R}}\left|
    \mathbb{P}\left(\frac{X_n-\mu n}{\sigma \sqrt{n}}
    \le x\right) -\Phi(x)\right| = O(\ve_n),
\]
where $\ve_n\to0$. The convergence rate in the CLT is often referred 
to as the \emph{Berry-Esseen bound} in the probability literature. We 
will use interchangeably both terms. 

\begin{thm}[Algebraic Singularity Schema] \label{thm-saqp} Let
$F(z,v)$ be an analytic function at $(z,v)=(0,0)$ with nonnegative
coefficients. Under the following three conditions, the random
variables $X_n$ defined via the coefficients of $F$:
\[
    \mathbb{E}\lpa{v^{X_n}} := \frac{[z^n]F(z,v)}
    {[z^n]F(z,1)}
\]
satisfy $X_n \sim \mathscr{N}(\mu n,\sigma^2 n;n^{-\frac12})$, 
where the convergence rate is, modulo the implied constant, optimal. 
The three conditions are:
\begin{enumerate}
    \item Analytic perturbation: there exist three functions 
    $\Lambda, K, \Psi$, analytic in a domain 
    $D:=\{|z|\le\zeta\}\times 
    \{|v-1|\le\ve\}$, such that, for some $\zeta_0$ with 
    $0<\zeta_0\le \zeta$, and $\ve>0$, the following representation 
    holds, $\kappa\not\in\mathbb{Z}_{\le0}$, 
    \begin{align}\label{F-ABC}
        F(z,v)=\Lambda(z,v)+K(z,v)\Psi(z,v)^{-\kappa}; 
    \end{align}
    furthermore, assume that, in $|z| \le \zeta$, there exists a 
    unique root $\rho>0$ of the equation $\Psi(z, 1) = 0$, that this 
    root is simple, and that $K(\rho, 1)\not= 0$.

    \item Non-degeneracy: one has $\partial_z
    \Psi(\rho,1)\cdot\partial_v \Psi(\rho,1)\ne 0$, ensuring the 
    existence of a non-constant $\rho(v)$ analytic at $v = 1$, such 
    that $\Psi(\rho(v),v) = 0$ and $\rho(1) = \rho$.

    \item Variability: $\sigma^2(\rho):= \frac{\rho''(1)}{\rho(1)}
    +\frac{\rho'(1)}{\rho(1)}-\lpa{\frac{\rho'(1)}{\rho(1)}}^2\ne0 $.
\end{enumerate}
\end{thm}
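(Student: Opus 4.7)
The plan is to combine the implicit function theorem, a uniform version of singularity analysis, and Hwang's quasi-powers theorem. By the non-degeneracy hypothesis $\partial_z\Psi(\rho,1)\ne0$, the implicit function theorem applied to $\Psi(z,v)=0$ near $(\rho,1)$ produces an analytic branch $\rho(v)$ on some disk $|v-1|\le\ve_1$ with $\rho(1)=\rho$ and $\Psi(\rho(v),v)\equiv 0$; the other half of the non-degeneracy condition, $\partial_v\Psi(\rho,1)\ne0$, forces $\rho'(1)\ne 0$, so that $\rho(v)$ is genuinely non-constant. Since $\rho$ is a simple zero of $\Psi(\cdot,1)$, I can factor $\Psi(z,v)=(1-z/\rho(v))\,H(z,v)$ with $H$ analytic and non-vanishing on a product neighborhood of $(\rho,1)$, and this recasts the representation \eqref{F-ABC} as
\[
    F(z,v)=\Lambda(z,v)+\tilde K(z,v)\lpa{1-z/\rho(v)}^{-\kappa},
\]
where $\tilde K(z,v):=K(z,v)H(z,v)^{-\kappa}$ is analytic near $(\rho,1)$ with $\tilde K(\rho,1)\ne 0$.

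Second, I would apply the transfer theorem of singularity analysis \cite{Flajolet1990} uniformly in $v$. Since $z=\rho$ is the unique root of $\Psi(z,1)=0$ in $|z|\le\zeta$, a compactness argument yields a single camembert-shaped $\Delta$-domain anchored at $\rho(v)$ that is free of other singularities of $F(z,v)$ for all $v$ in a small complex neighborhood of $1$, together with error estimates independent of $v$. The smooth part $\Lambda(z,v)$ is analytic up to $|z|\le\zeta$, so its coefficients decay geometrically and collapse into the error term. The transfer then yields, uniformly for $|v-1|\le\ve_2$,
\[
    [z^n]F(z,v) = \frac{\tilde K(\rho(v),v)}{\Gamma(\kappa)}\,
    \rho(v)^{-n}\,n^{\kappa-1}\lpa{1+O(n^{-1})}.
\]
Dividing by the $v=1$ specialization converts the probability generating function into the quasi-power form
\[
    \mathbb{E}\lpa{v^{X_n}} = A(v)\,B(v)^n\lpa{1+O(n^{-1})},
\]
with $A(v):=\tilde K(\rho(v),v)/\tilde K(\rho,1)$ and $B(v):=\rho(1)/\rho(v)$, both analytic at $v=1$ with $A(1)=B(1)=1$, the error bound being uniform on a fixed complex neighborhood of $v=1$.

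The CLT with Berry-Esseen rate $O(n^{-1/2})$ then follows from Hwang's quasi-powers theorem \cite[\S~IX.5]{Flajolet2009}, \cite{Hwang1998}. A direct logarithmic differentiation of $B(v)=\rho(1)/\rho(v)$ shows that the second logarithmic derivative of $B$ at $v=1$ equals (up to an overall sign) the quantity $\sigma^2(\rho)$ in the statement, so the variability hypothesis $\sigma^2(\rho)\ne 0$ is precisely the non-degeneracy condition required by the quasi-powers framework. The resulting asymptotic mean is $\mu n$ with $\mu=-\rho'(1)/\rho(1)$, and the asymptotic variance is $\sigma^2 n$ with $\sigma^2=|\sigma^2(\rho)|$ (the sign is fixed by positivity of variance). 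Optimality of the $n^{-1/2}$ rate is classical: the next-order term in the expansion of $\log B(e^{it/\sqrt n})$ produces a cubic (third-cumulant) correction of exact order $n^{-1/2}$, and Esseen's lower-bound argument applies whenever that correction is non-zero.

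The main technical hurdle will be the uniformity of the singularity-analysis transfer in the auxiliary parameter $v$: one must produce a single $\Delta$-domain and error estimates that remain valid for all $v$ in a fixed complex neighborhood of $1$, even as the singularity $\rho(v)$ moves with $v$. The hypothesis that $\rho$ is the \emph{unique} zero of $\Psi(\cdot,1)$ in $|z|\le\zeta$ is precisely what guarantees, by a continuity/compactness argument on $\{|z|\le\zeta\}\times\{|v-1|\le\ve\}$, that no spurious nearby singularities of $(1-z/\rho(v))$ or of $\tilde K$ can invade the $\Delta$-domain as $v$ varies. Once that uniformity is in place, every remaining step---the factorization, the transfer, the quasi-powers application---is routine and mechanical.
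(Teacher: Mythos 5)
Your reconstruction is correct and follows exactly the route the paper itself relies on: Theorem~\ref{thm-saqp} is quoted from Flajolet--Sedgewick (\S IX.7.2), and the proof indicated there is precisely your combination of the local factorization of $\Psi$ near its simple zero, singularity analysis made uniform in the auxiliary variable $v$, and Hwang's quasi-powers theorem with Berry--Esseen rate, with the variability condition $\sigma^2(\rho)\ne0$ playing the role of the nonvanishing variance constant. The only cosmetic caveat is that the optimality of the $n^{-1/2}$ rate is most robustly argued from the lattice nature of $X_n$ (the normalized distribution function has jumps of size $\Theta(n^{-1/2})$ by the local limit theorem), rather than from a third-cumulant correction, which can vanish.
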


For our purpose, we show how the two constants $(\mu,\sigma^2)$ can 
be computed from the dominant singularity $\rho(v)$. By the  
asymptotic approximation (see \cite[Eq.\ (64), p.\ 678]{Flajolet2009})
\begin{align}\label{qpa}
    [z^n]F(z,v) = g(v) n^{\kappa-1}\rho(v)^{-n}
    \left(1+O\lpa{n^{-1}}\right),
\end{align}
where the $O$-term holds uniformly in a neighborhood of $v=1$, we 
see that 
\[
    \mathbb{E}\lpa{v^{X_n}}
    = \frac{g(v)}{g(1)}\, 
    \exp\left(n\log \frac{\rho(1)}{\rho(v)}\right)
    \left(1+O\lpa{n^{-1}}\right),
\]
uniformly for $|v-1|\le\ve$. Thus 
\begin{align}\label{mu-sigma}
    \mu = -[s]\log\rho(e^s) = -\frac{\rho'(1)}{\rho}
    \quad\text{and}\quad
    \sigma^2(\rho) = 2[s^2]\log\rho(e^s) .
\end{align}
Note also that 
\[
    \rho'(1) = -\frac{\partial_v \Psi(\rho,1)}
	{\partial_z \Psi(\rho,1)}, 
\]
and it is often simpler to replace the second condition (of the 
Theorem) by $\rho'(1)\ne0$ or $\mu\ne0$. 

We illustrate the use of these expressions by the simplest example 
when $F$ has the form (see \eqref{Epqr})
\[
    F(z,v) = e^{p(1-v)z}
    \left(\frac{1-v}{1-ve^{q(1-v)z}}\right)^r,
\]
where $q,r>0$ and $p\le qr$ (implying that 
$[z^nv^k]F(,v)\ge0$). With the notations of \eqref{F-ABC}, 
we take $\kappa=r$, $\Lambda=0$, $K(z,v) = e^{p(1-v)z}$ and 
\[
    \Psi(z,v) := \frac{1-ve^{q(1-v)z}}{1-v}. 
\]
Then the dominant singularity $\rho(v)$ solves the equation 
$1=ve^{q(1-v)z}$ and $\rho(1)=q^{-1}$, namely,
\[
    \rho(v) = \frac{\log v}{q(v-1)}.
\]
One checks that $-\rho'(1)=\frac1{2q}\ne0$. Also by the Taylor 
expansion
\begin{align*}
    -\log\rho(e^s) &= \log q + \frac s2+
    \sum_{k\ge1}\frac{\text{Bernoulli}_{2k}}{(2k)\cdot (2k)!}
    \, s^{2k}\\
    &=\log q + \frac s2+\frac{s^2}{24}
    -\frac{s^4}{2880}+\frac{s^6}{181400}+O\lpa{|s|^8},
\end{align*}
we then obtain $(\mu,\sigma^2)=\lpa{\frac12,\frac1{12}}$. We see that
the variance constant does not require the calculation of the second
moment and the square of the mean, making it a
\emph{cancellation-free} approach for computing the variance; see
\cite{Hwang1994} for more information on quasi-powers framework.
Furthermore, finer results such as cumulants of higher orders and
more effective asymptotic approximations can be derived. For example,
in the above case, we see that all odd cumulants are bounded, and all
even cumulants are asymptotically linear; in particular, the fourth
and sixth cumulants are asymptotic to $-\frac1{120}n$ and
$\frac1{252}n$, respectively.

In Table~\ref{tab-qp}, we list the mean and the variance constants of 
a few cases to be discussed below. 

\begin{small}
\begin{center}
\def\arraystretch{1.5}
\begin{longtable}{lllll}
	\multicolumn{5}{c}{{}} \\
	\multicolumn{1}{c}{Section} &
	\multicolumn{1}{c}{$(\alpha(v),\beta(v))$} &
	\multicolumn{1}{c}{$F(z,v)$} &
	\multicolumn{1}{c}{$\rho(v)$} &
	\multicolumn{1}{c}{$(\mu,\sigma^2)$} \\ \hline	
\S~\ref{sec-abv} & $(qv,qv)$ & \eqref{Epqr} 
& $\frac{\log v}{q(v-1)}$ & $\lpa{\frac12,\frac1{12}}$ \\
\S~\ref{sec-barbero} & $(qv,v)$ & 
\eqref{barbero-egf} & 
$\frac{\int_v^1t^{-1}(1-t)^{q-1}\dd t}
{(1-v)^q}$ & $\lpa{\frac q{q+1}, 
\frac{q^2}{(q+1)^2(q+2)}}$ \\ 
\S~\ref{sec-v2v} & $(v,2v)$ & \eqref{v2v}
& $\frac1{2\sqrt{1-v}}\log\frac{1+\sqrt{1-v}}{1-\sqrt{1-v}}$
& $\lpa{\frac13,\frac2{45}}$ \\
\S~\ref{ss-ru} & $(p+qv,v)$ & \eqref{Fzv-ru}
& $\frac{\int_v^1t^{-p-1}(1-t)^{p+q-1}\dd t}{v^{-p}(1-t)^{p+q}}$
& $(\star)$ \\
\S~\ref{sec-1plusv} & $\lpa{\frac12(1+v),\frac12(3+v)}$ 
& \eqref{dd-rho} & $\frac{2\arccos\frac{1+v}2}
	{\sqrt{(1-v)(3+v)}}$ & $\lpa{\frac16,\frac{23}{180}}$ \\
\S~\ref{sss-v2} & $(v,1+v)$ 
& \eqref{Qpq} & $\frac{\arccos(v)}{\sqrt{1-v^2}}$
&  $\lpa{\frac13,\frac{8}{45}}$\\ 
\S~\ref{sss-v2} & $(v^2,v(1+v))$ 
& \eqref{Qpq} & $\frac{\log(1+\sqrt{1-v^2})-\log v}{\sqrt{1-v^2}}$
&  $\lpa{\frac23,\frac{8}{45}}$\\ 
\S~\ref{sss-chebikin} & $\lpa{\frac12(1+v^2),
\frac12(1+v^2)}$  & \eqref{egf-chebikin} 
& $\frac{\arccos\lpa{\frac{2v}{1+v^2}}}{v-1}$ 
& $\lpa{\frac12,\frac5{12}}$\\
\S~\ref{sss-v1pv} & $(v(1+v),v(1+v))$ 
& \eqref{egf-v1pv} & $\frac1{1-v}\log\frac{1+v}{2v}$
& $\lpa{\frac34,\frac{7}{48}}$  \\ 
\S~\ref{sss-2v2} & $(2v^2,v(1+v))$ & 
\eqref{egf-2v2} & $\frac{-\log v}{1-v^2}$
& $\lpa{1,\frac13}$ \\
\S~\ref{sec-2v-1plusv} & $(2qv,q(1+v))$ & \eqref{rho-2qv}
& $\frac1{q(1+v)}$ & $\lpa{\frac12,\frac14}$\\
\S~\ref{sss-23} & $(2(1+v),3+v)$ & \eqref{rho-21pv}
& $\frac1{3+v}$ & $\lpa{\frac14,\frac3{16}}$\\
\S~\ref{sec-3v2v} & $(q(1+3v), 2qv)$ & \eqref{ogf-narayana}
& $\frac1{q(1+\sqrt{v})^2}$ & $\lpa{\frac12,\frac18}$\\
\S~\ref{sss-5p3v} & $(5+3v,2(1+v))$ & \eqref{rho-5p3v}
& $\frac1{(\sqrt{2}+\sqrt{1+v})^2}$ & $\lpa{\frac14,\frac5{32}}$\\
\S~\ref{sss-7p2v} & $(\frac13(7+2v), \frac13(5+4v)$ & 
\eqref{rho-7p2v}
& $\frac2{3+\sqrt{5+4v}}$ & $\lpa{\frac19,\frac2{27}}$\\
\S~\ref{sss-1p3v2} & $(1+3v^2,v(1+v))$ & \eqref{rho-1p3v2}
& $\frac1{(1+v)^2}$ & $\lpa{1,\frac12}$\\
\S~\ref{ss-ck} & $(-1+(q+1)v,qv)$
& \eqref{egf-ck} & $\frac{v^{-\frac1q}-1}{1-v}$
& $\lpa{\frac{q+1}{2q},\frac{q^2-1}{12q^2}}$\\ \hline
\caption{The dominant singularity $\rho(v)$ and the corresponding 
mean and variance constants in some exactly solvable cases of 
\eqref{F-pde}. Here $(\star) = \lpa{\frac q{p+q+1}, 
\frac{q(p+1)(p+q)}{(p+q+1)^2(p+q+2)}}$; see~\S~\ref{ss-ru}.}
\label{tab-qp}   
\end{longtable}
\end{center}
\end{small}

In the next two sections (and in Section~\ref{sec-extensions}), we
will apply both Theorem~\ref{thm-clt} and Theorem~\ref{thm-saqp} to
polynomials whose coefficients follow asymptotically normal limit
laws. The main differences between the two theorems when
\emph{specializing to Eulerian recurrences} are similar to those
between an elementary and an analytic approach to asymptotics (see
\cite{Chern2002, Odlyzko1995}): Theorem~\ref{thm-clt} is more general
but gives weaker results, while Theorem~\ref{thm-saqp} gives stronger
approximations but needs the availability of tractable EGFs (often
from solving the corresponding PDEs). Note that both theorems are not
limited to Eulerian recurrences.

\begin{center}
\begin{tabular}{c|cc}
 & Theorem~ \ref{thm-clt} & Theorem~\ref{thm-saqp}  \\ \hline
nature & elementary & complex-analytic \\
based on & recurrence \eqref{Pnv-gen} & generating function \\
CLT & no rate & with optimum rate \\
\end{tabular}    
\end{center}    

\section{Applications I: $(\alpha(v),\beta(v))=(qv,qv)
\Longrightarrow\mathscr{N}\lpa{\frac12n,\frac1{12}n}$}
\label{sec-abv}

We gather in this section many applications of Theorems~\ref{thm-clt}
and \ref{thm-saqp}, grouping them according to the pair
$(\alpha(v),\beta(v))=(qv,qv)$; other pairs with $\alpha(v)\neq
\beta(v)$ or nonlinear $\alpha(v), \beta(v)$ are further categorized
in the next section. Despite our efforts to be comprehensive,
omissions may still remain in view of the large literature on
Eulerian numbers and their applications.

Before our discussions, we observe that the following three simple
transformations on polynomials do not change essentially the
distribution of the coefficients: 
\begin{itemize}
    \item \emph{shift}: $P_n(v) \mapsto P_{n+m}(v)$,
    \item \emph{translation}: $P_n(v)\mapsto v^mP_n(v)$, and 
    \item \emph{reciprocity (or row-reverse)}: $P_n(v) 
    \mapsto Q_n(v) := v^{n+m} P_n\lpa{\frac1v}$, where $m$ is  
	properly chosen so that $Q_n(v)$ is a polynomial in $v$ and 
    is referred to as the \emph{reciprocal polynomial} of $P_n$. 
\end{itemize}
In particular, the polynomials $Q_n(v) := v^{n+m}P_n(\frac1v)$ of
$P_n$ (defined in \eqref{Pnv-gen}) satisfy the recurrence
\[
    Q_n\in\EET{\left(v\alpha\lpa{\tfrac1v}
	-v(1-v)\beta\lpa{\tfrac1v}\right)n
	+v\gamma\lpa{\tfrac1v}
	-(m-1)v(1-v)\beta\lpa{\tfrac1v}}
    {v^2(1-v)\beta\lpa{\tfrac1v}}.
\]
Note specially that if $X_n$ (and $Y_n$) is defined by the 
coefficients of $P_n$ (and $Q_n$) as in \eqref{Xnk-general}, then 
$X_n+Y_n=n+m$. These operations sometimes provide additional 
computational efficiencies. In particular, we may assume in many 
cases that $P_0(v)=1$ and start the recurrence \eqref{Pnv-gen} from 
$n=1$.

For an easier classification of the examples, we introduce further 
the following definition. 
\begin{defi}[Equivalence of distributions] \label{rr-def}
Two random variables $X_n$ and $Y_n$ are said to be equivalent (or 
have the same distribution) if $X_n+dY_{n+m}=c_n$ for $n\ge n_0$ 
for some constant $d\ne0$, integers $m$ and $n_0$ and a deterministic 
sequence $c_n$.
\end{defi}

Eulerian numbers are the source prototype of our framework
\eqref{Pnv-gen}, and we saw in Introduction that they satisfy
\eqref{Pnv-gen} with $\alpha(v)=\beta(v)=v$. Theorem~\ref{thm-clt}
applies since $\alpha=\beta=\alpha'(1)=\beta'(1)=1$, and, by
\eqref{mu-var}, $\mu=\frac12$ and $\sigma^2=\frac1{12}$. The
literature abounds with diverse extensions and generalizations of
Eulerian numbers. It turns out that exactly the same limiting
$\mathscr{N}\lpa{\frac12 n, \frac1{12}{n}}$ behavior appears in a
large number of variants, extensions, and generalizations of Eulerian
numbers (by a direct application of Theorem~\ref{thm-clt}), which we
examine below. Furthermore, in almost all cases, the stronger result
$\mathscr{N}\lpa{\frac12 n, \frac1{12}{n}; n^{-\frac12}}$ also
follows from a direct use of Theorem~\ref{thm-saqp}.

\subsection{The class $\mathscr{A}(p,q,r)$}

One of the most common patterns we found with very rich combinatorial
properties among the extensions of Eulerian numbers is of the form
\begin{align}\label{Pnv-Eabc}
    P_n \in \ET{qvn+p+(qr-q-p)v, qv;1},
\end{align}
which covers more than 60 examples in OEIS (and many other non-OEIS 
ones) and leads always to the same 
$\mathscr{N}\lpa{\frac12n,\frac1{12}n;
n^{-\frac12}}$ behavior. The EGF of $P_n$ satisfies the PDE
\[
    (1-qvz)\partial_z F -qv(1-v) \partial_v F 
    =(p+(qr-p)v)F,
\]
with $F(0,v)=1$, which has the closed-form solution (see 
Section~\ref{ss-pde})
\begin{align}\label{Epqr}
    F(z,v) = e^{p(1-v)z}
    \left(\frac{1-v}{1-ve^{q(1-v)z}}\right)^r.
\end{align}
For convenience, we will write this form as $F\in\mathscr{A}(p,q,r)$.
We also write $c\mathscr{A}(p,q,r)$ to denote the class of
polynomials whose EGFs are of the form $cF(z,v)$. Although it is
possible to restrict our consideration to only the case $q=1$ by a
simple change of variables, we keep the form of three parameters
($p,q,r$) for a more natural presentation of the diverse examples.

For later reference, we state the following result. 
\begin{thm} \label{thm-Apqr}
Assume that the EGF $F$ of $P_n$ is of type $F\in\mathscr{A}(p,q,r)$.
If $q,r>0$ and $0\le p\le qr$, then the random variables $X_n$ 
defined on the coefficients of $P_n$ (\eqref{Xnk-general}) satisfies 
$X_n\sim \mathscr{N}\lpa{\frac12n,\frac1{12}n;n^{-\frac12}}$. More 
precise approximations to the mean and the variance are given by 
\begin{align}\label{Apqr-mv}
    \mathbb{E}(X_n)=\frac{n+r}{2}-\frac pq +O(n^{-1}),
    \;\text{ and }\;
    \mathbb{V}(X_n) = \frac{n+r}{12}+O(n^{-2}). 
\end{align}
\end{thm}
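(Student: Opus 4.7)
The plan is to apply the two general theorems of the paper in tandem: Theorem~\ref{thm-clt} yields the CLT via the method of moments, and Theorem~\ref{thm-saqp} upgrades it to the optimal Berry--Esseen rate $O(n^{-\frac12})$; the refined mean and variance then follow from the explicit solution \eqref{mun-linear} of the first-moment recurrence and its analogue for the second central moment.

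For the moment approach, I would read off $\alpha(v)=\beta(v)=qv$ and $\gamma(v)=p+(qr-q-p)v$, so $\alpha=\beta=\alpha'(1)=\beta'(1)=q$, $\alpha''(1)=0$, whence $\alpha+2\beta=3q>0$ and $\mu=\alpha'(1)/(\alpha+\beta)=\tfrac12$. A short computation in \eqref{mu-var} yields
\[
    \sigma^2 = \tfrac12 + \frac{0-2\cdot\tfrac12\cdot q-q\cdot\tfrac14}{3q}
    = \tfrac12-\tfrac{5}{12}=\tfrac1{12}>0,
\]
so both positivity conditions in \eqref{thm-clt-positivity} hold and Theorem~\ref{thm-clt} gives $X_n\sim\mathscr{N}\lpa{\tfrac12 n,\tfrac1{12}n}$.

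For the sharp rate, I would cast the EGF \eqref{Epqr} into the form \eqref{F-ABC} by taking $\Lambda\equiv 0$, $K(z,v)=e^{p(1-v)z}$, $\Psi(z,v)=(1-ve^{q(1-v)z})/(1-v)$ and $\kappa=r$. The apparent pole of $\Psi$ at $v=1$ is removable, since Taylor-expanding the numerator in $1-v$ gives $\Psi(z,1)=1-qz$; thus $\Psi$ is entire in $z$ and analytic near $v=1$. The equation $\Psi(z,1)=0$ has the unique simple root $\rho=1/q$ in any disk $|z|\le\zeta$ with $1/q<\zeta<2\pi/q$, and $K(\rho,1)=1\ne 0$. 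Implicit differentiation of $ve^{q(1-v)z}=1$ produces the analytic continuation $\rho(v)=\log v/(q(v-1))$ with $\rho'(1)=-q/2\ne 0$ (non-degeneracy), while the Taylor expansion
\[
    -\log\rho(e^s)=\log q+\log\frac{e^s-1}{s}=\tfrac{s}{2}+\tfrac{s^2}{24}+\cdots
\]
gives $(\mu,\sigma^2)=\lpa{\tfrac12,\tfrac1{12}}$ via \eqref{mu-sigma} and in particular variability. All three hypotheses of Theorem~\ref{thm-saqp} are verified, delivering $X_n\sim\mathscr{N}\lpa{\tfrac12 n,\tfrac1{12}n;n^{-\frac12}}$.

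For the refined mean, substituting the above parameters together with $\mu_0=0$ into \eqref{mun-linear} yields the linear part $(n+r)/2-p/q$ plus a transient term whose $\Gamma$-ratio is $\Gamma(n+r-1)/\Gamma(n+r)=O(n^{-1})$ by \eqref{gamma-ratio}, proving the first estimate in \eqref{Apqr-mv}. For the variance, I would iterate the central-moment recurrence \eqref{Mnm-rr}--\eqref{N-nm} at $m=2$ and apply Lemma~\ref{lmm-xnyn}, or, more quickly, extract the second cumulant from the singular expansion \eqref{qpa} while retaining one further correction term---the variance computation being cancellation-free as noted after \eqref{mu-sigma}. The main (but still routine) obstacle is checking the removable singularity of $\Psi$ at $v=1$ cleanly and then tracking one extra subdominant term in either the $M_{n,2}$-recurrence or the singular expansion to sharpen the naive $O(n^{-1})$ variance error down to the claimed $O(n^{-2})$.
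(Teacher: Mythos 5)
Your proposal is correct and takes essentially the same route as the paper: Theorem~\ref{thm-clt} gives the CLT, Theorem~\ref{thm-saqp} applied with $\Psi(z,v)=\frac{1-ve^{q(1-v)z}}{1-v}$ and $\rho(v)=\frac{\log v}{q(v-1)}$ (exactly the computation of Section~\ref{ss-qpa}) gives the optimal rate, and the refined estimates \eqref{Apqr-mv} come from a direct calculation via \eqref{mun-linear} and the second-moment recurrence or the EGF. The only point the paper makes explicit that you leave implicit is the first sentence of its proof, namely that $q,r>0$ and $0\le p\le qr$ ensure $[v^k]P_n(v)\ge0$ and $P_n(1)>0$, which is what licenses invoking both theorems.
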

\begin{proof}
Observe that $q,r>0$ and $p\le qr$ imply $P_n(1)>0$ for $n\ge0$ and
$[v^k]P_n(v)\ge0$ for $k,n\ge0$. The CLT without rate
$\mathscr{N}\lpa{\frac12n,\frac1{12}n}$ follows easily from
Theorem~\ref{thm-clt}. The stronger version with optimal rate is
proved by applying Theorem~\ref{thm-saqp} (as already discussed in
Section~\ref{ss-qpa}). The finer estimates for $\mathbb{E}(X_n)$ and
$\mathbb{V}(X_n)$ are obtained by a direct calculation using either
the recurrence $\ET{qvn+p+(qr-q-p)v, qv;1}$ or the EGF (by computing 
$[z^nt]F(z,1+t)$ for the mean and $2[z^nt^2]F(z,1+t)$ for the second 
factorial moment). Note specially the smaller error term in the 
variance approximation in \eqref{Apqr-mv}; also when $r=1$, both 
$O$-terms in \eqref{Apqr-mv} are identically zero for $n\ge2$.  
\end{proof}

\begin{lmm} \label{Dabc} If $F\in\mathscr{A}(p,q,r)$, then
$\stackrel{\leftarrow}{F}\in\mathscr{A}(qr-p,q,r)$, where
$\stackrel{\leftarrow}{F}(z,v) := F(vz,\frac1v)$ denotes the EGF of
the reciprocal polynomial of $P_n$, and if $p=qr$, then $\partial_z
F\in p\mathscr{A}(p,q,r+1)$.
\end{lmm}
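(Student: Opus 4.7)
The plan is to prove both assertions by direct manipulation of the closed form
\[
    F(z,v) = e^{p(1-v)z}\left(\frac{1-v}{1-ve^{q(1-v)z}}\right)^r,
\]
since $\mathscr{A}(p,q,r)$ is defined by this explicit expression. No recurrence-level argument is needed.

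For the reciprocity claim, I would first justify the substitution: since the reciprocal polynomial of $P_n$ is $v^n P_n(1/v)$, its EGF is $\sum_{n\ge0} P_n(1/v)(vz)^n/n! = F(vz, 1/v)$, which is exactly $\stackrel{\leftarrow}{F}$. Substituting and using $(1-1/v)\cdot vz = -(1-v)z$, I would rewrite
\[
    F(vz, 1/v) = e^{-p(1-v)z}\left(\frac{(v-1)/v}{1-v^{-1}e^{-q(1-v)z}}\right)^r
    = e^{-p(1-v)z}\left(\frac{v-1}{v-e^{-q(1-v)z}}\right)^r.
\]
The key algebraic step is to multiply numerator and denominator of the inner fraction by $e^{q(1-v)z}$, which flips the sign of the exponent inside and produces an extra factor $e^{qr(1-v)z}$ upon raising to the $r$th power; this factor combines with the prefactor $e^{-p(1-v)z}$ to yield $e^{(qr-p)(1-v)z}$, placing the result in $\mathscr{A}(qr-p,q,r)$.

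For the derivative claim, I would apply the product rule to $F$, using $\partial_z(1-ve^{q(1-v)z}) = -qv(1-v)e^{q(1-v)z}$ to obtain
\[
    \partial_z F
    = p(1-v)F + qrv\cdot e^{(p+q)(1-v)z}
    \left(\frac{1-v}{1-ve^{q(1-v)z}}\right)^{r+1}.
\]
The target expression $p\cdot\mathscr{A}(p,q,r+1)$ is $p\, e^{p(1-v)z}\lpa{(1-v)/(1-ve^{q(1-v)z})}^{r+1}$, so I would factor the common quantity $e^{p(1-v)z}(1-v)^{r+1}(1-ve^{q(1-v)z})^{-r-1}$ out of all three terms. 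What remains is the scalar identity
\[
    p\lpa{1-ve^{q(1-v)z}}+qrv\,e^{q(1-v)z} = p,
\]
which collapses to $(qr-p)v\,e^{q(1-v)z}=0$, holding identically exactly when $p=qr$.

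The main obstacle is not conceptual but notational bookkeeping: one must keep track of sign flips in the exponents, the exact powers of $1-v$ that appear when differentiating, and the $v$ versus $1/v$ swap in the reciprocity substitution. Both identities are forced, and indeed the derivative identity above shows that the hypothesis $p=qr$ is not merely sufficient but necessary for $\partial_z F$ to lie in $p\,\mathscr{A}(p,q,r+1)$.
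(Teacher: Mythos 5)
Your computation is correct on both counts: the substitution $F(vz,\tfrac1v)$ with the sign flips handled as you describe yields $\mathscr{A}(qr-p,q,r)$, and the product-rule calculation reduces the derivative claim to the scalar identity $p\lpa{1-ve^{q(1-v)z}}+qrv\,e^{q(1-v)z}=p$, which holds exactly when $p=qr$. The paper omits this proof as "straightforward," and your direct manipulation of the closed form \eqref{Epqr} is precisely the intended argument, so there is nothing to add.
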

The proof is straightforward and omitted. Note that 
$\partial_zF$ corresponds to the EGF of $P_{n+1}$. 

\begin{cor}
If $F\in\mathscr{A}(p,q,r)$ with $p=\frac12qr$, then $P_n$ is 
symmetric or palindromic, namely, $P_n(v)=v^nP_n\lpa{\frac1v}$.	
\end{cor}

\begin{defi} \label{rr-def2}
We write $X_n(p,q,r)\deq X_n(p',q',r')$ if the random variables
associated with the two types $\mathscr{A}(p,q,r)$ and
$\mathscr{A}(p',q',r')$ (defined as in \eqref{Xnk-general}), 
respectively, are equivalent in the sense of Definition~\ref{rr-def}.
\end{defi}

\begin{cor} \label{cor-eq} If $p\ne qr$, then $X_n(p,q,r)\deq 
X_n(qr-p,q,r)$; if $p=qr$, then 
\begin{align}
    X_n(qr,q,r) 
    \deq X_n(0,q,r)
    \deq X_n(qr,q,r+1) 
    \deq X_n(q,q,r+1).
\end{align}
\end{cor}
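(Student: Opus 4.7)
The corollary is essentially a direct consequence of Lemma~\ref{Dabc} once one interprets each operation there in terms of the random variables $X_n(p,q,r)$. My plan is to treat the two groups of equivalences separately and simply track what each EGF manipulation does to the coefficient distribution.

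First, for the general claim $X_n(p,q,r)\deq X_n(qr-p,q,r)$, I would invoke the reciprocity half of Lemma~\ref{Dabc}: if $P_n$ has EGF $F\in\mathscr{A}(p,q,r)$, then its row-reversed polynomial $Q_n(v)=v^n P_n(1/v)$ has EGF $\stackrel{\leftarrow}{F}\in\mathscr{A}(qr-p,q,r)$. Since $Q_n$ is a pure reflection of the coefficient sequence of $P_n$, the random variable $Y_n$ defined from $Q_n$ by \eqref{Xnk-general} satisfies $X_n+Y_n=n$, which is the required affine relation in the sense of Definition~\ref{rr-def}. In particular, $Y_n$ has the same law as $X_n(qr-p,q,r)$, so the two are equivalent.

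For the chain in the case $p=qr$, the first link $X_n(qr,q,r)\deq X_n(0,q,r)$ is just the previous step specialized to $p=qr$ (so that $qr-p=0$), and the last link $X_n(qr,q,r+1)\deq X_n(q,q,r+1)$ is the same reciprocity applied with parameters $(qr,q,r+1)$ (giving $q(r+1)-qr=q$). The one genuinely new link is the middle equivalence, and here I would use the differentiation half of Lemma~\ref{Dabc}: when $p=qr$ the EGF $F\in\mathscr{A}(qr,q,r)$ satisfies $\partial_z F\in qr\,\mathscr{A}(qr,q,r+1)$. Because $\partial_z F$ is the EGF of the shifted sequence $(P_{n+1})$, and because the scalar factor $qr$ cancels when one normalizes by $P_{n+1}(1)$ in \eqref{Xnk-general}, the coefficients of $P_{n+1}$ induce the same law as those of the polynomial whose EGF lies in $\mathscr{A}(qr,q,r+1)$. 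Thus $X_{n+1}(qr,q,r)\deq X_n(qr,q,r+1)$, which is an equivalence in the sense of Definition~\ref{rr-def} (via the shift parameter $m$). Concatenating with the reciprocity step already established gives $X_n(0,q,r)\deq X_n(qr,q,r+1)$.

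The argument is essentially a bookkeeping exercise rather than a computation; the only place one has to be a little careful is the middle link, where three distinct operations collide at once: the index shift inherent to $\partial_z$, the scaling by the constant $p=qr$ (which is harmless only because the definition of $X_n$ normalizes away multiplicative factors), and the passage from $(p,q,r)$ to $(p,q,r+1)$. Making sure that all of these are absorbed cleanly into the $(c,d,m,n_0)$ data of Definition~\ref{rr-def} is the one step I would write out explicitly; everything else is immediate from Lemma~\ref{Dabc}.
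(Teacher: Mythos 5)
Your argument is correct and is exactly the route the paper intends: the paper omits the proof, treating Corollary~\ref{cor-eq} as an immediate consequence of Lemma~\ref{Dabc}, with the reciprocity half giving $X_n(p,q,r)\deq X_n(qr-p,q,r)$ (hence the two outer links when $p=qr$) and the differentiation half, combined with the shift $\partial_z F \leftrightarrow P_{n+1}$ and the harmless scalar $p=qr$, giving the middle link. Your explicit bookkeeping of the shift, the scalar normalization, and the reflection $X_n+Y_n=n$ in the sense of Definition~\ref{rr-def} is precisely what the paper leaves to the reader.
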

This shows partly the advantages of considering the framework 
\eqref{Pnv-Eabc} and the EGF \eqref{Epqr}. 

We now discuss some concrete examples grouped according to increasing 
values of $q$. Most CLTs and their optimal Berry-Esseen bounds are 
new. 

\subsection{$q=1$}

\paragraph{Eulerian numbers}
By \eqref{eulerian-egf}, the Eulerian numbers are of type 
$\mathscr{A}(1,1,1)$, and, by Lemma~\ref{Dabc}, also of types 
$\mathscr{A}(1,1,2)$ and $\mathscr{A}(0,1,1)$. The correspondence to 
OEIS sequences is as follows. 
\begin{center}
\begin{tabular}{llll}
	\multicolumn{4}{c}{{}} \\
	\multicolumn{1}{c}{Description} &
	\multicolumn{1}{c}{OEIS} &
	\multicolumn{1}{c}{Type (in $\mathscr{A}$)} &
	\multicolumn{1}{c}{Type (in $\mathscr{E}$)}  \\ \hline		
Eulerian numbers ($1\le k\le n$) & 
\href{https://oeis.org/A008292}{A008292} 
& $\mathscr{A}(0,1,1)-1$ & $\GT{1}{vn,v;v}$ \\ 
Eulerian numbers ($1\le k\le n$)& 
\href{https://oeis.org/A123125}{A123125} 
& $\mathscr{A}(0,1,1)$ & $\ET{vn,v;1}$\\ 
Eulerian numbers ($0\le k< n$)& 
\href{https://oeis.org/A173018}{A173018} 
& $\mathscr{A}(1,1,1)$ & $\ET{vn+1-v,v;1}$\\ \hline
\end{tabular}	
\end{center}
Note that $v\mathscr{A}(1,1,1) = \mathscr{A}(0,1,1)+v-1$.
In addition to these, with $P_n$ defined by 
\href{https://oeis.org/A123125}{A123125}, the sequence 
\href{https://oeis.org/A113607}{A113607} equals $v^{n+1}+1+P_n(v)$ 
(with $1$'s at both ends of each row); we obtain the same CLT. 

\paragraph{LI Shanlan numbers} LI Shanlan\footnote{This author's
name appeared in the western literature ``under a bewildering variety
of fanciful spellings such as Li Zsen-Su or Shoo Le-Jen" (quoted from
\cite[Ch.\ 18]{Martzloff2006}) or Le Jen Shoo or Li Jen-Shu or Li
Renshu. We capitalize his family name to avoid confusion.}
(1810--1882) in his 1867 book \emph{Duoji Bilei}\footnote{In LI's 
context, ``Duo'' means some binomial coefficients, ``Ji'' means
summation, ``Bi'' is ``to compare'' and ``Lei'' is to classify (and 
``Bilei" means to compile and compare by types).} \cite[Ch.\ 
4]{Li1867} (\emph{Series Summations by Analogies}) studied
$\mathscr{A}(1,1,r+1)$, where $r=0,1,\dots$; see
\cite{Luo1982,Zhang1939} (in Chinese), \cite[p.\ 350]{Martzloff2006},
and \cite[Part II]{Wilson2013} for more modern accounts. In our
format, $P_n$ satisfies
\begin{align}\label{li}
    P_n\in \ET{vn+1+(r-1)v,v;1}.
\end{align}

The first few rows of these \emph{LI Shanlan numbers} are given in 
Table~\ref{tab-li}.
\begin{table}[!ht]
\begin{center}
\begin{tabular}{c|ccccc}
	$n\backslash k$
	& $0$ & $1$ & $2$ & $3$ & $4$  \\ \hline
    $0$ & $1$ &&&& \\
    $1$ & $1$ & $r$ &&& \\
    $2$ & $1$ & $1+3r$ & $r^2$ &&\\
    $3$ & $1$ & $4+7r$ & $1+4r+6r^2$ & $r^3$ &\\
	$4$ & $1$ & $11+15r$ & $11+30r+25r^2$ & $1+5r+10r^2+10r^3$ & 
	$r^4$ \\ 
\end{tabular}    
\end{center}
\caption{The first few rows of the polynomial $\ET{vn+1+(r-1)v,v;1}$.}
\label{tab-li}
\end{table}

Indeed, LI derived in \cite{Li1867} the identity
\[
    \sum_{1\le j\le m}j^n\binom{j+r-1}{j-1}
    = \sum_{0\le k\le n}\binom{m+n-k+r}{m-1-k}
    [v^k]P_n(v) 
\]
only for $n=1, 2, 3$ (generalizing a version of the identity later
often named after Worpitzky \cite{Worpitzky1883}), and mentioned the 
straightforward extension to higher powers, which was later carried 
out in detail by Zhang \cite{Zhang1939}, who also obtained many 
interesting expressions for $P_n(v)$. 

By Corollary~\ref{cor-eq}, we see that 
\begin{align}\label{A01r}
    X_n(1,1,r+1) 
    \deq X_n(0,1,r)
    \deq X_n(r,1,r) 
    \deq X_n(r,1,r+1).
\end{align}
Also by a change of variables, we have for any $p>0$
\begin{align}\label{Arrr}
    X_n(1,1,r+1)  \deq X_n(p,p,r+1).
\end{align}
In particular, the cases $r=0,1$ correspond to Eulerian numbers (so
that $\mathscr{A}(2,2,2)$ also leads to the same Eulerian
distribution \href{https://oeis.org/A008292}{A008292}), and the cases
$r=2,\dots,5$ appear in OEIS with suitable
offsets (see the table below), where they are referred to as
$r$-Eulerian numbers whose generating polynomials satisfy
$P_n\in\GT{r}{vn+1-v,v;1}$, which equals \eqref{li} by shifting
$n$ to $n-r$; see also Section~\ref{sss-r-E}.

\centering
\begin{tabular}{llll}
	\multicolumn{4}{c}{{}} \\
	\multicolumn{1}{c}{Description} &
	\multicolumn{1}{c}{OEIS} &
	\multicolumn{1}{c}{Type} &
	\multicolumn{1}{c}{Equivalent types} \\ \hline		
$2$-Eulerian & \href{https://oeis.org/A144696}{A144696} 
& $\mathscr{A}(1,1,3)$ & $\mathscr{A}(0,1,2)$,
$\mathscr{A}(2,1,2), \mathscr{A}(2,1,3)$ \\
$3$-Eulerian & \href{https://oeis.org/A144697}{A144697} 
& $\mathscr{A}(1,1,4)$ & $ \mathscr{A}(0,1,3)$, 
$ \mathscr{A}(3,1,3), \mathscr{A}(3,1,4)$ \\
$4$-Eulerian & \href{https://oeis.org/A144698}{A144698} 
& $\mathscr{A}(1,1,5)$ & $ \mathscr{A}(0,1,4)$, 
$ \mathscr{A}(4,1,4), \mathscr{A}(4,1,5)$ \\
$5$-Eulerian & \href{https://oeis.org/A144699}{A144699} 
& $\mathscr{A}(1,1,6)$ & $ \mathscr{A}(0,1,5)$, 
$ \mathscr{A}(5,1,5), \mathscr{A}(5,1,6)$ \\
$6$-Eulerian & \href{https://oeis.org/A152249}{A152249} 
& $\mathscr{A}(1,1,7)$ 
& $ \mathscr{A}(0,1,6)$, $\mathscr{A}(6,1,6), \mathscr{A}(6,1,7)$\\ 
\hline
\end{tabular}

\justifying

\medskip
These numbers found their later use in data smoothing techniques; see 
\cite[\S 4.3]{Montgomery1990}. For more information on $r$-Eulerian
numbers, see \cite{Bona2004, Ma2013b, Mezo2014} and the corresponding
OEIS pages. Combinatorial interpretation of the polynomials of type
$\mathscr{A}(1,1,r)$ was discussed by Carlitz in \cite{Carlitz1973}; 
these polynomials were also examined in the recent paper
\cite{Caro-Lopera2015} (without mentioning Eulerian numbers). The
distribution associated with $\mathscr{A}(0,1,p)$ appeared in
\cite{Dillon1968} and later in a random walk model
\cite{Janardan1993}.

The type $\mathscr{A}(q,1,q)$ (switching from $r$ to $q$ for 
convention) has also been studied in the combinatorics literature, 
corresponding to the recurrence satisfied by the $q$-analogue of 
Eulerian numbers ($\mathcal{S}_n$ being the set of all permutations 
of $n$ elements)
\[
    P_n(v) 
	= \sum_{\pi \in \mathcal{S}_n} q^{\text{cycle}(\pi)}
	v^{\text{exceedance}(\pi)+1},
\]
which is of type 
\begin{align}\label{riordan}
    P_n\in \ET{vn+q-v,v; 1};
\end{align}
see Foata and Sch\"utzenberger's book \cite[Ch.\ IV]{Foata1970}
for a detailed study. See also \cite[p.\ 235]{Riordan1958} and 
\cite{Carlitz1960, Dillon1968, Ikollo-Ndoumbe2016, Magagnosc1980}.
The type $\mathscr{A}(2,1,1)$ (with the different initial condition 
$P_2(v)=2$) enumerates big ($\ge2$) descents in permutations: 
\begin{center}
\begin{tabular}{llll}\hline
Big descents in perms. & \href{https://oeis.org/A120434}{A120434} 
& $\mathscr{A}(2,1,2)$ & $\GT{1}{vn+2-v,v;2}$ \\ 
Reciprocal of \href{https://oeis.org/A120434}{A120434} 
& \href{https://oeis.org/A199335}{A199335} 
& $\mathscr{A}(0,1,2)$ & $\ET{vn+v,v;1}$\\ \hline
\end{tabular}	
\end{center}
As already indicated above, these two distributions are also 
equivalent to those of $2$-Eulerian numbers and of 
$\mathscr{A}(2,1,3)$. 

By Theorem~\ref{thm-Apqr}, the polynomials \eqref{riordan} with any 
real $q>0$ lead to the same $\mathscr{N}\lpa{\frac12n,\frac1{12}n; 
n^{-\frac12}}$ asymptotic behavior. 

\paragraph{Generalized Eulerian numbers \cite{Carlitz1974, 
Morisita1971}} 
Morisita \cite{Morisita1971} introduced in 1971 in statistical 
ecology a class of distributions, which corresponds to 
$\mathscr{A}(p,1,p+q)$ in our notation, or
\begin{align}\label{morisita}
    P_n \in \ET{vn+p+(q-1)v,v;1}.
\end{align}

By Corollary~\ref{cor-eq}, $X_n(p,1,p+q)\deq X_n(q,1,p+q)$. Such
polynomials were also independently studied in 1974 by Carlitz and
Scoville \cite{Carlitz1974}, and are referred to as the
\emph{generalized Eulerian numbers}; see \cite{Charalambides1991,
Janardan1988, Janardan1993}.

The CLT for the coefficients of \eqref{morisita} was later derived in 
\cite{Charalambides1991} in a statistical context by checking the 
real-rootedness property and Lindeberg's condition, as motivated by 
\cite{Janardan1988, Morisita1971}, where the usefulness of these 
numbers is further highlighted via a few concrete models. See also 
\cite{Janardan1993} for more models leading to $X_n(p,1,p+q)$. 

In the context of random staircase tableaux, these polynomials were 
also examined in detail by Hitczenko and Janson \cite{Hitczenko2014}, 
where they derived not only a CLT but also an LLT. Moreover, they 
also address the situation when $p$ and $q$ may become large with 
$n$. 

\paragraph{Euler-Frobenius numbers} Dwyer \cite{Dwyer1940} studied
$\mathscr{A}(p,1,1)$, referred to as the ``cumulative numbers'' but 
better known later as the Euler-Frobenius numbers; see for example
\cite{Gawronski2013, Harris1994, Janson2013, Riordan1958} and the 
references therein. They are called \emph{non-central Eulerian 
numbers} in \cite[p.\ 538]{Charalambides2002}. The coefficients of 
such polynomials are nonnegative if $p\in[0,1]$; see also
\cite{Foulkes1980,Kaplansky1946}. The asymptotic normality
$\mathscr{N}\lpa{\frac12n,\frac1{12}n}$ of the coefficients is first 
proved in \cite{Harris1994} and later in \cite{Clark1998,
Gawronski2013, Janson2013} by different approaches; see also
\cite{Gawronski2013, Harris1994, Hwang1994, Janson2013} for local
limit theorems. In particular, an asymptotic expansion for $p=0$
(Eulerian numbers) was derived in the Ph.D. Thesis of the first
author \cite[p.\ 76]{Hwang1994}, the approach there being based on a
framework of quasi-powers \cite{Flajolet2009, Hwang1998} and a direct
Fourier analysis.

This class of polynomials is more useful than it seems because the 
coefficients of any polynomial of type $\mathscr{A}(p,q,1)$ with 
$q>0$ have the same distribution as $\mathscr{A}(\frac pq,1,1)$, 
which has nonnegative coefficients when $0\le p\le q$; see 
\cite{Janson2013} for details. 

\subsection{$q=2$}
\paragraph{Eulerian numbers} The sequence of polynomials 
\href{https://oeis.org/A296229}{A296229}, which corresponds to 
$2^n\eulerian{n}{k}$, is of type (shifting $n$ by $1$) 
$2\mathscr{A}(2,2,2)$, which has the same distribution as Eulerian 
numbers; see \eqref{Arrr}.

\paragraph{MacMahon numbers (or Eulerian numbers of type $B$)}
MacMahon numbers (first introduced in \cite{MacMahon1921}) are 
generated by the recurrence $P_n\in\ET{2vn+1-v,2v;1}$, which is of
type $\mathscr{A}(1,2,1)$; see Figure~\ref{fig-2v2v}. Their signed 
version is \href{https://oeis.org/A138076}{A138076}, and a
doubled-power version (with a zero between every two entries) is
\href{https://oeis.org/A158781}{A158781}. The CLT 
$\mathscr{N}\lpa{\frac12 n, \frac1{12}n}$ was proved
in \cite{Chen2009,Dasse-Hartaut2013,Janson2013}; see also
\cite{Diaconis2009,Schmidt1997}. The stronger results  
$\mathscr{N}\lpa{\frac12 n, \frac1{12}n; n^{-\frac12}}$ for these 
numbers follow readily from Theorem~\ref{thm-Apqr}.

\begin{center}
\begin{tabular}{llll}\hline
Eulerian numbers of type $B$ & 
\href{https://oeis.org/A060187}{A060187} 
& $\mathscr{A}(1,2,1)$ & $\ET{2vn+1-v,2v;1}$\\ 
\href{https://oeis.org/A060187}{A060187}: $v\mapsto v^2$ & 
\href{https://oeis.org/A158781}{A158781} & 
& $\ET{2v^2n+1-v^2,v(1+v);1}$\\ 
Signed version of \href{https://oeis.org/A060187}{A060187} & 
\href{https://oeis.org/A138076}{A138076} & \\ \hline
\end{tabular}	
\end{center}
The signed version \href{https://oeis.org/A138076}{A138076} can on 
the other hand be generated by $P_0(v)=1$ and 
\[
    P_n(v) = (2vn-1-v)P_{n-1}(v) -2v(1+v)P_{n-1}'(v)
    \qquad(n\ge1),
\]
whose EGF has the closed form expression $\mathscr{A}(1,2,1)$ but 
with $v\mapsto -v$ and $z\mapsto -z$. 

\paragraph{Polynomials arising from higher order derivatives} Many 
polynomials of the Eulerian type \eqref{Pnv-gen} are generated by 
successive differentiations of a given base function. Indeed, this is 
the very first genesis of Eulerian numbers (see \cite{Euler1755}):
\[
    (x\mathbb{D}_x)^n \frac1{1-x} 
    = \frac{P_n(x)}{(1-x)^{n+1}},
    \quad\text{where }
    P_n \text{ is of type } \mathscr{A}(0,1,1).
\]
For type $B$
\[
    \mathbb{D}_x^n \frac{e^x}{1-e^{2x}}
    = \frac{e^xP_n(e^{2x})}{(1-e^{2x})^{n+1}},
    \quad\text{where }
    P_n \text{ is of type } \mathscr{A}(1,2,1).
\]
Changing the base function to $\frac1{\sqrt{1-x}}$ gives 
\[
    (x\mathbb{D}_x)^n \frac1{\sqrt{1-x}}
    = \frac{P_n(x)}{2^n(1-x)^{n+\frac12}},
    \quad\text{where }
    P_n \text{ is of type } \mathscr{A}(0,2,\tfrac12).
\]
The last $P_n = $ \href{https://oeis.org/A156919}{A156919}$(n) =
v$\href{https://oeis.org/A185411}{A185411}$(n+1)$. (The former is
$\mathscr{A}(2,2,\frac32)$ while the latter is
$\mathscr{A}(0,2,\frac12)$). The same polynomials also appear in
\cite{Ma2013} in the form
\[
    (\tan(x) \mathbb{D}_x)^n \sec x
    = (\sec x)^{2n+1}P_n(\sin^2x),
    \quad\text{where }
    P_n \text{ is of type } \mathscr{A}(0,2,\tfrac12).
\]
By Corollary~\ref{cor-eq}
\[
    X_n(0,2,\tfrac12)
    \deq X_n(1,2,\tfrac12)  
    \deq X_n(1,2,\tfrac32) 
    \deq X_n(2,2,\tfrac32).
\]
In particular, $\mathscr{A}\lpa{1,2,\frac12}$ (the reciprocal of 
\href{https://oeis.org/A156919}{A156919}) also appears in
\cite{Savage2012} and corresponds to 
\href{https://oeis.org/A185410}{A185410}. 

More generally, we have 
\[
    (x\mathbb{D}_x)^n (1-x)^{-r}
    = \frac{P_n(x)}{(1-x)^{n+r}},
    \quad\text{where }
    P_n \text{ is of type } \mathscr{A}(0,1,r),
\]
and we have the equivalence relations \eqref{A01r}. 

On the other hand, Lehmer \cite{Lehmer1985} shows that, with $g(x) :=
\frac{x\arcsin x}{\sqrt{1-x^2}}$,
\begin{align}\label{lehmer}
    (x\mathbb{D}_x)^n g(x) 
    = \frac{P_n(x^2)g(x)+x^2R_n(x^2)}{(1-x^2)^n}, 
    \quad\text{where }
    P_n \text{ is of type } \mathscr{A}(1,2,\tfrac12),
\end{align}
and $R_n$ is Eulerian with a non-homogeneous term:
\begin{align}\label{lehmer2}
    R_n(v) = (2vn+2-4v)R_{n-1}(v) +2v(1-v)R_{n-1}'(v)
    +P_{n-1}(v)\qquad(n\ge1),
\end{align}
with $R_0(v)=0$. The EGF of $R_n(v)$ can be solved to be (by the 
approach described in Section~\ref{ss-pde})
\[
    e^{(1-v)z}\frac{\arcsin\lpa{2ve^{2(1-v)z}-1}
    -\arcsin(2v-1)}
    {2\sqrt{v(1-ve^{2(1-v)z})}}.
\]
The optimal CLT $\mathscr{N}(\frac12n,\frac1{12}n;n^{-\frac12})$ for
the coefficients of Lehmer's polynomials $P_n$ \eqref{lehmer} and
$R_n$ follows from an application of Theorem~\ref{thm-saqp}; see
Figure~\ref{fig-2v2v} for an illustration of the histograms. The CLT
$\mathscr{N}(\frac12n,\frac1{12}n)$ for this $P_n$ or
$\mathscr{A}(0,2,\frac12)$ was previously derived in \cite{Ma2013} by
the real-rootedness and unbounded variance approach. An LLT was also
established by Bender \cite{Bender1973}. See \cite{Ma2013b} for a
general treatment of derivative polynomials generated by context-free
grammars.

\begin{center}
\begin{tabular}{lll}\hline
$(x\mathbb{D}_x)^n \frac1{\sqrt{1-x}}$ & 
\href{https://oeis.org/A185411}{A185411} &
$\mathscr{A}(0,2,\frac12)$\\ 
$v$\href{https://oeis.org/A185411}{A185411}$(n+1)$ & 
\href{https://oeis.org/A156919}{A156919} 
& $\mathscr{A}(2,2,\frac32)$ \\ 
Lehmer's polynomials & 
\href{https://oeis.org/A185410}{A185410} 
& $\mathscr{A}(1,2,\frac12)$\\ \hline
\end{tabular}	
\end{center}

\begin{figure}[!h]
\begin{center}\small
\begin{tabular}{c c c}
\includegraphics[height=3.5cm]{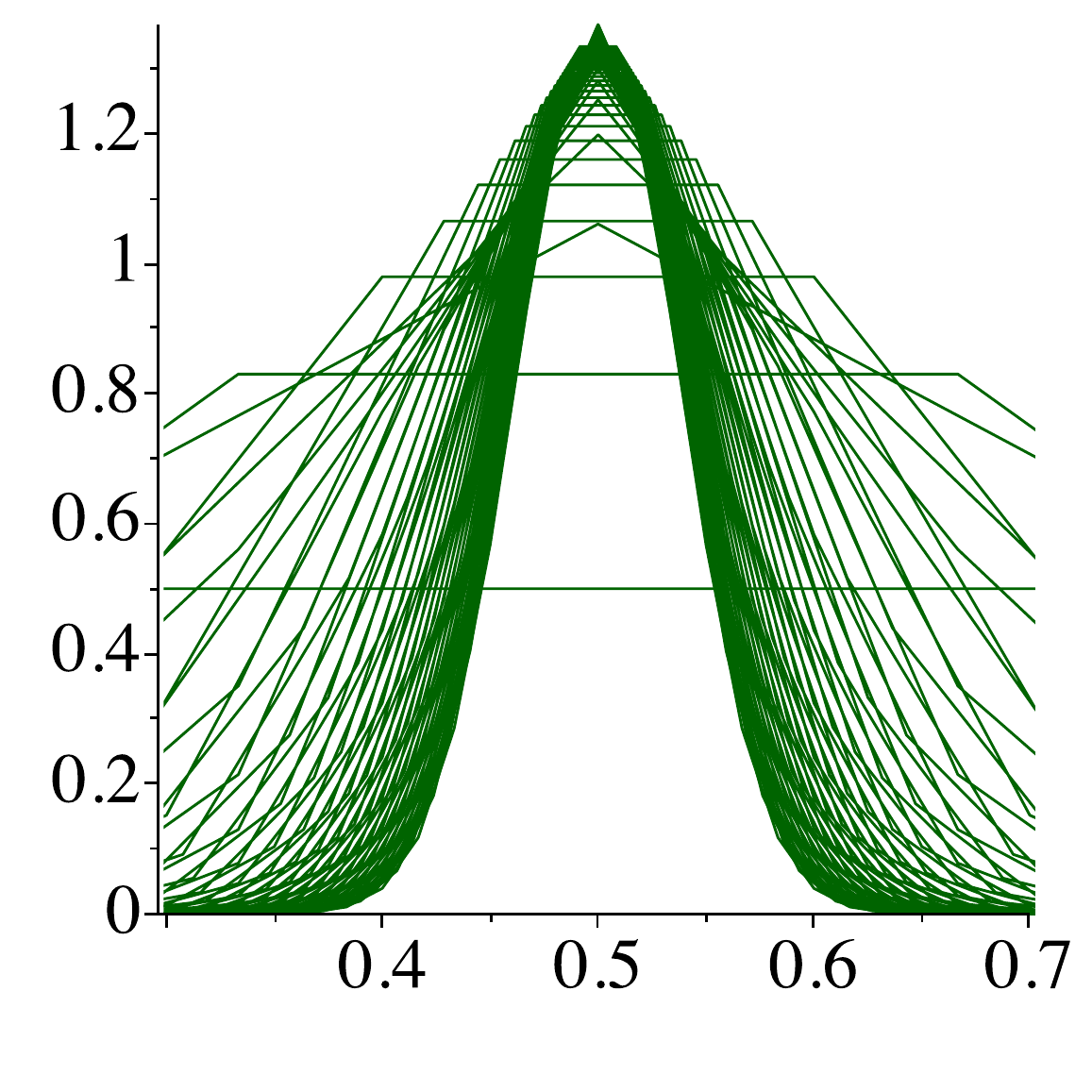} &
\includegraphics[height=3.5cm]{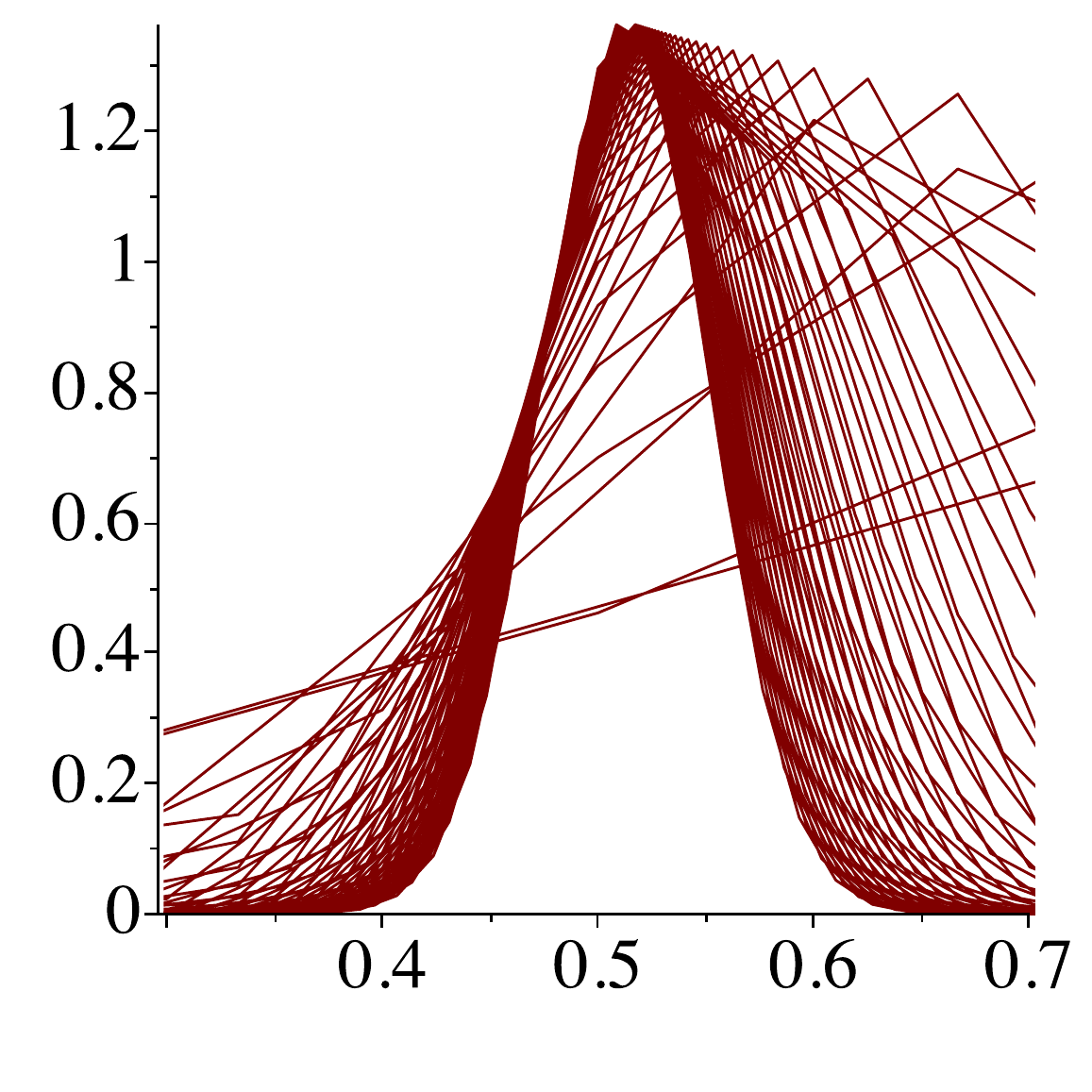} &
\includegraphics[height=3.5cm]{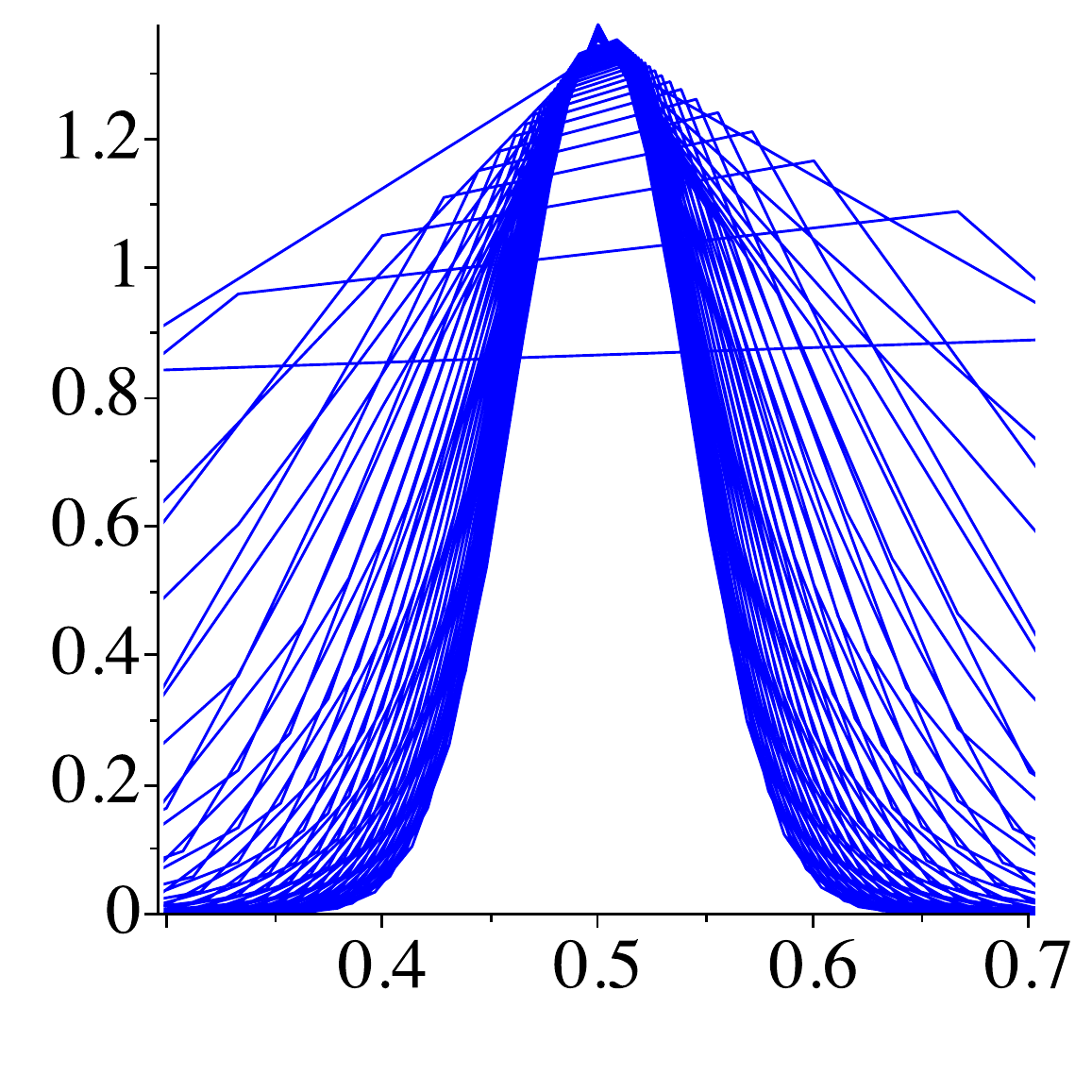} \\
Type $B$ Eulerian & Lehmer's $P_n$ \eqref{lehmer}
& Lehmer's $R_n$ \eqref{lehmer2} \\
\href{https://oeis.org/A060187}{A060187} $\mathscr{A}(1,2,1)$ 
& \href{https://oeis.org/A185410}{A185410} 
$\mathscr{A}(1,2,\frac12)$&  \\
$\lpa{\frac12n,\frac1{12}n+\frac1{12}}$ & 
$\lpa{\frac12n-\frac14,\frac1{12}n+\frac1{24}}$ &
$\lpa{\frac12n-\frac{11}{12},\frac1{12}n-\frac1{360}}$ 
\end{tabular}    
\end{center}
\caption{While we have the same CLT $\mathscr{N}(\frac12n,
\frac1{12}n;n^{-\frac12})$ for the three classes of polynomials,
their differences are reflected in the finer asymptotic
approximations to the mean and the variance, displayed in the last
row with the format $($mean, variance$)$; see \eqref{Apqr-mv}.}
\label{fig-2v2v}
\end{figure}

\paragraph{Stirling permutations of the second kind \cite{Ma2017}:
$\mathscr{A}(q,2,\frac q2)$} Ma and Yeh \cite{Ma2017} extended the
Stirling permutations of Gessel and Stanley \cite{Gessel1978} and
studied the so-called \emph{cycle ascent plateau}, leading to
polynomials of the type $\mathscr{A}(q,2,\frac q2)$. When $q=1$, we
get Lehmer's polynomial (\href{https://oeis.org/A185410}{A185410}),
and when $q=2$, we get Eulerian numbers (up to a factor of $2^n$).
The CLT $\mathscr{N}\lpa{\frac12n,\frac1{12}n;n^{-\frac12}}$ for the
coefficients (for any real $q>0$) follows from Theorem~\ref{thm-Apqr}.

\paragraph{Franssen's $\mathscr{A}(p,2,p)$ \cite{Franssens2006}}
The expansion 
\[
    \left(\frac{u-v}{ue^{-(u-v)z}-ve^{(u-v)z}}\right)^p
    = \sum_{n\ge0}R_n(u,v;p)\frac{z^n}{n!}
\] 
is studied in \cite{Franssens2006}. Let $P_n(v) := R_n(1,v;p)$. Then
$P_n\in \ET{2vn+p+(p-2)v,2v;1}$, which is of type
$\mathscr{A}(p,2,p)$. Note that when $p=1$ we get type $B$ Eulerian
numbers and when $p=2$, we get $2^n\eulerian{n+1}{k}$. For any real
$p>0$, we then obtain the asymptotic normality
$\mathscr{N}\lpa{\frac12n,\frac1{12}n; n^{-\frac12}}$ for the
coefficients of $P_n$.

\subsection{General $q>0$}
\paragraph{Savage and Viswanathan's $\mathscr{A}\lpa{1,q,\tfrac1q}$
\cite{Savage2012}} A class of polynomials called $1/k$-Eulerian is 
examined in \cite{Savage2012} (we changed their $k$ to $q$ for 
convenience) and is of type $P_n \in\ET{qvn+ 1-qv,qv;1}$.

In addition to Eulerian numbers when $q=1$, one gets Lehmer's
polynomials \eqref{lehmer} (or
\href{https://oeis.org/A185410}{A185410}) when $q=2$. By
Corollary~\ref{cor-eq}
\[
    X_n\lpa{1,q,\tfrac1q}
    \deq X_n\lpa{0,1,\tfrac1q}
    \deq X_n\lpa{1,q,\tfrac1q+1}
    \deq X_n\lpa{q,q,\tfrac1q+1},
\] 
for any $q>0$, which is a special case of \eqref{A01r} and 
\eqref{Arrr}. 

\paragraph{Strasser's $\mathscr{A}\lpa{1,q,\tfrac2q}$
\cite{Strasser2011}} A general framework studied in
\cite{Strasser2011} is of the form $P_n\in\ET{qvn+1-(q-1)v,qv;1}$,
where $q=1,2,\dots$. These polynomials are palindromic. Note that
when $q=0, 1$ and $2$, one gets binomial coefficients
\href{https://oeis.org/A007318}{A007318}, Eulerian numbers
\href{https://oeis.org/A008292}{A008292}, and MacMahon numbers
\href{https://oeis.org/A060187}{A060187}, respectively.
\begin{center}
\begin{tabular}{c c | c c | c c } \hline    
\href{https://oeis.org/A142458}{A142458} 
& $\mathscr{A}(1,3,\frac23)$ 
& \href{https://oeis.org/A142459}{A142459} 
& $\mathscr{A}(1,4,\frac12)$  
& \href{https://oeis.org/A142460}{A142460} 
& $\mathscr{A}(1,5,\frac25)$\\ 
  \href{https://oeis.org/A142461}{A142461} 
  & $\mathscr{A}(1,6,\frac13)$ 
& \href{https://oeis.org/A142462}{A142462} 
& $\mathscr{A}(1,7,\frac27)$
& \href{https://oeis.org/A167884}{A167884} 
& $\mathscr{A}(1,8,\frac14)$ \\ \hline
\end{tabular}     
\end{center}
On the other hand, the first few rows of $P_n(v)$ read $P_1(v) = 
1+v$, $P_2(v) = 1+2(1+q)v + v^2$ and
\begin{align*}
    P_3(v) &= 1+(3+6q+2q^2)v + (3+6q+2q^2)v^2 + v^3.
\end{align*}
Numerically, 
\vspace*{-0.4cm}
\begin{center}
\begin{tabular}{c|cccccccc}
$q$ & $1$ & $2$ & $3$ & $4$ & $5$ & $6$ & $7$ & $8$ \\ \hline
$3+6q+2q^2$ &$11$ & $23$ & $39$ & $59$ & $83$ & $111$ & $143$ 
& $179$\\
\end{tabular}    
\end{center}
We see that the CLT $\mathscr{N} \lpa{\frac12n,\frac1{12}n; 
n^{-\frac12}}$ remains the same for $q>0$ although these 
coefficients are more concentrated near the middle range 
for growing $q$. 

\paragraph{Brenti's $q$-Eulerian polynomials \cite{Brenti1994}}
A different $q$-analogue of Eulerian numbers considered in
\cite{Brenti1994} is of the form $P_n\in\ET{qvn+1-v,qv;1}$, which is
of type $\mathscr{A}(1,q,1)$; see also \cite{Steingrimsson1994}.
These polynomials also arise in the analysis of carries processes;
see \cite{Nakano2014}. The reciprocal polynomials are of type
$\mathscr{A}(q-1,q,1)$, which appeared on the webpage
\cite{Luschny2013}. In addition to Eulerian and MacMahon numbers for
$q=1$ and $q=2$, respectively, we also have
\begin{center}
\begin{tabular}{lll}\hline
	\href{https://oeis.org/A225117}{A225117} 
	& $\mathscr{A}(2,3,1)$ 
    & Reciprocal of $\mathscr{A}(1,3,1)$ \\
    \href{https://oeis.org/A225118}{A225118} 
	& $\mathscr{A}(3,4,1)$ 
    & Reciprocal of $\mathscr{A}(1,4,1)$ \\ 
    \href{https://oeis.org/A158782}{A158782} 
	& $\mathscr{A}(1,4,1)$ & $v\mapsto v^2$: 
	$\ET{4v^2n+1-v^2,2v(1+v);1}$\\ \hline
\end{tabular}	
\end{center}
The CLT and LLT when $q\ge1$ were derived in \cite{Chow2012} by the 
real-rootedness and Bender's approach \cite{Bender1973}, respectively.

\paragraph{Eulerian numbers associated with arithmetic progressions}
Eulerian numbers associated with the arithmetic progression $\{p, 
p + q, p + 2q,\dots\}$ are considered in Xiong et al.\ 
\cite{Xiong2013}, which corresponds to the polynomials 
$P_n\in\ET{qvn+(q-p)(1-v),qv(1-v);1}$; see also 
\cite{Mezo2016,Ramirez2018}. 

These polynomials are of type $\mathscr{A}(q-p,q,1)$, which have
nonnegative coefficients when $0\le p\le q$.

By Corollary~\ref{cor-eq}, $X_n(q-p,q,1)\deq X_n(p,q,1)$, and
polynomials of the latter type arise in the following extension of
Euler's original construction
\[
    P_n(v) := (1-v)^{n+1}\sum_{j\ge0}(p+qj)^nv^j
	\qquad(n\ge1),
\]
with $P_0(v)=1$ for a given pair $(p,q)$; see \cite{Eriksen2000,
Ramirez2018}. The polynomials associated with the type
$\mathscr{A}(p,q,1)$ were rediscovered in \cite{Samadi2004} in
digital filters and those with $\mathscr{A}(q-p,q,1)$ in
\cite{Pita-Ruiz-V.2017} in connection with sums of squares. In
particular, $(p,q)=(1,0)$ or $(1,1)$ gives Eulerian numbers and
$(p,q)=(1,2)$ the MacMahon numbers. Furthermore, two more sequences
were found in OEIS:
\begin{center}
\begin{tabular}{cc|cc}\hline
	\href{https://oeis.org/A178640}{A178640} 
	& $\mathscr{A}(5,8,1)=$ reciprocal of 
    $\mathscr{A}(3,8,1)$ &
	\href{https://oeis.org/A257625}{A257625} 
	& $\mathscr{A}(3,6,1)$ \\ \hline
\end{tabular}	
\end{center} 

A more general type is studied in Barry \cite{Barry2013}:
\[
    X_n(q(p+r)-p,q,p+r) \deq X_n(p,q,p+r);
\]
Theorem~\ref{thm-Apqr} applies when $p\ge0$, and $q,r>0$, and we get
always the same CLT $\mathscr{N}\lpa{\frac12n, \frac1{12}n;
n^{-\frac12}}$. See also \cite{Liu2015} for other properties such as
continued fraction expansions and $q$-log convexity.

Yet another type 
\[
    X_n(qr-r+1,q,r) \deq X_n(r-1,q,r),
\]
(referred to as the $r$-Eulerian-Fubini polynomials) was studied in 
\cite{Corcino2018}. The same CLT holds when $q>0$ and $r\ge1$. 

\paragraph{OEIS: $\mathscr{A}\lpa{p,q,\tfrac{2p}q}$}
Two dozens of OEIS sequences have the 
pattern 
\[
    [v^k]P_n(v) = \phi_k [v^k]P_{n-1}(v) 
	+ \phi_{n-k} [v^{k-1}]P_{n-1}(v)
	\qquad(1\le k\le n; n\ge1),
\]
with $P_0(v)=1$, where $\phi_k=p+qk$. Such polynomials $P_n$'s satisfy
$P_n\in\ET{qvn +p +(p-q)v,qv;1}$, which is of type 
$\mathscr{A}\lpa{p,q,\tfrac{2p}q}$. The sequences we found are listed 
below. 
\begin{center}
\begin{tabular}{cl|cl|cl}\hline
\href{https://oeis.org/A256890}{A256890} 
& $\mathscr{A}(2,1,4)$ 
& \href{https://oeis.org/A257180}{A257180} 
& $\mathscr{A}(3,1,6)$ 
& \href{https://oeis.org/A257606}{A257606} 
& $\mathscr{A}(4,1,8)$ \\
\href{https://oeis.org/A257607}{A257607} 
& $\mathscr{A}(5,1,10)$ 
& \href{https://oeis.org/A257608}{A257608} 
& $\mathscr{A}(1,9,\frac29)$ 
& \href{https://oeis.org/A257609}{A257609} 
& $\mathscr{A}(2,2,2)$ \\
\href{https://oeis.org/A257610}{A257610} 
& $\mathscr{A}(2,3,\frac43)$ 
& \href{https://oeis.org/A257611}{A257611} 
& $\mathscr{A}(3,2,3)$ 
& \href{https://oeis.org/A257612}{A257612} 
& $\mathscr{A}(2,4,1)$ \\
\href{https://oeis.org/A257613}{A257613} 
& $\mathscr{A}(4,2,4)$ 
& \href{https://oeis.org/A257614}{A257614} 
& $\mathscr{A}(2,5,\frac45)$ 
& \href{https://oeis.org/A257615}{A257615} 
& $\mathscr{A}(5,2,5)$ \\
\href{https://oeis.org/A257616}{A257616} 
& $\mathscr{A}(2,6,\frac23)$ 
& \href{https://oeis.org/A257617}{A257617} 
& $\mathscr{A}(2,7,\frac47)$ 
& \href{https://oeis.org/A257618}{A257618} 
& $\mathscr{A}(2,8,\frac12)$ \\
\href{https://oeis.org/A257619}{A257619} 
& $\mathscr{A}(2,9,\frac49)$ 
& \href{https://oeis.org/A257620}{A257620} 
& $\mathscr{A}(3,3,2)$ 
& \href{https://oeis.org/A257621}{A257621} 
& $\mathscr{A}(3,4,\frac32)$ \\
\href{https://oeis.org/A257622}{A257622} 
& $\mathscr{A}(4,3,\frac83)$ 
& \href{https://oeis.org/A257623}{A257623} 
& $\mathscr{A}(3,5,\frac65)$ 
& \href{https://oeis.org/A257624}{A257624} & 
$\mathscr{A}(5,3,\frac{10}3)$ \\ 
\href{https://oeis.org/A257625}{A257625} 
& $\mathscr{A}(3,6,1)$ 
& \href{https://oeis.org/A257626}{A257626} 
& $\mathscr{A}(6,3,4)$ 
& \href{https://oeis.org/A257627}{A257627} & 
$\mathscr{A}(3,7,\frac67)$ \\ \hline
\end{tabular}	
\end{center}
When $p=1$, one obtains Strasser's generalizations and more
OEIS sequences are listed above. Note that
both $(1,1,2)$ and $(2,2,2)$ lead to Eulerian numbers and $(1,2,1)$
to MacMahon numbers. All these types of polynomials produce the same
$\mathscr{N}\lpa{\frac12n,\frac1{12}n;n^{-\frac12}}$ limiting 
behavior.

\paragraph{A summarizing table for generic types} We summarize the 
above discussions in the following table, listing only generic types 
and their equivalent ones. 

\centering
\renewcommand{\arraystretch}{1.3}
\begin{longtable}{lll}
	\multicolumn{3}{c}{{}} \\
	\multicolumn{1}{c}{References} &
	\multicolumn{1}{c}{Type \& its equivalent types} \\ \hline		
LI Shanlan \cite{Li1867} 
& $\mathscr{A}(1,1,q+1); 
\mathscr{A}(q,1,q+1), 
\mathscr{A}(0,1,q), 
\mathscr{A}(q,1,q)$ \\
\makecell[l]{Riordan \cite{Riordan1958}\\
             Foata and Sch\"utzenberger \cite{Foata1970}}
& $\mathscr{A}(q,1,q); 
\mathscr{A}(0,1,q), \mathscr{A}(q,1,q+1), 
\mathscr{A}(1,1,q+1)$ \\ 
Brenti \cite{Brenti1994}, Luschny \cite{Luschny2013} 
& $\mathscr{A}(1,q,1); \mathscr{A}(q-1,1,1)$ \\
Dwyer \cite{Dwyer1940}, Harris \cite{Harris1994}
& $\mathscr{A}(q,1,1); \mathscr{A}(1-q,1,1)$ \\  \hline
Savage and Viswanathan \cite{Savage2012} 
& $\mathscr{A}(1,q,\frac1q);
\mathscr{A}(0,1,\frac1q), 
\mathscr{A}(1,q,\frac{q+1}q), 
\mathscr{A}(q,q,\frac{q+1}{q})$ \\
Strasser \cite{Strasser2011} 
& $\mathscr{A}(1,q,\frac2q)$  \\  
\makecell[l]{Morisita \cite{Morisita1971}\\
             Carlitz and Scoville \cite{Carlitz1974}\\
             Hitczenko and Janson \cite{Hitczenko2014}}
& $ \mathscr{A}(p,1,p+q); 
\mathscr{A}(q,1,p+q)$  \\ 
\makecell[l]{Xiong et al.\ \cite{Xiong2013}, 
	OEIS\\ Eriksen et al.\ \cite{Eriksen2000}}
& $ \mathscr{A}(p,q,1); 
\mathscr{A}(q-p,q,1)$ \\ \hline
Ma and Yeh \cite{Ma2017} & $\mathscr{A}\lpa{q,2,\frac q2};
\mathscr{A}\lpa{0,2,\frac q2}, 
\mathscr{A}\lpa{q, 2, \frac{q+2}2}, 
\mathscr{A}\lpa{2,2,\frac{q+2}2}$ \\
Franssens \cite{Franssens2006} &  $\mathscr{A}(q,2,q)$  \\ 
OEIS & $\mathscr{A}(p,q,\frac{2p}q)$ &   \\
Oden et al.\ \cite{Oden2006} & $\mathscr{A}(p-q,q,\frac {2p}q);
\mathscr{A}(p+q,q,\frac{2p}q)$ \\ 
Corcino et al.\ \cite{Corcino2018} &
$\mathscr{A}(pq-p+1,q,p); \mathscr{A}(p-1,q,p)$ \\
Barry \cite{Barry2013} & $\mathscr{A}(p,q,r); 
\mathscr{A}(qr-p,q,r)$  \\ \hline
\caption{A summary of generic types of $\mathscr{A}(p,q,r)$
and their equivalent ones.}
\end{longtable}	

\justifying

\subsection{Other extensions with the same CLT and their variants}
\label{ss-euler-clt}

We briefly mention some other examples not of the form 
$\mathscr{A}(p,q,r)$ but with the same CLT 
$\mathscr{N}\lpa{\frac12n,\frac1{12}n}$; more examples with the same 
CLT are discussed in Section~\ref{sec-extensions}.  

\subsubsection{The two examples in the Introduction}
The first example (see Figure~\ref{fig-complex-roots}) is of the form 
$\ET{vn+(1+v)^2,v;1}$ with $\alpha(v)=\beta(v)=v$ and
$\gamma(v)=(1+v)^2$. We can directly apply Theorem~\ref{thm-clt} and 
get the same CLT $\mathscr{N}\lpa{\frac12n,\frac1{12}n}$ for the 
distribution of the coefficients. The EGF 
\[
    e^{(1-v)z+v(1-e^{(1-v)z})}\left(\frac{1-v}
    {1-ve^{(1-v)z}}\right)^5
\]
can be derived by the procedures in Section~\ref{ss-pde}. 
Analytically, this is of the form $\mathscr{A}(1,1,5)$ times the  
entire function $e^{v(1-e^{(1-v)z})}$, and we get the optimal 
Berry-Esseen bound $n^{-\frac12}$ by applying Theorem~\ref{thm-saqp}. 

Similarly, the second example
\href{https://oeis.org/A244312}{A244312} \eqref{A244312} in the
Introduction leads to the same CLT
$\mathscr{N}\lpa{\frac12n,\frac1{12}n}$ by the method of moments
because it can be rewritten as $P_n\in\GT{1}{vn-1+(1-v)
\mathbf{1}_{n \text{ is odd}}, v;v}$, where again 
$\alpha(v)=\beta(v)=v$, and $\gamma(v)$ is less important in the
dominant terms of the asymptotic approximations to the moments. In 
particular, the mean and the variance are given respectively by 
\[
    \mathbb{E}(X_n)
    = \begin{cases}
        \frac{n^2}{2(n-1)}, & n\ge2 \text{ is even };\\
        \frac{n+1}2, & n\ge3 \text{ is odd }, 
    \end{cases}
    \quad \text{and}\quad
    \mathbb{V}(X_n)
    = \begin{cases}
        \frac{n(n^2-2n-2)}{12(n-1)^2}, 
        & n\ge4 \text{ is even};\\
        \frac{(n+1)(n-3)}{12(n-2)}, 
        & n\ge3 \text{ is odd}.
    \end{cases}
\]

The optimal Berry-Esseen bound is expected to be of order
$n^{-\frac12}$, but the analytic proof via Theorem~\ref{thm-saqp}
fails due to the lack of solution to the PDE \eqref{A244312-egf}
satisfied by the EGF of $P_n$. Note that it can be shown that
\[
    P_n(v) = (1-v)^n\sum_{j\ge0}
    j^{\tr{\frac12n}}(j+1)^{\cl{\frac12n}-1}v^{j+1}
    \qquad(n\ge1).
\]
From this expression, we can derive the optimal Berry-Esseen bound 
$n^{-\frac12}$; details will be given elsewhere. 

In such a context, we see particularly that the method of moments 
provides more robustness in the variation of $\gamma(v)$ in the 
recurrence \eqref{Pnv-gen} as long as the coefficients $[v^k]P_n(v)$ 
remain nonnegative, although the analytic approach is not limited to 
Eulerian type or nonnegativity of the coefficients. 

\subsubsection{$r$-Eulerian numbers again} 
\label{sss-r-E}

The following six OEIS sequences are all
generated by the same recurrence $P_n\in\GT{2}{vn+1,v}$, with
initial conditions $P_2(v)$ different from that ($1+4v+v^2$) of
Eulerian numbers:
\begin{center}
\begin{tabular}{cl|cl|cl}\hline
\href{https://oeis.org/A166340}{A166340} & $1+8v+v^2$ & 
\href{https://oeis.org/A166341}{A166341} & $1+10v+v^2$ &
\href{https://oeis.org/A166343}{A166343} & $1+12v+v^2$ \\ 
\href{https://oeis.org/A166344}{A166344} & $1+6v+v^2$ & 
\href{https://oeis.org/A166345}{A166345} & $1+2v+v^2$ & 
\href{https://oeis.org/A188587}{A188587} & $1+v+v^2$	\\ \hline
\end{tabular}	
\end{center}
See also the paper by Conger \cite{Conger2010} for the polynomials 
$\GT{r}{vn+1-2v,v;A_r(v)}$ for fixed $r=1,2,\dots$, where
$A_r(v)$ is Eulerian polynomial of order $r-1$. Since
Theorem~\ref{thm-clt} does not depend specially on the initial
conditions, we obtain the same CLT
$\mathscr{N}\lpa{\frac12n,\frac1{12}n}$ by a simple shift of the
recurrence $n\mapsto n-r$ and then by applying Theorem~\ref{thm-clt}.
The corresponding EGF can also be worked out, which leads to an
effective version of CLT by Theorem~\ref{thm-saqp}.

\subsubsection{Eulerian numbers of type $D$} 
\label{ss-type-d}

Brenti \cite{Brenti1994} (see also \cite{Chow2003}) shows that the
EGF of the Eulerian polynomials $P_n(v)$ of type $D$ is given by
\[
    F(z,v) 
	= \frac{(1-v)\lpa{e^{(1-v)z}-vze^{2(1-v)z}}}
    {1-ve^{2(1-v)z}}.
\]
By the decomposition ($P_n$ being palindromic)
\begin{align*}
    F(z,v) - (1-v)z=
    \frac{1-v}{1-ve^{2(1-v)z}}\lpa{e^{(1-v)z}-z}
    \in \mathscr{A}(1,2,1)-z\mathscr{A}(0,2,1),
\end{align*}
we see that, up to the term $(1-v)z$, type $D$ is a difference of
type $B$ and type $A$ Eulerian numbers; see \cite{Stembridge1994}.
Theorem~\ref{thm-clt} does not apply because these polynomials do not
have the pattern \eqref{Pnv-gen}. However, the coefficients do
satisfy the same CLT $\mathscr{N}\lpa{\frac12n,\frac1{12}n;
n^{-\frac12}}$ by applying Theorem~\ref{thm-saqp}.

\subsubsection{Exponential perturbation} 
Polynomials of the form 
\[
    P_n(v) = (2vn+ 1-v)P_{n-1}(v) +2v(1-v)P_{n-1}'(v)
	\mp v(1-v)^{n-1}\qquad(n\ge1),
\]
with $P_0(v)=0$ (for ``$+$'') and $P_0(v)=1$ (for ``$-$'') are studied
in \cite{Borowiec2016}, which correspond to
\href{https://oeis.org/A262226}{A262226} (``$-$'') and
\href{https://oeis.org/A262227}{A262227} (``$+$''), respectively. The
EGF equals
\[
    \frac{(1-v)e^{(1-v)z}}
    {2\lpa{1-ve^{2(1-v)z}}}\mp\frac{e^{(1-v)z}}2.
\]
While Theorem~\ref{thm-clt} does not apply, the method of proof
easily extends to this case because the extra ``exponential
perturbation" term does not contribute to the dominant asymptotics of 
all finite moments. We then get the same CLT 
$\mathscr{N}\lpa{\frac12n,\frac1{12}n}$ (as that for 
$\mathscr{A}(1,2,1)$). For both polynomials, Theorem~\ref{thm-saqp} 
applies. 

Another sequence \href{https://oeis.org/A180246}{A180246} corresponds 
essentially to $\mathscr{A}(2,1,1)$ (differing by the term $(-v)^n$). 
This is a concrete polynomial with $p>qr$ (see \eqref{Pnv-Eabc} and 
\eqref{Epqr}), and thus the coefficients are not all positive. More 
precisely, if $P_n$ is of type $\mathscr{A}(2,1,1)$, then $P_n$ is, 
up to minor exponential perturbation, of type $\mathscr{A}(0,1,1)$ 
(Eulerian numbers) because 
\[
    e^{2(1-v)z}\frac{1-v}{1-ve^{(1-v)z}}
    = \frac1{v^2}\left(\frac{1-v}{1-ve^{(1-v)z}}
    +(1-v)\lpa{1+ve^{(1-v)z}} \right).
\]
On the other hand, all coefficients $[v^k]P_n(v)$ are positive except 
the following three ones:
\begin{align*}
    [z^n]P_n(v) &= (-1)^n, 
    \quad [z^{n-1}]P_n(v) = (-1)^{n-1}(n+1),\\
    [z^{n-2}]P_n(v) &= (-1)^{n}\left(
    \binom{n+1}{2}+(-1)^n\right).
\end{align*}
Thus if we consider the random variables defined via the absolute
values of all coefficients, then we still obtain the same CLT
$\mathscr{N}\lpa{\frac12n, \frac1{12}n;n^{-\frac12}}$ because the
above possibly negative coefficients are asymptotically negligible.
The same argument applies to the more general type 
$\mathscr{A}(p,1,1)$, or (see \cite{Harris1994})
\[
    P_n(v) = \sum_{0\le k\le n}
    v^k\sum_{0\le j\le k}\binom{n+1}{j}(-1)^j
    (k+p-j)^n,
\]
where $p>1$. For, 
\[
    e^{p(1-v)z}\frac{1-v}{1-ve^{(1-v)z}}
    = v^{-p}\frac{1-v}{1-ve^{(1-v)z}}+O(1),
\]
uniformly for $z\sim -\frac{\log v}{1-v}$. Thus, up to a few possibly 
negative coefficients that are asymptotically negligible, the
polynomials are essentially Eulerian polynomials. 

\begin{center}
\begin{tabular}{lll}\hline
Type $D$ Eulerian 
& \href{https://oeis.org/A066094}{A066094} & 
$\mathscr{N}\lpa{\frac12n,\frac1{12}n;n^{-\frac12}}$ \\ 
$\sum_{0\le j\le k} \binom{n+1}{j}(-1)^j(k+2-j)^n$
& \href{https://oeis.org/A180246}{$|$A180246$|$} & 
$\mathscr{N}\lpa{\frac12n,\frac1{12}n;n^{-\frac12}}$ \\ 
Primary type $D$ Eulerian  
& \href{https://oeis.org/A262226}{A262226} & 
$\mathscr{N}\lpa{\frac12n,\frac1{12}n;n^{-\frac12}}$ \\ 
Complementary type $D$ Eulerian 
& \href{https://oeis.org/A262227}{A262227} & 
$\mathscr{N}\lpa{\frac12n,\frac1{12}n;n^{-\frac12}}$ \\ 
\hline
\end{tabular}	
\end{center}

\subsubsection{Eulerian polynomials multiplied by $1+v$}
\label{sss-conway}
Let $P_n(v) := (1+v)\sum_{0\le k<n}\eulerian{n}{k}v^k$. Such 
polynomials arose in the study of low-dimensional lattices (see 
\cite{Conway1997}), and satisfy the recurrence 
\[
	\mathscr{E}\left\langle\!\left\langle
    vn+\frac{1-v}{1+v}, v;1+v
    \right\rangle\!\right\rangle.
\]
These polynomials correspond to
\href{https://oeis.org/A008518}{A008518} and are specially
interesting because $\gamma(v)$ (in the notation of
Theorem~\ref{thm-clt}) is not a polynomial. The same limit law
$\mathscr{N}\lpa{\frac12n,\frac1{12}n}$ holds by an extension of
Theorem~\ref{thm-clt} (because $\gamma(v)=\frac{1-v}{1+v}$ is not
analytic in $|v|\le 1$). However, from the proof of
Theorem~\ref{thm-clt}, it is clear that the analyticity of
$\gamma(v)$ in $|v|<1$ and the finiteness of $\gamma^{(j)}(1)$ for
each $j\ge0$ are sufficient to guarantee the same CLT. In contrast, 
Theorem~\ref{thm-saqp} easily applies. 

\section{Applications II: $\alpha(v)\neq\beta(v)$
or quadratic $\alpha(v)$, $\beta(v)$}
\label{sec-quadratic}

We consider in this section other Eulerian-type polynomials for which
Theorem~\ref{thm-clt} applies. Exact solutions for the associated 
PDEs when $\alpha(v)\ne\beta(v)$ are still possible but they are 
often of a less explicit form (especially when compared with the
equal case~\eqref{Pnv-Eabc}). Yet our approaches still apply as far 
as the limit laws are concerned. 

We discuss a few such frameworks for which explicit EGFs are 
available before specializing to concrete examples. Note that in all 
cases we discuss below, Theorem~\ref{thm-clt} applies and we obtain a 
CLT easily. Following the same spirit of Section~\ref{sec-abv}, 
we use the special forms of EGFs for a more synthetic discussion of 
the examples as well as for establishing a stronger CLT with optimal 
rate by Theorem~\ref{thm-saqp}. 

\subsection{Polynomials with $(\alpha(v),\beta(v))
=(qv,v)\Longrightarrow\mathscr{N}\lpa{\frac q{q+1}\,n, 
    \frac{q^2}{(q+1)^2(q+2)}\,n}$}

\label{sec-barbero}

A class of higher-order Eulerian numbers is proposed in Barbero G.\
et al.\ \cite{Barbero2015} satisfying the recurrence
$P_n\in\ET{qvn+p+(r-p-q)v, v;1}$, where $q\ge1$ and $r\ge p\ge1$ are
integers. The EGF has the closed-form expression \cite{Barbero2014}
\begin{align}\label{barbero-egf}
    F(z,v) := \sum_{n\ge0}P_n(v)\frac{z^n}{n!}
    =\left(\frac{T_{q}\lpa{e^{(1-v)^q z}
    S_q(v)}}{v}\right)^p
    \left(\frac{1-v}{1-T_q\lpa{e^{(1-v)^q z}
    S_q(v)}}\right)^{r},
\end{align}
where $T_q(S_q(v))=S_q(T_q(v))=v$, $S_q$ is a one-parameter family of 
functions given by 
\[
    S_q(v) = ve^{L_q(v)},\quad
    \text{with}\quad
    L_q(v) = \sum_{1\le j<q}\binom{q-1}{j}
    \frac{(-v)^j}{j}.
\]
If we change $L_q(v)$ to 
\begin{align}\label{Lqv}
    L_q(v) := \int_0^v\frac{(1-t)^{q-1}-1}{t}\,\dd t,
\end{align}
then \eqref{barbero-egf} holds for real $p,q,r$. For convenience, we
write the framework \eqref{barbero-egf} as $F \in \mathscr{T}(p,q.r)$.

\begin{thm} \label{thm-barbero}
Assume $P_n\in\ET{qvn+p+(r-p-q)v, v;1}$. If 
\begin{align}\label{Fpqr-nn}
    q\ge1, r\ge p\ge0, \text{ and } r+p>0,
\end{align}
then the coefficients of $P_n$ satisfy the CLT
\begin{align}\label{barbero-clt}
    \mathscr{N}\llpa{\frac q{q+1}\,n, 
    \frac{q^2}{(q+1)^2(q+2)}\,n; n^{-\frac12}}. 
\end{align}
\end{thm}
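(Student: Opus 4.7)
My plan is to establish the CLT via Theorem~\ref{thm-clt} and then upgrade it to the optimal Berry--Esseen rate $n^{-1/2}$ via Theorem~\ref{thm-saqp} applied to the closed-form EGF~\eqref{barbero-egf}.

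First I would check the hypotheses of Theorem~\ref{thm-clt}. Here $\alpha(v)=qv$, $\beta(v)=v$, $\gamma(v)=p+(r-p-q)v$ are entire in $v$, and $P_0(v)=1$. Nonnegativity of $[v^k]P_n(v)$ under~\eqref{Fpqr-nn} is established by induction on $n$: reading the recurrence off coordinate-wise gives
\[
    [v^k]P_n(v) = \lpa{q(n-1)+r-p-k+1}[v^{k-1}]P_{n-1}(v)+(p+k)[v^k]P_{n-1}(v),
\]
and under $q\ge 1$, $r\ge p\ge 0$, both bracketed factors are nonnegative for $0\le k\le n$ (using also $\deg P_{n-1}\le n-1$). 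Since $\alpha+2\beta=q+2>0$, the formula~\eqref{mu-var} with $\alpha'(1)=q$, $\alpha''(1)=0$, $\beta'(1)=1$ yields, after simplification,
\[
    \mu=\frac{q}{q+1},\qquad
    \sigma^2 = \frac{q^2}{(q+1)^2(q+2)}>0,
\]
so Theorem~\ref{thm-clt} delivers the CLT in~\eqref{barbero-clt} without rate.

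To obtain the optimal rate I would invoke Theorem~\ref{thm-saqp} on the EGF~\eqref{barbero-egf}. Setting $W(z,v):=e^{(1-v)^qz}S_q(v)$, the dominant $z$-singularity is determined by $T_q(W)=1$, i.e.\ $W=S_q(1)$; using $(\log S_q)'(t)=(1-t)^{q-1}/t$ this reproduces the entry in Table~\ref{tab-qp},
\[
    \rho(v) = \frac{1}{(1-v)^q}\int_v^1\frac{(1-t)^{q-1}}{t}\dd t,
\]
with $\rho(1)=1/q$ and, writing $u=1-v$, the expansion $\rho(1-u)=1/q+u/(q+1)+u^2/(q+2)+O(u^3)$. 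A local analysis near $z=\rho(v)$, using $S_q(1)-S_q(v)\sim e^{L_q(1)}(1-v)^q/q$ and $W(z,v)-S_q(1)\sim (1-v)^qS_q(1)(z-\rho(v))$ together with the cancellation $S_q(1)=e^{L_q(1)}$, yields
\[
    F(z,v) \sim q^{-r/q}v^{-p}(\rho(v)-z)^{-r/q}
    \qquad(z\to\rho(v),\ v\to 1),
\]
so the schema~\eqref{F-ABC} applies with $\Psi(z,v)=\rho(v)-z$ (trivially analytic with a simple zero), exponent $\kappa=r/q\notin\mathbb{Z}_{\le 0}$, and $K(z,v):=F(z,v)(\rho(v)-z)^{r/q}$ analytic and nonvanishing at $(1/q,1)$. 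Non-degeneracy follows from $\rho'(1)=-1/(q+1)\ne 0$ and variability from $\sigma^2\ne 0$, reproducing the same $(\mu,\sigma^2)$ through~\eqref{mu-sigma}.

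The main obstacle is the analytic-continuation step. For $q>1$ one has $S_q'(1)=0$, so the inverse $T_q$ has a branch point at $S_q(1)$, and to verify the hypotheses of Theorem~\ref{thm-saqp} one must choose the principal branch of $T_q$ along the curve $z\mapsto W(z,v)$ and show that, after absorbing the branch into the algebraic factor $\Psi^{r/q}$, the residual prefactor $K=F\cdot(\rho(v)-z)^{r/q}$ is single-valued and analytic in a bidisk $\{|z|\le\zeta\}\times\{|v-1|\le\varepsilon\}$. The explicit integral representation~\eqref{Lqv} of $L_q$ makes $S_q$ analytic in a neighborhood of $[0,1]$, so that $T_q$ continues analytically off a slit emanating from $S_q(1)$; uniqueness of the singular zero on a slightly enlarged disk then follows from a standard argument-principle/implicit-function argument, uniformly for $v$ near $1$. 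Granted these continuation details, Theorem~\ref{thm-saqp} (equivalently, a direct singularity analysis transfer \cite{Flajolet1990}) delivers~\eqref{barbero-clt} with the claimed optimal rate $n^{-1/2}$.
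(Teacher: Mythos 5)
Your proposal is correct and follows essentially the same route as the paper: nonnegativity of $[v^k]P_n(v)$ plus Theorem~\ref{thm-clt} gives the CLT with $(\mu,\sigma^2)=\bigl(\frac{q}{q+1},\frac{q^2}{(q+1)^2(q+2)}\bigr)$, and the optimal rate comes from Theorem~\ref{thm-saqp} applied to the EGF \eqref{barbero-egf} with the same $\rho(v)$, $\rho'(1)=-\frac1{q+1}$ and $\rho''(1)=\frac2{q+2}$. The only difference is bookkeeping in the schema: the paper takes $\kappa=r$ with $\Psi(z,v)=\frac{1-T_q(e^{(1-v)^qz}S_q(v))}{1-v}$, so the branch-point/continuation issue you isolate (the singular factor is really $(\rho(v)-z)^{-r/q}$ with Puiseux corrections in $(\rho(v)-z)^{1/q}$ when $q>1$) is glossed there just as in your choice $\kappa=r/q$, $\Psi=\rho(v)-z$.
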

\begin{proof}
By examining the corresponding recurrence for the coefficients, we 
see that if $q\ge1$ and $r\ge p\ge0$, then $[v^k]P_n(v)\ge0$; the 
additional condition $r+p>0$ guarantees positivity of $P_n(1)$. 
Thus under \eqref{Fpqr-nn}, Theorem~\ref{thm-clt} applies and we see 
that the coefficients of $P_n(v)$ satisfy the CLT \eqref{barbero-clt}
without rate. On the other hand, Theorem~\ref{thm-saqp} also applies 
by taking there $\kappa=r$ and 
\[
    \Psi(z,v) := \frac{1-T_q\lpa{e^{(1-v)^q z}S_q(v)}}{1-v}.
\]
The dominant singularity $\rho(v)$ is given by 
\[
    \rho(v) := \frac{\log S_q(1)-\log S_q(v)}{(1-v)^q}
    = \frac1{(1-v)^q}\int_v^1 t^{-1}(1-t)^{q-1}\dd t. 
\]
The mean and the variance constants can then be computed by the
relations $\rho'(1)=-\frac1{q+1}$ and $\rho''(1)=\frac2{q+2}$.
\end{proof}

In particular, 
\begin{center}
\begin{tabular}{cccc} 
$q=1$ & $q=2$ & $q=3$ & $q=4$ \\  \hline
$\mathscr{N}\lpa{\frac12n,\frac1{12}n}$ &
$\mathscr{N}\lpa{\frac23n,\frac1{9}n}$ &
$\mathscr{N}\lpa{\frac34n,\frac9{80}n}$ &
$\mathscr{N}\lpa{\frac45n,\frac8{75}n}$ \\ 
\end{tabular}	
\end{center}
Interestingly, as a function of $q$, the variance coefficient 
$\frac{q^2}{(q+1)^2(q+2)}$ first increases and then steadily 
decreases to $0$ as $q$ grows, the maximum occurring at 
$q=\frac{1+\sqrt{17}}2\approx 2.56$ with the value 
$\frac18\lpa{71-17\sqrt{17}}\approx 0.113$.

The reciprocal polynomial of $P_n$ satisfies the recurrence 
\[
    Q_n\in\ET{(q-1+v) n 
    +r+1-p-q-(1-p)v, v;1},
\]
whose coefficients follow the CLT $\mathscr{N}\lpa{\frac 1{q+1}\,n, 
\frac{q^2}{(q+1)^2(q+2)}\,n;n^{-\frac12}}$ under the same conditions 
$r\ge p\ge0$, $r+p>0$ and $q\ge1$.

\subsubsection{$q=\frac12\Longrightarrow 
\mathscr{N} \lpa{\frac13n,\frac2{45}n;n^{-\frac12}}$}
\label{sec-v2v}

David and Barton examined in their classical book \cite{David1962}
the number of increasing runs of length at least two
(\href{https://oeis.org/A008971}{A008971}), and the number of peaks
in permutations (\href{https://oeis.org/A008303}{A008303}), in
addition to Eulerian numbers. They derived the corresponding
recurrences:
\begin{small}
\begin{center}
\begin{tabular}{llll}\hline
\# $(|\!\uparrow\!\! \text{ runs}|\ge2)$ in permutations 
& \href{https://oeis.org/A008971}{A008971} 
& $\ET{vn+1-v,2v;1}$ 
&$\mathscr{T}\lpa{\frac12,\frac12,1}$ \\
\# peaks in permutations 
& \href{https://oeis.org/A008303}{A008303} 
& $\GT{1}{vn+2(1-v),2v;1}$ 
& $\mathscr{T}\lpa{1,\frac12,1}$\\ \hline
\end{tabular}	
\end{center}
\end{small}
The first few rows of both sequences are given in
Table~\ref{tab-v-2v}. To apply Theorem~\ref{thm-barbero} (which
starts the recurrence from $n=1$), we shift $n$ in both recurrences
by $1$, changing $\gamma(v)$ from ``$1-v$'' and ``$2(1-v)$'' to
``$1$'' and ``$2-v$'' respectively. Then the polynomials
$2^{-n}P_n(v)$ are of type $\mathscr{T}\lpa{\frac12, \frac12,1}$ and
$\mathscr{T}\lpa{1,\frac12,1}$, respectively. We thus obtain the same
CLT $\mathscr{N} \lpa{\frac13n,\frac2{45}n;n^{-\frac12}}$ for both
statistics by Theorem~\ref{thm-barbero}. In particular, about
two-thirds of runs have length $\ge2$; also note that the variance
constant $\frac2{45}$ is very small.

\begin{table}[!ht]
\begin{center}
\begin{minipage}{0.45\textwidth}
\begin{tabular}{c|ccccc}
    \multicolumn{6}{c}{\href{https://oeis.org/A008971}{A008971}}\\
	$n\backslash k$
    & $0$ & $1$ & $2$ & $3$ & $4$ \\ \hline
    $1$ & $1$ \\
    $2$ & $1$ & $1$  & & \\
    $3$ & $1$ & $5$ \\
    $4$ & $1$ & $18$ & $5$  \\
    $5$ & $1$ & $58$ & $61$ \\
    $6$ & $1$ & $179$ & $479$ & $61$ \\
    $7$ & $1$ & $543$ & $3111$ & $1385$\\
    $8$ & $1$ & $1636$ & $18270$ & $19028$ & $1385$\\ 
\end{tabular}
\end{minipage}\qquad\;
\begin{minipage}{0.45\textwidth}
\begin{tabular}{c|cccc}
    \multicolumn{5}{c}{\href{https://oeis.org/A008303}{A008303}}\\
	$n\backslash k$
    & $0$ & $1$ & $2$ & $3$  \\ \hline
    $1$ & $1$ \\
    $2$ & $2$ & & \\
    $3$ & $4$ & $2$ \\
    $4$ & $8$ & $16$  \\
    $5$ & $16$ & $88$ & $16$ \\
    $6$ & $32$ & $416$ & $272$ \\
    $7$ & $64$ & $1824$ & $2880$ & $272$\\
    $8$ & $128$ & $7680$ & $24576$ & $7936$\\
\end{tabular}
\end{minipage}
\vspace*{.2cm}
\caption{The first few rows of 
\href{https://oeis.org/A008971}{A008971} (left) and 
\href{https://oeis.org/A008303}{A008303} (right).}
\label{tab-v-2v}
\end{center}
\end{table}

Instead of using \eqref{barbero-egf}, the exact solutions for the 
bivariate EGFs have the simpler alternative forms
\begin{equation}\label{v2v}
    \begin{split}
    \href{https://oeis.org/A008971}{\text{A008971}}:\;&
    \frac{\sqrt{1-v}}{\sqrt{1-v}\cosh\lpa{\sqrt{1-v}\,z}
    -\sinh\lpa{\sqrt{1-v}\,z}}, \\
    \href{https://oeis.org/A008303}{A008303}: \; &
    1+\frac{v\sinh\lpa{\sqrt{1-v}\,z}}
    {\sqrt{1-v}\cosh\lpa{\sqrt{v-1}\,z}-\sinh\lpa{\sqrt{1-v}\,z}},
    \end{split}
\end{equation}
respectively, which can be derived directly by the approach of  
Section~\ref{ss-pde}; see \cite{Chow2014b,Entringer1969,Ma2013a,
Petersen2015,Warren1996}.

These numbers also appear in other different contexts \cite{Dale1988,
Hackl2018, Kermack1938, Mallows2008, Minai1993, Norton2013} (notably
\cite{Kermack1938}). See also \cite{Flajolet1997} for a connection to
binary search trees. D\'esir\'e Andr\'e \cite{Andre1895} seems the
first to give a detailed study of
\href{https://oeis.org/A008303}{A008303} (up to a proper shift) where
he examined the number of ascending or descending runs in cyclic
permutations. He derived not only the recurrence for the polynomials
and the first two moments of the distribution, but also solved the
corresponding PDE for the EGF. For more information (including
asymptotic normality), see \cite{David1962,Warren1996} and the
references therein.

\subsubsection{$q=1\Longrightarrow 
\mathscr{N} \lpa{\frac12n,\frac1{12}n;n^{-\frac12}}$} 
In this case, $L_1(z) = 0$, $S_1(v)=T_1(v)=v$, so that  
\[
    F(z,v) =e^{p(1-v)z}
    \left(\frac{1-v}{1-v e^{(1-v)z}}\right)^r,
\]
implying that $\mathscr{T}(p,1,r) = \mathscr{A}(p,1,r)$, 
which we already discussed in Section~\ref{sec-abv}. 

\subsubsection{$q=2\Longrightarrow 
\mathscr{N} \lpa{\frac23n,\frac19n;n^{-\frac12}}$} 
In this case, $S_2(z) = ze^{-z}$ and $T_2(z) = ze^{T_2(z)}
=\sum_{n\ge1}\frac{n^{n-1}}{n!}\, z^n$ is the Cayley tree function
(essentially the Lambert $W$-function; see \cite{Corless1996} and
\href{https://oeis.org/A000169}{A000169}), so that
\begin{align}\label{T-p2r}
    F(z,v) = \left(\frac{T_2
    \lpa{ve^{-v+(1-v)^2 z}}}{v}\right)^p
    \left(\frac{1-v}{1-T_2\lpa{ve^{-v+(1-v)^2 z}
    }}\right)^r.
\end{align}
The simple relations 
\begin{align}\label{T-class-eq}
    \partial_z \mathscr{T}(p,2,p) = p\mathscr{T}(p,2,p+2)
    \quad\text{and}\quad 
    \partial_z \mathscr{T}(0,2,p) = pv\mathscr{T}(1,2,p+2),
\end{align}
imply an equivalence relation for the underlying random variables 
in each case. 

In particular, $\mathscr{T}(0,2,1)$ gives the second order Eulerian
numbers (or Eulerian numbers of the second kind): $P_n\in
\ET{(2n-1)v,v;1}$.

Such polynomials arise in many different combinatorial and
computational contexts; see for example \cite{Carlitz1965,
Corless1996, Gautschi1959, Gessel1978, Graham1994, Janson2008, 
Lang2017, Petersen2015} and OEIS 
\href{https://oeis.org/A008517}{A008517} for 
more information. In addition to enumerating the number of ascents in 
Stirling permutations (see \cite{Bona2008,Gessel1978, Janson2008}), 
we mention here two other relations: as derivative polynomials \cite{Corless1996}
\[
    \mathbb{D}_x^{n+1}T_2(e^x)
    = \frac{P_n(-T_2(e^x))}{(1-T_2(e^x))^{2n+1}}
    \qquad(n\ge1),
\]
and as coefficients in an asymptotic expansion \cite{Carlitz1965}
\[
    \frac{n!}{(nv)^n}\left(e^{nv}
    -\sum_{0\le j\le n}\frac{(nv)^j}{j!}\right)
    = \sum_{0\le j<K}\frac{(-1)^jP_j(v)}
    {n^j(1-v)^{2j+1}}+ O\lpa{n^{-K}},
\]
for any $K=1,2,\dots$.

The CLT $\mathscr{N}\lpa{\frac 23n,\frac19n}$ seems first proved 
in \cite{Bergeron1992, Mahmoud1993} in the context of leaves in 
plane-oriented recursive trees, and later in \cite{Bona2008, 
Janson2008}, the approaches used including analytic, urn models and 
real-rootedness, respectively. 

The corresponding reciprocal polynomials $Q_n(v) := v^{n+1}P_n(\frac
1v)$ satisfy $Q_n\in\ET{(1+v)n-1-2v,v;1}$, which is 
\href{https://oeis.org/A163936}{A163936}. We
summarize these in the following table.

\begin{center}
\begin{tabular}{llll}\hline
Second order Eulerian ($1\le k\le n$) & 
\href{https://oeis.org/A008517}{A008517} & 
$\mathscr{T}(0,2,1)$ & $\mathscr{N}\lpa{\frac 23n, \frac19n; 
n^{-\frac12}}$\\ 
Reciprocal of \href{https://oeis.org/A008517}{A008517} & 
\href{https://oeis.org/A112007}{A112007} &&
$\mathscr{N}\lpa{\frac 13n, \frac19n;n^{-\frac12}}$\\ \hline
Second order Eulerian ($0\le k< n$) & 
\href{https://oeis.org/A201637}{A201637} & 
$\mathscr{T}(1,2,1)$ & $\mathscr{N}\lpa{\frac 23n, 
\frac19n;n^{-\frac12}}$\\ 
Reciprocal of \href{https://oeis.org/A201637}{A201637} & 
\href{https://oeis.org/A163936}{A163936} & &
$\mathscr{N}\lpa{\frac 13n, \frac19n;n^{-\frac12}}$\\ 
Essentially $=$ \href{https://oeis.org/A163969}{A163969} & 
\href{https://oeis.org/A288874}{A288874} & &
$\mathscr{N}\lpa{\frac 13n, \frac19n;n^{-\frac12}}$\\ 
\hline
\end{tabular}	
\end{center}

In addition to $\mathscr{T}(0,2,1)$ and $\mathscr{T}(1,2,1)$, the
polynomials defined on $\mathscr{T}(1,2,3)$ also correspond, by
\eqref{T-class-eq}, to the second-order Eulerian numbers, and 
appeared in \cite{Eu2014}, together with two other variants:
\[
    \mathscr{T}(0,2,2) \text{ with }P_0(v)=v,
    \quad\text{and}\quad
	\mathscr{T}(1,2,0).
\]
The first ($\mathscr{T}(0,2,2)$ and $\mathscr{T}(1,2,4)$ by
\eqref{T-class-eq}) leads, by Theorem~\ref{thm-barbero}, to the same
$\mathscr{N}\lpa{\frac 23n,\frac19n;n^{-\frac12}}$ as for the second
order Eulerian numbers because \eqref{Fpqr-nn} holds. The second type
($\mathscr{T}(1,2,0)$) contains negative coefficients but corresponds
essentially to the second order Eulerian numbers after dividing by 
$1-v$.

Another example with $q=2$ is sequence 
\href{https://oeis.org/A214406}{A214406}, which is the second
order Eulerian numbers of type $B$ and counts the Stirling
permutations \cite{Gessel1978,Janson2011} by ascents. The polynomials
can be generated by $P_n\in\ET{4vn +1-3v,2v;1}$ and its reciprocal
transform is $Q_n\in\ET{(2n-1)(1+v),2v;1}$. By considering
$2^{-n}P_n(v)$, we see that these numbers are of type
$\mathscr{T}(\frac12,2,1)$ and the coefficients follow a CLT with
optimal convergence rate.

The last example \href{https://oeis.org/A290595}{A290595} is of a 
different form: $P_n\in\ET{3(1+v)n-2-v,3v;1}$, whose reciprocal $Q_n$ 
satisfies $Q_n\in\ET{6vn+2-5v,3v;1}$ and is, up to the factor $3^n$, 
of type $\mathscr{T}\lpa{\frac23,2,1}$. Thus the EGF of $P_n$ is 
given by 
\[
    \left(vT_2\lpa{v^{-1}e^{-\frac1v(1-3(1-v)^2 z)}}
    \right)^{\frac23}\left(\frac{v-1}{v\lpa{1-
    T_2\lpa{v^{-1}e^{-\frac1v(1-3(1-v)^2 z)}}}}\right),
\]
and we obtain the same CLT $\mathscr{N}\lpa{\frac13n, 
\frac19n;n^{-\frac12}}$ for the distribution of $[v^k]P_n(v)$.

\begin{center}
\begin{tabular}{llll}\hline
Second order Eulerian type $B$ & 
\href{https://oeis.org/A214406}{A214406} & 
$\mathscr{T}(\frac12,2,1)$ 
&  $\mathscr{N}\lpa{\frac 23n, \frac19n;n^{-\frac12}}$\\ 
Reciprocal of \href{https://oeis.org/A214406}{A214406} & 
\href{https://oeis.org/A288875}{A288875} & &
$\mathscr{N}\lpa{\frac 13n, \frac19n;n^{-\frac12}}$\\ \hline
$\ET{6vn+2-5v,3v;1}$  &  & 
$\mathscr{T}(\frac23,2,1;3z)$ 
&  $\mathscr{N}\lpa{\frac 23n, \frac19n;n^{-\frac12}}$\\ 
Reciprocal of $\mathscr{T}\lpa{\frac23,2,1;3z}$ & 
\href{https://oeis.org/A290595}{A290595} & &
$\mathscr{N}\lpa{\frac 13n, \frac19n;n^{-\frac12}}$\\ \hline
\end{tabular}	
\end{center}

See also Section~\ref{ss-1-sv} for polynomials related to 
$\mathscr{T}\lpa{\frac1q,2,1}$. 

\subsubsection{$q=3\Longrightarrow 
\mathscr{N} \lpa{\frac34n,\frac9{80}n;n^{-\frac12}}$} 
We found only one OEIS example:
\begin{center}
\begin{tabular}{cccc}\hline
Third order Eulerian ($0\le k<n$) & \href{https://oeis.org/A219512}{A219512} & 
$\mathscr{T}(1,3,1)$ & 
$\mathscr{N}\lpa{\frac34n, \frac{9}{80}n;n^{-\frac12}}$\\ \hline
\end{tabular}	
\end{center}
or
\begin{align}\label{A219512}
    P_n\in\ET{3vn+1-3v,v;1}.
\end{align}

For the EGF, in addition to Barbero G.\ et al.'s 
solution~\eqref{barbero-egf}, an alternative form is as follows. 
Define $J(z,v)$ 
\begin{align*}
    J(z,v) := \int_0^z \frac{\dd t}{(1+t)(1+tv)^3}
    = \frac{\log\frac wv +2(v-w)
    -\tfrac12(v^2-w^2)}{(1-v)^3},
\end{align*}
where $w := \frac{v(1+z)}{1+vz}$. Then the EGF $F(z,v)-1$ is the 
compositional inverse of $J$, namely, it satisfies
\[
    F(J(z,v),v)-1 = z.
\]
This can be readily checked by \eqref{barbero-egf}. Indeed, for any 
polynomials of type $\mathscr{T}(1,q,1)$ with $q>0$, we have 
$F(J(z,v),v)-1=z$, where 
\begin{align*}
    J(z,v) &:= \int_0^z \frac{\dd t}{(1+t)(1+tz)^q}
    =\frac1{(1-v)^q}
    \left(\log\frac{1+z}{1+vz} 
	+L_q\left(\frac{v(1+z)}{1+vz}\right)
	-L_q(v)\right),
\end{align*}
with $L_q$ defined in \eqref{Lqv}. 

Note that the random variables associated with the coefficients of 
$\mathscr{T}(1,3,1)$ are equivalent to those of $\mathscr{T}(1,3,4)$
by a simple shift $n\mapsto n+1$ in \eqref{A219512}. We obtain the 
same CLT $\mathscr{N}\lpa{\frac34n, \frac{9}{80}n; n^{-\frac12}}$. 

\subsubsection{$q>1\Longrightarrow\mathscr{N}\lpa{\frac q{q+1}\,n, 
    \frac{q^2}{(q+1)^2(q+2)}\,n}$} 
These higher order Eulerian numbers are discussed 
in \cite{Barbero2014, Barbero2015}; see also Section~\ref{ss-polya} 
on P\'olya urn models. We list the CLTs for $q=4,\dots,7$; note that 
our results are not limited to integer $q$.

\vspace*{-.3cm}
\begin{center}
\begin{tabular}{cc|cc}
Type & CLT & Type & CLT \\ \hline
$\ET{4vn+1-4v,v;1}$ & $\mathscr{N}\lpa{\frac45n, \frac{8}{75}n;n^{-\frac12}}$ &
$\ET{5vn+1-5v,v;1}$ & $\mathscr{N}\lpa{\frac56n, \frac{25}{252}n;n^{-\frac12}}$ \\
$\ET{6vn+1-6v,v;1}$ & $\mathscr{N}\lpa{\frac67n, \frac{9}{98}n;n^{-\frac12}}$ &
$\ET{7vn+1-7v,v;1}$ & $\mathscr{N}\lpa{\frac67n, \frac{49}{576}n;n^{-\frac12}}$ \\ \hline
\end{tabular}    
\end{center}

\subsection{Polynomials with $(\alpha(v),\beta(v))
=(p+qv,v)$ \\ $\Longrightarrow$ $\mathscr{N}\lpa{\frac q{p+q+1}\,n, 
\frac{q(p+1)(p+q)}{(p+q+1)^2(p+q+2)}\,n}$}\label{ss-ru}
Rz\c adkowski and Urli\'nska \cite{Rzadkowski2019} study the 
recurrence 
\begin{align}\label{Rz-Ur}
    P_n\in\ET{(p+qv)n +1-p-q v,v;1},
\end{align}
where $p, q$ are not necessarily integers. When $p=0$, we obtain
higher order Eulerian numbers $\mathscr{T}(1,q,1)$; in particular,
$(p,q)=(0,1)$ gives Eulerian numbers, and $(p,q)=(0,m)$ the $m$th 
order Eulerian numbers. If $[v^k]P_n(v)\ge0$ for $n,k\ge0$, then we 
obtain the CLT
\begin{align}\label{Rza-Url}
    \mathscr{N}(\mu n, \sigma^2n),
    \text{ where }
    \mu := \frac{p}{p+q+1} \text{ and }
	\sigma^2 := 
    \frac{q(p+1)(p+q)}{(p+q+1)^2(p+q+2)},
\end{align}
provided that the variance coefficient $\sigma^2>0$. Note that 
for fixed $q$ and increasing $p$, the mean coefficient $\mu $ 
increases to unity and the variance coefficient $\sigma^2$ 
first increases and then decreases to zero, while for fixed $p$ and
increasing $q$, $\mu $ decreases steadily and $\sigma^2$ 
undergoes a similar unimodal pattern as in the case of fixed $q$ and 
increasing $p$.

By \eqref{Fzv-ru}, we can also apply Theorem~\ref{thm-saqp} by taking 
(assuming $p+q>0$)
\[
    \Psi(z,v) = \frac{1-T\lpa{S(v)
    +\frac{(1-v)^{p+q}z}{v^p}}}{1-v},\quad
    \text{where}\quad 
    S(v) = \int_v^1 t^{-p-1}(1-t)^{p+q-1}\dd t,
\]
and $T(S(v))=v$. With the notations of Theorem~\ref{thm-saqp}, since 
\[
    \rho(v) 
    = \frac{v^p}{(1-v)^{p+q}}
    \int_v^1 t^{-p-1}(1-t)^{p+q-1}\dd t,
\]
we obtain $\rho'(1)=-\frac{q}{(p+q)(p+q+1)}$ and
$\rho''(1)=\frac{2q(q+1)}{(p+q)(p+q+1)(p+q+2)}$. We then deduce an
optimal rate $n^{-\frac12}$ in the CLT \eqref{Rza-Url}.

If $(p,q)=(-1,1)$, then $P_n(v) = v^{n-1}$. Another simple example
for which $\sigma^2$ equals zero is $(p,q)= (-\frac12,\frac12)$
and in this case
\[
    P_{2n}(v)=\frac{v^{n-1}+v^n}2,\quad
	\text{and}\quad P_{2n-1}(v) = v^n,
\]
which does not lead to a CLT. 

Yet another example discussed in \cite{Rzadkowski2019} is
$(p,q)=(-\frac12,1)$ (which seems connected to \href{https://oeis.org/A160468}{A160468} in some way).
We then obtain $\mathscr{N}\lpa{\frac23n, \frac{2}{45}n}$ for the
distributions of the coefficients. The EGF can be solved to be of the
form
\[
    F(z,v) = \frac{1-v}v\cdot
    \frac{1+\sin\lpa{\sqrt{v(1-v)}\,z+\arcsin(2v-1)}}
    {1-\sin\lpa{\sqrt{v(1-v)}\,z+\arcsin(2v-1)}}.
\]
To apply Theorem~\ref{thm-saqp}, we use the notation of \eqref{F-ABC} 
and take (due to a double zero)
\[
    \Psi(z,v) 
    = \sqrt{\frac{1-\sin\lpa{\sqrt{v(1-v)}z
	+\arcsin(2v-1)}}{1-v}},
\]
so that 
\[
    \rho(v) = \frac{2\arccos\sqrt{v}}{\sqrt{v(1-v)}}
    = \frac{\pi-2\arcsin\sqrt{v}}{\sqrt{v(1-v)}}.
\]
Thus Theorem~\ref{thm-saqp} applies with $\rho'(1)=-\frac43$ and 
$\rho''(1) = \frac{32}{15}$, and we obtain the CLT with rate 
$\mathscr{N}\lpa{\frac23n,\frac{2}{45}n;n^{-\frac12}}$.

\paragraph{A CLT example with $\beta(1)<0$}
An example reducible to the form $(\alpha(v),\beta(v))
=(p+qv,v)$ but slightly different from \eqref{Rz-Ur} is Warren's  
model of two-coin trials studied in \cite{Warren1999}, leading to the 
recurrence 
\[
    P_n\in\GT{1}{(1-\theta_2+\theta_2v)(n-1), 
    -(\theta_1-\theta_2)v; 1-\theta_2+\theta_2v},
\]
where $0<\theta_1\ne\theta_2<1$. Since $[v^k]P_n(v)\ge0$ for all
pairs $(\theta_1,\theta_2)$ by the original construction (or by
examining the recurrence satisfied by the coefficients), we can apply
Theorem~\ref{thm-clt} and obtain the CLT
\[
    \mathscr{N}\left(\frac{\theta_2}{1-\theta_1+\theta_2}\,n,
    \frac{(1-\theta_1)\theta_2}{(1-2\theta_1+2\theta_2)
    (1-\theta_1+\theta_2)^2}\,n \right),
\]
provided that $0<\theta_1<\theta_2+\frac12$ (so that 
$1-2\theta_1+2\theta_2>0$). This example is interesting 
because if $\theta_2<\theta_1<\theta_2+\frac12$, then, putting in the 
form of \eqref{Pnv-gen}, we see that the factor  
\[
    \beta(v) = -(\theta_1-\theta_2)v
\]
becomes negative at $v=1$, and this is one of the \emph{few examples
in this paper with negative $\beta(1)$ and the coefficients of
$P_n(v)$ still following a CLT}. See Section~\ref{ss-ck} and 
\cite{Warren1999} for other models of a similar nature. By solving 
the corresponding PDE (with $F(z,v)=(1-\theta_2+\theta_2v)z + O(z^2)$ 
as $z\to0$), we obtain the EGF
\begin{align*}
    F(z,v) &= \frac1{\theta_2-\theta_1}
    \log\frac{1-v}{1-T\lpa{S(v)+v^{-\frac{1-\theta_2}
    {\theta_2-\theta_1}}(1-v)^{\frac1{\theta_2-\theta_1}}
    z}}\\ &\qquad +\frac{1-\theta_2}{\theta_2-\theta_1}
    \log\frac{T\lpa{S(v)+v^{-\frac{1-\theta_2}
    {\theta_2-\theta_1}}(1-v)^{\frac1{\theta_2-\theta_1}}
    z}}{v}, 
\end{align*}
where $T(S(v))=v$ and 
\[
    S(v) := \frac1{\theta_2-\theta_1}\int v^{-\frac{1-\theta_2}
    {\theta_2-\theta_1}-1}(1-v)^{\frac1{\theta_2-\theta_1}-1}
	\dd v.
\]
%

Another extension studied in \cite{Charalambides2002} has the form 
$P_n\in\ET{n+h_n(v-1),v;1}$ for some given sequence $h_n$. In the 
case when $h_n=p+qn$, we obtain the CLT 
\[
    \mathscr{N}\llpa{\frac{q}{2}\,n,
    \frac{q(2-q)}{12}\,n},
\]	
by Theorem~\ref{thm-clt} when the coefficients are nonnegative and
$0<q<2$.

\subsection{Polynomials with $(\alpha(v),\beta(v))=
\lpa{\frac12(1+v),\frac12(3+v)}\Longrightarrow\mathscr{N}
\lpa{\frac16n,\frac{23}{180}n}$} \label{sec-1plusv}

The sequence \href{https://oeis.org/A162976}{A162976} counts the 
number of permutations of $n$ elements having exactly $k$ double 
and initial descents; the generating polynomials $P_n$ satisfy the 
recurrence $P_n\in\GT{1}{\tfrac12(1+v)n, \tfrac12(3+v); 1}$. 
This recurrence can be verified by the EGF 
\begin{align}\label{A162976}
    F(z,v) = 1-\frac{2}{1+v-\sqrt{(1-v)(3+v)}\,
    \cot\lpa{\frac12z\sqrt{(1-v)(3+v)}}},
\end{align}
obtained by using the expression in Goulden and Jackson's book 
\cite[p.\ 195, Ex.\ 3.3.46]{Goulden1983} after a direct 
simplification; see also Zhuang \cite{Zhuang2016}. The CLT 
$\mathscr{N}\lpa{\frac16n,\frac{23}{180}n}$ for the coefficients of 
$P_n$ follows easily from Theorem~\ref{thm-clt}. 
Theorem~\ref{thm-saqp} also applies with the dominant singularity at
\begin{align}\label{dd-rho}
    \rho(v) = \frac{2\arccos\frac{1+v}2}
	{\sqrt{(1-v)(3+v)}}.
\end{align}

Two other recurrences arise from a study of similar permutation 
statistics in \cite{Zhuang2016}:
\begin{align} \label{A162975}
    P_n(v) &= \frac{(1+v)n\pm (1-v)}{2}\,P_{n-1}(v)
    +\frac{(3+v)(1-v)}2\,P_{n-1}'(v)
    \pm \frac{(1-v)(n-1)}2\,P_{n-2}(v),
\end{align}
for $n\ge2$ with $P_0(v)=1$. These recurrences follow from the EGFs
($w := \sqrt{(1-v)(3+v)}$) 
\begin{align} \label{A162975-egfs}
    \frac{w\, e^{\pm\frac12(1-v)z}}
    {w\cos\lpa{\frac12zw}-(1+v)\sin\lpa{\frac12zw}},
\end{align}
derived in \cite{Zhuang2016}; see also \cite{Elizalde2003}.  
Taking both plus signs on the right-hand side of \eqref{A162975} 
together with $P_1(v)=1$ gives the sequence 
\href{https://oeis.org/A162975}{A162975} (enumerating double
ascents); the other recurrence with both minus signs together with
$P_1(v)=v$ gives the sequence
\href{https://oeis.org/A097898}{A097898} (enumerating left-right
double ascents or unit-length runs); see \cite{Gessel1977,Zhuang2016}
for more information. Theorem~\ref{thm-clt} does not apply directly
but the same method of moments do and we get the same CLT
$\mathscr{N} \lpa{\frac16n,\frac{23}{180}n}$. The main reason that
the method of moments works for \eqref{A162975} is that the last term
is asymptotically negligible after the normalization $\bar{P}_n(v) :=
\frac{P_n(v)}{P_n(1)}=\frac{P_n(v)}{n!}$:
\[
    \bar P_n(v) = \frac{(1+v)n\pm (1-v)}{2n}\,\bar P_{n-1}(v)
    +\frac{(3+v)(1-v)}{2n}\,\bar P_{n-1}'(v)
    \pm \frac{1-v}{2n}\,\bar P_{n-2}(v).
\]
Alternatively, one applies the analytic method to the EGFs
\eqref{A162975-egfs} (with the same $\rho(v)$ as \eqref{dd-rho})
and obtains additionally an optimal convergence
rate in the CLT $\mathscr{N}\lpa{\frac16n,\frac{23}{180}n; 
n^{-\frac12}}$.

\begin{center}
\begin{tabular}{lllll}
\multicolumn{1}{c}{OEIS} &
\multicolumn{1}{c}{coeff.\ $P_{n-1}(v)$} &
\multicolumn{1}{c}{ceoff.\ $P_{n-1}'(v)$} &
\multicolumn{1}{c}{coeff.\ $P_{n-2}(v)$} &
\multicolumn{1}{c}{$(\mu_n,\sigma_n^2)$} \\ \hline	%
\href{https://oeis.org/A162976}{A162976} & $\frac{(1+v)n}2$ 
& $\frac{(3+v)(1-v)}2$ & $0$ 
& $\lpa{\frac16n+\frac16,\frac{23}{180}n+\frac{23}{180}}$\\ 
\href{https://oeis.org/A162975}{A162975} & $\frac{(1+v)n+1-v}2$ 
& $\frac{(3+v)(1-v)}2$ & $\frac{(n-1)(1-v)}2$ 
& $\lpa{\frac16n-\frac13,\frac{23}{180}n-\frac{37}{180}}$\\ 
\href{https://oeis.org/A097898}{A097898} & $\frac{(1+v)n-1+v}2$ 
& $\frac{(3+v)(1-v)}2$ & $-\frac{(n-1)(1-v)}2$  
& $\lpa{\frac16n+\frac23,\frac{23}{180}n+\frac{83}{180}}$\\ \hline
\end{tabular}	
\end{center}
We also show in this table the differences in the lower order terms 
of the asymptotic mean and asymptotic variance. 

\subsection{Polynomials with quadratic $\alpha(v)$}

We consider in this subsection recurrences of the form 
\eqref{Pnv-gen} where $\alpha(v)$ is a quadratic polynomial. 

\subsubsection{$(\alpha(v),\beta(v))
=(v^2, v(1+v))\Longrightarrow \mathscr{N}\lpa{\tfrac23n,
\tfrac{8}{45}n}$}\label{sss-v2}

Most of the examples we found involving quadratic $\alpha(v)$ have
the form (after a shift of $n$ or a change of scales)
$P_n\in\ET{v^2n+q-p+pv-v^2,v(1+v);1}$. For such a pattern, since the
degree of $P_n$ is $n$, it proves simpler to look at its reciprocal
$Q_n(v) = v^nP_n(\frac1v)$, which then has the simpler generic form
$Q_n\in\ET{vn+p+(q-p-1)v,1+v;1}$. If $q\ge p>0$, then 
$[v^k]P_n(v)\ge0$ and $P_n(1)>0$, and we obtain, by 
Theorem~\ref{thm-clt}, the CLTs $\mathscr{N}\lpa{\tfrac13n,
\tfrac{8}{45}n}$ and $\mathscr{N}\lpa{\tfrac23n,
\tfrac{8}{45}n}$
for the coefficients of $Q_n$ and of $P_n$, respectively.  

We now show how to enhance the CLTs by computing the corresponding 
EGFs. In general, assume $Q_n\in\ET{vn+p+(q-p-1)v,1+v}$. Let $G(z,v)$ 
be the EGF of $Q_n(v)$. Then $G$ satisfies the PDE
\[
    (1-vz)\partial_z G - (1-v^2)\partial_v G 
    = (p+(q-p)v)G,
\]
with $G(0,v)=Q_0(v)$. The solution, by the method of characteristics 
described in Section~\ref{ss-pde}, is given by ($u := \sqrt{1-v^2}$ 
and $w=\arcsin(v)$)
\begin{equation}\label{Qpq}
\begin{split} 
    G(z,v) &= Q_0\lpa{\sin(uz+w)}
    \left(\frac{1+\sin(uz+w)}{1+v}\right)^p
    \left(\frac{u}{\cos(uz+w)}\right)^q.
\end{split}    
\end{equation}
Write this class of functions as $\mathscr{Q}(p,q)$. Then 
\begin{align}\label{d-Qpq}
    \partial_z \mathscr{Q}(q,q)
    = q \mathscr{Q}(q,q+1) \quad \text{when}\quad 
    Q_0(v)=1. 
\end{align}

With \eqref{Qpq} available, we can apply Theorem~\ref{thm-saqp} when 
$q\ge p>0$ with $\rho(v) = \frac{\arccos(v)}{\sqrt{1-v^2}}$, and the 
local expansion 
\[
    -\log\rho(e^s) = \tfrac13s+\tfrac4{45}s^2+\tfrac8{2835}s^3
    -\tfrac{44}{14175}s^4+\cdots,
\]
giving the CLT with optimal rate $\mathscr{N}\lpa{\frac13n,
\frac8{45}n;n^{-\frac12}}$.

\paragraph{Liagre's $\mathscr{Q}(2,3)$ and $\mathscr{Q}(1,3)$}
Jean-Baptiste Liagre \cite{Liagre1855} studied (motivated by a
statistical problem) as early as 1855 the combinatorial and
statistical properties of the number of turning points (peaks and
valleys) in permutations, and as far as we were aware, his paper
\cite{Liagre1855} is the first publication on permutation
statistics leading to an Eulerian recurrence, and contains the two 
recurrences
\begin{align}\label{A008970}
    \begin{cases}
        P_n\in\GT{2}{v^2n+1+2v-3v^2,v(1+v);1},\\
        P_n\in\GT{3}{v^2n+1+v-3v^2,v(1+v);1}.
    \end{cases}
\end{align}
The former (\href{https://oeis.org/A008970}{A008970}) counts the
number of turning points in permutations of $n$ elements divided by
two, while the latter (not in OEIS) that in cyclic permutations
divided by two.

We can apply Theorem~\ref{thm-clt} by a direct shift of the two
recurrences (so both has the initial conditions $P_0(v)=1$), and
obtain the same CLT $\mathscr{N}\lpa{\frac23n,\frac8{45}n}$. The CLT
for \href{https://oeis.org/A008970}{A008970} can be obtained by the
general theorem of Wolfowitz in \cite{Wolfowitz1944} although, quite
unexpectedly, it was first stated (without proof) by Bienaym\'e as
early as 1874 in a very short note \cite{Bienayme1874} (with a total
of 13 lines); see also Netto's book \cite[pp.\ 105--116]{Netto1901}.
Bienaym\'e's result is described as ``far ahead of its time'' in 
Heyde and Seneta's book \cite{Heyde1977}. 
For more historical accounts, see \cite{Barton1965, Heyde1977, 
Warren1996}. The normalized versions (with $P_0(v)=1$) are given as 
follows. 
\begin{center}
\begin{tabular}{llll} \hline
$\frac12\#$($n$-perms. with $k$ turning points) 
& \href{https://oeis.org/A008970}{A008970} & 
$\ET{v^2n+1+2v-v^2,v(1+v);1}$ \\
$\frac12\#$($n$-cyclic perms. with $k$ turning points)
&  & $\ET{v^2n+1+v,v(1+v);1}$ \\ \hline
\end{tabular}	
\end{center}

The reciprocal polynomials $Q_n(v) := v^{n-2}P_{n-2}\lpa{\frac1v}$
and $Q_n(v) := v^{n-2}P_{n-3}\lpa{\frac1v}$ are of type 
$\mathscr{Q}(2,3)$ and $\mathscr{Q}(1,3)$, respectively, with the 
initial condition $Q_0(v)=1$ and $Q_0(v)=v$, respectively. 
By \eqref{Qpq}, we have the EGFs of $P_n$ and $Q_n$, respectively
($u := \sqrt{1-v^2}$ and $w=\arcsin(v)$):
\begin{align*}
    \begin{cases}
    \left(\dfrac{1+\sin(uz+w)}{1+v}\right)^2
    \left(\dfrac{u}{\cos(uz+w)}\right)^3,\\
    \sin(uz+w)\dfrac{(1+\sin(uz+w)}
    {1+v}\left(\dfrac{u}{\cos(uz+w)}\right)^3.
    \end{cases}
\end{align*}
Note that in the first case, an alternative form for the EGF was 
derived by Morley \cite{Morley1897} in 1897 
\[
    \sum_{n\ge1}\frac{Q_{n+1}(v)}{n!}\, z^n
    = \frac{1-v}{(1+v)\lpa{1-\sin\lpa{uz+w}}}
    -\frac1{1+v},
\]
which can be obtained by a direct integration of $\mathscr{Q}(2,3)$. 
These EGFs are then suitable for applying Theorem~\ref{thm-saqp}, and 
an optimal Berry-Esseen bound is thus implied in the corresponding 
CLTs for the coefficients. 

\paragraph{Alternating runs in permutations: $\mathscr{Q}(2,2)$}
By \eqref{d-Qpq}, we see that the total number of turning points or
alternating runs (which is twice \href{https://oeis.org/A008970}{A008970}) in all permutations of $n$
elements (not half of them) is of type $\mathscr{Q}(2,2)$. This
corresponds to sequence \href{https://oeis.org/A059427}{A059427}. For more details and information,
see David and Barton's book \cite[pp.\ 158--161]{David1962}, the 
review paper \cite{Barton1965} and  
\cite{Andre1884,Bienayme1875,Bona2004}. The normalized version (with
$P_0(v)=1$) is

\medskip

\centering
\begin{tabular}{ccccc} \hline
alternating runs in perms. 
& \href{https://oeis.org/A059427}{A059427} & $\ET{v^2n+2v-v^2,v(1+v);1}$ 
& $\mathscr{N}\lpa{\frac13n,
\frac8{45}n;n^{-\frac12}}$\\ \hline
\end{tabular}	
\justifying

\medskip

\noindent
This sequence of polynomials has a larger literature than Liagre's 
statistics. In particular, finding closed-form expressions for 
$[v^k]P_n(v)$ has been the subject of many papers; see for example 
\cite{Ma2012,Ma2013a} and the references therein. 

\paragraph{Alternating runs in up signed permutations: 
$\mathscr{Q}\lpa{\frac32,2}$}
Extending further the alternating runs to signed permutations, 
Chow and Ma \cite{Chow2014a} studied the recurrence
\begin{align}\label{chow-ma-2014}
    P_n\in\GT{1}{2v^2n-1+3v-2v^2,2v(1+v);v}.
\end{align}
They also derived the closed form expression for the EGF of $P_n$: 
\[
    \frac1{1+v}+\frac{v\sqrt{1-v}}
    {(1+v)\sqrt{\cosh\lpa{2z\sqrt{1-v^2}}-v-
    \sqrt{1-v^2}\sinh\lpa{2z\sqrt{1-v^2}}}}.
\]
The reciprocal transformation $Q_n=v^nP_n\lpa{\frac1v}$ satisfies
\[
    Q_n\in\GT{1}{2vn+3(1-v),2(1+v);1}.
\]
This is of type $\mathscr{Q}\lpa{\frac32,2}$ after normalizing
$Q_n(v)$ by $2^n$. Thus the same CLT 
$\mathscr{N}\lpa{\frac23n,\frac{8}{45}n;n^{-\frac12}}$ holds for the 
distribution of the number of alternating runs in signed permutations.

\paragraph{Derivative polynomials: $\mathscr{Q}(0,2)$}
Another sequence \href{https://oeis.org/A198895}{A198895}, which 
corresponds to the derivative polynomials of $\tan v + \sec v$, 
satisfies the recurrence
\[
	P_n\in\GT{1}{v^2n+1-v^2,v(1+v);1+v}.
\]
One gets the CLT$\lpa{\frac 23n,\frac8{45}n;n^{-\frac12}}$ for the 
coefficients $[v^k]P_n(v)$, a result (without rate) also proved in 
\cite{Ma2012a} by the real-rootedness approach. Its reciprocal 
polynomial satisfies the simpler form
\[
	Q_n\in\ET{vn,1+v;1+v}.
\]
This is of type $\mathscr{Q}(0,2)$. 

\paragraph{Up-down runs in permutations: $\mathscr{Q}(1,2)$}
A very similar sequence is \href{https://oeis.org/A186370}{A186370} 
(number of permutations of $n$ elements having $k$ up-down runs):
\[
	P_n\in\GT{1}{v^2n+v-v^2,v(1+v);v}.
\]
One gets the same CLT $\mathscr{N}\lpa{\frac23n,
\frac8{45}n;n^{-\frac12}}$. Its reciprocal polynomial satisfies the 
simpler form 
\[
	Q_n\in\GT{1}{vn+1-v,1+v;1},
\]
which is of type $\mathscr{Q}(1,2)$. Interestingly, $Q_1(v)=v$
generates the same sequence of polynomials for $n\ge2$.

\subsubsection{$(\alpha(v),\beta(v))=
\lpa{\frac12(1+v^2),\frac12(1+v^2)}\Longrightarrow 
\mathscr{N}\lpa{\frac12n,\frac5{12}n}$}
\label{sss-chebikin}

The generating polynomials for the numbers of alternating descents 
($\pi(i)\gtrless\pi(i+1)$ depending on the parity of $i$) or for the 
number of $3$-descents (either of the patterns $132$, $213$ or 
$321$) satisfy (see \cite{Chebikin2008,Ma2015})
\[
    P_n\in\GT{1}{\tfrac12(1+v^2)n+v(1-v),\tfrac12(1+v^2);1}.
\]
They are palindromic and correspond to  
\href{https://oeis.org/A145876}{A145876}.

This leads, by Theorem~\ref{thm-clt}, to the CLT
$\mathscr{N}\lpa{\frac12n, \frac5{12}n}$ for the coefficients. For
the optimal convergence rate $n^{-\frac12}$, we can use the EGF 
derived in \cite{Chebikin2008} (see also \cite{Zhuang2016})
\begin{align}\label{egf-chebikin}
    \frac{1+\sin((1-v)z)-\cos((1-v)z)}
    {\cos((1-v)z)-v-v\sin((1-v)z)},
\end{align}
and then apply Theorem~\ref{thm-saqp} with 
\[
    \rho(v) = \frac{\text{arccos}\lpa{\frac{2v}{1+v^2}}}{v-1}.
\]
A very interesting property of $P_n(v)$ is that all roots lie on the
left half unit circle, namely, $v=e^{i\theta}$ with
$\frac12\pi\le\theta \le \frac32\pi$; see \cite{Ma2015} for more
information and Figure~\ref{fig-chebikin} for an illustration. Such a
root-unitary property implies an alternative proof of the CLT via the
fourth moment theorem of \cite{Hwang2015}: \emph{the fourth centered
and normalized moment tends to three iff the coefficients are
asymptotically normally distributed.} This is in contrast to proving
the unboundedness of the variance when all roots are real; also
without the root-unitary property Theorem~\ref{thm-clt} requires the
moments of all orders.
\begin{figure}[!ht]
\centering    
\includegraphics[width=2.2cm]{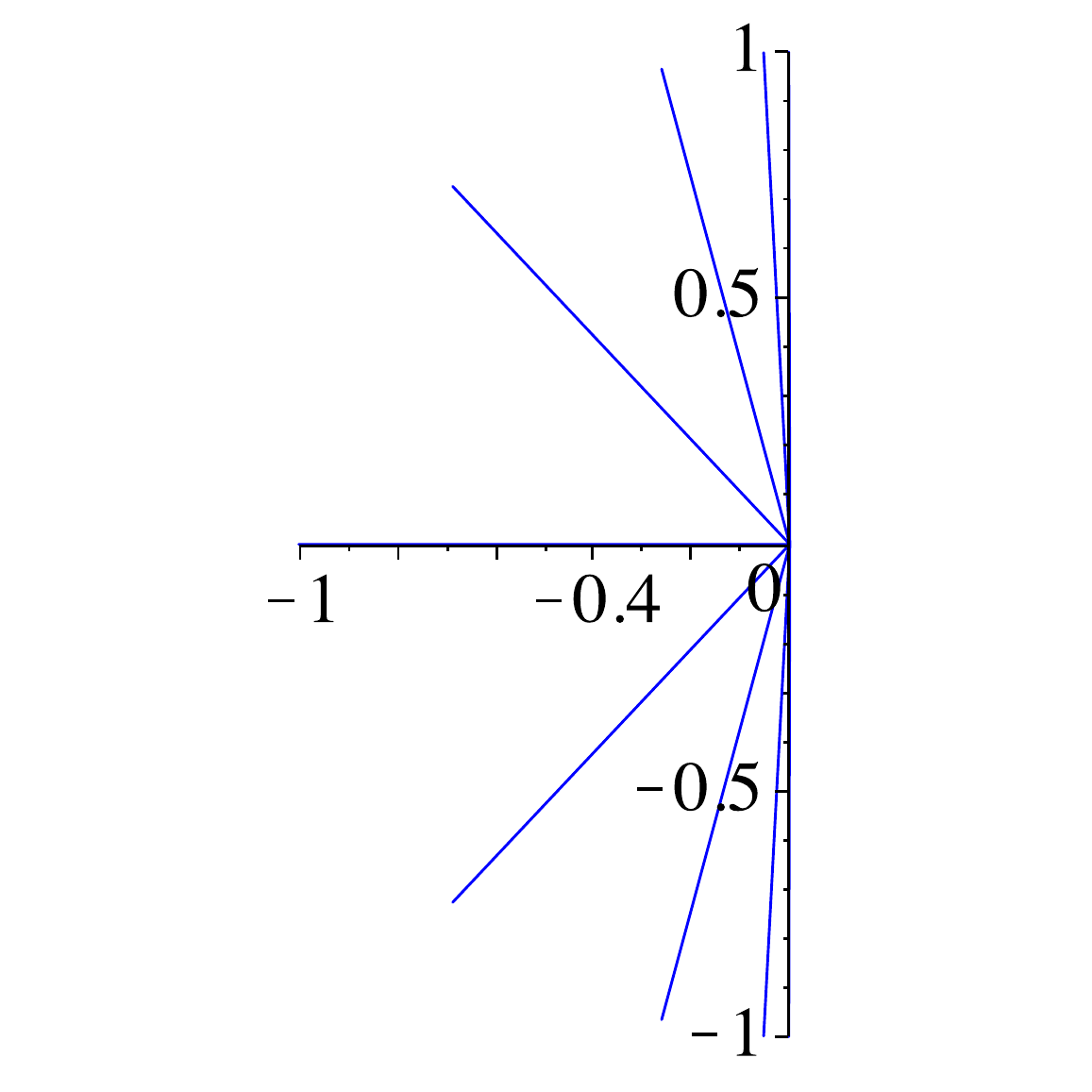}\;
\includegraphics[width=2.2cm]{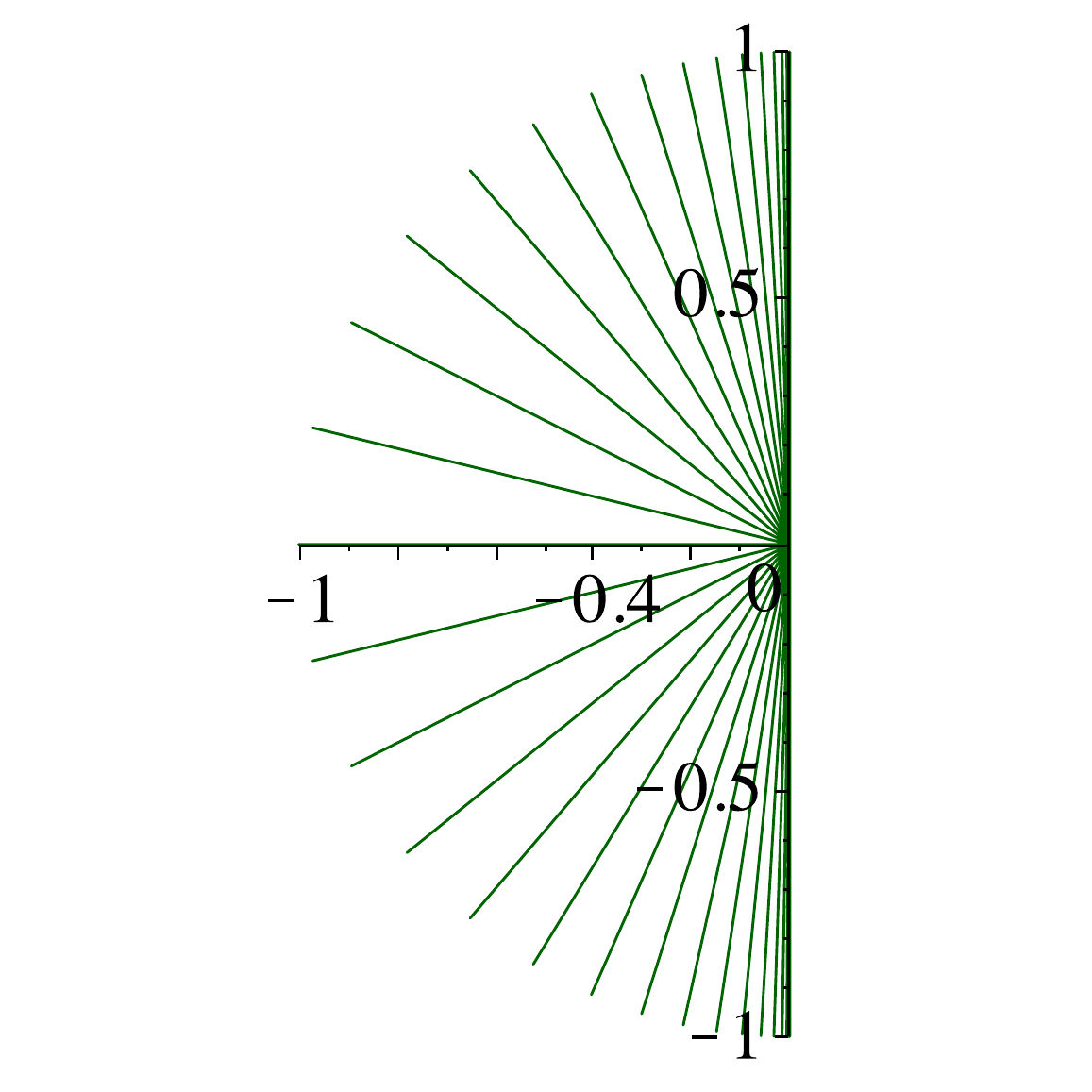}\;
\includegraphics[width=2.2cm]{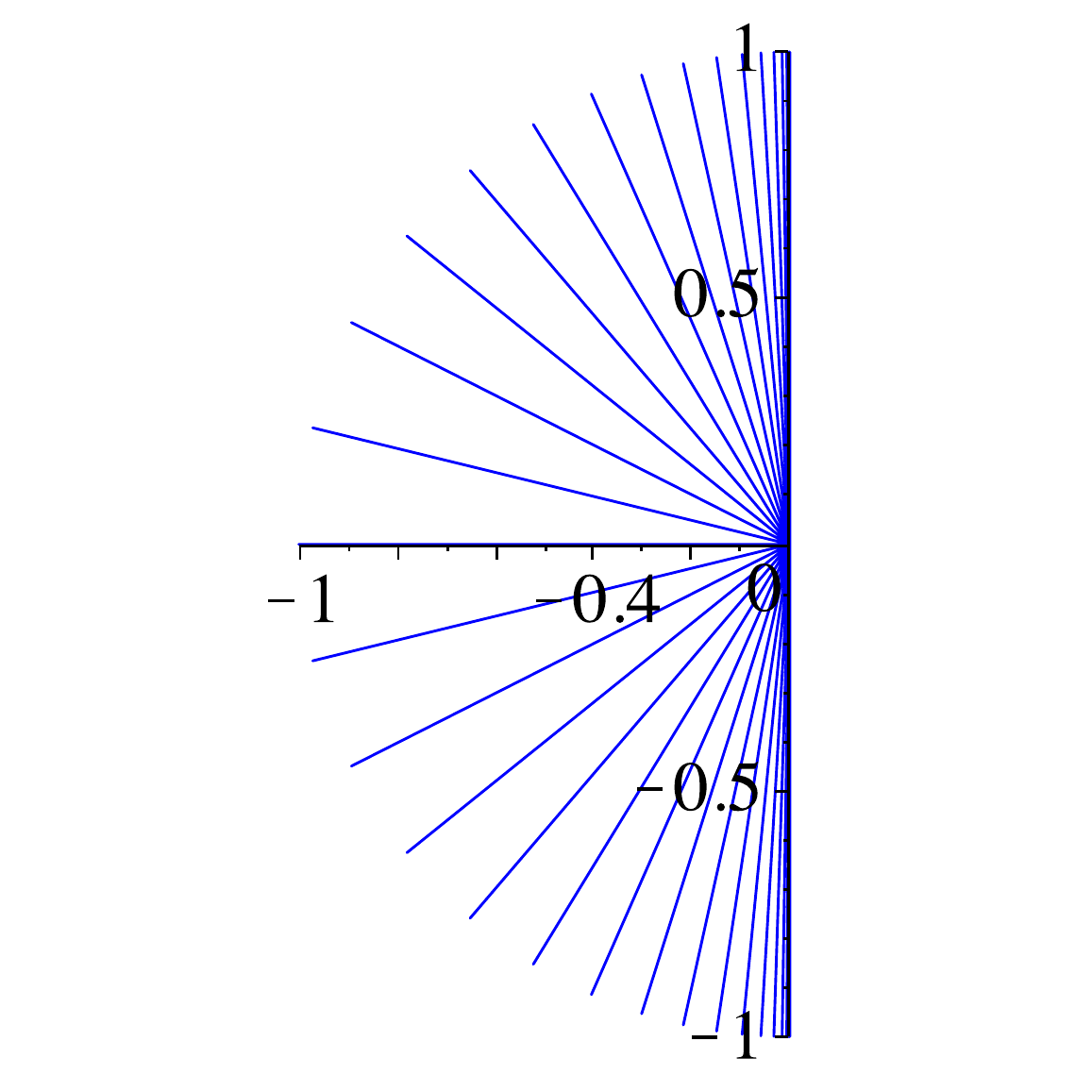}\;
\includegraphics[width=2.2cm]{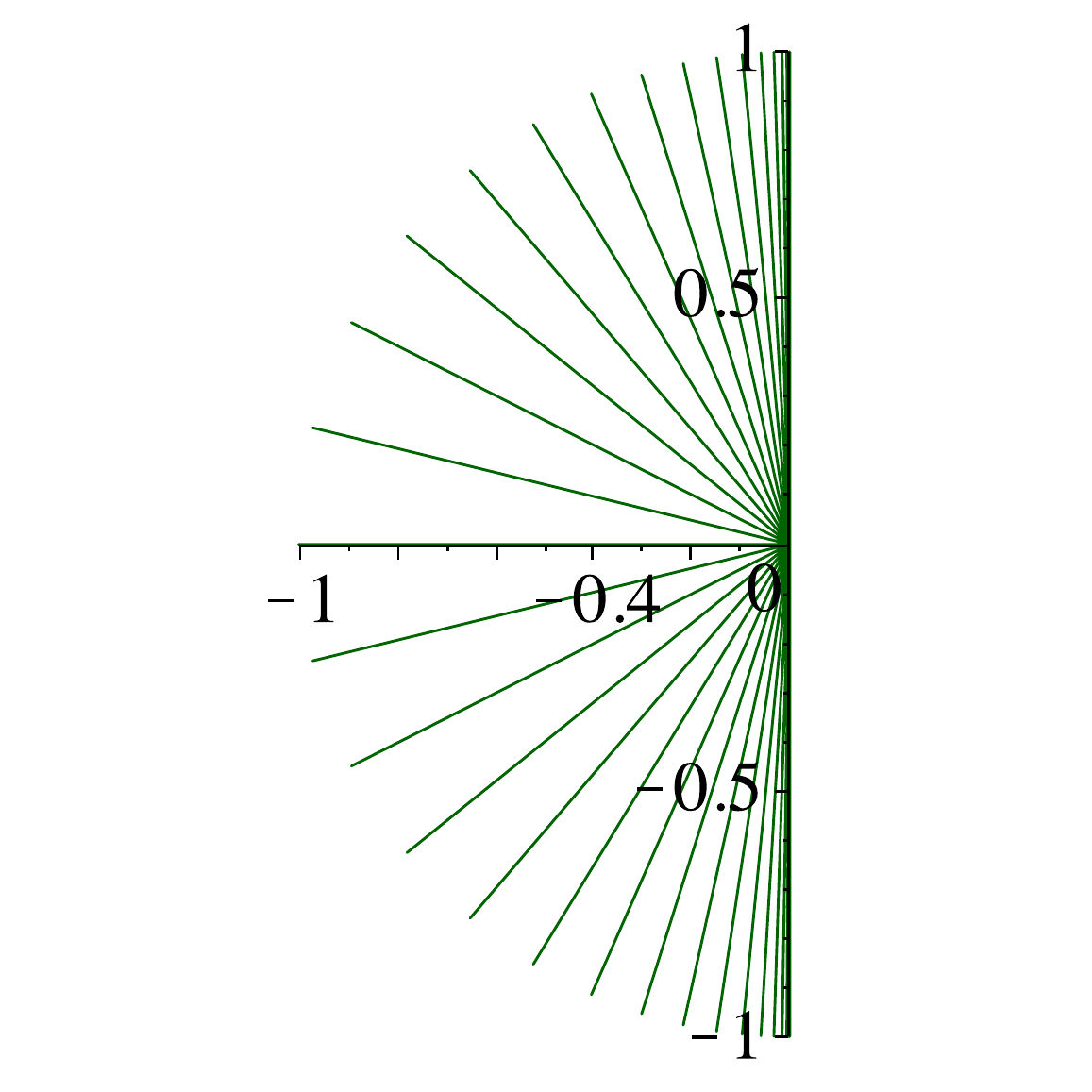}\;
\includegraphics[width=2.2cm]{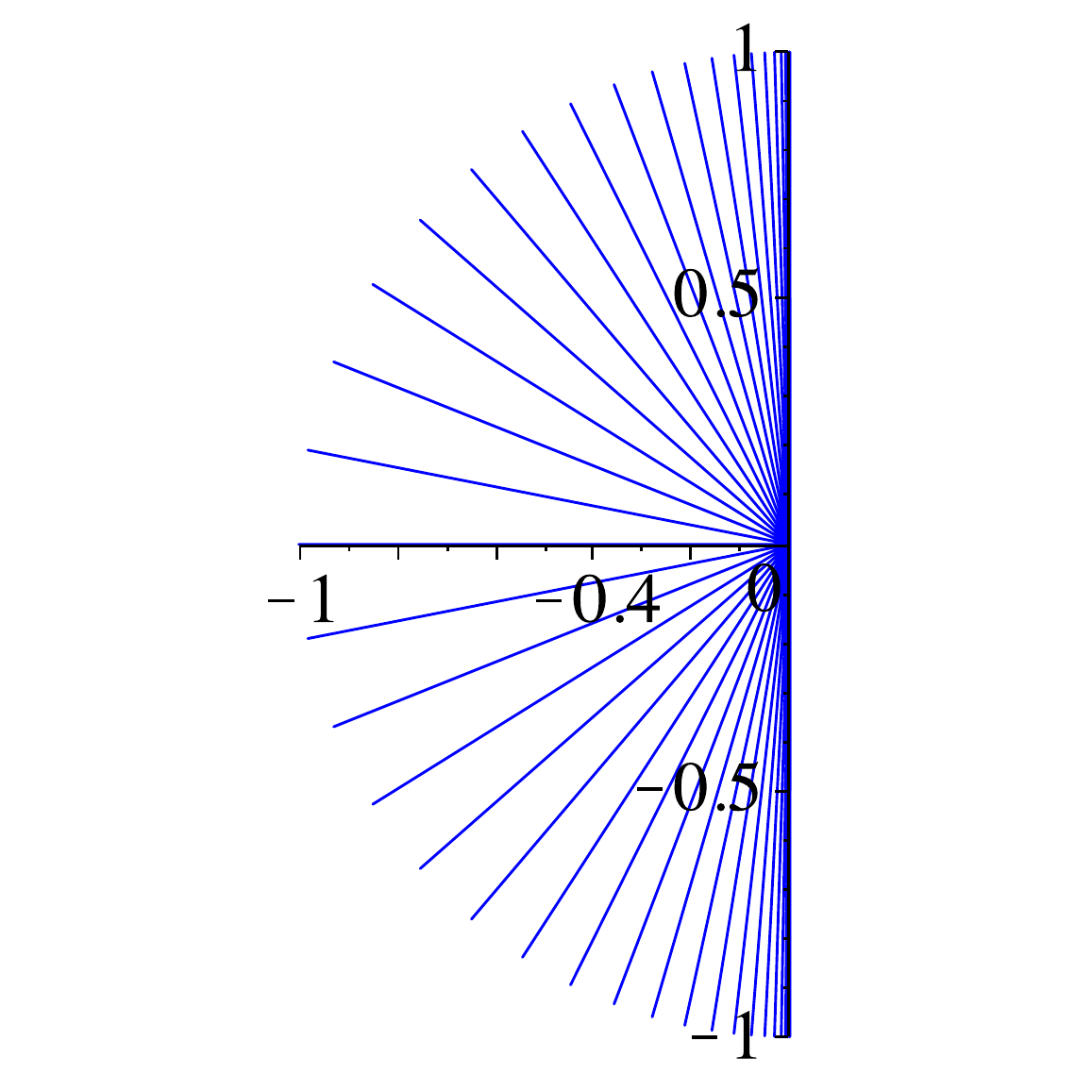}\;
\includegraphics[width=2.2cm]{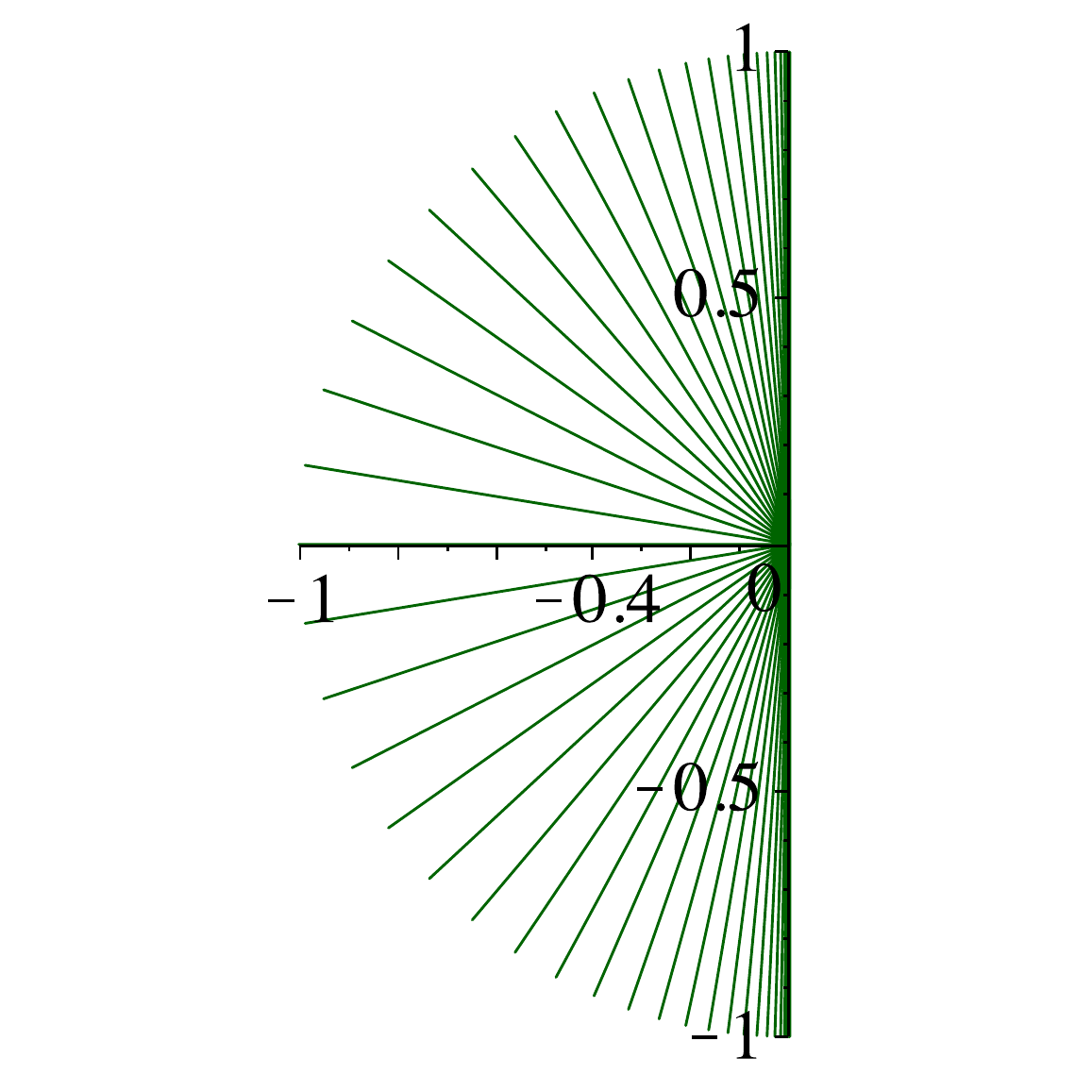}
\medskip
\caption{Distribution of the zeros of the 
\href{https://oeis.org/A145876}{A145876} polynomials 
$P_n(v)$ for $n=10,20,\dots,60$.}\label{fig-chebikin}
\end{figure}

\subsubsection{$(\alpha(v),\beta(v))
=(v(1+v), v(1+v))\Longrightarrow \mathscr{N}\lpa{\frac34n,
\frac{7}{48}n}$}\label{sss-v1pv}

In the context of tree-like tableaux, the generating polynomial for 
the number of symmetric tree-like tableaux of size $2n+1$ with $k$ 
diagonal cells satisfies the recurrence \cite{Aval2013a}
\begin{align}\label{aval-rr}
    P_n\in\ET{v(1+v)n,v(1+v);v}.
\end{align}
We obtain, by Theorem~\ref{thm-clt}, the CLT
$\mathscr{N}\lpa{\frac34n,\frac{7}{48}n}$ for the coefficients. This
CLT was proved in \cite{Hitczenko2018} by the real-rootedness
approach. The reciprocal polynomial $Q_n(v) =
v^{n+1}P_n\lpa{\frac1v}$ satisfies the simpler recurrence
$Q_n\in\ET{(1+v)n,1+v;1}$, where the right-hand side differs from
that of $P_n$ only by a factor $v$. By the techniques of 
Section~\ref{ss-pde}, the EGF has the exact form
\begin{align}\label{egf-v1pv}
    F(z,v) = \frac{v(1-v)}{(1+v)e^{z(v-1)}-2v}
    =e^{(1-v)z}\frac{v(1-v)}{1+v-2ve^{(1-v)z}},
\end{align}
which can then be used to prove an optimal Berry-Esseen bound 
$\mathscr{N}\lpa{\frac34n,\frac{7}{48}n;n^{-\frac12}}$ by 
Theorem~\ref{thm-saqp} with $\rho(v) = 
\frac1{1-v}\log\frac{1+v}{2v}$. 

See also \cite{Aval2013} for another recurrence of the same type
$P_n\in\ET{v(1+v)n+1+v-v^2,v(1+v)}$ whose reciprocal is of type
$\ET{(1+v)n+1,1+v}$. We have the same CLT for the coefficients.

\subsubsection{$(\alpha(v),\beta(v))
=(2v^2, v(1+v))\Longrightarrow \mathscr{N}\lpa{n,
\frac13n}$}\label{sss-2v2}
The $n$th order $\theta$-derivative $\theta := v\mathbb{D}_v$ of
$\sqrt{\frac{1+v}{1-v}}$ leads to the sequence of polynomials
\cite{Ma2016}
\begin{align}\label{A256978}
    P_n\in\ET{v(2vn+ 1-2v),v(1+v);1};
\end{align}
these polynomials are palindromic and correspond to 
\href{https://oeis.org/A256978}{A256978}. The degree of $P_n$
is $2n-1$, and the CLT $\mathscr{N}\lpa{n, \frac13n}$ follows from 
Theorem~\ref{thm-clt}. Furthermore, since the EGF of $P_n$ satisfies 
\cite{Ma2016}
\begin{align}\label{egf-2v2}
    \sqrt{\frac{(1-v)\lpa{1+ve^{(1-v^2)z}}}
    {(1+v)\lpa{1-ve^{(1-v^2)z}}}},
\end{align}
we obtain additionally the stronger CLT $\mathscr{N}\lpa{n,
\frac13n;n^{-\frac12}}$ by Theorem~\ref{thm-saqp} with $\rho(v) =
-\frac{\log v}{1-v^2}$. 

More generally, the same CLT holds for the $\theta$-derivative
polynomials of $\lpa{\frac{1+v}{1-v}}^q$ (with $q>0$) satisfying
$P_n\in\ET{2v((n-1)v+ q),v(1+v);1}$. Note that the usual derivative
polynomial of $\sqrt{\frac{1+v}{1-v}}$ leads to polynomials of the
type $P_n\in\ET{2vn+ 1-2v,1+v;1}$ with a different CLT; see
Section~\ref{sec-2v-1plusv}.

Another example of the form $P_n\in\ET{2v^2n+1+v,v(1+v);1+v}$
appeared in \cite{Carlitz1973a}, which enumerates the rises (or
falls) in permutations of $2n$ elements satisfying $2n+1-\pi(j)
=\pi(2n+1-j)$; see \cite{Adin2001,Ma2013b} for a shifted version of
the form $P_n\in\ET{2v^2n+1+v-2v^2,v(1+v);1}$ (enumerating the
flag-descent statistic in signed permutations). The CLT
$\mathscr{N}\lpa{n, \frac13n}$ for the coefficients of both
polynomials holds by Theorem~\ref{thm-clt}. Note that the latter
$P_n$ (from \cite{Adin2001}) corresponds to
\href{https://oeis.org/A101842}{A101842} and can be computed by
\[
    P_n(v) = (1+v)^n \sum_{0\le k<n}\eulerian{n}{k}v^k,
\]
implying that the EGF is given by 
\[
    e^{(1-v^2)z}\frac{1-v}{1-ve^{(1-v^2)z}}.
\]
Then Theorem~\ref{thm-saqp} applies with $\rho(v) = \frac{-\log v}
{1-v^2}$ and an optimal convergence rate $n^{-\frac12}$ in the CLT 
is guaranteed; see Figure~\ref{tab-v1pv} for the histograms and finer 
expressions of the mean and the variance. 

More generally, all polynomials $P_n$ of the form $(1+v)^nR_n(v)$, 
where $R_n(v)$ is of type $\mathscr{A}(p,q,r)$ with $p,q,r\ge0$ and 
$qr\ge p\ge0$, are Eulerian with $(\alpha(v),\beta(v)) = 
(2qv^2,qv(1+v))$, which leads to the same CLT $\mathscr{N}\lpa{n, 
\frac13n;n^{-\frac12}}$. An OEIS instance of this type is 
\href{https://oeis.org/A165891}{A165891}, which corresponds to 
$\ET{2v^2n+1+2v-v^2,v(1+v);1}$ and is related to 
\href{https://oeis.org/A101842}{A101842} by a factor of $1+v$;
see Figure~\ref{tab-v1pv}.
\begin{figure}[!h]
\centering
\begin{tabular}{c|ccc}
OEIS & \href{https://oeis.org/A256978}{A256978} & 
\href{https://oeis.org/A101842}{A101842} & 
\href{https://oeis.org/A165891}{A165891} \\ \hline
$a_n(v)$ &
$2v^2n+v-2v^2$ & 
$2v^2n+1+v-2v^2$ & 
$2v^2n+1+2v-v^2$ \\
&\includegraphics[width=3cm]{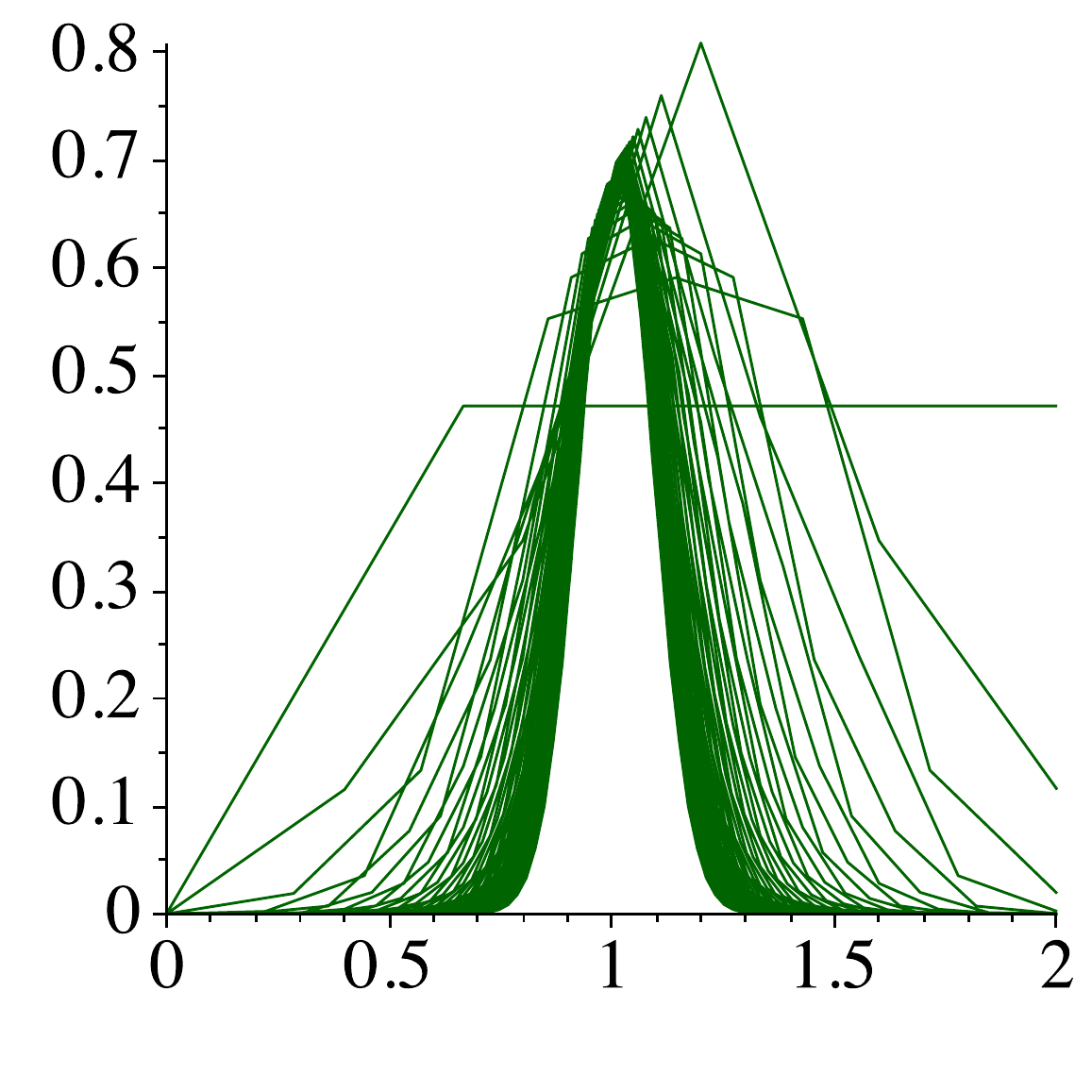} &
\includegraphics[width=3cm]{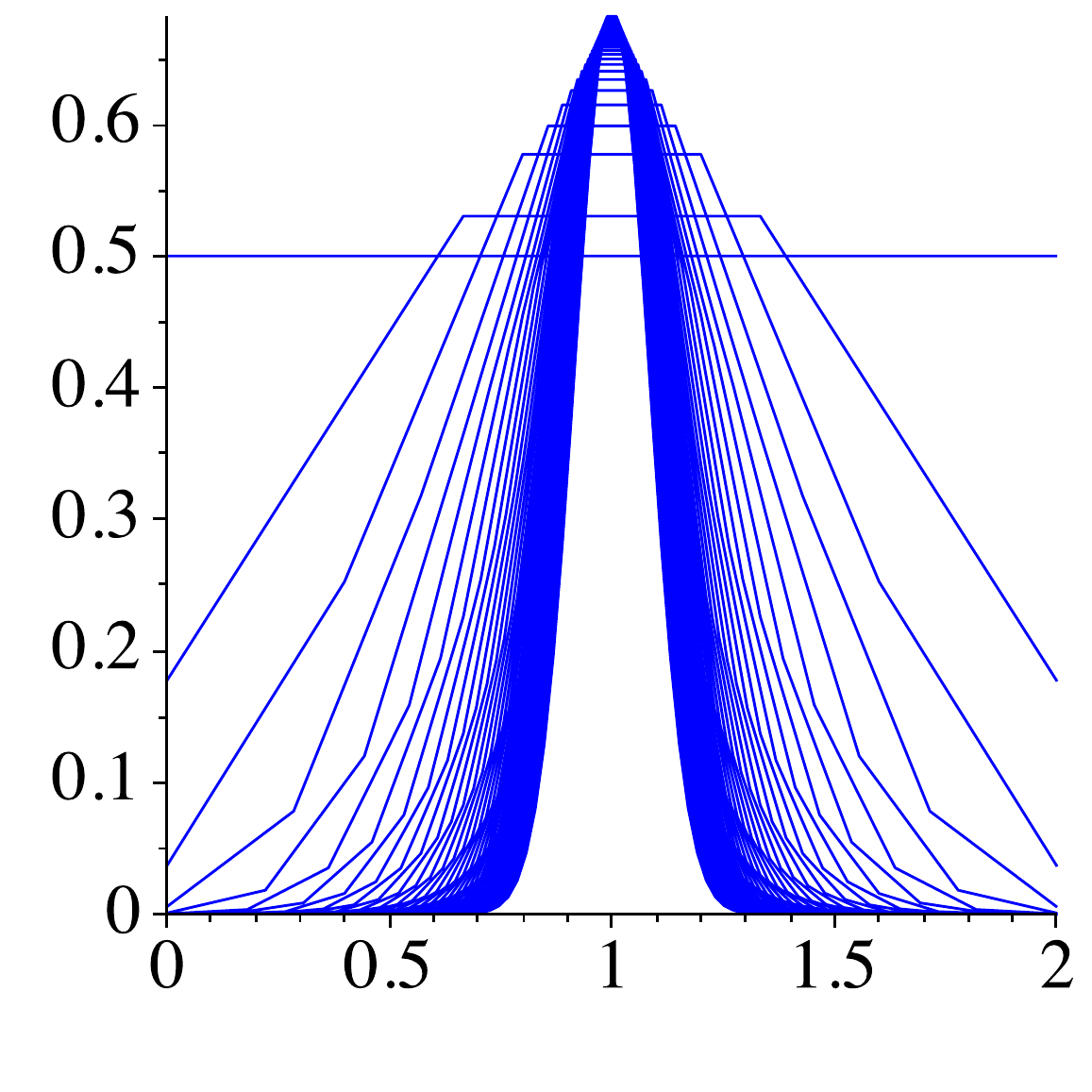} &
\includegraphics[width=3cm]{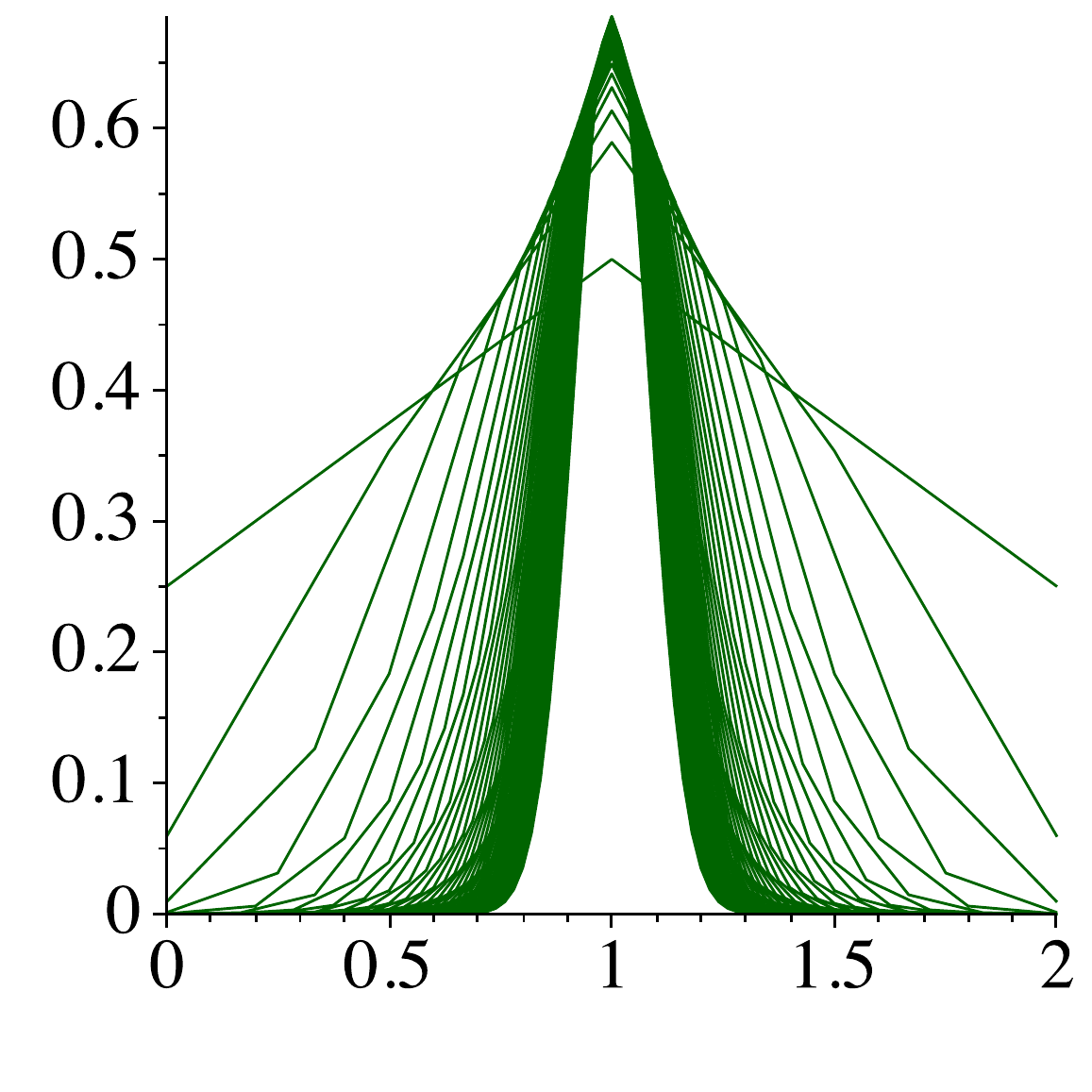} \\
$(\mathbb{E}(X_n),\mathbb{V}(X_n)))$ &
$\lpa{n,\frac{n(n-1)(4n-5)}{3(2n-1)(2n-3)}}$
& $\lpa{n-\frac12,\frac13n+\frac1{12}}$ &
$\lpa{n,\frac13n+\frac16}$ \\ \hline
\end{tabular}
\vspace*{.2cm}
\caption{The histograms of the three OEIS polynomials of the format
$\ET{a_n(v), v(1+v);1}$ for $n=2,\dots,50$. Their coefficients all
satisfy the same CLT $\mathscr{N}(n, \frac13n; n^{-\frac12})$ and 
their differences in the exact mean and the exact variance are shown 
in the last row.}\label{tab-v1pv}
\end{figure}

\subsection{Polynomials with an extra normalizing factor}

We discuss in this subsection polynomials of the form 
\begin{align}\label{enPnv}
    R_n\in\EET{\frac{\alpha(v)n+\gamma(v)}{e_n}}
    {\frac{\beta(v)}{e_n}},  
\end{align}
where $e_n$ is a nonzero normalizing factor such as $n$. If we
consider $P_n(v) := R_n(v)\prod_{1\le j\le n}e_j$, then $P_n$
satisfies $P_n\in\ET{\alpha(v)n+\gamma(v),\beta(v)}$, 
which falls into our framework \eqref{Pnv-gen}. 

\subsubsection{$(\alpha(v),\beta(v)) =(2qv,q(1+v)) \Longrightarrow 
\mathscr{N}\lpa{\frac12n,\frac14n}$}
\label{sec-2v-1plusv}

Examples in this category are often periodic in the sense that 
$[v^k]P_n(v) = 0$, say when $n-k$ is odd or even. In particular, if 
$P_n(v)$ is of the form $P_n \in\ET{(pn+r)v, q(1+v)}$,
then $P_n(v)$ is periodic. For example, the derivative 
polynomials of arcsine function 
(\href{https://oeis.org/A161119}{A161119}):
\[
    P_n(v) := (1-v^2)^{n+\frac12}
    \mathbb{D}_v^{n+1}
    \arcsin(v)\qquad(n\ge0)
\] 
satisfies $P_n\in\ET{(2n-1)v,1+v;1}$, and a CLT of the form 
$\mathscr{N}\lpa{\frac12n,\frac14n}$ holds for the coefficients. Also 
we have the EGF
\[
    F(z,v) = \lpa{(1-vz)^2-z^2}^{-\frac12},
\]
yielding an optimal rate $\mathscr{N}\lpa{\frac12n, 
\frac14n;n^{-\frac12}}$ by Theorem~\ref{thm-saqp} with $\rho(v) = 
\frac1{1+v}$, as well as the expression
\begin{align}\label{A161119}
    P_n(v) &= \sum_{0\le k\le \tr{\frac12n}}
    \frac{n!^2}{k!^2(n-2k)!4^k}\,v^{n-2k}.
\end{align}
Thus $[v^k]P_n(v)=0$ if $n-k$ is odd. The reciprocal polynomial 
corresponds to \href{https://oeis.org/A161121}{A161121}.

On the other hand, the polynomials $P_n(v) := \sum_{0\le k\le n}
(2-(-1)^{n-k})\binom{n}{k}v^k$ satisfies the recurrence
$P_n\in\GT{1}{2v,\frac{1+v}{n-1};3+v}$; see
\href{https://oeis.org/A162315}{A162315}. We then get the CLT
$\mathscr{N}\lpa{\frac12n, \frac14n}$. Note that we get binomial
coefficients (Pascal's triangle
\href{https://oeis.org/A007318}{A007318}) if $P_1(v) = 1+v$.
\begin{figure}[!ht]
\begin{center}
\includegraphics[height=3cm]{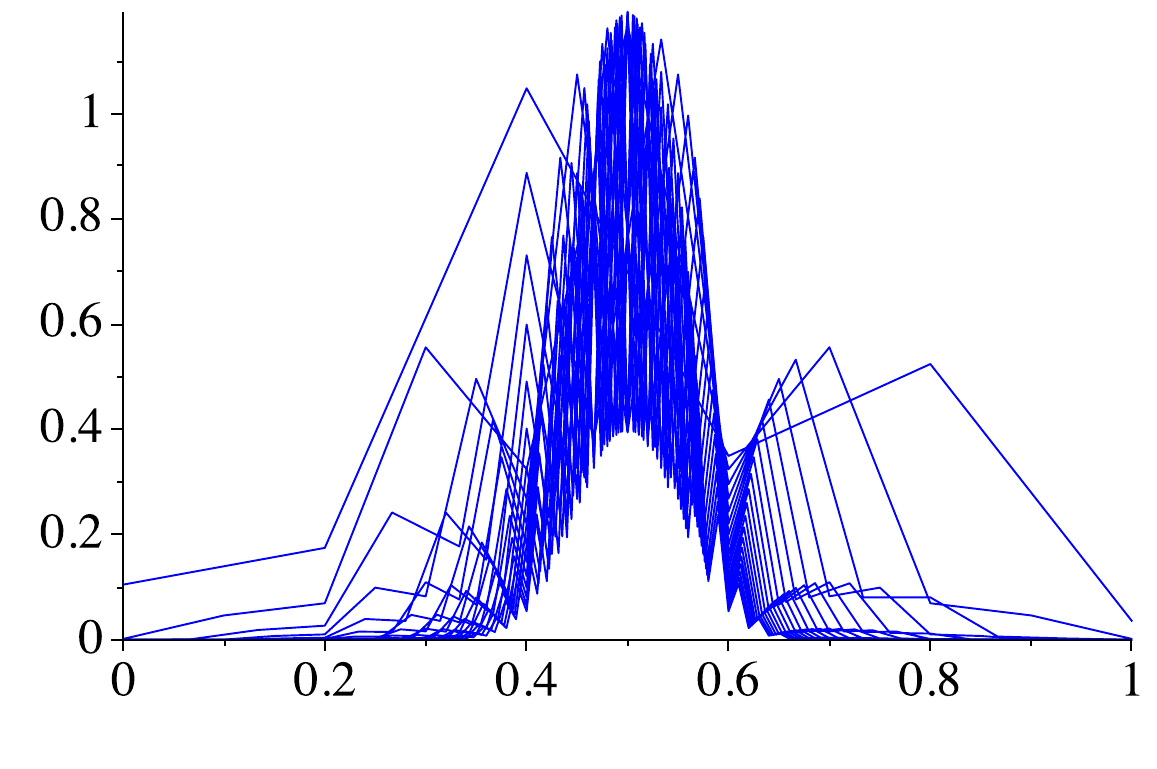}\,
\includegraphics[height=3cm]{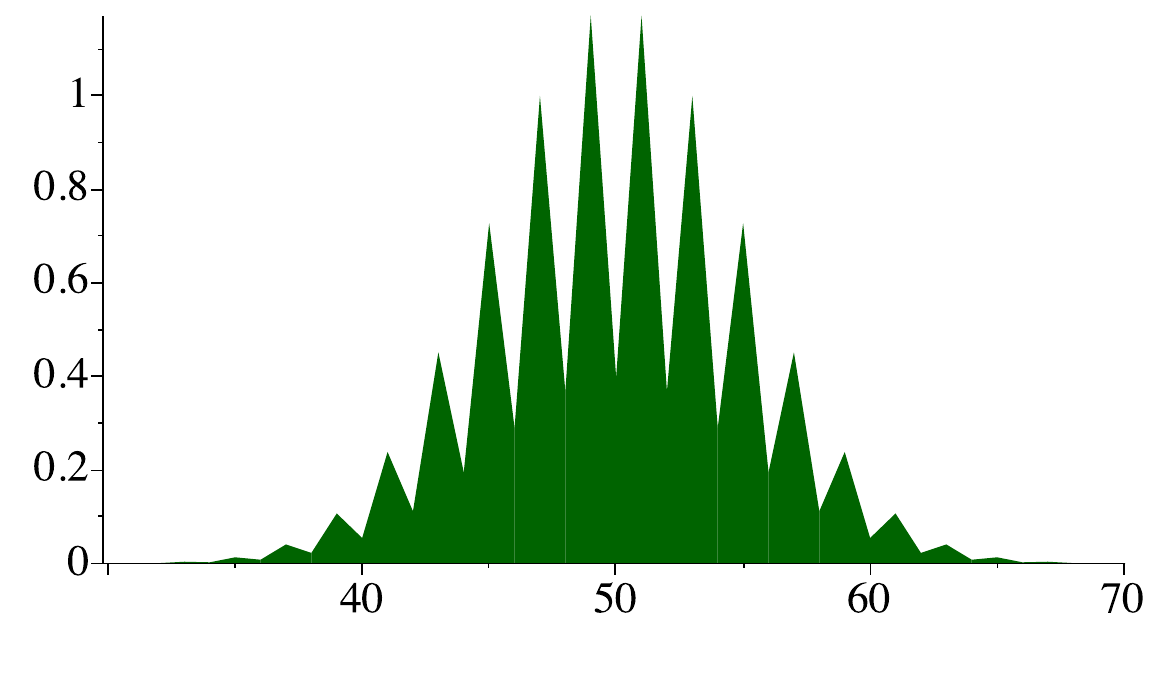}
\end{center}
\caption{The distributions of the coefficients of 
\href{https://oeis.org/A162315}{A162315}: for $n=5,10,\dots, 100$ 
(left) and $n=100$ (right). We see that they are highly oscillating 
in nature.}	\label{fig-A162315}
\end{figure}
Also note specially that despite the oscillating nature of the 
coefficients (see for example Figure~\ref{fig-A162315}), we still 
have a CLT, which is a global property, not a local one. 

The reciprocal polynomials $Q_n(v) = v^nP_n\lpa{\frac1v}$ satisfy
$Q_n\in\ET{1+v^2,\frac{v(1+v)}{n-1}}$ with the initial condition 
$Q_1(v) = 1+3v$; see \href{https://oeis.org/A124846}{A124846}. The 
coefficients of $Q_n$ yield the CLT 
$\mathscr{N}\lpa{\frac12n,\frac14n}$.

These two OEIS sequences, together with a few others leading to the 
same CLT $\mathscr{N}\lpa{\frac12n,\frac14n}$, are summarized in the 
following table. In all cases, it is possible to derive an optimal 
Berry-Esseen bound but we omit the details because these examples are 
comparatively simpler (put together here mainly to show the modeling 
diversity of the Eulerian recurrences). 

\begin{center}
\begin{tabular}{llll}\\
\multicolumn{1}{c}{OEIS} &
\multicolumn{1}{c}{$e_n$} &
\multicolumn{1}{c}{Type} &
\multicolumn{1}{c}{$[v^k]P_n(v)$} \\ \hline
\href{https://oeis.org/A161119}{A161119} 
& $1$ & $\ET{2vn-v,1+v;1}$ & \eqref{A161119} \\
\href{https://oeis.org/A161121}{A161121} & $1$ 
& $\ET{(1+v^2)n-v^2,v(1+v);1}$ & \eqref{A161119} \\ 
\href{https://oeis.org/A162315}{A162315} & $n-1$ 
& $\GT{1}{2vn-2v,1+v;3+v}$ & $(2-(-1)^{n-k})\binom{n}{k}$ \\ 
\href{https://oeis.org/A007318}{A007318} & $n-1$ 
& $\GT{1}{2vn-2v,1+v;1+v}$ & $\binom{n}{k}$ \\ 
\href{https://oeis.org/A124846}{A124846} & $n-1$ 
& $\GT{1}{(1+v^2)n-1-v^2,v(1+v);1+3v}$  
& $(2-(-1)^k)\binom{n}{k}$ \\ 
\href{https://oeis.org/A121448}{A121448} & $n+2$ 
& $\ET{4vn+2v,2(1+v);1}$ 
& $\frac{2^k}{n+1}\binom{n+1}{k}\binom{n+1-k}{\frac{n-k}2}$\\ 
\href{https://oeis.org/A143358}{A143358} & $n+1$ 
& $\ET{4vn+2,2(1+v);1}$ &
$2^k\binom{n}{k}\binom{n-k}{\tr{\frac12(n-k)}}$\\ \hline
\end{tabular}
\end{center}	
\justifying

\medskip

In particular, the sequence \href{https://oeis.org/A121448}{A121448} 
is also periodic because $\binom{n+1-k}{\frac{n-k}2}=0$ when $n-k$ is 
odd. 

On the other hand, the $n$th order derivative of 
$\sqrt{\frac{1+v}{1-v}}$ leads to the polynomials satisfying the 
recurrence $P_n\in\ET{2vn+ 1-2v,1+v;1}$; compare \eqref{A256978}. 
The EGF is given by 
\begin{align}\label{rho-2qv}
    (1-(1+v)z)^{-\frac32}
    (1+(1-v)z)^{-\frac12},
\end{align}
from which we deduce the CLT $\mathscr{N}\lpa{\frac12n, 
\frac14n;n^{-\frac12}}$ by Theorem~\ref{thm-saqp} with 
$\rho(v)=\frac1{1+v}$. 

\subsubsection{$(\alpha(v),\beta(v))
=(2(1+v),3+v) \Longrightarrow \mathscr{N}\lpa{\frac14n,
\frac3{16}n}$}\label{sss-23}

The sequence \href{https://oeis.org/A091867}{A091867}, which
enumerates the number of Dyck paths of semi-length $n$ having $k$
peaks at odd height, has its generating polynomial satisfying the
recurrence
\[
    P_n\in\EET{\frac{2((1+v)n-1)}{n+1}}
    {\frac{3+v}{n+1};1}.
\]

A closed-form expression is known (see \href{https://oeis.org/A091867}{A091867})
\begin{align}\label{A091867}
    [v^{n-k}]P_n(v) = \frac1{k+1}\binom{n}{k}
    \sum_{0\le j\le k}(-1)^j
    \binom{k+1}{j}\binom{2k-2j}{k-j}. 
\end{align}
Due to the presence of the factor $(-1)^j$, the asymptotics of this 
expression is less transparent; however, we get the CLT 
$\mathscr{N}\lpa{\frac14n,\frac3{16}n}$ by Theorem~\ref{thm-clt} 
using the expression of $(\alpha(v),\beta(v))$. The corresponding 
reciprocal polynomials 
\href{https://oeis.org/A124926}{A124926} satisfy
\[
    Q_n \in\EET{\frac{(1+3v^2)n+1-3v^2}{n+1}}
    {\frac{v(1+3v)}{n+1};1}.
\]
On the other hand, since the ordinary generating function (OGF) of 
$P_{n-1}$ satisfies 
\begin{align}\label{rho-21pv}
    \frac12-\frac12\sqrt{\frac{1-(3+v)z}{1+(1-v)z}},
\end{align}
an optimal Berry-Esseen bound also follows from 
Theorem~\ref{thm-saqp} with $\rho(v) = \frac1{3+v}$. Furthermore, by 
this OGF we have for $n\ge1$
\[
    P_{n-1}(v) 
    = \frac1n[w^{n-1}]\left(1+ vw + \frac{w^2}{1-w}\right)^n.
\]
From this and Lagrange inversion formula \cite{Stanley2012}, we 
derive the expression (without alternating terms; cf.\ 
\eqref{A091867}) 
\[
    [v^{n-k}]P_n(v) = 
    \frac1{n+1}\binom{n+1}{k+1}
    \sum_{0\le j\le \tr{\frac12k}}
    \binom{k+1}{j} \binom{k-1-j}{j-1}.
\]
Although non-alternating, the asymptotics of the right-hand side 
still remains obscure. 
%
%

These sequences and a few others of the same type are listed as 
follows. 
\vspace*{-.6cm}
\begin{small}
\begin{center}
\begin{tabular}{llll}\\
\multicolumn{1}{c}{OEIS} &
\multicolumn{1}{c}{$e_n$} &
\multicolumn{1}{c}{Type} &
\multicolumn{1}{c}{CLT}  \\ \hline	
\href{https://oeis.org/A091867}{A091867} & $n+1$ 
& $\ET{(2v+2)n-2,3+v;1}$ & 
$\mathscr{N}\lpa{\frac14n,\frac3{16}n;n^{-\frac12}}$ \\ 
\href{https://oeis.org/A124926}{A124926} & $n+1$ 
& $\ET{(1+3v^2)n+1-3v^2,v(1+3v);1}$  
& $\mathscr{N}\lpa{\frac 34n,\frac3{16}n;n^{-\frac12}}$ \\ \hline
\href{https://oeis.org/A171128}{A171128} & $n$ 
& $\ET{(2v+2)n-1-v,3+v;1}$ & 
$\mathscr{N}\lpa{\frac14n,\frac3{16}n;n^{-\frac12}}$ \\ 
\href{https://oeis.org/A135091}{A135091} & $n$ 
& $\ET{(1+3v^2)n+v(1-3v),v(1+3v);1}$  
& $\mathscr{N}\lpa{\frac 34n,\frac3{16}n;n^{-\frac12}}$ \\ \hline
\href{https://oeis.org/A091869}{A091869} & $n+1$ 
& $\GT{1}{(2v+2)n-1-v,3+v;1}$ & 
$\mathscr{N}\lpa{\frac14n,\frac3{16}n;n^{-\frac12}}$ \\ 
\href{https://oeis.org/A091187}{A091187} & $n+1$ 
& $\GT{1}{(1+3v^2)n+1+3v-6v^2,v(1+3v);1}$  
& $\mathscr{N}\lpa{\frac 34n,\frac3{16}n;n^{-\frac12}}$ \\ \hline
\href{https://oeis.org/A171651}{A171651} & $n+1$ 
& $\ET{(2v+2)n+2,3+v;1}$ & 
$\mathscr{N}\lpa{\frac14n,\frac3{16}n;n^{-\frac12}}$ \\ \hline
\end{tabular}
\end{center}
\end{small}
Here the first six are grouped in reciprocal pairs. Each of these
has a closed-form expression for their OGFs (as well as a summation 
formula similar to \eqref{A091867}); we list below only their OGFs.

\begin{small}
\begin{center}
\renewcommand{\arraystretch}{2.2}
\begin{tabular}{ll}\hline
\href{https://oeis.org/A171128}{A171128} & $\dfrac1{\sqrt{(1-(1-v)z)(1-(3+v)z)}}$\\
\href{https://oeis.org/A091869}{A091869} & $\dfrac{1-(1+v)z-\sqrt{(1+(1-v)z)(1-(3+v)z)}}{2z}$ \\
\href{https://oeis.org/A171651}{A171651} & $\dfrac{1-(3+v)z+\sqrt{(1+(1-v)z)(1-(3+v)z)}}{2(1-(3+v)z)}$
\\ \hline
\end{tabular}    
\end{center}
\end{small}

\subsubsection{$(\alpha(v),\beta(v))
=(q(1+3v),2qv) \Longrightarrow \mathscr{N}\lpa{\frac12n,
\frac18n}$} \label{sec-3v2v}

The generating polynomials of Narayana numbers (enumerating peaks in 
Dyck paths; see \cite{Sulanke1999} and \href{https://oeis.org/A090181}{A090181})
\[
    P_n(v) := \sum_{1\le k\le n}\frac1k\binom{n}{k-1}
	\binom{n-1}{k-1}v^k\qquad(n\ge1),
\]
also satisfy
\begin{align}\label{1-plus-3v}
    (n+1)P_n(v) = ((1+3v)n-1-v)P_{n-1}(v)
    +2v(1-v)P_{n-1}'(v)\qquad(n\ge1)
\end{align}
in addition to the usual three-term recurrence
\[
    (n+1)P_n(v) = (2n-1)(1+v)P_{n-1}(v)
    -(n-2)(1-v)^2P_{n-2}(v).
\]
These polynomials are palindromic and the CLT
$\mathscr{N}\lpa{\frac12n,\frac18n}$ for $[v^k]P_n(v)$ follows easily
from Theorem~\ref{thm-clt}. An essentially identical sequence
\href{https://oeis.org/A001263}{A001263} corresponds to
$v^{-1}P_n(v)$. The OGF of $P_n$ satisfies
\begin{align}\label{ogf-narayana}
    f(z,v) := \sum_{n\ge0}P_n(v) z^n  
    = \frac{1-(1+v)z-\sqrt{1-2(1+v)z+(1-v)^2z^2}}{2z},
\end{align}
from which we get an additional convergence rate $n^{-\frac12}$ by
Theorem~\ref{thm-saqp} with $\rho(v) = (1+\sqrt{v})^{-2}$. These and
a few others satisfying $P_n\in\ET{\frac{(1+3v)n+\gamma(v)}{e_n},
\frac{2v}{e_n}}$, leading to the same CLT $\mathscr{N}\lpa{\frac12n,
\frac18n; n^{-\frac12}}$, are collected in the following table.
\vspace*{-.5cm}
\begin{center}
\begin{tabular}{llll}\\
	\multicolumn{1}{c}{OEIS} &
	\multicolumn{1}{c}{$e_n$} &
	\multicolumn{1}{c}{Type} &
	\multicolumn{1}{c}{$[v^k]P_n(v)$} \\ \hline
\href{https://oeis.org/A086645}{A086645} & $n-1$ 
& $\GT{1}{(1+3v)(n-1),2v;1+v}$ 
& $\binom{2n}{2k}$\\
\href{https://oeis.org/A103328}{A103328} & $n-1$ 
& $\GT{1}{(1+3v)n-4v,2v;2}$ 
& $\binom{2n}{2k+1}$ \\ 
\href{https://oeis.org/A091044}{A091044} & $n$ 
& $\ET{(1+3v)n+1-v,2v;1}$ 
& $\frac12\binom{2n}{2k+1}$ \\ 
\href{https://oeis.org/A001263}{A001263} & $n+1$ 
& $\ET{(1+3v)n-1-v,2v;1}$ 
& $\frac1{k}\binom{n}{k-1}\binom{n-1}{k-1}$\\ 
\href{https://oeis.org/A090181}{A090181} & $n+1$ 
& $\ET{(1+3v)n-1-v,2v;1}$
& $\frac1k\binom{n}{k-1}\binom{n-1}{k-1}$ \\ 
\href{https://oeis.org/A131198}{A131198} & $n+1$ 
& $\ET{(1+3v)n+1-3v,2v;1}$
& $\frac1{n-k}\binom{n}{k+1}\binom{n-1}{k}$ \\ 
\href{https://oeis.org/A118963}{A118963} & $n$ 
& $\GT{1}{(1+3v)n+1-3v,2v;2}$
& $\frac{n+1}{n}\binom{n}{k}\binom{n}{k+1}$ \\ 
\href{https://oeis.org/A008459}{A008459} & $n$ 
& $\ET{(1+3v)n-2v,2v;1}$
& $\binom{n}{k}^2$ \\ \hline
\end{tabular}	
\end{center}
In particular, we see that the coefficients $\binom{n}{k}^2$ follow
asymptotically a CLT $\mathscr{N}\lpa{\frac12n,\frac18n}$, the
variance being smaller than that of $\binom{n}{k}$; more generally,
$\binom{n}{k}^\alpha$ follows asymptotically the CLT 
$\mathscr{N}\lpa{\frac12n,\frac 1{4\alpha}n}$ for large $n$ when 
$\alpha>0$; see Figure~\ref{fig-binom}.

\begin{figure}[!ht]
\begin{center}
\includegraphics[height=3cm]{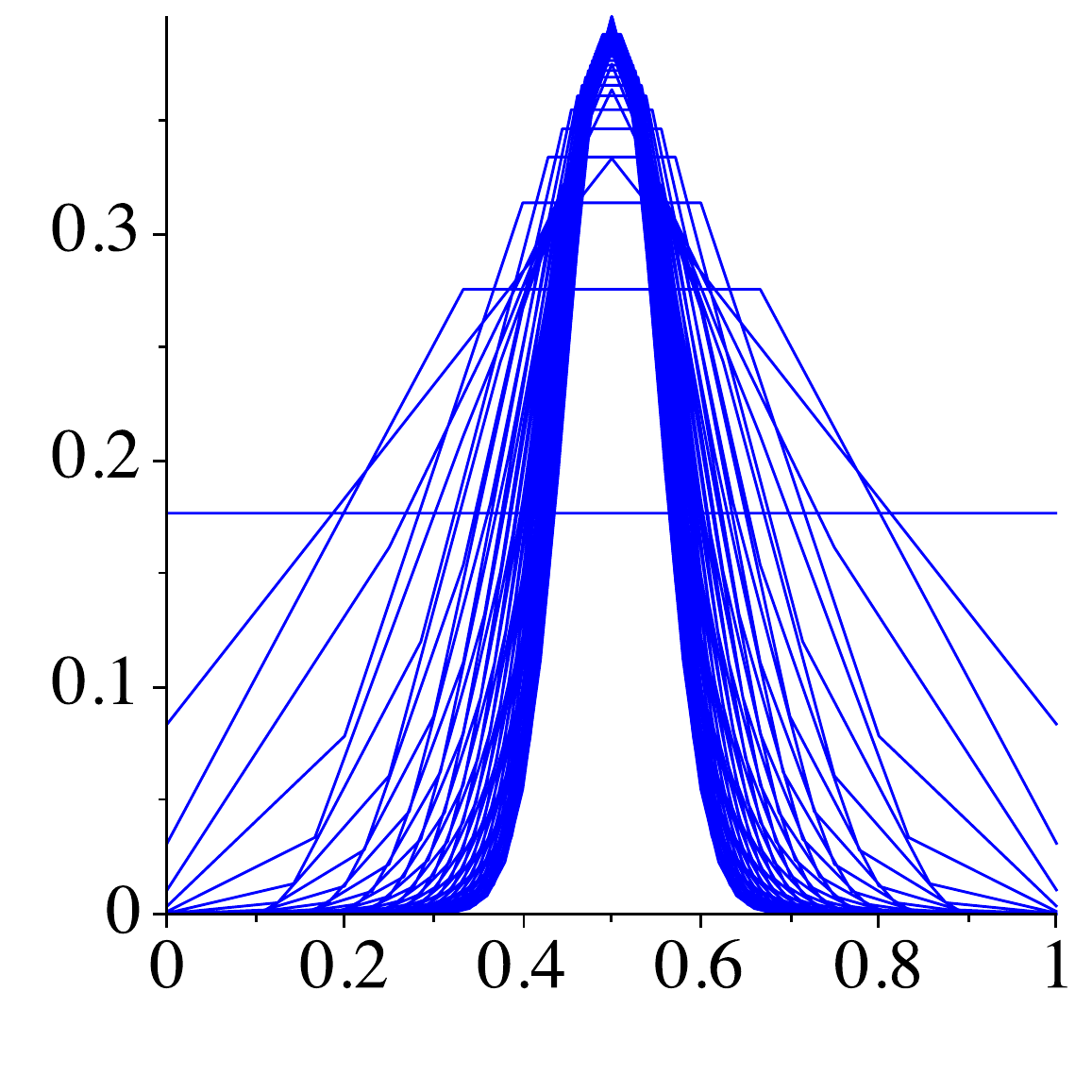}\;
\includegraphics[height=3cm]{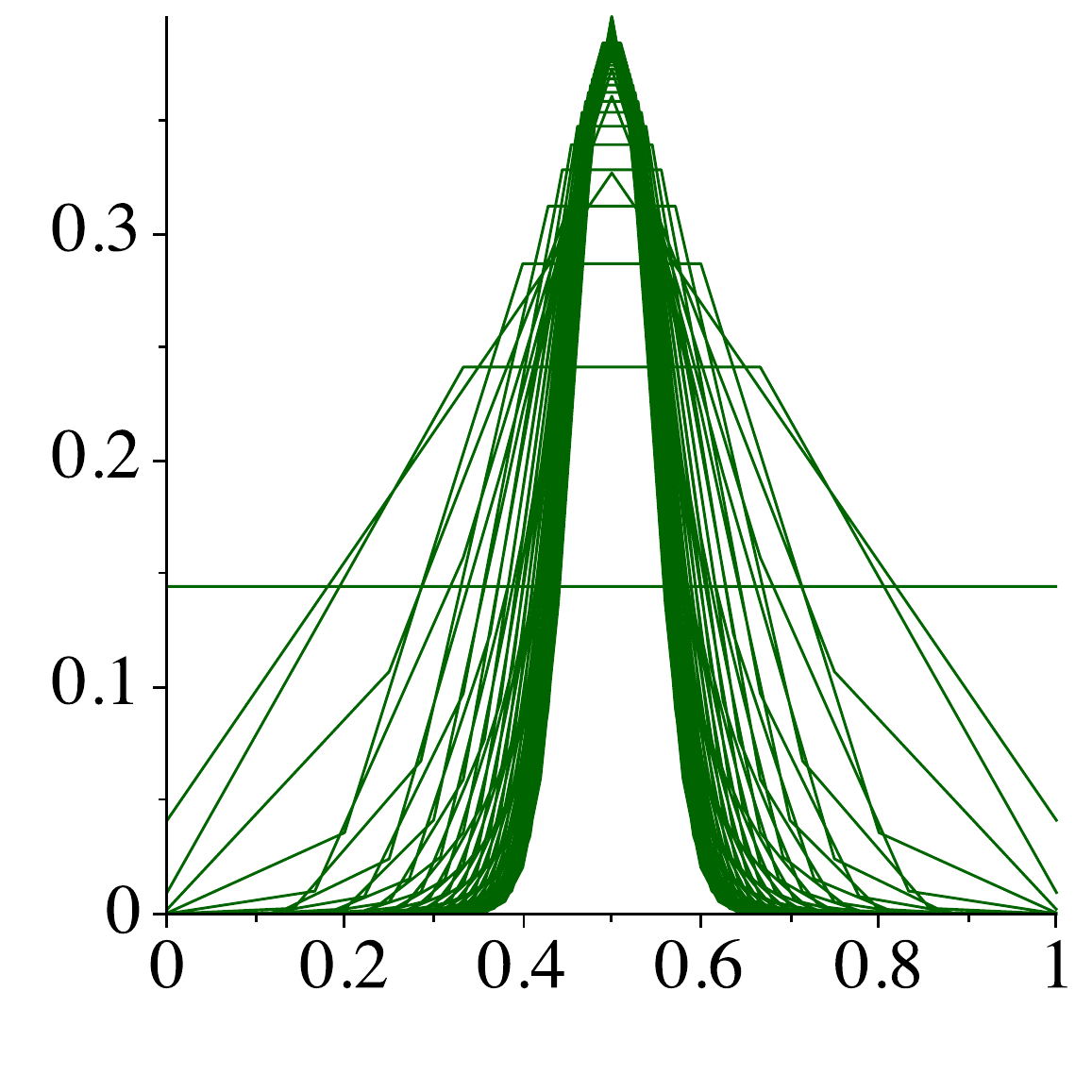}\;
\includegraphics[height=3cm]{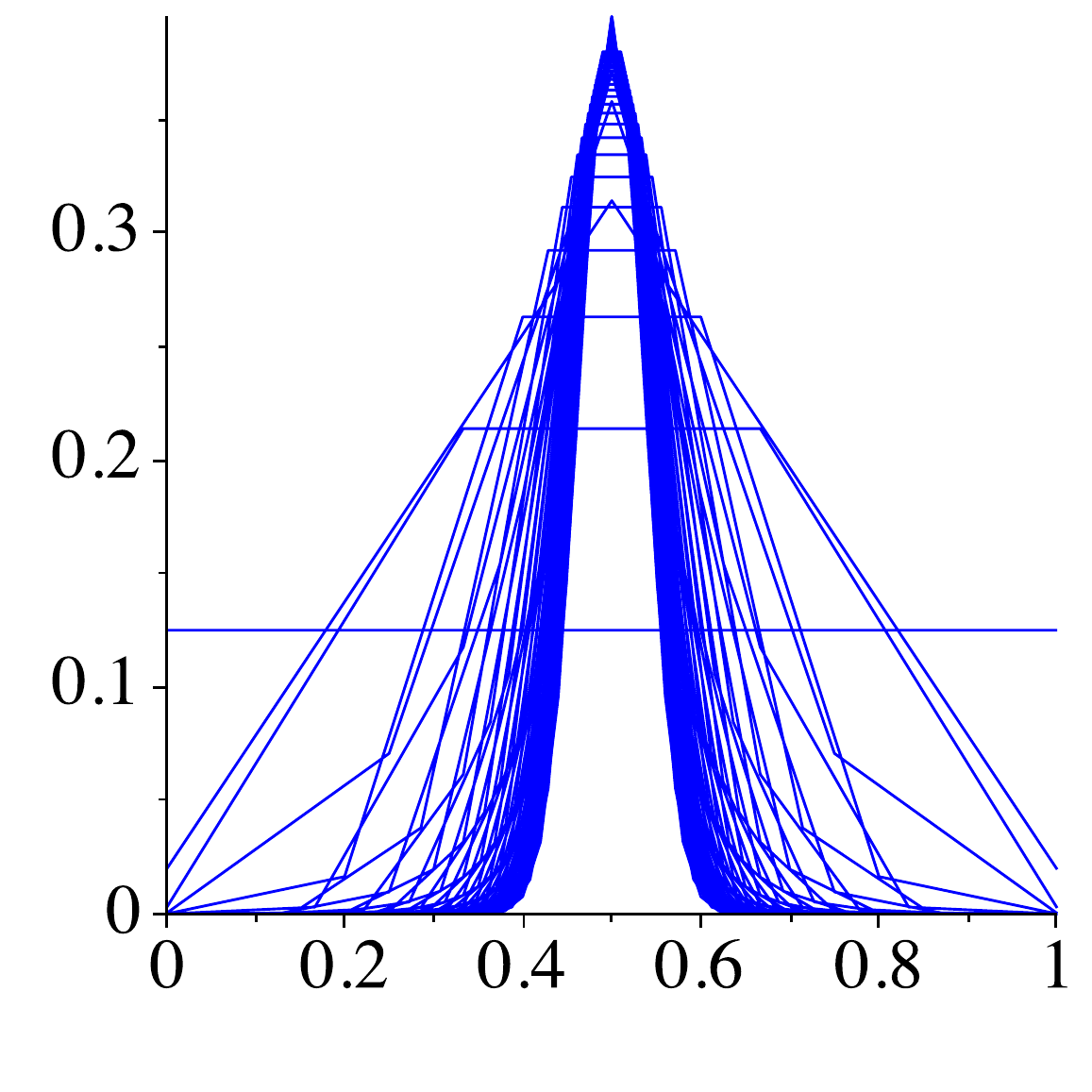}\;
\includegraphics[height=3cm]{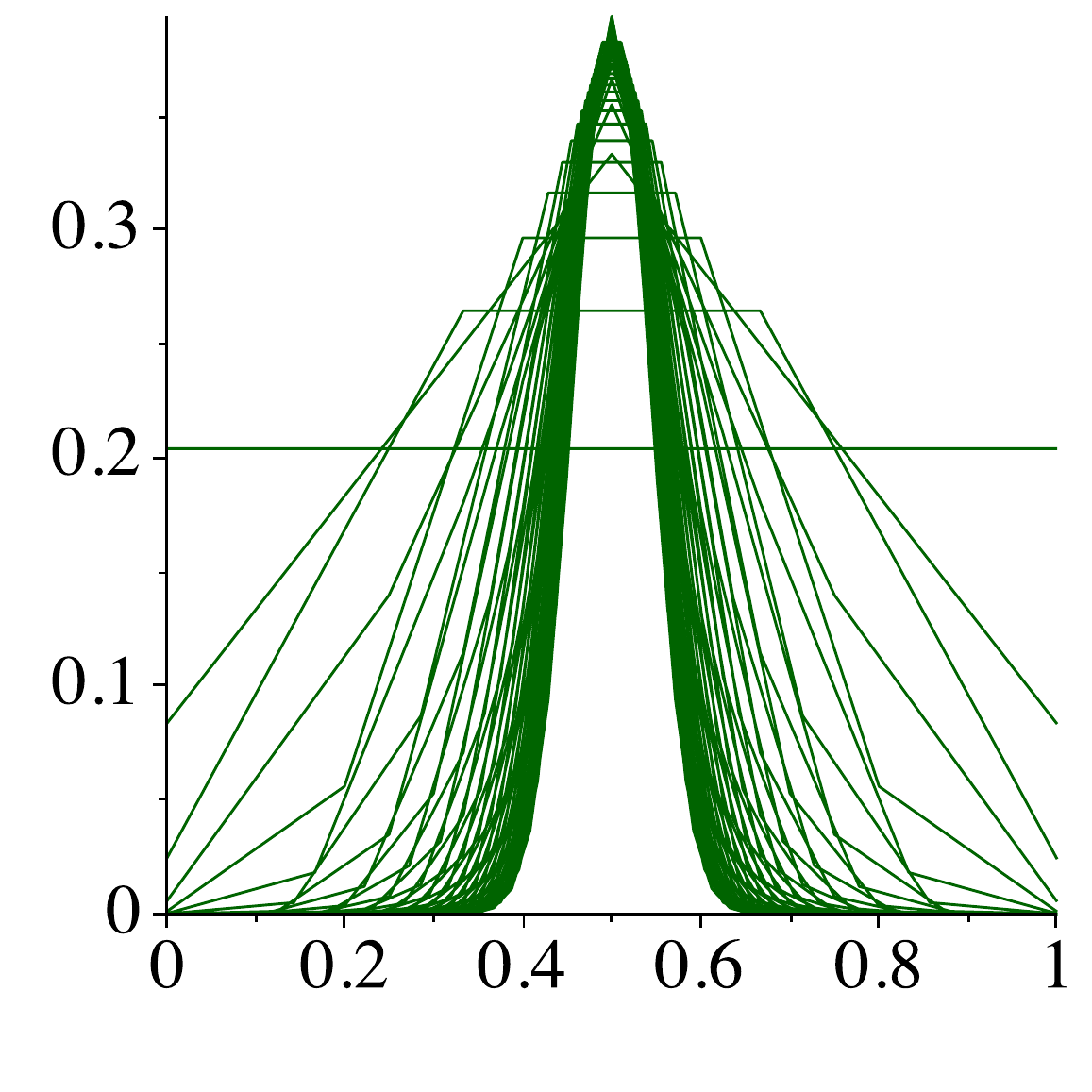}
\end{center}
\caption{Normalized histograms of $\binom{n}{k}^\alpha$ for
$n=1,\dots,50$ and $\alpha=2,3,4$ (the first three), respectively,
and the Eulerian distribution (rightmost). The variance for the
second and the fourth are both asymptotic to $\frac1{12}n$. Note that
$\binom{n}{k}^3$ correspond to
\href{https://oeis.org/A181543}{A181543} and $\binom{n}{k}^4$ to
\href{https://oeis.org/A202750}{A202750}.}
\label{fig-binom}
\end{figure}

While the generating polynomials of $\binom{2n}{2k}$ satisfy 
\eqref{1-plus-3v}, those of $\binom{2n+1}{2k}$ and 
$\binom{2n+1}{2k+1}$ satisfy the following recurrences
\vspace*{-.5cm}
\begin{center}
\begin{tabular}{ccc}\\ \hline
\href{https://oeis.org/A091042}{A091042} & $\binom{2n+1}{2k}$ & 
$\ET{\frac{2(1+3v)n-1-3v}{2n-1},\frac{4v}{2n-1};1}$   \\
\href{https://oeis.org/A103327}{A103327} & $\binom{2n+1}{2k+1}$ &
$\ET{\frac{2(1+3v)n+1-5v}{2n-1},\frac{4v}{2n-1};1}$ \\ \hline
\end{tabular}    
\end{center}
The two sequences form a reciprocal pair. 

\subsubsection{$(\alpha(v),\beta(v))
=(5+3v,2(1+v)) \Longrightarrow \mathscr{N}\lpa{\frac14n,
\frac5{32}n}$}\label{sss-5p3v}

The polynomials (\href{https://oeis.org/A114608}{A114608}, 
enumerating the number of peaks in bicolored Dyck paths)
\[
    P_n(v) = \frac1n\sum_{0\le k\le n}v^k
    \sum_{0\le j\le n-k}\binom{n}{j+1}\binom{n-k}{j}
    2^j,
\]
satisfy $P_n\in\ET{\frac{(5+3v)n-3-v}{n+1}, \frac{2(1+v)}{n+1};
1}$. The CLT $\mathscr{N}\lpa{\frac14n,\frac5{32}n}$ then follows
from Theorem~\ref{thm-clt}, and an effective version with 
$n^{-\frac12}$ convergence rate follows from Theorem~\ref{thm-saqp} 
using the OGF
\begin{align}\label{rho-5p3v}
    \frac{1+(1-v)z-\sqrt{1-2(3+v)z+(1-v)^2z^2}}{4z},
\end{align}
with $\rho(v) = \lpa{\sqrt{2}+\sqrt{1+v}}^{-2}$. 

\subsubsection{$(\alpha(v),\beta(v))
=\lpa{\frac13(7+2v),\frac13(5+4v)} \Longrightarrow 
\mathscr{N}\lpa{\frac19n, \frac2{27}n}$}\label{sss-7p2v}

The generating polynomial (\href{https://oeis.org/A181371}{A181371}) 
of the pattern occurrences of ``$01$'' in ternary words satisfies 
$P_n\in\ET{\frac{(7+2v)n+2(1-v)}{3n},\frac{5+4v}{3n};1}$. This 
follows from the OGF 
\begin{align}\label{rho-7p2v}
    \sum_{n\ge0} P_n(v) z^n 
	= \frac1{1-3z+(1-v)z^2}.
\end{align}
From this we deduce the CLT $\mathscr{N}\lpa{\frac19n, \frac2{27}n
;n^{-\frac12}}$ for the coefficients $[v^k]P_n(v)$ by 
Theorem~\ref{thm-saqp} with $\rho(v)=\frac2{3+\sqrt{5+4v}}$. 

\subsubsection{$(\alpha(v),\beta(v))
=(1+3v^2,v(1+v)) \Longrightarrow \mathscr{N}\lpa{n,
\frac12n}$}\label{sss-1p3v2}

The sequence \href{https://oeis.org/A088459}{A088459} enumerates
peaks in symmetric Dyck paths and the corresponding polynomials
satisfy $\ET{\frac{(1+3v^2)n+1+v}{n+1}, \frac{v(1+v)}{n+1};1+v}$. One
then gets the CLT $\mathscr{N}\lpa{n,\frac12n}$ by
Theorem~\ref{thm-clt}. This and a few other polynomials from OEIS are
listed as follows. 
\vspace*{-.5cm}
\begin{center}
\begin{tabular}{lll}\\ \hline
\href{https://oeis.org/A088459}{A088459}	
& Peaks in symmetric Dyck paths
& $\ET{\frac{(1+3v^2)n+1+v}{n+1}, \frac{v(1+v)}{n+1};1+v}$ \\
\href{https://oeis.org/A059064}{A059064}	
& Card-matching numbers	
& $\ET{\frac{(1+3v^2)n-2v^2}{n}, \frac{v(1+v)}{n};1}$ \\
\href{https://oeis.org/A059065}{A059065}	
& Card-matching numbers 
& $\ET{(1+3v^2)n^2-2v^2n, v(1+v)n;1}$\\
\href{https://oeis.org/A152659}{A152659}	
& Turns in lattice paths
& $\ET{\frac{(1+3v^2)n+1+2v-v^2}{n+1}, \frac{v(1+v)}{n+1};2}$ \\
\href{https://oeis.org/A247644}{A247644}	
& Even rows of \href{https://oeis.org/A088855}{A088855}
& $\ET{\frac{(1+3v^2)n+1+2v-v^2}{n+1},\frac{v(1+v)}{n+1};1}$\\
\hline
\end{tabular}
\end{center}

A convergence rate in the CLT can be obtained by solving the
corresponding PDEs and then by applying Theorem~\ref{thm-saqp}. For
example, the OGF for \href{https://oeis.org/A152659}{A152659} is
given by
\begin{align}\label{rho-1p3v2}
	\frac{2}{z(v-f(z,v^2))}-\frac2{vz},
\end{align}
where $f$ is the generating function \eqref{ogf-narayana} of Narayana 
numbers. Thus $\rho(v) = (1+v)^{-2}$ and the CLT 
$\mathscr{N}\lpa{n,\frac12n;n^{-\frac12}}$ is implied. 

\subsection{Polynomials with $(\alpha(v),\beta(v))
=(-1+(q+1)v,qv)\Longrightarrow\mathscr{N}\lpa{\frac{q+1}{2q}\,n, 
\frac{q^2-1}{12q^2}\,n}$}\label{ss-ck}

A generalization of Morisita's model \eqref{morisita} proposed 
by Charalambides and Koutras in \cite{Charalambides1993} is of the 
form 
\[
    P_n\in\EET{\frac{(-1+(q+1)v)n+1+p+(qr-p-q-1)v}{n}}
    {\frac{qv}{n};1}.
\]
The OGF $f(z,v) := \sum_{n\ge0}P_n(v)z^n$ is given by 
\begin{align}\label{egf-ck}
    \lpa{1+(1-v)z}^{p}
    \left(\frac{1-v}{1-v(1+(1-v)z)^q}\right)^{r}.
\end{align}
We write this class as $f\in\mathscr{M}(p,q,r)$ or $f\in
\mathscr{M}(p,q,r;z)$. The type $\mathscr{M}(p,q,1)$ was studied in
\cite{Charalambides1982}, and the type $\mathscr{M}\lpa{\frac
pq,\frac1q,1;qz}$ in \cite{Hsu1999} in connection with degenerate
Stirling numbers. It is interesting to compare these forms with those
(\eqref{Pnv-Eabc} and \eqref{Epqr}) for $\mathscr{A}(p,q,r)$ where
the factor ``$e^{(1-v)z}$'' there is ``mimicked" by ``$1+(1-v)z$''
here. If $[v^k]P_n(v)\ge0$ or $(-1)^n[v^k]P_n(v)\ge0$ and $|q|>1$,
then we obtain the CLT $\mathscr{N}\lpa{\frac{q+1}{2q}n,
\frac{q^2-1}{12q^2}n}$ for the coefficients by Theorem~\ref{thm-clt}
and $\mathscr{N}\lpa{\frac{q+1}{2q}n,
\frac{q^2-1}{12q^2}n;n^{-\frac12}}$ by Theorem~\ref{thm-saqp} with
$\rho(v) =-\frac{1-v^{-\frac1q}} {1-v}$.

The reciprocal polynomial $Q_n(v) := v^nP_n\lpa{\frac1v}$ satisfies
\[
    Q_n\in\EET{\frac{(1+(q-1)v)n +qr-1-p+(1+p-q)v}{n}}
    {\frac{qv}{n};1}.
\]
This gives the pair $(\alpha(v), \beta(v)) = (1+(q-1)v, qv)$, and
then the CLT $\mathscr{N} \lpa{\frac{q-1}{2q}\,n,
\frac{q^2-1}{12q^2}\,n;n^{-\frac12}}$.
If $f\in\mathscr{M}(p,q,r;z)$, then the reciprocal polynomial is of
type $\mathscr{M}(p-qr,-q,r;-z)$.

\paragraph{Runs in words: $\mathscr{M}(0,q,1;z)$ or
$\mathscr{M}(-q,-q,1;-z)$} This class of polynomials appeared in
Carlitz's study \cite{Carlitz1978b, Carlitz1979} of ``degenerate"
Eulerian numbers (which corresponds to $\mathscr{M}(0,q,1;\frac
zq)$), as well as that of rises in sequences (with repetitions)
\cite{Carlitz1966}, and was later referred to as the Carlitz numbers
in \cite[\S 14.3]{Charalambides2002a}. Such numbers also enumerate
increasing runs in $q$-ary words and have the closed-form expression
\[
    P_n(v) = \sum_{0\le k\le n} v^k
    \sum_{0\le j\le k} (-1)^{k-j}
    \binom{n+1}{k-j}\binom{qj}{n};
\]
see also \cite{Dais2001} for the occurrence of these numbers in
algebraic geometry. Note that when $q=2$, one gets the simpler
expression $\binom{n+1} {2n-2k+1}$ for $[v^k]P_n(v)$. We obtain the
CLT $\mathscr{N}\lpa{\frac{q+1}{2q}\,n, 
\frac{q^2-1}{12q^2}\,n;n^{-\frac12}}$ when
$q>1$ is an integer. When $q=1$, we get the OGF $\frac1{1-vz}$, and
the limit law is degenerate. The cases $q=2,3,4$ appear in OEIS:
\vspace*{-.5cm}
\begin{center}
\begin{tabular}{llll}\\
	\multicolumn{1}{c}{Description} &
	\multicolumn{1}{c}{OEIS} &
	\multicolumn{1}{c}{Type} &
	\multicolumn{1}{c}{CLT} \\ \hline
$\uparrow$ runs in binary words &
\href{https://oeis.org/A119900}{A119900} & $\mathscr{M}(0,2,1;z)$ & 
$\mathscr{N}\lpa{\frac34n,\frac1{16}n;n^{-\frac12}}$ \\
\href{https://oeis.org/A119900}{A119900} without zeros & \href{https://oeis.org/A109447}{A109447} &
& $\mathscr{N}\lpa{\frac14n,\frac1{16}n;n^{-\frac12}}$ \\
Reciprocal of \href{https://oeis.org/A119900}{A119900} & \href{https://oeis.org/A202064}{A202064} &
$\mathscr{M}(-2,-2,1;-z)$ & 
$\mathscr{N}\lpa{\frac14n,\frac1{16}n;n^{-\frac12}}$ \\
\href{https://oeis.org/A202064}{A202064} without zeros & \href{https://oeis.org/A034867}{A034867} &
& $\mathscr{N}\lpa{\frac14n,\frac1{16}n;n^{-\frac12}}$ \\ \hline
$\uparrow$ runs in ternary words &
\href{https://oeis.org/A120987}{A120987} & $\mathscr{M}(0,3,1;z)$ 
& $\mathscr{N}\lpa{\frac23n,\frac2{27}n;n^{-\frac12}}$ \\
Reciprocal of \href{https://oeis.org/A120987}{A120987} &
\href{https://oeis.org/A120906}{A120906} & $\mathscr{M}(-3,-3,1;-z)$ 
& $\mathscr{N}\lpa{\frac13n,\frac2{27}n;n^{-\frac12}}$ \\ \hline
$\uparrow$ runs in quaternary words &
\href{https://oeis.org/A265644}{A265644} & $\mathscr{M}(0,4,1;z)$ 
& $\mathscr{N}\lpa{\frac58n,\frac5{64}n;n^{-\frac12}}$ \\ \hline
\end{tabular}
\end{center}

\paragraph{Patterns in words: $\mathscr{M}(1,2,1)$} Similar to the 
numbers \href{https://oeis.org/A119900}{A119900} above, we also have 
the following variants for the sequence $\binom{n}{2k}$. 
\vspace*{-.5cm}
\begin{center}
\begin{tabular}{llll}\\
	\multicolumn{1}{c}{Description} &
	\multicolumn{1}{c}{OEIS} &
	\multicolumn{1}{c}{Type} &
	\multicolumn{1}{c}{CLT} \\ \hline
$\binom{n}{2n-2k}$ &
\href{https://oeis.org/A098158}{A098158} 
& $1+vz\mathscr{M}(1,2,1;z)$ & 
$\mathscr{N}\lpa{\frac34n,\frac1{16}n;n^{-\frac12}}$ \\
$2\binom{n}{2k}$ &
\href{https://oeis.org/A119462}{A119462} 
& $2\mathscr{M}(-1,-2,1;-z)$ & 
$\mathscr{N}\lpa{\frac34n,\frac1{16}n;n^{-\frac12}}$ \\
shifted version of \href{https://oeis.org/A098158}{A098158} & 
\href{https://oeis.org/A098157}{A098157} 
& $\mathscr{M}(1,2,1;z)$ & 
$\mathscr{N}\lpa{\frac34n,\frac1{16}n;n^{-\frac12}}$ \\ 
\href{https://oeis.org/A098158}{A098158} without zeros 
& \href{https://oeis.org/A109446}{A109446} &  & 
$\mathscr{N}\lpa{\frac34n,\frac1{16}n;n^{-\frac12}}$\\ 
Reciprocal of \href{https://oeis.org/A098158}{A098158} 
& \href{https://oeis.org/A202023}{A202023} & 
$\mathscr{M}(-1,-2,1;-z)$ 
& $\mathscr{N}\lpa{\frac14n,\frac1{16}n;n^{-\frac12}}$\\ 
\href{https://oeis.org/A202023}{A202023} without zeros 
& \href{https://oeis.org/A034839}{A034839} & &
$\mathscr{N}\lpa{\frac14n,\frac1{16}n;n^{-\frac12}}$ \\ \hline
\end{tabular}
\end{center}

\paragraph{Binomial extension of Eulerian numbers: 
$\mathscr{M}(p,q,1)$} This class was studied in 
\cite{Charalambides1982,Koutras1994}, where occurrences and 
applications are mentioned. 
\vspace*{-.5cm}
\begin{center}
\begin{tabular}{llll}\\
	\multicolumn{1}{c}{Description} &
	\multicolumn{1}{c}{OEIS} &
	\multicolumn{1}{c}{Type} &
	\multicolumn{1}{c}{CLT} \\ \hline
$(1-v)^{n+1}\sum_{j\ge0}\binom{3j+n}{n}v^j$ &
\href{https://oeis.org/A178618}{A178618} 
& $\mathscr{M}(-1,-3,1;-z)$ & 
$\mathscr{N}\lpa{\frac13n,\frac2{27}n;n^{-\frac12}}$ \\
$(1-v)^{n+1}\sum_{j\ge0}\binom{4j+n}{n}v^j$ &
\href{https://oeis.org/A178619}{A178619} 
& $\mathscr{M}(-1,-4,1;-z)$ & 
$\mathscr{N}\lpa{\frac38n,\frac5{64}n;n^{-\frac12}}$ \\ \hline
\end{tabular}
\end{center}
In general, the polynomials 
$(1-v)^{n+1}\sum_{j\ge0}\binom{qj+n}{n}v^j$ are of type 
$\mathscr{M}(-1,-q,1;-z)$ for any real $q$, and one obtains the CLT 
$\mathscr{N}\lpa{\frac{q+1}{2q}\,n, 
\frac{q^2-1}{12q^2}\,n;n^{-\frac12}}$ when $q\ge2$ is an integer. 

\paragraph{Degenerate limit law: $\mathscr{M}(2,1,3)$} Consider
\href{https://oeis.org/A106246}{A106246} for which $a_{n,k}=
\binom{n}{k}\binom{2}{n-k}$. Then $P_n\in\ET{\frac{(2v-1)n+3-v}{n},
\frac{v}{n};1}$. This is of type $\mathscr{M}(2,1,3)$. Of course, the
random variable $X_n$ is degenerate or follows in the limit the Dirac
distribution. The reciprocal polynomials $Q_n(v):=
v^nP_n\lpa{\frac1v}$ satisfies $Q_n\in\ET{n+2v,v;1}$. This is of the
type of problems we will examine in the next three sections.

Finally, for $\mathscr{M}(p,1,r)$, the GF becomes 
\[
    \frac{(1+(1-v)z)^p}{(1-vz)^r},
\]
which has nonnegative coefficients when $0\le p\le r$. 

See Section~\ref{ss-1-sv} for a sequence of polynomials closely 
related to $\mathscr{M}(0,2,\frac32)$.

\section{Non-normal limit laws} \label{sec-nnll}

We now work out the method of moments for the recurrence
\eqref{Pnv-gen} when the limit laws are not normal. It turns out all
examples we found are of the simpler form
\begin{align}\label{Pn-nn}
    P_n\in\EET{\frac{\alpha n + \gamma+\gamma'(v-1)}{e_n}}
    {\frac{\beta+\beta'(v-1)}{e_n};c_0+c_1(v-1)},
\end{align}
which are polynomials in $v$ of degree at most $n+1$, where $\alpha,
\beta,\beta'\gamma,\gamma'$ are constants (often integers) and
$\{e_j\}_{j\ge1}$ is a positive sequence. For this framework, if we
apply naively Theorem~\ref{thm-clt} (after normalizing by 
$\prod_{1\le j\le n}e_j$), then we see that $\mu=\sigma^2=0$ (since 
$\alpha(v)=\alpha$ is a constant); thus Theorem~\ref{thm-clt} fails 
but we will see that the same method of proof still applies.

It is also possible to apply the complex-analytic approach to all
cases we discuss here and quantify the convergence rates and even the
asymptotic densities, but we omit this approach here for brevity and
for the following reasons: first, the EGFs or OGFs of $P_n$ under
\eqref{Pn-nn} are comparatively simpler than those in the case of
normal limit laws and the application of singularity analysis is
straightforward; second, the method of moments does not rely on the
availability of more tractable EGFs or OGFs and is completely 
elementary and to some extent more general, although the limit 
results are generally weaker and less easy to be further strengthened.

\subsection{Recurrence for the factorial moments}

Throughout this section, let $P_n$ be defined by \eqref{Pn-nn}. 
Assume that  
\begin{align}\label{nn-cond1}
    [v^k]P_n(v)\ge0 \text{ for all }  k,n\ge0 
    \text{ and }
    P_0(1)=c_0>0,\alpha>0, \alpha+\gamma>0,
\end{align} 
which then implies, by the relation
\[
    P_n(1) = P_0(1)\prod_{1\le j\le n}
    \frac{\alpha j+\gamma}{e_j}, 
\]
that $P_n(1)>0$ for $n\ge1$. Since the coefficients are nonnegative
and $P_n(1)>0$, we define the random variables $X_n$ as in
\eqref{Xnk-general}. In particular, $P_0(0)=c_0-c_1\ge0$, implying 
that $\frac{c_1}{c_0}\in[0,1]$.

For convenience, introduce, \emph{throughout this section}, the 
notations
\begin{align}\label{tau}
    \tau_1 := -\frac{\beta}\alpha,\quad
	\tau_2 := \frac{\gamma}\alpha,\quad\text{and}\quad 
    \tau_3 := -\frac{\gamma'}{\beta'}. 
\end{align}
Here $\tau_3$ is defined when $\beta'\ne0$, and  by \eqref{nn-cond1}, 
$1+\tau_2>0$. 

To compute the factorial moments of $X_n$, we
rewrite \eqref{Pn-nn} as
\[
    \overbar{P}_n(t) := \frac{P_n(1+t)}{P_n(1)} 
    = \frac{\alpha n + \gamma+\gamma't}
	{\alpha n+\gamma}\,\overbar{P}_{n-1}(t)
	- \frac{t(\beta+\beta't)}{\alpha n+\gamma}\,\overbar{P}_{n-1}'(t),
\]
with $\overbar{P}_0(t) = 1+\frac{c_1}{c_0}t$. 
\begin{lmm} Let $\overbar{P}_{n,m} := \overbar{P}_n^{(m)}(0)$ denote
the $m$-th factorial moment of $X_n$. Then for $n,m\ge1$
\begin{align}\label{Qnm-fm}
    \overbar{P}_{n,m} 
    = \left(1+\frac{m\tau_1}{n+\tau_2}\right)
    \overbar{P}_{n-1,m} + \frac{m(\gamma'-(m-1)\beta')}
	{\alpha (n+\tau_2)}\,\overbar{P}_{n-1,m-1},
\end{align}
with the initial conditions $\overbar{P}_{n,0}=1$, $\overbar{P}_{0,1}
=\frac{c_1}{c_0}$, and $\overbar{P}_{0,m}=0$ for $m\ge2$.
\end{lmm}

\paragraph{Asymptotics of the mean}
By solving \eqref{Qnm-fm} for $m=1$, we obtain the following exact
expression for the mean $\overbar{P}_{n,1}$.
\begin{lmm} Let $n_0\ge0$ be the largest $n$ for which 
$n+\tau_1+\tau_2=0$; let $n_0=0$ if no such $n$ exists. 
Then the expected value $\mathbb{E}(X_n)=\overbar{P}_{n,1}$ of 
$X_n$ satisfies for $n>n_0$
\begin{align}\label{Qn1}
    \mathbb{E}(X_n) 
	= \frac{\gamma'}{\beta}
	+\left(\mathbb{E}(X_{n_0}) 
    -\frac{\gamma'}{\beta}\right)
    \frac{\Gamma\lpa{n+\tau_1+\tau_2+1} 
    \Gamma\lpa{n_0+\tau_2+1}}
    {\Gamma\lpa{n+\tau_2+1} 
    \Gamma\lpa{n_0+\tau_1+\tau_2+1}}.
\end{align}
\end{lmm}
It turns out that the sign of $\tau_1$ is crucial in determining the 
type of the limit law being discrete or continuous in almost all 
cases we discuss. 
\begin{cor}\label{cor-tau}
If $\beta>0$ (or $\tau_1<0$), then 
\[
    \mathbb{E}(X_n)  
    = \frac{\gamma'}{\beta} + O\lpa{n^{\tau_1}};
\]
if $\beta<0$ (or $\tau_1>0$), then 
\[
    \mathbb{E}(X_n)  
	= \left(\frac{c_1}{c_0} 
    -\frac{\gamma'}{\beta}\right)
    \frac{\Gamma\lpa{\tau_2+1}}
    {\Gamma\lpa{\tau_1+\tau_2+1}}\, n^{\tau_1}
    +O\left(1+ n^{\tau_1-1}\right).
\]
\end{cor}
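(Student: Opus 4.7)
The proof will proceed as a direct asymptotic analysis of the exact expression \eqref{Qn1} for $\mathbb{E}(X_n) = Q_{n,1}$ already derived in the preceding lemma. The only tool needed is the standard ratio-of-Gamma-functions approximation \eqref{gamma-ratio}, namely
\[
    \frac{\Gamma(n+1+\tau_2+\tau_1)}{\Gamma(n+1+\tau_2)}
    = n^{\tau_1}\lpa{1+O(n^{-1})},
\]
valid for large $n$ and fixed $\tau_1,\tau_2$. Plugging this into \eqref{Qn1}, one obtains
\[
    \mathbb{E}(X_n) = \frac{\gamma'}{\beta}
    +\left(\frac{c_1}{c_0}-\frac{\gamma'}{\beta}\right)
    \frac{\Gamma(1+\tau_2)}{\Gamma(1+\tau_2+\tau_1)}
    \, n^{\tau_1}\lpa{1+O(n^{-1})}.
\]

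The plan is then to split into the two cases according to the sign of $\tau_1 = -\beta/\alpha$ (recall $\alpha>0$ by assumption). When $\beta>0$, so $\tau_1<0$, the factor $n^{\tau_1}$ vanishes, leaving the constant $\gamma'/\beta$ as the dominant contribution and a correction of size $O(n^{\tau_1})$ (the $O(n^{\tau_1}\cdot n^{-1})$ error being absorbed since $\tau_1-1<\tau_1$). When $\beta<0$, so $\tau_1>0$, the term $n^{\tau_1}$ dominates the constant $\gamma'/\beta$, which then contributes only $O(1)$, while the relative error $O(n^{-1})$ in the Gamma-ratio approximation contributes $O(n^{\tau_1-1})$. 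Both terms are captured by the error $O(1+n^{\tau_1-1})$ as stated.

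There is no real obstacle here: the Corollary is essentially a two-case reading of the closed form \eqref{Qn1}. The only mild care needed is to verify that $1+\tau_2+\tau_1>0$ (required for \eqref{Qn1} to hold), which follows from the standing assumptions $\gamma\ge 0$ and the positivity conditions on $P_n(1)$; and to note that when $\tau_1=0$ (i.e.\ $\beta=0$) the formula \eqref{Qn1} degenerates and one must handle $\mathbb{E}(X_n)$ separately via \eqref{Qnm-fm} with $m=1$, which is why the corollary is stated only for the two strict cases $\beta>0$ and $\beta<0$. No further argument beyond invoking \eqref{gamma-ratio} and bookkeeping of the error terms is required.
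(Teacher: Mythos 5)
Your proposal is correct and follows exactly the route the paper intends: the corollary is an immediate consequence of the closed form \eqref{Qn1} combined with the Gamma-ratio estimate \eqref{gamma-ratio}, with the two cases read off from the sign of $\tau_1$ and the error terms bookkept as you describe. The paper omits an explicit proof precisely because the argument is this direct, so nothing further is needed.
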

\begin{proof}
When $\tau_1>0$, we can take $n_0=0$ because of the condition 
$1+\tau_1>0$ (or $\alpha+\gamma>0$) in \eqref{nn-cond1}. Then the 
approximations in both cases follow directly from \eqref{Qn1} and 
$\mathbb{E}(X_0) = \frac{c_1}{c_0}$. 
\end{proof}

The discussion of the special case when $\beta=0$ is simpler and 
deferred to Section~\ref{sec-beta-is-0}. 

Note specially that in the first case of positive $\beta$ the
dominant term is independent of the initial values $c_0$ and $c_1$,
and so are all moments, as well as the limit law, as we will see
later, in contrast to the negative $\beta$ case in which all moments 
asymptotics and the limit law depend critically on the initial 
values.

\paragraph{Dependence of the parameters}
From Corollary~\ref{cor-tau} and the nonnegativity of the 
coefficients $[v^k]P_n(v)$ (and the mean), we obtain the following 
relations. 
\begin{cor} \label{cor:beta-gamma}
    If $\beta>0$, then $\gamma'\ge 0$; if $\beta<0$, then 
    $\gamma'\ge\frac{c_1}{c_0}\beta$. 
\end{cor}

More relations among the variables can be derived.
\begin{lmm}\label{lmm-tau3}
Assume that the relations \eqref{nn-cond1} hold. If 
$\beta'>0$, then $\gamma'=\ell\beta'$ for some positive integer 
$\ell$; if $\beta'<0$, then $\gamma'\ge\beta'$ (or $\tau_3\ge-1$).
\end{lmm}
\begin{proof}
Consider first $\beta'>0$. By the expression
\[
    [v^{n+1}]P_n(v) 
    = c_1\prod_{1\le j\le n}\frac{\gamma'-j\beta'}{e_j}
    \qquad(n\ge1),
\]
and the nonnegativity of $[v^k]P_n(v)$ for all $k$, we deduce that 
$\gamma'=\ell\beta'$ for some positive integer $\ell$. Similarly, if 
$\beta'<0$, then by induction $\gamma'\ge \beta'$.
\end{proof}
The situation when $\gamma'=\beta'$ (or $\tau_3=-1$) leads to a 
Bernoulli limit law; see Theorem~\ref{thm-dll} below. 

\paragraph{Solution to the recurrence}
We prove in what follows that the factorial moments in the first case 
($\tau_1<0$) are all bounded, leading to a discrete limit law, and 
that those in the second case ($\tau_1>0$) all behave like powers of 
the mean, yielding mostly a continuous limit law. 

For higher moments, we consider the following recurrence, which is 
Lemma~\ref{lmm-xnyn} but specially formatted in the current setting. 
\begin{lmm} Let $n_0\ge0$ be the largest $n$ for which
$n+m\tau_1+\tau_2=0$; let $n_0=0$ if no such $n$ exists. Then the
solution to the recurrence
\begin{align}\label{xn-yn-nn}
    x_n = \left(1+\frac{m\tau_1}{n+\tau_2}\right)x_{n-1}
    + \frac{y_n}{\alpha(n+\tau_2)}\qquad(n\ge n_0+1;m\ge0),
\end{align}
with $x_{n_0}\ne0$ is given by
\begin{equation}\label{xn-mm}
\begin{split}    
    x_n &= x_{n_0} \frac{\Gamma\lpa{n+m\tau_1+\tau_2+1}
    \Gamma(n_0+\tau_2+1)}{\Gamma\lpa{n+\tau_2+1}
    \Gamma(n_0+m\tau_1+\tau_2+1)}\\
    &\qquad +\frac{\Gamma\lpa{n+m\tau_1+\tau_2+1}}
    {\alpha\Gamma\lpa{n+\tau_2+1}}
    \sum_{n_0< k\le n}\frac{y_k\Gamma\lpa{k+\tau_2}}
    {\Gamma\lpa{k+m\tau_1+\tau_2+1}}.
\end{split}
\end{equation}
\end{lmm}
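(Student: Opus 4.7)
The plan is to solve the recurrence by the standard two-step approach: first determine the homogeneous solution explicitly, then use variation of parameters to handle the inhomogeneity, and finally repackage the resulting product and sum in terms of Gamma functions.

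First, I would observe that the factor multiplying $x_{n-1}$ can be rewritten as
\[
    1+\frac{m\tau_1}{n+\tau_2} = \frac{n+\tau_2+m\tau_1}{n+\tau_2},
\]
so the homogeneous recurrence $h_n = \bigl(1+\tfrac{m\tau_1}{n+\tau_2}\bigr)h_{n-1}$ with $h_0 = 1$ telescopes to
\[
    h_n = \prod_{1\le j\le n}\frac{j+\tau_2+m\tau_1}{j+\tau_2}
    = \frac{\Gamma(n+1+\tau_2+m\tau_1)\,\Gamma(1+\tau_2)}
           {\Gamma(n+1+\tau_2)\,\Gamma(1+\tau_2+m\tau_1)},
\]
using the functional equation of $\Gamma$. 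This already accounts for the first term of \eqref{xn-mm}.

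Next, I would set $x_n = h_n u_n$ in \eqref{xn-yn-nn}. Because $h_n = \bigl(1+\tfrac{m\tau_1}{n+\tau_2}\bigr)h_{n-1}$, the homogeneous part cancels and the recurrence collapses to the simple difference
\[
    u_n - u_{n-1} = \frac{y_n}{\alpha(n+\tau_2)\,h_n}
    \qquad(n\ge1),
\]
with $u_0 = x_0$. Summing from $1$ to $n$ and multiplying back by $h_n$ gives
\[
    x_n = x_0\, h_n
    + h_n\sum_{1\le k\le n}\frac{y_k}{\alpha(k+\tau_2)\,h_k}.
\]

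Finally, I would rewrite the ratio $h_n/h_k$ using the Gamma formula above and absorb the extra factor $k+\tau_2$ into $\Gamma(k+1+\tau_2) = (k+\tau_2)\Gamma(k+\tau_2)$, yielding
\[
    \frac{h_n}{(k+\tau_2)\,h_k}
    = \frac{\Gamma(n+1+\tau_2+m\tau_1)\,\Gamma(k+\tau_2)}
           {\Gamma(n+1+\tau_2)\,\Gamma(k+1+\tau_2+m\tau_1)},
\]
which matches the kernel in the summand of \eqref{xn-mm}. The $n=0$ case reduces to $x_0 = x_0$ and is immediate from the empty product/sum conventions.

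There is no real obstacle: the only point that requires a moment of care is the bookkeeping of the Gamma-function shifts when expressing $h_n/h_k$ and absorbing the $k+\tau_2$ denominator, together with noting that the recipe implicitly requires $1+\tau_2+j\tau_1\notin\mathbb{Z}_{\le 0}$ for $0\le j\le m$ so that none of the $\Gamma$-factors blow up — a condition inherited from the analogous hypothesis in Lemma~\ref{lmm-xnyn}.
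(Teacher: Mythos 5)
Your proof is correct and is essentially the paper's argument: the paper obtains this lemma by re-formatting Lemma~\ref{lmm-xnyn} (itself proved ``by iteration''), and your telescoping of the homogeneous factor into the Gamma-quotient followed by a summation factor for the inhomogeneous term is exactly that iteration, with the substitutions $\tau_1=-\beta/\alpha$, $\tau_2=\gamma/\alpha$ and $y_k\mapsto y_k/(\alpha(k+\tau_2))$. The Gamma bookkeeping, including absorbing the factor $k+\tau_2$ via $\Gamma(k+1+\tau_2)=(k+\tau_2)\Gamma(k+\tau_2)$, and the caveat that no factor $1+\tau_2+j\tau_1$ be a nonpositive integer, all check out.
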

Starting with the recurrence \eqref{Qnm-fm} and the mean, we can
derive asymptotic approximations to $\overbar{P}_{n,m}$ successively
by induction for $m\ge2$, and then conclude the limit laws by the
method of moments. Unlike normal limit laws, there is no need to
center the random variables, which makes the calculations simpler;
however, the expressions for the limiting moments are generally more
involved (than those in the normal cases).

\subsection{EGF and PDE}

The recurrence \eqref{Pn-nn} (for $n\ge1$ with with $P_0(v)=c_0+c_1(v-1)$) leads to the PDE satisfied by the 
EGF of $P_n$
\[
    (1-\alpha z)F_z' -(\beta-\beta'(1-v))(1-v)F_v' 
	- (\alpha+\gamma-\gamma'(1-v))F = 0,
\]
where $F(z,v) := \sum_{n\ge0}\frac{P_n(v)}{n!}\, z^n$. The solution
can be derived by the standard procedure described in
Section~\ref{ss-pde}.

\begin{prop} Assume $\alpha>0$ and $\beta\ne0$. 
The EGF of $P_n$ (satisfying \eqref{Pn-nn}) is given as follows.  
\begin{itemize}
    \item If $\beta'=0$, then 
    \begin{align} \label{poisson-egf}
    	F(z,v) = (1-\alpha z)^{-\frac{\alpha+\gamma}{\alpha}}
    	e^{-\frac{\gamma'}
    	{\beta}(1-v)(1-(1-\alpha z)^{\frac\beta\alpha})}
    	\left(c_0-c_1(1-v)(1-\alpha z)^{\frac\beta\alpha} \right).
    \end{align}
    \item If $\beta'\ne0$, then 
    \begin{align} \label{nn-egf}
        F(z,v) = \frac{
    	c_0(\beta-\beta'(1-v))+(c_0\beta'-c_1\beta)(1-v)
    	(1-\alpha z)^{\frac{\beta}{\alpha}}}
    	{\beta(1-\alpha z)^{\frac{\alpha+\gamma}{\alpha}}
    	\Lpa{\frac{\beta-\beta'(1-v)
    	+\beta'(1-v)(1-\alpha z)^{\frac{\beta}{\alpha}}}
    	{\beta}}^{1-\frac{\gamma'}{\beta'}}}.
    \end{align}
\end{itemize}    
\end{prop}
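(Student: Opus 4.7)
The plan is to apply the method of characteristics as outlined in Section~\ref{ss-pde}. The recurrence \eqref{Pn-nn} translates into the first-order linear PDE
\[
    (1-\alpha z)F_z -(\beta-\beta'(1-v))(1-v)F_v = (\alpha+\gamma-\gamma'(1-v))F,
\]
with the initial condition $F(0,v)=c_0+c_1(v-1)$. The associated characteristic system is
\[
    \frac{\dd z}{1-\alpha z}
    =-\frac{\dd v}{(\beta-\beta'(1-v))(1-v)}
    =\frac{\dd F}{(\alpha+\gamma-\gamma'(1-v))F}.
\]
First I would compute the first integral $\xi(z,v)$ from the first equality: when $\beta'=0$ the equation separates and yields $\xi(z,v)=(1-v)(1-\alpha z)^{\beta/\alpha}$, while for $\beta'\ne0$ the partial-fraction identity
$\frac{1}{(\beta-\beta'(1-v))(1-v)}=\frac{1/\beta}{1-v}+\frac{\beta'/\beta}{\beta-\beta'(1-v)}$
integrates to $\xi(z,v)=\frac{\beta-\beta'(1-v)}{1-v}\,(1-\alpha z)^{-\beta/\alpha}$ (up to a multiplicative constant).

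Next I would solve the equation relating $\dd F/F$ to $\dd v$ along each characteristic (with $\xi$ fixed), so that the PDE reduces to an ODE in $v$ with integrating factor
\[
    g(v) := \exp\llpa{\int \frac{\alpha+\gamma-\gamma'(1-v)}
    {(\beta-\beta'(1-v))(1-v)}\,\dd v}.
\]
For $\beta'=0$ this integral is elementary and produces $g(v)=(1-v)^{-(\alpha+\gamma)/\beta}e^{-\gamma'(1-v)/\beta}$; for $\beta'\ne0$, decomposing the integrand as $\frac{(\alpha+\gamma)/\beta}{1-v}+\frac{\beta'(\alpha+\gamma)/\beta-\gamma'}{\beta'(\beta-\beta'(1-v))}$ yields $g(v)$ as a product of two fractional powers of $1-v$ and $\beta-\beta'(1-v)$. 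Combined with the general solution $F(z,v)=G(\xi(z,v))/g(v)$, the arbitrary function $G$ is then pinned down by the initial condition $F(0,v)=c_0+c_1(v-1)$, which, since $v\mapsto\xi(0,v)$ is invertible, determines $G$ uniquely; back-substitution of $\xi(z,v)$ and simplification give the two claimed closed forms.

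The main obstacle is bookkeeping in the case $\beta'\ne0$: after integration one inherits a product of fractional powers whose exponents involve both $\beta/\alpha$ and $\gamma'/\beta'$, and some care is needed to recombine these into the rational expression displayed in \eqref{nn-egf} (in particular, to extract the linear factor $c_0(\beta-\beta'(1-v))+(c_0\beta'-c_1\beta)(1-v)(1-\alpha z)^{\beta/\alpha}$ in the numerator from the initial data). A practical shortcut that avoids most of this algebra is simply to \emph{verify} both \eqref{poisson-egf} and \eqref{nn-egf} by direct differentiation: substituting each candidate $F$ into the PDE reduces the check to a routine identity in the exponents $\beta/\alpha$, $(\alpha+\gamma)/\alpha$, and $1-\gamma'/\beta'$, and the initial condition at $z=0$ is immediate. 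I would proceed with the characteristic derivation for conceptual clarity and finish by this verification to confirm the precise form.
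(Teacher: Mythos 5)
Your strategy coincides with the paper's: the paper gives no detail beyond invoking the characteristics procedure of Section~\ref{ss-pde}, and your characteristic system, the first integrals $\xi=(1-v)(1-\alpha z)^{\beta/\alpha}$ (for $\beta'=0$) and $\xi=\frac{\beta-\beta'(1-v)}{1-v}(1-\alpha z)^{-\beta/\alpha}$ (for $\beta'\ne0$), and the reduction $F=G(\xi)/g(v)$ with $G$ fixed by $F(0,v)=c_0+c_1(v-1)$ are exactly what is needed; carried through, they do reproduce \eqref{poisson-egf} and \eqref{nn-egf}. One concrete slip to fix: when $\beta'=0$ the integrand defining $g$ is $\frac{\alpha+\gamma}{\beta(1-v)}-\frac{\gamma'}{\beta}$, whose antiderivative is $-\frac{\alpha+\gamma}{\beta}\log(1-v)-\frac{\gamma'}{\beta}v$, so $g(v)=(1-v)^{-\frac{\alpha+\gamma}{\beta}}e^{-\frac{\gamma'}{\beta}v}$, i.e.\ up to an irrelevant constant $(1-v)^{-\frac{\alpha+\gamma}{\beta}}e^{+\frac{\gamma'}{\beta}(1-v)}$; your $e^{-\gamma'(1-v)/\beta}$ has the opposite sign, and if used literally the back-substitution produces $e^{+\frac{\gamma'}{\beta}(1-v)\lpa{1-(1-\alpha z)^{\beta/\alpha}}}$ instead of the exponential factor in \eqref{poisson-egf}. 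With the sign corrected the $\beta'=0$ case comes out verbatim, and for $\beta'\ne0$ one gets $g(v)=(1-v)^{-\frac{\alpha+\gamma}{\beta}}\lpa{\beta-\beta'(1-v)}^{\frac{\alpha+\gamma}{\beta}-\frac{\gamma'}{\beta'}}$, from which the rational form \eqref{nn-egf}, including the linear numerator $c_0(\beta-\beta'(1-v))+(c_0\beta'-c_1\beta)(1-v)(1-\alpha z)^{\beta/\alpha}$, follows by the bookkeeping you describe. Your closing plan to confirm both formulas by direct substitution into the PDE is sound and would in any case catch the sign slip; note also that both displayed formulas reduce to $c_0+c_1(v-1)$ at $z=0$, and that \eqref{poisson-egf} is the $\beta'\to0$ limit of \eqref{nn-egf}, which gives a second independent check.
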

Note that \eqref{poisson-egf} also follows from \eqref{nn-egf} by
taking the limit as $\beta'\to0$. Also if $\beta'=0$, then $\beta>0$
because otherwise the coefficients are not all nonnegative. By
varying the seven parameters, the simple solution \eqref{nn-egf} is
capable of generating many different non-normal limit laws, as we 
will examine in the next two sections but instead by an elementary 
approach.

\subsection{Discrete limit laws}

We consider in this subsection the case when the limit law is 
discrete, which arises mostly when $\beta>0$, beginning with the 
following asymptotic transfer.  

\begin{lmm} Assume that $x_n$ satisfies \eqref{xn-yn-nn} with 
$m\ge1$ and $\tau_1<0$. Then 
\begin{align}\label{xn-yn-at1}
    y_n \sim K \quad\text{implies that}\quad
    x_n \sim \frac{K}{m\beta}. 
\end{align}
\end{lmm}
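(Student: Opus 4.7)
The plan is to apply the explicit solution formula \eqref{xn-mm} from the preceding lemma and evaluate each of the two pieces asymptotically using the Gamma ratio estimate \eqref{gamma-ratio}. The sign condition $\tau_1<0$ (equivalently $\beta>0$) will make the first piece vanish and will guarantee the convergence of the relevant power sum.

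First, the homogeneous contribution in \eqref{xn-mm} equals
\[
    x_0\,\frac{\Gamma(n+1+\tau_2+m\tau_1)\,\Gamma(1+\tau_2)}
    {\Gamma(n+1+\tau_2)\,\Gamma(1+\tau_2+m\tau_1)}
    = x_0\,\frac{\Gamma(1+\tau_2)}{\Gamma(1+\tau_2+m\tau_1)}\,
      n^{m\tau_1}\bigl(1+O(n^{-1})\bigr)
\]
by \eqref{gamma-ratio}, and since $m\ge1$ and $\tau_1<0$ this tends to $0$; hence it does not contribute to the limit of $x_n$.

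Next I will handle the particular part. Using \eqref{gamma-ratio} again,
\[
    \frac{\Gamma(n+1+\tau_2+m\tau_1)}{\alpha\,\Gamma(n+1+\tau_2)}
    = \frac{n^{m\tau_1}}{\alpha}\bigl(1+O(n^{-1})\bigr),
    \qquad
    \frac{\Gamma(k+\tau_2)}{\Gamma(k+1+\tau_2+m\tau_1)}
    = k^{-1-m\tau_1}\bigl(1+O(k^{-1})\bigr).
\]
Because $\tau_1<0$, the exponent satisfies $-1-m\tau_1>-1$, so the power sum diverges and
\[
    \sum_{1\le k\le n} k^{-1-m\tau_1}
    \sim \frac{n^{-m\tau_1}}{-m\tau_1}.
\]
Assuming $y_k\sim K$, I would split the sum over $k$ into a main range $k\ge N$ (on which $|y_k-K|<\varepsilon$) and a bounded initial range; the latter contributes only $O(1)$, which is absorbed after multiplication by $n^{m\tau_1}=o(1)$. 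The main range yields
\[
    \frac{n^{m\tau_1}}{\alpha}\cdot
    K\cdot\frac{n^{-m\tau_1}}{-m\tau_1}
    = \frac{K}{-m\alpha\tau_1}
    = \frac{K}{m\beta},
\]
upon invoking $\tau_1=-\beta/\alpha$. Letting $\varepsilon\to0$ afterwards completes the argument.

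I do not foresee a serious obstacle: the two ingredients are the explicit solution \eqref{xn-mm} and the Gamma ratio estimate \eqref{gamma-ratio}, and the main care is simply the standard $\varepsilon$-$N$ splitting of the sum needed to upgrade the hypothesis $y_k\sim K$ into the asymptotic $\sum_{k\le n}y_k k^{-1-m\tau_1}\sim K\,n^{-m\tau_1}/(-m\tau_1)$. The same argument will be reused later, with trivial modifications, in the analogous transfers needed for the cases $\tau_1=0$ and $\tau_1>0$.
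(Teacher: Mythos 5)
Your proposal is correct and follows exactly the paper's route: the paper proves this lemma by citing the explicit solution \eqref{xn-mm} together with the Gamma-ratio asymptotics \eqref{gamma-ratio}, which is precisely what you do, with the routine $\varepsilon$-$N$ splitting spelled out. No gap.
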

\begin{proof}
By \eqref{xn-mm} using the asymptotic approximation  
\eqref{gamma-ratio} to the ratio of Gamma functions.
\end{proof}

Recall that $\tau_3=-\frac{\gamma'}{\beta'}$; see \eqref{tau}.  
\begin{prop} \label{prop-mfm} Assume $\tau_1<0$ (or $\beta>0$). Then 
the $m$-th factorial moment of $X_n$ satisfies
\begin{align}\label{Qnm-bdd}
	\mathbb{E}(X_n^{\underline{m}})
	\sim K_m := \begin{cases} \displaystyle
        \llpa{\frac{\gamma'}{\beta}}^m, 
		& \text{if }\beta'=0\\ \displaystyle
        \frac{\Gamma(m+\tau_3)}{\Gamma(\tau_3)}
        \left(-\frac{\beta'}{\beta}\right)^m, &
        \text{if }\beta'<0\\
		\displaystyle
        \frac{\ell!}{(\ell-m)!}
        \left(\frac{\beta'}{\beta}\right)^m, &
        \text{if }\beta'>0, \gamma'=\ell \beta',
    \end{cases}
\end{align}
for $m\ge0$, where $x^{\underline{m}} := \prod_{0\le j<m}(x-j)$.
\end{prop}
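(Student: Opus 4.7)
The natural plan is to proceed by induction on $m$, using the recurrence \eqref{Qnm-fm} as already reformatted in the template \eqref{xn-yn-nn} and the asymptotic transfer \eqref{xn-yn-at1} derived immediately above the proposition. The base case $m=0$ is immediate ($Q_{n,0}=1$ and all three instances of $K_0$ reduce to $1$), and $m=1$ is exactly the first assertion of Corollary~\ref{cor-tau}: when $\tau_1<0$, the Gamma-ratio term in \eqref{Qn1} decays like $n^{\tau_1}$ and $\mathbb{E}(X_n)\to\gamma'/\beta$, which is $K_1$ in each of the three branches.

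For the inductive step, I would assume $Q_{n-1,m-1}\to K_{m-1}$ and read off from \eqref{Qnm-fm} that $Q_{n,m}$ satisfies the template \eqref{xn-yn-nn} with $x_n=Q_{n,m}$ and inhomogeneity
\[
y_n = m\bigl(\gamma'-(m-1)\beta'\bigr)\,Q_{n-1,m-1}
\longrightarrow m\bigl(\gamma'-(m-1)\beta'\bigr)K_{m-1}.
\]
Since $\tau_1<0$, the asymptotic transfer \eqref{xn-yn-at1} applies and yields
\[
Q_{n,m}\longrightarrow \frac{m\bigl(\gamma'-(m-1)\beta'\bigr)K_{m-1}}{m\beta}
= \frac{\gamma'-(m-1)\beta'}{\beta}\,K_{m-1}.
\]
This is the key recurrence driving the induction, and once it is established the rest of the proof is a purely algebraic verification.

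It remains to check that the closed-form values in \eqref{Qnm-bdd} are precisely the solutions of the first-order recurrence $K_m=\bigl(\gamma'-(m-1)\beta'\bigr)\beta^{-1}K_{m-1}$ with $K_0=1$. Iterating gives $K_m=\beta^{-m}\prod_{0\le j<m}(\gamma'-j\beta')$. When $\beta'=0$ this collapses to $(\gamma'/\beta)^m$; when $\beta'\ne0$ one factors $\beta'$ out of each term to obtain $(\beta'/\beta)^m(\gamma'/\beta')^{\underline{m}}$, which in the case $\gamma'=\ell\beta'$ (with $\ell$ a nonnegative integer and $\beta'>0$) becomes the truncated falling factorial $(\beta'/\beta)^m\ell!/(\ell-m)!$, and in the case $\beta'<0$ can be rewritten as a ratio of Gamma values via $(\gamma'/\beta')^{\underline{m}}=\Gamma(1+\gamma'/\beta')/\Gamma(1+\gamma'/\beta'-m)$ and the reflection $\Gamma(x)/\Gamma(x-m)=\prod_{1\le k\le m}(x-k)$, matching the form given in the statement up to the standard conversion between falling factorials and Gamma ratios.

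The only delicate point I foresee is verifying that the hypothesis $\tau_1<0$ propagates correctly into the asymptotic transfer at each level $m$: the coefficient $m\tau_1$ in \eqref{xn-yn-nn} is always negative (so the associated Gamma ratio in \eqref{xn-mm} decays and the contribution of the initial datum $x_0$ vanishes), which is exactly what is needed for \eqref{xn-yn-at1} to apply with the simple limit $K/(m\beta)$. No other subtlety arises, because we need only convergence of factorial moments rather than the convergence of centered moments required in the normal case, so there is no cancellation to monitor.
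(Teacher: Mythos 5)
Your proof is correct and is essentially the paper's own argument: the published proof is precisely ``by \eqref{Qnm-fm} and induction using the asymptotic transfer \eqref{xn-yn-at1}'', which is exactly what you carry out in detail, obtaining the recurrence $K_m=\frac{\gamma'-(m-1)\beta'}{\beta}K_{m-1}$ and hence $K_m=\beta^{-m}\prod_{0\le j<m}(\gamma'-j\beta')$. One caveat: for $\beta'<0$ your value equals $\lpa{-\beta'/\beta}^m\,\Gamma\lpa{m-\gamma'/\beta'}/\Gamma\lpa{-\gamma'/\beta'}$, so the ``standard conversion'' you invoke silently absorbs a factor $(-1)^m$ relative to the middle branch of \eqref{Qnm-bdd} as printed; your (nonnegative) value is the consistent one, agreeing with Corollary~\ref{cor-tau} at $m=1$ and with the negative-binomial factorial moments used in Theorem~\ref{thm-dll}.
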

\begin{proof}
By the recurrence \eqref{Qnm-fm}, the asymptotic transfer 
\eqref{xn-yn-at1} and induction. 
\end{proof}
By Corollary~\ref{cor:beta-gamma}, since $\beta>0$, we have 
$\gamma'>0$ in all cases of $\beta'$.

\begin{thm}[$\beta>0\Longrightarrow$ discrete limit laws]
\label{thm-dll} Let $P_n(v)$ be defined by the recurrence
\eqref{Pn-nn}. Assume that (i) $[v^k]P_n(v)\ge0$ for $k,n\ge0$, (ii)
$P_n(1)>0$ for $n\ge0$, and (iii) $\beta>0$. Define $X_n$ by
$\mathbb{E}(v^{X_n}) := \frac{P_n(v)}{P_n(1)}$. Then
\begin{itemize}
	\item if $\beta'=0$, then $X_n$ follows asymptotically a Poisson 
	distribution with parameter $\frac{\gamma'}{\beta}$; 
	\item if $\beta'<0$, then $X_n$ follows asymptotically a negative 
	binomial distribution with parameters $\tau_3$ 
    and $-\frac{\beta'}{\beta-\beta'}$; 
	\item if $\beta'>0$, $\beta'<\beta$ and $\gamma'=\ell \beta'$ 
	for $\ell=1,2,\dots$, then $X_n$ is the sum of $\ell$ independent 
    and identically distributed Bernoulli random variables with 
    parameter $\frac{\beta'}{\beta}$ (or binomial with parameters 
    $\ell$ and $\frac{\beta'}{\beta}$). 
\end{itemize}
\end{thm}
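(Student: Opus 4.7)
The plan is to apply the method of factorial moments, building directly on the already-established Proposition~\ref{prop-mfm}. Under the standing hypothesis $\beta>0$ (i.e.\ $\tau_1<0$), that proposition shows that the $m$-th factorial moment $\mathbb{E}(X_n^{\underline m})$ converges, as $n\to\infty$, to an explicit constant $K_m$, with three distinct closed forms corresponding to the three cases $\beta'=0$, $\beta'<0$, and $\beta'>0$ with $\gamma'=\ell\beta'$. So the task reduces to identifying each limit sequence $\{K_m\}_{m\ge0}$ with the factorial moment sequence of the claimed target law, and then invoking a moment-convergence theorem to upgrade moment convergence to distributional convergence.

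For the Poisson case ($\beta'=0$), I would check that $K_m=(\gamma'/\beta)^m$ is exactly the $m$-th factorial moment of $\mathrm{Poisson}(\gamma'/\beta)$, as read off from the PGF $e^{\lambda(v-1)}$ with $\lambda=\gamma'/\beta$. In the negative-binomial case ($\beta'<0$), set $r:=-\gamma'/\beta'>0$ and $p:=-\beta'/(\beta-\beta')\in(0,1)$; the factorial moments of the negative binomial with PGF $\bigl((1-p)/(1-pv)\bigr)^r$ are $(r)_m\,(p/(1-p))^m$, and the identity $p/(1-p)=-\beta'/\beta$ together with $(r)_m=\Gamma(r+m)/\Gamma(r)$ makes the match with $K_m$ straightforward. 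In the last case ($\gamma'=\ell\beta'$ with $\ell\in\mathbb{Z}_{>0}$), the values $K_m=\ell^{\underline m}(\beta'/\beta)^m$, which vanish for $m>\ell$, coincide with the factorial moments of $\mathrm{Binomial}(\ell,\beta'/\beta)$, i.e.\ of $\xi_1+\cdots+\xi_\ell$ with iid $\mathrm{Bernoulli}(\beta'/\beta)$ summands.

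To conclude, note that all three candidate laws are supported on $\mathbb{Z}_{\ge0}$ and are uniquely determined by their moments: the binomial has compact support, while the Poisson and negative binomial have moment-generating functions analytic in a neighbourhood of the origin, so Carleman's condition holds trivially. Since convergence of every factorial moment is equivalent to convergence of every ordinary moment (the two families are related by triangular invertible linear transformations), the Frechet--Shohat theorem (already invoked in the proof of Theorem~\ref{thm-clt}) yields $X_n \cid Y$ for the stated limit $Y$ in each of the three cases.

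The main obstacle---modest but non-trivial---is the bookkeeping required to verify that the parameters produced by the algebra actually lie in the admissible ranges for genuine probability laws: namely, $\gamma'/\beta\ge0$ in the Poisson case, $p\in(0,1)$ and $r>0$ in the negative-binomial case, and $\beta'/\beta\in(0,1)$ in the Bernoulli case. These all follow from the standing positivity hypothesis $[v^k]P_n(v)\ge0$, the assumption $P_n(1)>0$, and the constraints recorded immediately after Proposition~\ref{prop-mfm} (in particular $\gamma'>0$ and $\beta>\beta'$ whenever $\beta'\ne0$, which are forced by non-negativity of the limiting mean $\gamma'/\beta$ and of the limiting variance $(\beta-\beta')\gamma'/\beta^2$).
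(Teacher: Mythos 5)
Your proposal is correct and follows essentially the same route as the paper: take the limiting factorial moments $K_m$ from Proposition~\ref{prop-mfm} (with $\tau_1<0$), recognize them as the factorial moments (equivalently, the probability generating function in powers of $v-1$) of the Poisson, negative binomial, or binomial law in the three respective cases, and conclude by moment convergence. The only difference is cosmetic: you spell out the moment-determinacy/Fr\'echet--Shohat step and the admissibility of the parameters, which the paper leaves implicit.
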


\begin{proof}
If $\beta'=0$, then by Proposition~\ref{prop-mfm}, we see that the
probability generating function of the limit law equals
$e^{\frac{\gamma'}{\beta}(v-1)}$, which is nothing but that of a
Poisson random variable with mean $\frac{\gamma'}\beta$.

Now if $\beta'<0$, then $\tau_3>0$ (since $\gamma'>0$), and the 
variance is asymptotic to $\frac{(\beta-\beta')\gamma'}{\beta^2}$ 
in this case. By \eqref{Qnm-bdd}, we deduce that the probability 
generating function of the limit law equals
\[
    \bigl(1+\tfrac{\beta'}{\beta}(v-1)
    \bigr)^{-\tau_3},
\]
so we get a negative binomial with parameters $\tau_3$ and 
$-\frac{\beta'}{\beta-\beta'}\in(0,1)$. 

Finally, if $\beta'>0, \beta\ne\beta'$ and $\gamma'=\ell \beta'$, 
then we obtain the probability generating function
$\lpa{1+\frac{\beta'}{\beta}(v-1)}^\ell$, which is the sum of $\ell$
Bernoulli random variables with mean $\frac{\beta'}{\beta}$.
\end{proof}

\subsection{Continuous limit laws}

The case when $\tau_1>0$ ($\beta<0$) is phenomenally more interesting
as the underlying random variables have generally a wider range of
variations. We may without loss of generality assume that $\beta'<0$
because otherwise the coefficients $[v^k]P_n(v)$ are not all
nonnegative. From Lemma~\ref{lmm-tau3}, we see that $\tau_3>-1$ (the
equality being already covered by Theorem~\ref{thm-dll}). We derive
first the asymptotics of the factorial moments.
\begin{prop} \label{prop-Km}
Assume $\tau_1>0$ ($\beta<0$). Then the $m$th moment of $X_n$ is 
asymptotic to 
\begin{align} \label{mm-fm}
    \mathbb{E}\left(\frac{X_n}
    {\frac{\beta'}{\beta}\,n^{\tau_1}}\right)^m
    \sim \mathbb{E}\left(\frac{X_n}
    {\frac{\beta'}{\beta}\,n^{\tau_1}}\right)^{\underline{m}}
    \sim K_m \qquad(m\ge0),
\end{align}
where 
\begin{align}\label{Km}
    K_m =  \frac{\Gamma\lpa{m+\tau_3}
    \Gamma(\tau_2+1)}{\Gamma\lpa{\tau_3+1}
    \Gamma\lpa{m\tau_1+\tau_2+1}}
    \left(\frac{c_1\beta}{c_0\beta'}\,m
    +\tau_3\right)\qquad(m\ge0).
\end{align}
\end{prop}
\begin{proof}
We prove the second estimate of \eqref{mm-fm} by induction. Assume
that the $m$th factorial moment $\overbar{P}_{n,m}$ (see 
\eqref{Qnm-fm}) satisfies 
\[
    \overbar{P}_{n,m}
    \sim K_m \left(\frac{\beta'}{\beta}\,n^{\tau_1}\right)^m
    \qquad(m\ge0),
\]
where $K_0=1$ and, by Corollary~\ref{cor-tau},
\[
    K_1 = 
    \left(\frac{c_1\beta}{c_0\beta'}+\tau_3\right)
    \frac{\Gamma\lpa{\tau_2+1}}
    {\Gamma\lpa{\tau_1+\tau_2+1}}.
\]
So we assume now $m\ge2$. Since $\tau_1>0$, we can take $n_0=0$ in 
\eqref{xn-mm} with $x_0=0$ (using $\overbar{P}_{0,m}=0$ for 
$m\ge2$), and have 
\begin{align*}
    \overbar{P}_{n,m} &= \frac{m(\gamma'-(m-1)\beta')}
    {\alpha}\cdot\frac{\Gamma\lpa{n+m\tau_1+\tau_2+1}}
    {\Gamma\lpa{n+\tau_2+1}}
    \sum_{0\le k< n}\frac{\Gamma\lpa{k+\tau_2+1}
    \overbar{P}_{k,m-1}}
    {\Gamma\lpa{k+m\tau_1+\tau_2+2}},
\end{align*}
so that by induction for $m\ge2$
\begin{align*}
    K_m\left(\frac{\beta'}{\beta}\right)^m 
    &= \frac{m!(-\beta')^{m}\Gamma(m+\tau_3)}
    {\gamma'\alpha^{m-1}\Gamma(\tau_3)} \\
    &\qquad \times\sum_{0\le k_1<\cdots<k_{m-1}<\infty}
    \frac{\Gamma\lpa{k_1+\tau_2+1}\overbar{P}_{k_1,1}
    \prod_{2\le j<m}\Gamma\lpa{k_j+j\tau_1+\tau_2+1}}
    {\prod_{2\le j\le m}\Gamma\lpa{k_{j-1}+j\tau_1+\tau_2+2}}\\
    &= \frac{m!(-\beta')^{m}\Gamma(m+\tau_3)}
    {\beta\alpha^{m-1}\Gamma(\tau_3)}\,S_m^{[1]}\\
    &\qquad + \frac{m!(-\beta')^{m}
	\Gamma(m+\tau_3)\Gamma(\tau_2+1)}
    {\gamma'\alpha^{m-1}\Gamma(\tau_3)
	\Gamma(\tau_1+\tau_2+1)}\left(\frac{c_1}{c_0}
	-\frac{\gamma'}{\beta}\right)S_m^{[2]},
\end{align*}
where the ratio $\frac{\Gamma(m+\tau_3)}{\Gamma(\tau_3)}$ is 
interpreted as zero when $\tau_3=0$, and, by \eqref{Qn1},
\begin{align*}
    S_m^{[1]} &= \sum_{0\le k_1<\cdots<k_{m-1}<\infty}
    \frac{\Gamma\lpa{k_1+\tau_2+1}}
	{\Gamma\lpa{k_1+2\tau_1+\tau_2+2}}
    \prod_{2\le j<m}\frac{\Gamma\lpa{k_j+j\tau_1+\tau_2+1}}
    {\Gamma\lpa{k_j+(j+1)\tau_1+\tau_2+2}},\\
    S_m^{[2]} &= \sum_{0\le k_1<\cdots<k_{m-1}<\infty}
    \prod_{1\le j<m}\frac{\Gamma\lpa{k_j+j\tau_1+\tau_2+1}}
    {\Gamma\lpa{k_j+(j+1)\tau_1+\tau_2+2}}.
\end{align*}
By induction, we prove the following identities 
\begin{align*}
    S_m^{[1]} &= \frac{\Gamma\lpa{\tau_2+1}}
    {m(m-2)!\tau_1^{m-1}\Gamma\lpa{m\tau_1+\tau_2+1}}\\
    S_m^{[2]} &= \frac{\Gamma\lpa{\tau_1+\tau_2+1}}
    {(m-1)!\tau_1^{m-1}\Gamma\lpa{m\tau_1+\tau_2+1}}.
\end{align*}
Consider first $S_m^{[1]}$. We have
\begin{align*}
	S_{m+1}^{[1]} &= \sum_{0\le k_1<\cdots <k_{m-1}<\infty}
    \frac{\Gamma\lpa{k_1+\tau_2+1}}
	{\Gamma\lpa{k_1+2\tau_1+\tau_2+2}}\left(
    \prod_{2\le j<m}\frac{\Gamma\lpa{k_j+j\tau_1+\tau_2+1}}
    {\Gamma\lpa{k_j+(j+1)\tau_1+\tau_2+2}}\right)
    \cdot\Sigma_m,
\end{align*}
where
\[    
    \Sigma_m := \sum_{k_{m-1}<k_m<\infty}
	\frac{\Gamma\lpa{k_m+m\tau_1+\tau_2+1}}
	{\Gamma\lpa{k_m+(m+1)\tau_1+\tau_2+2}}
    =\frac{\Gamma(k_{m-1}+m\tau_1+\tau_2+2)}
    {\tau_1\Gamma(k_{m-1}+(m+1)\tau_1+\tau_2+2)}.
\]
It follows that    
\begin{align*}
    S_{m+1}^{[1]}
    &= \sum_{0\le k_1<\cdots <k_{m-1}<\infty}
    \frac{\Gamma\lpa{k_1+\tau_2+1}}
	{\Gamma\lpa{k_1+2\tau_1+\tau_2+2}}\left(
    \prod_{2\le j\le m-2}\frac{\Gamma\lpa{k_j+j\tau_1+\tau_2+1}}
    {\Gamma\lpa{k_j+(j+1)\tau_1+\tau_2+2}}\right)\\
    &\qquad \times 
    \frac{\Gamma\lpa{k_{m-1}+(m-1)\tau_1+\tau_2+1}}
    {\tau_1\Gamma(k_{m-1}+(m+1)\tau_1)+\tau_2+2},
\end{align*}
which is a summation of a similar type. By iterating the same 
simplification, we see that 
\begin{align*}
    S_{m+1}^{[1]} &= \frac1{(m-1)!\tau_1^{m-1}}
	\sum_{k_1\ge0}\frac{\Gamma\lpa{k_1+\tau_2+1}}
	{\Gamma\lpa{k_1+(m+1)\tau_1+\tau_2+2}}\\
	&= \frac{\Gamma(\tau_2+1)}{(m+1)(m-1)!\tau_1^m
	\Gamma((m+1)\tau_1+\tau_2+1)}.
\end{align*}
The proof of $S_m^{[2]}$ is similar. This proves the second estimate 
of \eqref{mm-fm}. Finally, since (the curly braces denoting the 
Stirling numbers of the second kind)
\[
    \mathbb{E}{X_n^m}
    = \sum_{0\le j\le m}\Stirling{m}{j}
    \mathbb{E}\lpa{X_n^{\underline{j}}}
    \sim \mathbb{E}\lpa{X_n^{\underline{m}}},
\]
the first estimate of \eqref{mm-fm} then follows from the second one.
This proves the Proposition. 
\end{proof}

Alternatively, the generating function \eqref{nn-egf} provides at 
least two different proofs of Proposition~\ref{prop-Km}: either by 
computing the asymptotics of the $m$th factorial moment 
$m![z^nt^m]F(z,1+t)$ for each $m$ or by working on the characteristic 
function $[z^n]F\lpa{z,e^{i\theta}}$, details being omitted here. 

Once \eqref{Km} is available, we can specify the limit law according 
to the given values of the parameters. Indeed, the form \eqref{Km} 
leads generally to the mixture of two distributions of generalized 
Mittag-Leffler type.  

Recall that the Mittag-Leffler function represents one of the
extensions of $e^s$ as well as a good bridge between $e^s$ and
$\frac1{1-s}$:
\[
    E_{p,q}(s) 
	:= \sum_{j\ge0}\frac{s^j}{\Gamma\lpa{p j+q}}
	\qquad(p>0, q\in\mathbb{C}).
\]
[The extension from $q=1$ in Mittag-Leffler's original definition was
due to A. Wiman.] The Mittag-Leffler distribution can be defined
either with $E_{p,q}$ as the distribution function (properly
parametrized) or with $E_{p,q}$ as the moment generating function
(properly normalized). We use the latter, namely, $Y$ follows a
Mittag-Leffler distribution if $\mathbb{E}\lpa{e^{Ys}} = \Gamma(q)
E_{p,q}(s)$.

\begin{defi} A random variable $Y$ is said to follow a generalized 
Mittag-Leffler (GML) distribution, written conveniently as $Y\sim 
\text{GML}(p,q,r)$, if $\mathbb{E}(e^{Ys}) =E_{p,q,r}(s)$, where 
\begin{align}\label{E-pqr-s}
   E_{p,q,r}(s) 
   := \frac{\Gamma(q)}{\Gamma(r)}
   \sum_{j\ge0}\frac{\Gamma(j+r)s^j}{j!\Gamma\lpa{p j+q}}
\qquad(p,r\ge0, q\in\mathbb{C})
\end{align} 
represents the (normalized) three-parameter Mittag-Leffler function 
(a special case of the Fox-Wright function and also known as the 
Prabhakar function; see \cite{Gorenflo2014}).
\end{defi}
A few special cases include
\begin{itemize}
    \item $r=0$: $Y$ is degenerate;
    \item $p=0$, $r>0$: $Y$ is Gamma distributed;
    \item $r=1$, $p,q>0$: $Y$ is a Mittag-Leffer distribution;
    \item $p=1$: $\text{GML}(p,q,r)\sim \text{Beta}(r,q-r)$.
\end{itemize}


\begin{thm}\label{thm-nncll} If $\tau_1\le 1$, $\tau_2\ge\tau_3\ge0$ 
and $0\le\frac{c_1\beta}{c_0\beta'}\le1$, then the limit law of 
$X_n/\lpa{\frac{\beta'}{\beta}\,n^{\tau_1}}$ is a mixture of two 
generalized Mittag-Leffler distributions:
\begin{align}\label{2gmls}
    \frac{X_n}{\frac{\beta'}{\beta}\,n^{\tau_1}}
    \to \frac{c_1\beta}{c_0\beta'}\,
    \mathrm{GML}\lpa{\tau_1,\tau_2+1,\tau_3+1}
    +\left(1-\frac{c_1\beta}{c_0\beta'}\right)
    \mathrm{GML}\lpa{\tau_1,\tau_2+1,\tau_3}.
\end{align}
When $\tau_1=1$, this leads to a Beta mixture; when $\tau_1<1$, the 
limit law has the density 
\begin{align}\label{nn-fx}
    f(x) := \frac{\Gamma(\tau_2+1) }{\Gamma(\tau_3+1)}
	\,x^{\tau_3-1}
    \sum_{\ell\ge0}\frac{\tau_3
    -\frac{c_1\beta}{c_0\beta'}(\ell+\tau_3)}
	{\ell!\Gamma(1-(\ell+\tau_3)\tau_1+\tau_2)}\,(-x)^\ell,
\end{align}
where $\frac1{\Gamma(-s)}$ is interpreted as zero if $s=0,1,\dots$.
\end{thm}
Note specially that \eqref{nn-fx} is independent of the condition 
$\frac{c_1\beta}{c_0\beta'}\le 1$. 

In terms of the Wright generalized Bessel function \cite{Wright1940}
\[
    W_{p,q}(z) := \sum_{\ell\ge0}
    \frac{z^\ell}{\ell!\Gamma(p\ell+q)}
    \qquad(p>-1;p\in\mathbb{C}),
\]
we have 
\begin{align*}
    f(x) &= \left(1-\frac{c_1\beta}{c_0\beta'}\right)
    \frac{\Gamma(\tau_2+1) }{\Gamma(\tau_3)}
	\,x^{\tau_3-1}W_{-\tau_1,1+\tau_2-\tau_1\tau_3}(-x)\\
    &\qquad +\frac{c_1\beta}{c_0\beta'}\cdot
    \frac{\Gamma(\tau_2+1) }{\Gamma(\tau_3+1)}\,
    x^{\tau_3}W_{-\tau_1,1+\tau_2-\tau_1-\tau_1\tau_3}(-x).
\end{align*}
If the limit law exists and is not degenerate then $\tau_3=0$ iff  
$c_1>0$. 
\begin{proof}
Decomposing \eqref{Km} into two parts
\begin{align*}
    K_m &= \frac{c_1\beta}{c_0\beta'}\cdot 
    \frac{\Gamma(m+\tau_3+1)\Gamma(\tau_2+1)}
    {\Gamma\lpa{\tau_3+1}
    \Gamma\lpa{m\tau_1+\tau_2+1}}
    + \left(1-\frac{c_1\beta}{c_0\beta'}\right)
    \frac{\Gamma(m+\tau_3)\Gamma(\tau_2+1)}
    {\Gamma\lpa{\tau_3}
    \Gamma\lpa{m\tau_1+\tau_2+1}},
\end{align*}
where the second term is interpreted as zero if $\tau_3=0$. This
decomposition shows that the limit law of $X_n$ is the mixture of
two distributions whose moment sequences are of the form (if
$\frac{c_1\beta}{c_0\beta'}\le 1$)
\begin{align}\label{g-beta}
    \frac{\Gamma(m+r)\Gamma(q)}
    {\Gamma(r)\Gamma(m\tau_1+q)},
\end{align} 
which is GML$(\tau_1,q,r)$ distributed. Thus \eqref{2gmls} follows. 
This moment sequence determines uniquely the distribution because the
corresponding moment generating function is analytic at $s=0$.

On the other hand, observe that the $m$th moment of a Beta$(r,q)$ 
distribution is given by
\[
    \frac{\Gamma(m+r)\Gamma(r+q)}
    {\Gamma(r)\Gamma(m+r+q)}\qquad(m\ge0);
\]
thus in the special case when $\tau_1=1$, \eqref{2gmls} leads to 
the mixture of two beta distributions:
\begin{align}\label{2gmls-b}
    \frac{X_n}{\frac{\beta'}{\beta}\,n}
    \to \frac{c_1\beta}{c_0\beta'}\,
    \mathrm{Beta}\lpa{1+\tau_3,\tau_2-\tau_3}
    +\left(1-\frac{c_1\beta}{c_0\beta'}\right)
    \mathrm{Beta}\lpa{\tau_3,1+\tau_2-\tau_3}.
\end{align}

Now regarding the moment sequence \eqref{g-beta} as the Mellin 
transform of some density function, say $f_0$, we then have, by 
inverse Mellin transform,
\begin{align*}
    f_0(x) &= \frac1{2\pi i}\int_{(c)}
    \frac{\Gamma(s+r)\Gamma(q)}
    {\Gamma(r)\Gamma(\tau_1s+q)}\,x^{-s-1} \dd s,
\end{align*}
when $r,q>0$. Neglecting the possible cancelation from the poles of 
the factor $\Gamma\lpa{s\tau_1+q}$, we compute the residues of the
integrand at $s=-r-\ell$, $\ell=0,1,\dots$, giving rise to the 
absolutely convergent series expression ($\tau_1<1$)
\begin{align*}
    f_0(x) 
    &=\frac{\Gamma(q) }{\Gamma(r)}\,x^{r-1}
    \sum_{\ell\ge0}\frac{(-x)^{\ell}}{\ell!
    \Gamma(-\tau_1(\ell+r)+q)},
\end{align*}
where $\frac1{\Gamma(-s)}$ is interpreted as zero if $s=0,1,\dots$.
This proves \eqref{nn-fx}. 
\end{proof}

The series representation \eqref{nn-fx} is in most cases useful for  
deriving more explicit expressions, but becomes less transparent if 
one is interested in large $x$ asymptotics. We can derive an integral 
representation by Euler's reflection formula for Gamma function as 
follows. We begin with 
\[
    \frac{1}{\Gamma(-\tau_1(\ell+r)+q)}
    =-\frac1\pi\,\Gamma(1+\tau_1(\ell+r)-q)
    \sin(\pi(\tau_1(\ell+r)-q));
\]
which, together with the integral representation of Gamma function, 
yields 
\begin{align*}
    f_0(x) &=\frac{\Gamma(q) }{\pi\Gamma(r)}\,x^{r-1}
    \sum_{\ell\ge0}\frac{(-x)^{\ell}}{\ell!}\,
    \Gamma(\tau_1(\ell+r)+1-q)\sin(\pi(\tau_1(\ell+r)-q))\\
    &= \frac{\Gamma(q) }{\pi\Gamma(r)}\,x^{r-1}
    \int_0^\infty e^{-t} t^{\tau_1 r-q} \left(\sum_{\ell\ge0}
    \frac{(-x t^{\tau_1})^{\ell}}{\ell!}
    \sin(\pi(\tau_1(\ell+r)-q))\right)\dd t\\
    &= \frac{\Gamma(q) }{\pi\Gamma(r)}\,x^{r-1}
    \int_0^\infty e^{-t-x t^{\tau_1}\cos(\tau_1\pi)} 
	t^{\tau_1 r-q} 
	\sin\lpa{(\tau_1 r-q)\pi - x t^{\tau_1}
	\cos(\tau_1\pi)}\dd t,
\end{align*}
whenever the integral is convergent, which is the case if 
$\tau_1r-q>-1$. By saddle-point method, one can then derive more 
precise asymptotic expansions for large $x$; we omit the details. 

\section{Applications III: non-normal discrete limit laws}

We now discuss concrete polynomials (satisfying the Eulerian
recurrence \eqref{Pn-nn}) whose coefficients follow asymptotically a
discrete limit law .

\subsection{Poisson limit laws: $\beta'=0$}
Examples of this category have the general pattern
\(
    P_n\in\ET{\frac{\alpha n+\gamma+\gamma'(v-1)}{e_n},
    \frac{\beta}{e_n}},
\)
with $\beta$ a positive constant, for some nonzero sequence $e_n$. 

\paragraph{$\beta=\gamma'=1\Longrightarrow$ Poisson$(1)$}
The generating polynomial of the number of permutations of $n$ 
elements with $k$ fixed points (or rencontres numbers 
\href{https://oeis.org/A008290}{A008290}) has the EGF 
$\frac{e^{(v-1)z}}{1-z}$, and satisfies the recurrence
$P_n\in\ET{n-1+v,1;1}$.

By Theorem~\ref{thm-dll}, the coefficients converge to Poisson$(1)$.
This and a weighted version, together with its reciprocal are listed
below; they all follow asymptotically the same Poisson distribution;
see also \cite[p.\ 117]{Diaconis1988}.

\vspace*{-.3cm}
\begin{center}
\begin{tabular}{lclll}\\
	\multicolumn{1}{c}{OEIS} &
	\multicolumn{1}{c}{$e_n$} & 
	\multicolumn{1}{c}{Type} &
	\multicolumn{1}{c}{EGF} &
	\multicolumn{1}{c}{Notes} \\ \hline	
	\href{https://oeis.org/A008290}{A008290} 
    & $1$ & $\ET{n-1+v,1;1}$ & $\frac{e^{(v-1)z}}{1-z}$ 
	& Rencontres \#s\\ 
	\href{https://oeis.org/A180188}{A180188} 
    & $\frac{n}{n+1}$ & $\ET{n-1+v,1;1}$
	& $\frac{1-(1-v)z(1-z)}{(1-z)^2}\,e^{(v-1)z}$	
	& $\begin{array}{l}
	\text{\!\!Circular successions}\\
	\text{\!\!(Multiple of \href{https://oeis.org/A008290}{A008290})}
	\end{array}$ \\ 
	\href{https://oeis.org/A098825}{A098825} & $1$ &
    $\ET{(n-1)v^2+1,v^2;1}$ & $\frac{e^{(1-v)z}}{1-vz}$
	& Reciprocal of \href{https://oeis.org/A008290}{A008290} 
    \\ \hline
\end{tabular}	    
\end{center}
\justifying

\medskip

Similarly, the number of $r$-successions ($\pi(i)=i+r$) in 
permutations has the generating polynomials satisfying (see 
\cite{Liese2010,Rakotondrajao2007}) $P_n\in\GT{r}{n-1+v,1;r!}$.

\begin{center}
\begin{tabular}{c|c|c}	\hline
$r=1$ \href{https://oeis.org/A123513}{A123513} & $r=2$ \href{https://oeis.org/A264027}{A264027} & $r=3$ \href{https://oeis.org/A264028}{A264028} \\ \hline
\end{tabular}	
\end{center} 
By considering $R_n(v) := P_{n+r}(v)$, we then get
$P_n\in\ET{n+r-1+v,1;r!}$. The corresponding EGF is
$\frac{r!e^{(v-1)z}} {(1-z)^{r+1}}$. All lead to Poisson$(1)$ limit 
law. Note that we also have
\begin{center}
\begin{tabular}{cccc}\hline
	\href{https://oeis.org/A010027}{A010027} 
    & $\ET{(n-1)v^2+1+v,v^2;1}$ 
	& $\frac{e^{(1-v)z}}{(1-vz)^2}$ 
	& Reciprocal of \href{https://oeis.org/A123513}{A123513} 
    \\ \hline
\end{tabular}	
\end{center}

Finally, the sequence
\href{https://oeis.org/A193639}{A193639} can be defined recursively 
by 
\(
    P_n\in\ET{2n(2n-2+v),2n;1}.
\)
Normalize this sequence by considering $R_n(v) :=
\frac{P_n(v)}{2^nn!}$, which then satisfies $R_n\in\ET{2n-2+v,1;1}$.
This is identical to \href{https://oeis.org/A079267}{A079267}. The
same Poisson$(1)$ limit law holds for the distribution of the
coefficients.
\begin{center}
\begin{tabular}{llll}\hline
	\href{https://oeis.org/A079267}{A079267} 
    & $\ET{2n-2+v,1;1}$ &  
	$\frac{e^{(v-1)(1-\sqrt{1-2z})}}{\sqrt{1-2z}}$
	& Short-pair matchings\\
	\href{https://oeis.org/A193639}{A193639} 
    & $\ET{2n(2n-2+v),2n;1}$ & 
	& Consecutive rencontres \\ \hline
\end{tabular}	
\end{center}
Note that in all these cases, we can derive more precise asymptotic 
approximations to the distributions, either by the EGF using  
analytic means or by the explicit expression of the coefficients 
using elementary arguments. We leave this to the interested readers. 

\paragraph{$\beta=2, \gamma'=1\Longrightarrow$ Poisson$(\frac12)$}
\href{https://oeis.org/A055140}{A055140} enumerates the number of
matchings of $2n$ people with partners such that exactly $k$ couples
are left together; the generating polynomials satisfy
\(
    P_n\in\ET{2n-2+v,2;1}.
\)
By Theorem~\ref{thm-dll}, the distribution tends to
Poisson$(\frac12)$. This sequence shares a common property with
\href{https://oeis.org/A008290}{A008290}: $[v^n]P_n(v)=1$ but
$[v^{n-1}]P_n(v)=0$.

A sequence leading to the same limit Poisson$(\frac12)$ distribution
is \href{https://oeis.org/A155517}{A155517}, which is defined on
$P_n(v)= \text{\href{https://oeis.org/A055140}{A055140}}_n(v)$ by
$\tr{\frac12n}!2^{\tr{\frac12n}}P_{\cl{\frac12n}}(v)$.

\begin{center}
\begin{tabular}{llll}\hline
	\href{https://oeis.org/A055140}{A055140} 
    & $\ET{2n-2+v,2;1}$ &  
	$\frac{e^{(v-1)z}}{\sqrt{1-2z}}$  & Partner-matchings\\
    \href{https://oeis.org/A155517}{A155517} 
    & & & $\tr{\frac12n}!2^{\tr{\frac12n}}
    \text{\href{https://oeis.org/A055140}
    {A055140}}_{\cl{\frac12n}}(v)$ \\ \hline
\end{tabular}	
\end{center}

\subsection{Geometric and negative-binomial limit laws: $\beta'<0$}
The examples of this category now have the general pattern
\begin{align}\label{Pnv-ab}
    P_n\in\EET{\frac{\alpha n+\gamma+\gamma'(v-1)}{e_n}}
    {\frac{\beta+\beta'(v-1)}{e_n}},
\end{align}
with $\beta>0, \beta'<0$, $\tau_3$ a positive integer and 
$-\frac{\beta'}{\beta-\beta'}>0$. 

Consider \href{https://oeis.org/A158815}{A158815}, counting the 
number of nonnegative paths consisting of up-steps and down-steps of 
length $2n$ with $k$ low peaks (a low peak has its peak vertex at 
height 1). Then 
\(
    P_n\in\ET{\frac{4n-3+v}{n},\frac{3-v}{n};1},
\)
which follows from the OGF 
\[
    \frac{2}{\sqrt{1-4z}\lpa{3-\sqrt{1-4z}-
    v\lpa{1-\sqrt{1-4z}}}}. 
\]
By Theorem~\ref{thm-dll}, $\tau_3=1$ and 
$-\frac{\beta'}{\beta-\beta'} =\frac13$; thus we obtain the geometric 
limit law:
\[
    \mathbb{P}(X_n=k) \to 2 \cdot 3^{-k-1}
    \qquad(k=0,1,\dots). 
\]
The reciprocal polynomials $Q_n(v) := v^nP_n\lpa{\frac1v}$ satisfy
$Q_n\in\ET{(1+3v^2)n +v(1-3v),-v(1-3v);1}$.

Similarly, the sequence \href{https://oeis.org/A065600}{A065600},
counting the number of hills in Dyck paths, can be generated by
$P_n\in\ET{\frac{4n-4+2v}{n+1}, \frac{3-v}{n+1};1}$. Since
$\tau_3= 2$ and $-\frac{\beta'}{\beta-\beta'} =\frac13$, we obtain, 
by Theorem~\ref{thm-dll}, a negative binomial limit law with 
parameters $2$ and $\frac13$:
\[
    \mathbb{P}(X_n=k) \to 4(k+1)\cdot3^{-k-2} \qquad(k=0,1,\dots).
\]

Finally, the sequence \href{https://oeis.org/A202483}{A202483} defined by 
\[
    a_{n,k} := [z^n]\left(\frac{1-(1-9z)^{\frac13}}
    {4-(1-9z)^{\frac13}}\right)^k,
\]
satisfies the recurrence $P_n\in\ET{\frac{9n-5+2v}{n+1},
\frac{4-v}{n+1};1}$. We obtain a negative binomial limit law with
parameters $\tau_3 = 2$ and $-\frac{\beta'}{\beta-\beta'}=\frac14$:
\[
    \mathbb{P}(X_n=k) \to 9(k+1)\cdot4^{-k-2} \qquad(k=0,1,\dots).
\]
These examples are summarized as follows. 
\begin{center}
\begin{tabular}{llll}\hline
	\href{https://oeis.org/A158815}{A158815} 
    & $\ET{\frac{4n-3+v}{n},\frac{3-v}{n};1}$ &  
	\text{Geometric}$(\frac23)$  & Low peaks in paths\\
    \href{https://oeis.org/A065600}{A065600} 
    & $\ET{\frac{4n-4+2v}{n+1}, \frac{3-v}{n+1};1}$ &  
    \text{Negative-Binomial}$(2,\frac13)$ & Hills in Dyck paths \\ 
    \href{https://oeis.org/A202483}{A202483} 
    & $\ET{\frac{9n-5+2v}{n+1},\frac{4-v}{n+1};1}$ & 
    \text{Negative-Binomial}$(2,\frac14)$ 
    & $[z^n]\left(\frac{1-(1-9z)^{\frac13}}
    {4-(1-9z)^{\frac13}}\right)^k$\\
    \hline
\end{tabular}	
\end{center}

\subsection{A Bernoulli limit law}
All examples we examined so far with discrete limit laws have
$\beta>0$. We now consider a different example 
\href{https://oeis.org/A103451}{A103451} with $\beta<0$ and 
\[
    P_n(v) = 1+v^{n+1}\qquad(n\ge0).
\] 
The limit law is obviously Bernoulli$\lpa{\frac12}$. Such 
polynomials satisfy the recurrence 
\begin{align}\label{Pn-Bernoulli}
    P_n\in\EET{1}{-\frac{v}{n};1+v}.
\end{align}
We see that in this case $\beta<0$ but the limit law is discrete
(also following from \eqref{Km}). 

\section{Applications IV: non-normal continuous limit laws}
\label{sec-nncll}

Polynomials satisfying \eqref{Pn-nn} with $\beta<0$ whose
coefficients tends to some continuous limit law are examined in this
section. In all cases we consider, since the variance tends to
infinity and the limit law is not normal, we deduce that the roots of 
the polynomials are not all real.

\subsection{Beta limit laws and their mixtures 
($\frac{\beta}{\alpha}=-1$)}

A large number of polynomials whose coefficients converge to Beta 
limit laws have the same pattern 
\begin{align}\label{beta-rr}
    P_n \in\EET{\frac{\alpha n + pv+q}{e_n}}
    {-\frac{\alpha v}{e_n};h_0+h_1v},
\end{align}
where $h_0, h_1\ge0$ and $h_0+h_1>0$. By \eqref{nn-egf}, we see that 
the EGF of $P_n$ is given by 
\[
    F(z,v) = \frac{h_0(1-\alpha vz)+h_1v(1-\alpha z)}
    {(1-\alpha z)^{\frac q\alpha+1}
	(1-\alpha vz)^{\frac p\alpha+1}},
\]
which shows that the recurrence \eqref{beta-rr} is indeed simpler 
than most others treated in this paper. Thus the discussions of the 
examples in this category will be brief. 

Since we assume that $\alpha>0$, it can be checked that 
\[
    [v^n]P_n(v)\ge 0 \text{ for }n,k\ge0
	\quad\text{iff}\quad p,q\ge0,
\]
in contrast to the more general form \eqref{Pn-nn} for which general 
conditions for the nonnegativity of the coefficients remain less 
clear. 

The following beta limit law is a special case of 
Theorem~\ref{thm-nncll}. 
\begin{cor} \label{thm:beta} Assume that $P_n(v)$ satisfies the
recurrence \eqref{beta-rr}. If $p,q>0$, then the coefficients of
$P_n(v)$
follows asymptotically a mixture of two Beta distributions:
\begin{align}\label{beta-mixture}
    \frac{h_1}{h_0+h_1}\,
    \mathrm{Beta}\llpa{\frac p\alpha+1,
	\frac q\alpha}
    +\frac{h_0}{h_0+h_1}\,
    \mathrm{Beta}\llpa{\frac p\alpha,
	\frac q\alpha+1}.
\end{align}
\end{cor}
\begin{proof}
Since $\beta=\beta'=-\alpha$, we have $\tau_1=1$, $\tau_2
=\frac{p+q}\alpha$ and $\tau_3=\frac{q}{\alpha}$, so that 
\eqref{beta-mixture} follows from \eqref{2gmls-b}.
\end{proof}

In particular, the mean is asymptotically linear and the variance asymptotically quadratic with the leading constants given by 
\begin{align*}
    \frac{\mathbb{E}(X_n)}{n}
	&\sim K_1 = \frac{ph_0+(p+\alpha)h_1}{(h_0+h_1)
	(p+q+\alpha)}, \\
    \frac{\mathbb{V}(X_n)}{n^2}
	&\sim K_2-K_1^2 = \alpha\frac{p(q+\alpha)h_0^2
	+2(p+\alpha)(q+\alpha)h_0h_1
	+q(p+\alpha)h_1^2}{(h_0+h_1)^2
	(p+q+\alpha)^2(p+q+2\alpha)},
\end{align*}
respectively.

\subsubsection{Uniform (Beta$(1,1)$) limit laws} \label{sec:ull}
Uniform distribution is a special case of Beta distributions:
$\text{Beta}(1,1)$. A very simple example in OEIS with this
distribution is \href{https://oeis.org/A123110}{A123110} (shifted by
$1$), which can be generated by \eqref{beta-rr} with $P_n\in
\ET{\frac{n+1}{n},-\frac{v}{n};v}$. Then $P_n(v) = v+\cdots +
v^{n+1}$ for $n\ge0$, and one obviously has a Uniform$[0,1]$ limit
law for the coefficients with mean and variance asymptotic to $\frac
n2$ and $\frac{n^2}{12}$, respectively. This and other examples are
listed as follows.
\begin{center}
\begin{tabular}{llll}
	\multicolumn{1}{c}{OEIS} &
	\multicolumn{1}{c}{Type} &
	\multicolumn{1}{c}{Limit law} \\ \hline 
\href{https://oeis.org/A000012}{A000012} 
& $\ET{\frac{n+v}{n},-\frac{v}{n};1}$ & Uniform$[0,1]$ \\
\href{https://oeis.org/A123110}{A123110} 
& $\ET{\frac{n+1}{n},-\frac{v}{n};v}$  & Uniform$[0,1]$ \\
\href{https://oeis.org/A279891}{A279891} 
& $\ET{\frac{n+1+v}{n},-\frac{v}{n};2+2v}$ & Uniform$[0,1]$ \\ 
\hline
\end{tabular}    
\end{center}
Note that all roots of these polynomials lie on the unit circle. 
Also if we change the initial condition of 
\href{https://oeis.org/A123110}{A123110} to $P_0(v)=1$
(instead of $v$), then $P_n(v)\equiv 1$ for all $n\ge0$. This shows 
the high sensitivity of the limit law on initial conditions. 

\subsubsection{Arcsine (Beta$\lpa{\frac12,\frac12}$) law} Arcsin law
is another special case of Beta distribution:
$\text{Beta}(\frac12,\frac12)$. A classical example in this category
is Chung-Feller's arcsine law \cite{Chung1949}. First, the number
of simple random walks (up or down with the same probability) of
length $2n$ with $2k$ steps above zero is given by
$\binom{2k}{k}\binom{2n-2k}{n-k}$ (alternatively, paths of length
$2n$ with the last return to zero at $2k$ has the same distribution),
which is \href{https://oeis.org/A067804}{A067804}. Then, the
corresponding generating polynomials are of type
$P_n\in\ET{\frac{4n-2+2v}{n}, -\frac{4v}{n};1}$. We obtain, by
Corollary~\ref{thm:beta}, the arcsine limit law for the coefficients.

Another essentially identical sequence leading to the same law is 
\href{https://oeis.org/A059366}{A059366}. 

\vspace*{-.5cm}
\begin{center}
\begin{tabular}{llllll} \\
	\multicolumn{1}{c}{OEIS} &
	\multicolumn{1}{c}{Type} &
	\multicolumn{1}{c}{$[v^k]P_n(v)$} & 
	\multicolumn{1}{c}{Limit law} &
	\multicolumn{1}{c}{Limit density} \\ \hline 	
\href{https://oeis.org/A059366}{A059366} 
& $\ET{2n-1+v,-2v;1}$ & 
$\frac{n!}{2^n}\binom{2k}{k}\binom{2(n-k)}{n-k}$ & 
arcsine & 
$\frac1{\pi\sqrt{x(1-x)}}$ \\
\href{https://oeis.org/A067804}{A067804} 
& $\ET{\frac{4n-2+2v}{n},-\frac{4v}{n};1}$ & 
$\binom{2k}{k}\binom{2(n-k)}{n-k}$ & 
arcsine & 
$\frac1{\pi\sqrt{x(1-x)}}$ \\ \hline
\end{tabular}	
\end{center}

By the connection to Legendre polynomials, all roots of $P_n(v)$ lie 
on the unit circle; see also \cite{Hwang2015}. 

\subsubsection{\text{Beta}$(q,q)$ with $q>1$} 
Consider the expansion (\href{https://oeis.org/A120406}{A120406})
\[
    \frac{1-2(1+v)z-\!\sqrt{(1-4z)(1-4vz)}}
	{2(1-v)^2z^2}
	= \sum_{n\ge0}P_n(v)z^n.
\]
Then $P_n(v)\in\ET{\frac{4n+2+6v}{n+2},-\frac{4v}{n+2};1}$. We 
obtain a Beta$\lpa{\frac32,\frac32}$ (semi-elliptic) limit law for the
coefficients.

Another example is \href{https://oeis.org/A091441}{A091441}, which
counts the number of permutations of two types of objects so that
each cycle contains at least one object of each type. Shifting by one
(so as to start the recurrence from $n=1$) leads to the polynomial of
type $\ET{n+1+2v,-v;1}$. We then obtain the limit law Beta$(2,2)$ 
(parabolic) for the coefficients.

\begin{small}
\begin{center}
\begin{tabular}{lllll}\\
	\multicolumn{1}{c}{OEIS} &
	\multicolumn{1}{c}{Type} &
	\multicolumn{1}{c}{$[v^k]P_n(v)$} & 
	\multicolumn{1}{c}{Limit law} &
	\multicolumn{1}{c}{Limit density} \\ \hline 	 
\href{https://oeis.org/A120406}{A120406} &  
$\ET{\frac{4n+2+6v}{n+2},-\frac{4v}{n+2};1}$ & 
$\frac{2\binom{n}{k}^2\binom{2n+2}{n}}
{\binom{2n+2}{2k+1}}$ & Beta$\lpa{\frac32,\frac32}$ &
$\frac{8\sqrt{x(1-x)}}\pi$ \\
\href{https://oeis.org/A091441}{A091441} & $\ET{n+1+2v,-v;1}$ & 
$n!(k+1)(n+1-k)$ & Beta$(2,2)$ &
$\frac16x(1-x)$ \\ 
\href{https://oeis.org/A003991}{A003991} & 
$\ET{\frac{n+1+2v}{n},-\frac{v}{n};1}$ & 
$(k+1)(n+1-k)$ & Beta$(2,2)$ &
$\frac16x(1-x)$ \\  \hline
\end{tabular}	
\end{center}
\end{small}

\subsubsection{Beta$(p,q)$ with $p\ne q$}
A generic example is the negative hypergeometric distribution, first 
introduced by Condorcet in 1785 (see \cite[Ch.\ 6, Sec.\ 
2.2]{Johnson1992}) and defined by 
\[
    \mathbb{P}(X_n=k)= [v^k]P_n(v)
    = \frac{\binom{p+k-1}{k}
    \binom{q+n-k-1}{n-k}}{\binom{p+q+n-1}{n}}
    \qquad(n\ge0;p,q>0).
\]
Then $P_n(v)$ is of type $\ET{\frac{n+q-1+pv}{p+q+n-1},
-\frac{v}{p+q+n-1};1}$, and the limit law of $X_n$ is, by 
Corollary~\ref{thm:beta}, Beta$(p,q)$. See also \cite{Janardan1988} 
where this distribution arises in a ``social attraction model''.
For clarity, we separate the factor $e_n$ (see \eqref{beta-rr}) in 
the following table. 

\centering
\begin{longtable}{cclccc} \\
	\multicolumn{1}{c}{OEIS} &
	\multicolumn{1}{c}{$e_n$} &
	\multicolumn{1}{c}{Type} &
	\multicolumn{1}{c}{Limit law} &
	\multicolumn{1}{c}{Limit density} \\ \hline 	
\href{https://oeis.org/A162608}{A162608} & $1$ & $\ET{n+2v,-v;1}$  
& Beta$(2,1)$ & $2x$ \\
\href{https://oeis.org/A002260}{A002260} & $n$ & $\ET{n+2v,-v;1}$  
& Beta$(2,1)$ & $2x$ \\
\href{https://oeis.org/A051683}{A051683} & $\frac{n}{n+1}$ & $\ET{n+2v,-v;1}$  
& Beta$(2,1)$ & $2x$\\
\href{https://oeis.org/A002262}{A002262} & $n$ & $\ET{n+1+v,-v;v}$  
& Beta$(2,1)$ & $2x$ \\ \hline
\href{https://oeis.org/A138770}{A138770} & $1$ & $\ET{n+1+v,-v;2}$  
& Beta$(1,2)$ & $2(1-x)$\\ 
\href{https://oeis.org/A004736}{A004736} & $n$ & $\ET{n+1+v,-v;1}$  
& Beta$(1,2)$ & $2(1-x)$ \\
\href{https://oeis.org/A212012}{A212012} & $n$ & $\ET{n+1+v,-v;2}$  
& Beta$(1,2)$ & $2(1-x)$ \\ 
\href{https://oeis.org/A202363}{A202363} & $\frac{n}{n+2}$ & $\ET{n+1+v,-v;1}$  
& Beta$(1,2)$ & $2(1-x)$\\ \hline
\href{https://oeis.org/A122774}{A122774} & $1$ & $\ET{2n-1+2v,-2v;1}$ 
& Beta$(1,\frac12)$ & $\frac1{2\sqrt{1-x}}$\\
\href{https://oeis.org/A104633}{A104633} & $n$ & $\ET{n+2+2v,-v;1}$ &
Beta$(2,3)$ & $12x(1-x)^2$ \\
\href{https://oeis.org/A127779}{A127779} & $n$ & $\ET{n+1+3v,-v;1}$ &
Beta$(3,2)$ & $12x^2(1-x)$ \\
\href{https://oeis.org/A033820}{A033820} & $n+1$ & $\ET{4n-2+6v,-4v;1}$ & 
Beta$(\frac32,\frac12)$ 
& $\frac{2\sqrt{x}}{\pi\sqrt{1-x}}$ \\ \hline
\end{longtable}    
\justifying

\medskip

\noindent Here (\href{https://oeis.org/A127779}{A127779}, \href{https://oeis.org/A104633}{A104633}) are a reciprocal pair. In 
particular, \href{https://oeis.org/A033820}{A033820} is connected to the enumeration of paths avoiding 
the line $x=y$; see \cite{Gessel1992, Shur2003}.

\paragraph{More OEIS sequences with Beta$(2,1)$ limit law} Three 
simple sequences of polynomials are also Eulerian although they are 
not of the form \eqref{beta-rr}. We list them here for completeness. 
\begin{center}\small
\begin{tabular}{cll}
\multicolumn{1}{c}{OEIS} &
\multicolumn{1}{c}{$P_n(v)$} &
\multicolumn{1}{c}{Type} \\ \hline 
\href{https://oeis.org/A071797}{A071797}
& $\sum\limits_{1\le j\le 2n}(j+1)v^j$
& $\ET{\frac{2(2n^2-(1-2v+2v^2)n+v^2)}
    {2n(2n-1)},-\frac{2v(1+v)n-v^2}{2n(2n-1)};1}$ \\ 
\href{https://oeis.org/A074294}{A074294}
& $\sum\limits_{0\le j\le 2n+1}(j+1)v^j$	
& $\ET{\frac{2n^2+(1+2v+2v^2)n+v(1+2v)}{2n(2n+1)},
    -\frac{2v(1+v)n-v(1+2v)}{2n(2n+1)};1+2v}$ \\
\href{https://oeis.org/A293497}{A293497}
& $\sum\limits_{0\le j\le 2n}(j+1)v^j$
& $\ET{\frac{4n^2+2v(2+v)n+v(1+v)}{2n(2n-1)},
    -\frac{2v(1+v)n-v^2}{2n(2n-1)};v}$	\\ \hline
\end{tabular}	
\end{center}

Without a priori information on the exact forms of the polynomials,
we can still apply the method of moments (with more complicated
calculations) and get the limit law, although the corresponding PDEs
seem more difficult to solve. A simple reason these recurrences lead
to non-normal limit laws is that the dependence on $v$ in each of the
multiplicative factors is only at the lower order terms such as
$O(n^{-1})$ and smaller ones.

\subsubsection{Beta mixtures}
For simplicity, we abbreviate the Beta$(p,q)$ distribution by 
$B_{p,q}$ in the following table.

\centering
\begin{tabular}{ lllll } 
	\multicolumn{1}{c}{OEIS} &
	\multicolumn{1}{c}{$e_n$} &
	\multicolumn{1}{c}{Type} &
	\multicolumn{1}{c}{Limit law} &
	\multicolumn{1}{c}{Limit density} \\ \hline
\makecell{\href{https://oeis.org/A051162}{A051162}\\
\href{https://oeis.org/A134478}{A134478}} & $n$ 
& $\ET{n+1+v,-v;1+2v}$ 
& $\frac23B_{2,1}+\frac13B_{1,2}$ & $\frac23(1+x)$ \\
\href{https://oeis.org/A294317}{A294317} & $n$ 
& $\ET{n+1+v,-v;2+v}$ 
& $\frac13B_{2,1}+\frac23B_{1,2}$ & $\frac16(2-x)$\\
\href{https://oeis.org/A087401}{A087401} & $n$ 
& $\ET{n+2+v,-v;v+v^2}$ 
& $\frac12B_{3,1}+\frac12B_{2,2}$ & $\frac32x(2-x)$ \\
\href{https://oeis.org/A141418}{A141418} & $n$ 
& $\ET{n+1+2v,-v;1+v}$ 
& $\frac12B_{3,1}+\frac12B_{2,2}$ & $\frac32x(2-x)$ \\
\href{https://oeis.org/A193891}{A193891} & $n$ 
& $\ET{n+1+3v,-v;1+2v}$ 
& $\frac23B_{4,1}+\frac13B_{3,2}$ & $\frac43x^2(3-x)$ \\
\href{https://oeis.org/A193892}{A193892} & $n$ 
& $\ET{n+3+v,-v;2+v}$ 
& $\frac13B_{2,3}+\frac23B_{2,4}$ & $\frac43(1-x)^2(2+x)$ \\
\href{https://oeis.org/A193895}{A193895} & $n$ 
& $\ET{n+2+2v,-v;2+v}$ 
& $\frac13B_{3,2}+\frac23B_{2,3}$ & $4x(1-x)(2-x)$ \\
\href{https://oeis.org/A193896}{A193896} & $n$ 
& $\ET{n+2+2v,-v;1+2v}$ 
& $\frac23B_{3,2}+\frac13B_{2,3}$ & $4x(1-x^2)$ \\ \hline
\end{tabular}
\justifying
\medskip

\noindent
Note that (\href{https://oeis.org/A051162}{A051162},
\href{https://oeis.org/A294317}{A294317}), 
(\href{https://oeis.org/A193891}{A193891},
\href{https://oeis.org/A193892}{A193892}) and
(\href{https://oeis.org/A193895}{A193895},
\href{https://oeis.org/A193896}{A193896}) are reciprocal pairs.
See Figure~\ref{fig-beta} for the histograms of some polynomials
leading to Beta limit laws.

\begin{figure}[!ht]
\begin{center}
\includegraphics[height=3cm]{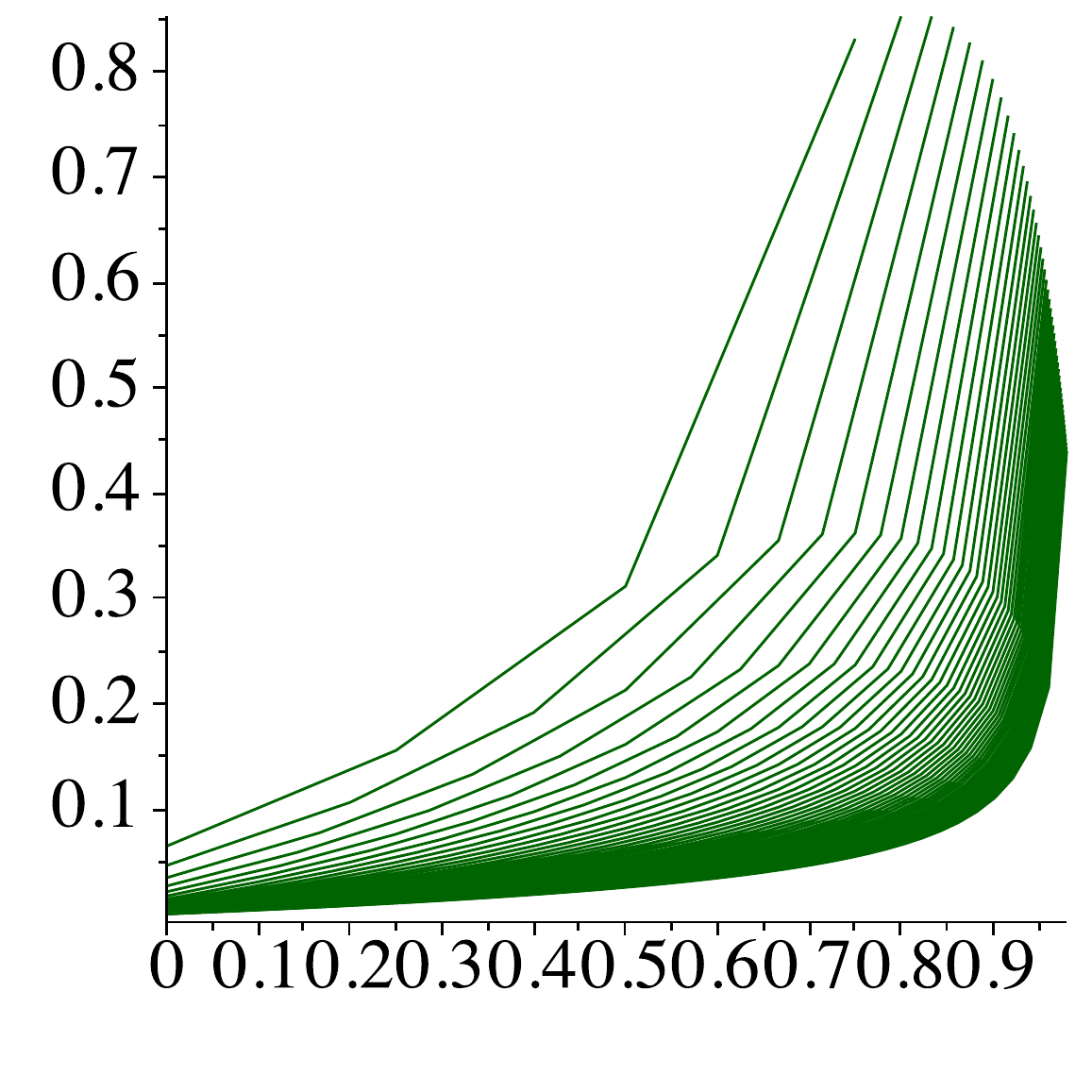}\,
\includegraphics[height=3cm]{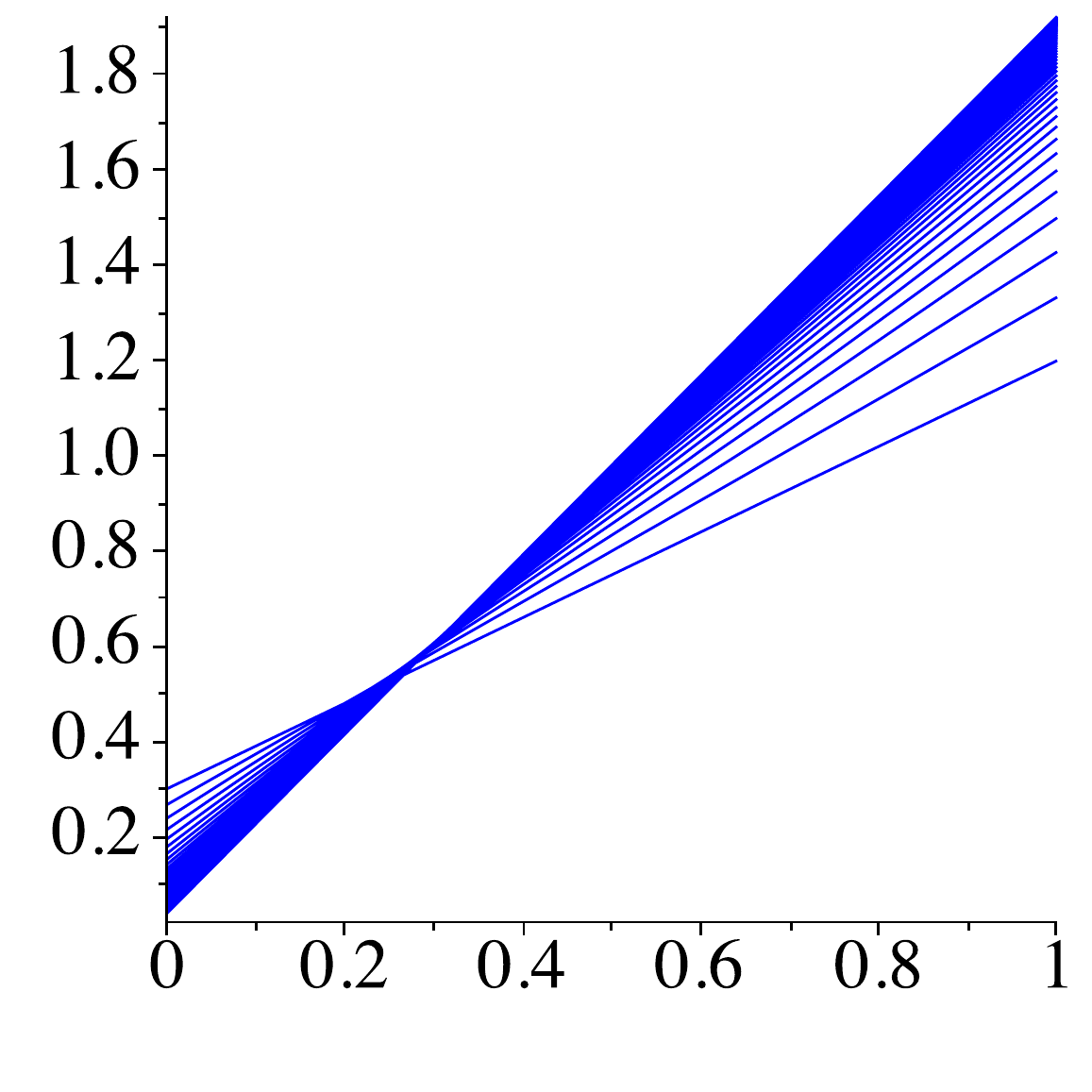}\,
\includegraphics[height=3cm]{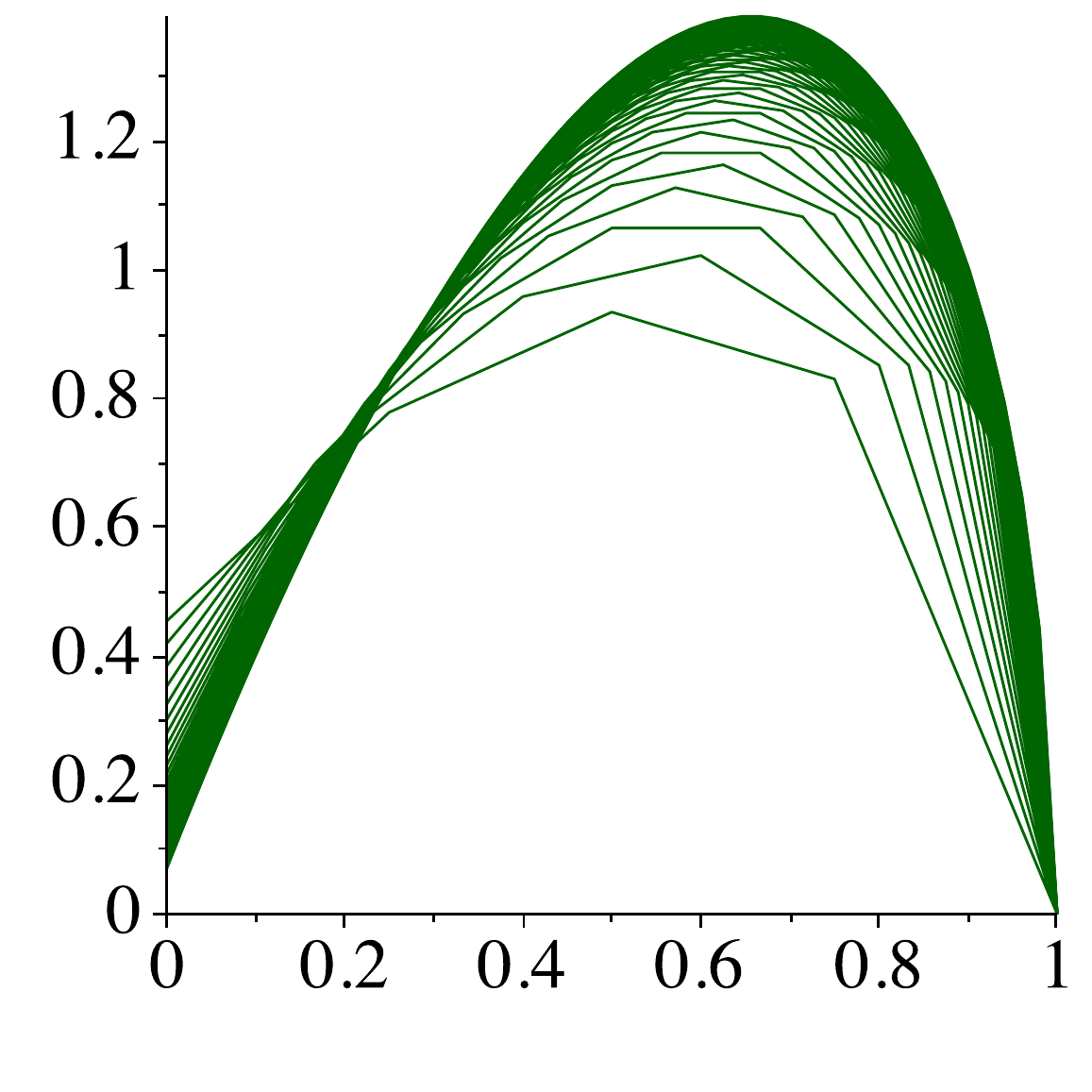}\,
\includegraphics[height=3cm]{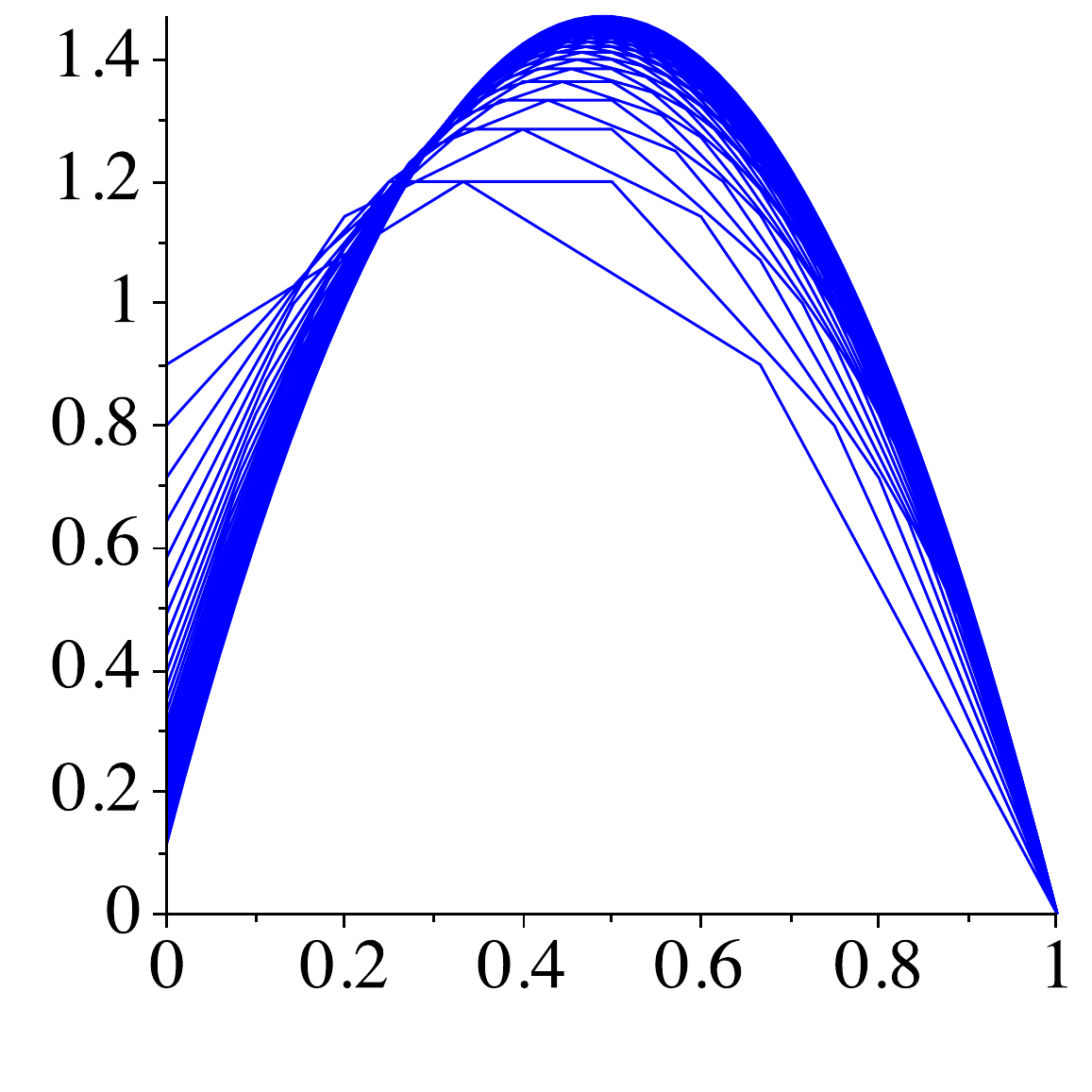}\,
\includegraphics[height=3cm]{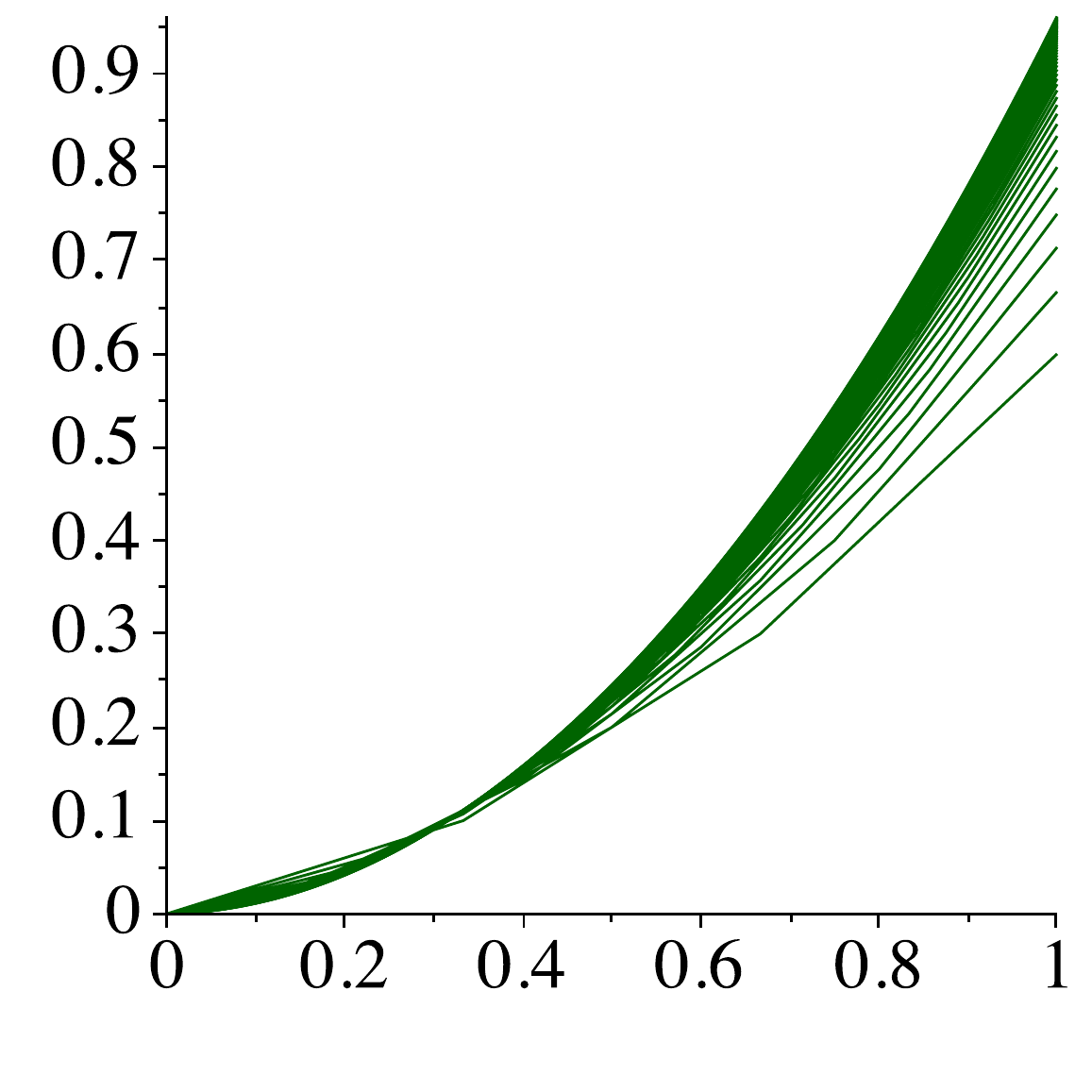}
\end{center}
\caption{Distributions of the coefficients of polynomials of type 
$\ET{n+pv+q,-v;1}$ for $n=3,\dots, 50$: (from left to right) 
$(p,q)=(2,-0.5)$, $(2,0)$, $(2,0.5)$, $(2,1)$, $(3,1)$.}
\label{fig-beta}	
\end{figure}

\subsection{Uniform limit laws again}
\label{ss-ull}
We saw two occurrences of uniform limit law in the above table (being
a special case of beta distribution):
\href{https://oeis.org/A279891}{A279891} and
\href{https://oeis.org/A123310}{A123310}. Other less trivial examples
are the following.
\begin{center}
\begin{tabular}{llll} \\
\multicolumn{1}{c}{OEIS} &
\multicolumn{1}{c}{Type} &
\multicolumn{1}{c}{OGF} & 
\multicolumn{1}{c}{Limit law}  \\ \hline 	
\href{https://oeis.org/A104709}{A104709} & $\ET{2n+1+v,-(1+v);1}$
& $\frac1{(1-2z)(1-(1+v)z)}$ & Uniform$[0,\frac12]$ \\
\href{https://oeis.org/A193851}{A193851} & $\ET{3n+1+2v,-(1+2v);1}$
& $\frac1{(1-3z)(1-(1+2v)z)}$ & Uniform$[0,\frac23]$ \\
\href{https://oeis.org/A193861}{A193861} & $\ET{3n+2+v,-(2+v);1}$
& $\frac1{(1-3z)(1-(2+v)z)}$ & Uniform$[0,\frac13]$\\ \hline
\end{tabular}    
\end{center}

\medskip
Their reciprocal polynomials are of the same form \eqref{Pnv-gen} but 
with quadratic $\alpha(v)$. See Figure~\ref{fig-unif} for a graphical 
rendering. 

\medskip

\begin{center}
\begin{tabular}{cccccc} 
	\multicolumn{1}{c}{OEIS} &
	\multicolumn{1}{c}{Type} &
	\multicolumn{1}{c}{Limit law} &
	\multicolumn{1}{c}{Recip. of} \\ \hline 	
\href{https://oeis.org/A054143}{A054143} & $\ET{(1+2v-v^2)n+v(1+v),-v(1+v);1}$
& Uniform$[\frac12,1]$ &\href{https://oeis.org/A104709}{A104709} \\
\href{https://oeis.org/A193850}{A193850} & $\ET{(2+2v-v^2)n+v(2+v),-v(2+v);1}$
& Uniform$[\frac23,1]$ & \href{https://oeis.org/A193851}{A193851}\\
\href{https://oeis.org/A193860}{A193860} & $\ET{(1+4v-2v^2)n+v(1+2v),-v(1+2v);1}$
& Uniform$[\frac13,1]$ & \href{https://oeis.org/A193861}{A193861} \\ \hline
\end{tabular}
\end{center}

\begin{figure}[!ht]
\begin{center}
\begin{tabular}{c c c}
\includegraphics[height=3.2cm]{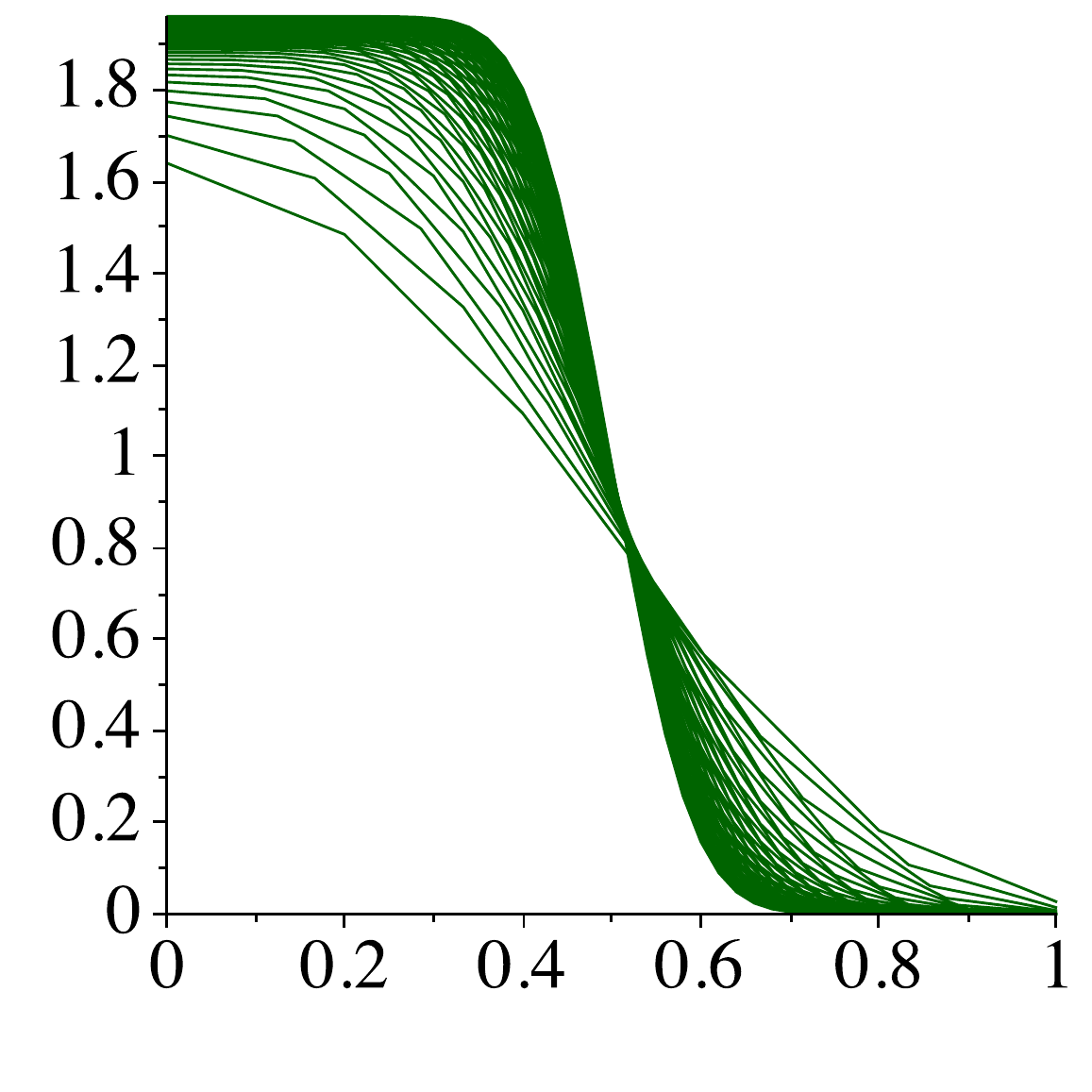} &
\includegraphics[height=3.2cm]{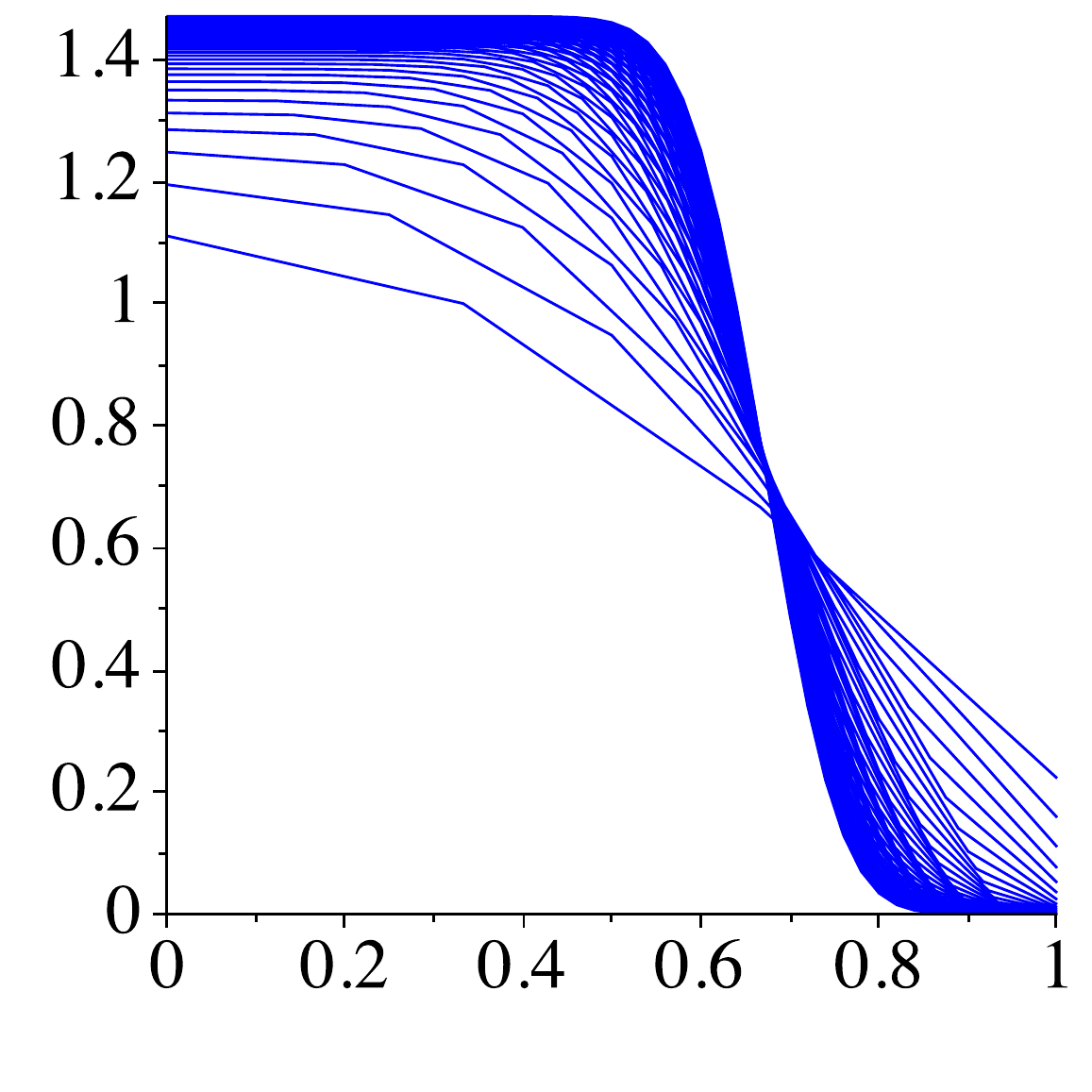} &
\includegraphics[height=3.2cm]{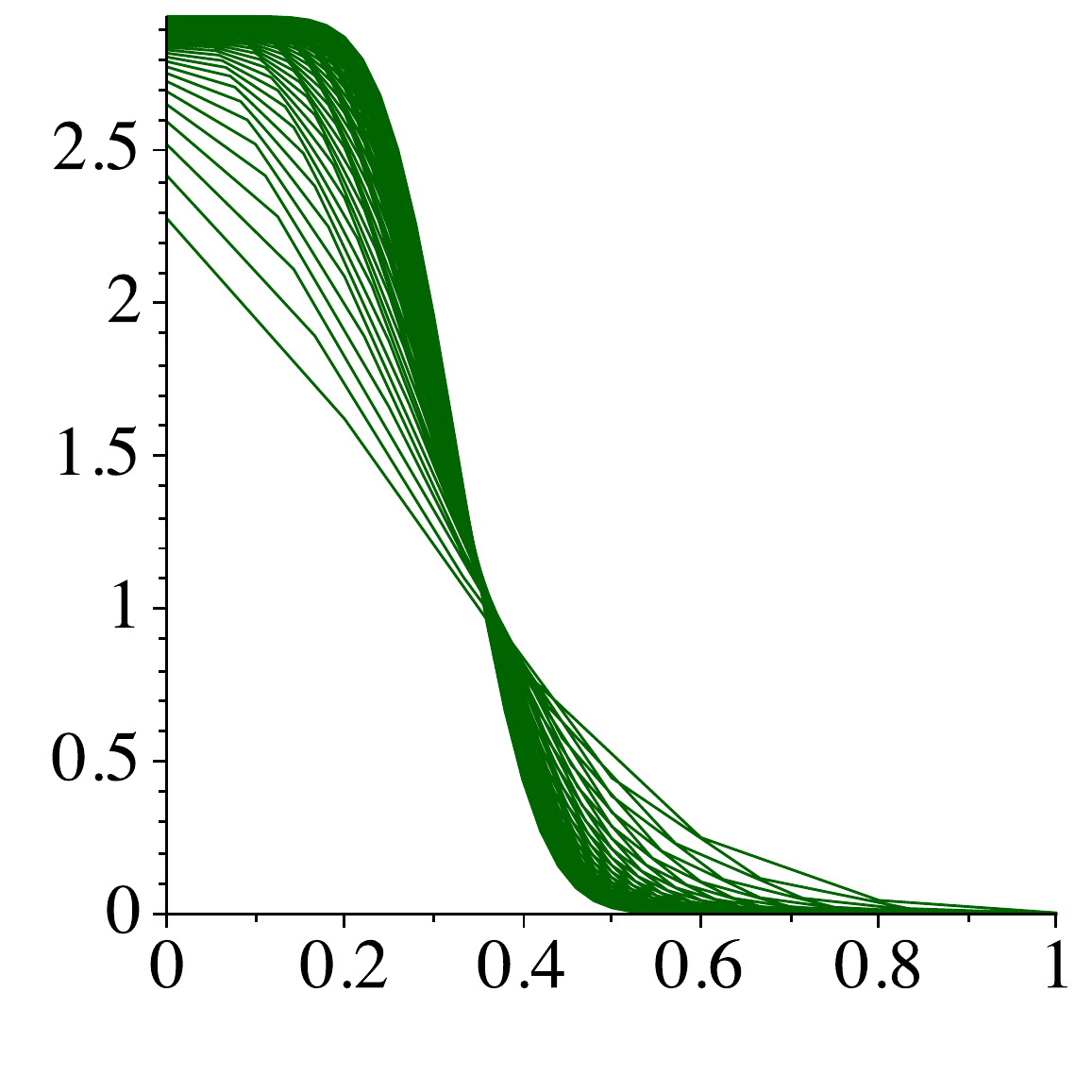} \\
\href{https://oeis.org/A104709}{A104709} &
\href{https://oeis.org/A193851}{A193851} &
\href{https://oeis.org/A193861}{A193861}
\end{tabular}
\end{center}
\caption{The histograms corresponding to A104709, A193851 and 
A193861.}\label{fig-unif}
\end{figure}

\medskip

Other Eulerian recurrences not of the form \eqref{beta-rr} but with a 
uniform limit law include
\begin{small}
\begin{center}
\begin{tabular}{llll} \hline
\href{https://oeis.org/A118175}{A118175} 
& $P_n(v)=\sum_{1\le j\le 2n}v^j$ 
& $P_n\in\ET{\frac{v(2-v)n-v(1-2v)}{n},-\frac{v^2}{n};1}$ 
& Uniform$[0,1]$\\
\href{https://oeis.org/A071028}{A071028} 
& $P_n(v)=\sum_{0\le j\le n}v^{2j}$
& $P_n\in\ET{\frac{n+v^2}{n},-\frac{v(1+v)}{2n};1}$ 
& Uniform$[0,1]$\\ 
\hline
\end{tabular}    
\end{center}
\end{small}

Note that a very similar-looking EGF $\frac1{(1-z)(1-(1+2v)z)}$, 
which is \href{https://oeis.org/A193862}{A193862} (reciprocal of 
\href{https://oeis.org/A115068}{A115068}, enumerating elements in 
Coxeter group with certain descent sets), leads to the CLT 
$\mathscr{N}\lpa{\frac23n,\frac29n}$, although both sequences do not 
satisfy the recurrence \eqref{Pnv-gen}. This follows from a direct 
calculation.

\subsection{Rayleigh and half-normal limit laws 
($\frac\beta\alpha=-\frac12$)}

We consider here $\tau_1=\frac12$ for which many different
limit laws are possible. For example, the polynomials with
\[
    P_n\in\ET{2n-2+v,-v;1+v},
\]
contain only nonnegative coefficients, and follow a limit law with
the density $\frac18x^3 e^{-\frac14x^2}$. This is proved directly 
from \eqref{nn-fx}. Similarly, the polynomials 
$P_n\in\ET{2n+bv,-v;1}$ leads to the limit law with the density
\[
    \frac{b2^{-b}}{\Gamma\lpa{\frac12(b+1)}}
    \,x^{b-1}\int_x^\infty e^{-\frac14t^2}\dd t
    \qquad(b>0;x>0).
\]

Instead of describing all possible limit laws for which we have
few applications, we address the following question, based on the
examples we collected: \emph{under which conditions will the limit
law of the coefficients be either Rayleigh or half-normal (two of the
most common non-normal laws in lattice paths, random trees, random
mappings, etc.)}? For more instances and techniques for these two
laws, see \cite{Drmota1997,Wallner2016} and the references therein.
It turns out that these are very special laws from our framework and
very strong restrictions are needed. We give a complete
characterization of this question.

Recall that the Rayleigh and half-normal distributions with scale 
$\sigma>0$ (which corresponds to the mode of the distribution) have 
the densities
\[
    \frac{x}{\sigma^2}\,e^{-\frac{x^2}{2\sigma^2}}
	\quad\text{and}\quad
	\frac{\sqrt{2}}{\sqrt{\pi}\,\sigma}\, 
	e^{-\frac{x^2}{2\sigma^2}}\qquad(x\ge0),
\]
respectively. While the Taylor expansion of the former contains only 
odd powers, that of the latter contains only even powers. The 
corresponding $m$th moments have the forms 
\begin{align}\label{ray-half-mm}
    \sqrt{\pi}\frac{\Gamma(m+1)}
	{\Gamma\lpa{\frac m2+\frac12}}\,
    \left(\frac{\sigma}{\sqrt{2}}\right)^m
	\quad\text{and}\quad
	\frac{\Gamma(m+1)}{\Gamma\lpa{\frac m2+1}}
	\,\left(\frac{\sigma}{\sqrt{2}}\right)^m,
\end{align}
respectively. 

\subsubsection{Characterizations of Rayleigh and half-normal limit 
laws}

To describe our characterization of the two special limit laws, 
we define the function
\[
    \mathscr{F}_\alpha(p,q,\rho;z) 
	:= (1-\alpha z)^{-p}
	\lpa{\lpa{1-\rho(1-v)}
	\sqrt{1-\alpha z}+\rho(1-v)}^{-q},
\]
which equals $\alpha^q$ times $F$ in \eqref{nn-egf} when
$\beta=-\frac12\alpha$, $\beta'=-\frac12\rho$,
$\gamma=\alpha\lpa{p-1+\frac12q}$, $\gamma' = \frac12 q\rho$, $c_0=1$
and $c_1=0$. For our uses, we need the following conditions for the
nonnegativity of the coefficients $[v^kz^n]\mathscr{F}_\alpha$.
\begin{lmm} \label{lmm-nonneg}
Let $P_n(v) := [z^n]\mathscr{F}_\alpha(p,q,\rho;z)$. 
Assume $\alpha>0$. (i) If $p=0$ and $q>0$, then $[v^k]P_n(v)\ge 0$ 
for all $n,k\ge0$ iff $0\le \rho\le 1$; and (ii) if $p\ge\frac12$ and 
$0<q\le 2$, then  $[v^k]P_n(v)\ge 0$ for all $n,k\ge0$ iff $0\le 
\rho\le \frac32$.
\end{lmm}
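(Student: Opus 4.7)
The approach is to introduce the substitution $u := \sqrt{1-\alpha z}$, $g := 1-u$, $w := 1-v$, and $h := 1 - \rho w = (1-\rho) + \rho v$. Since $\alpha > 0$, both $g$ (as a series in $z$) and $u^{-r}$ for any $r > 0$ have nonnegative Taylor coefficients in $z$. A direct computation yields the key identity $(1-\rho w)\sqrt{1-\alpha z} + \rho w = 1 - gh$, so that $\mathscr{F}_\alpha(p,q,\rho;z) = u^{-2p}(1-gh)^{-q}$. Expanding by the binomial series and executing the inner $v$-sum via $\sum_{m\ge 0}\binom{q+k+m-1}{m}\gamma^m = (1-\gamma)^{-(q+k)}$ with $\gamma = (1-\rho)g$ yields the closed form
\[
[v^k]\mathscr{F}_\alpha(p,q,\rho;z) = \binom{q+k-1}{k}\rho^k g^k u^{-2p}\bigl(\rho + (1-\rho)u\bigr)^{-(q+k)}.
\]
Hence the problem reduces to analyzing the sign of $\Psi_k(z) := g^k u^{-2p}(\rho + (1-\rho)u)^{-(q+k)}$ as a power series in $z$, for each integer $k \ge 0$ (the prefactor $\binom{q+k-1}{k}\rho^k$ being nonnegative for $\rho \ge 0$).

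For sufficiency in part (i) with $p = 0$ and $0 \le \rho \le 1$, I write $\rho + (1-\rho)u = 1 - (1-\rho)g$ and note that $\Psi_k = g^k(1-(1-\rho)g)^{-(q+k)}$ is a product of series with nonnegative coefficients. Necessity follows from $[z^1]\mathscr{F}_\alpha(0,q,\rho;z) = (q\alpha/2)((1-\rho) + \rho v)$, whose $v$-coefficients force $0 \le \rho \le 1$. The lower bound $\rho \ge 0$ in part (ii) is equally immediate from $[v^1 z^1]\mathscr{F}_\alpha = q\rho\alpha/2$; and sufficiency in the subregime $0 \le \rho \le 1$ of (ii) follows verbatim from (i), with the additional factor $u^{-2p}$ contributing more nonneg coefficients since $p > 0$.

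The hard regime in (ii) is $1 < \rho \le 3/2$. Setting $b := \rho - 1 \in (0, 1/2]$, one has $\rho + (1-\rho)u = 1 + bg$, and the factor $(1+bg)^{-(q+k)}$ in $\Psi_k = g^k u^{-2p}(1+bg)^{-(q+k)}$ has Taylor coefficients of alternating signs, so positivity must emerge from cancellation. My plan is to substitute $s := g$ (so that $u = 1-s$ and $\alpha z = s(2-s)$), recast $\Psi_k$ as $F_k(s) := s^k(1-s)^{-2p}(1+bs)^{-(q+k)}$, and apply the Lagrange inversion formula
\[
[z^n]F_k(g(z)) = \frac{\alpha^n}{n\cdot 2^n}\,[s^{n-1}]\,F_k'(s)(1 - s/2)^{-n} \qquad (n \ge 1),
\]
exploiting that $(1-s/2)^{-n}$ has nonnegative coefficients in $s$. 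The main obstacle will be to verify nonneg of the right-hand side under the constraints $p \ge 1/2$, $0 < q \le 2$, $b \in [0, 1/2]$, which I expect to reduce to a hypergeometric positivity identity for ${}_2F_1$-type series, the threshold $b = 1/2$ being precisely what balances the first-order cancellation already visible in $[z^1]\Psi_k \propto 2p - (q+k)b$. For necessity in (ii), the naive bound $\rho \le 1 + 2p/q$ obtained from $[v^0 z^1]\mathscr{F}_\alpha = (\alpha/2)(2p - q(\rho-1))$ is too weak for most admissible $(p,q)$; indeed an explicit calculation gives $[v^3 z^4]\mathscr{F}_\alpha(1,1,2;z) = -\alpha^4/4 < 0$, so the true threshold must be extracted from higher-order coefficients. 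My plan is to identify, for every admissible $(p,q)$, a specific pair $(k,n)$ at which $[v^k z^n]\mathscr{F}_\alpha$ flips sign exactly at $\rho = 3/2$---most likely via the same Lagrange-inversion reduction applied in reverse to locate the critical $(k,n)$---and then to verify the sign change directly. Establishing these matching bounds is the core calculational obstacle.
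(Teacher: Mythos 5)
Your algebraic reduction is correct: the identity $(1-\rho(1-v))\sqrt{1-\alpha z}+\rho(1-v)=1-gh$ and the closed form $[v^k]\mathscr{F}_\alpha=\binom{q+k-1}{k}\rho^k\,g^k\,u^{-2p}\bigl(\rho+(1-\rho)u\bigr)^{-(q+k)}$ are right, and they do settle part (i) in both directions, in fact a bit more directly than the paper, which instead extracts an explicit double sum for $[v^k]P_n(v)$ by Lagrange inversion applied to $g=z\,\tilde\rho(v)^2/(2\tilde\rho(v)-g)$. The easy pieces of part (ii) (sufficiency for $0\le\rho\le1$, necessity of $\rho\ge0$) are also fine, and your Lagrange set-up $[z^n]F_k(g(z))=\frac{\alpha^n}{n2^n}[s^{n-1}]F_k'(s)(1-s/2)^{-n}$ is a valid instance of the formula.

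The problem is that the entire substance of part (ii) lies in the two items you leave as a ``plan'': sufficiency for $1<\rho\le\frac32$ and necessity of $\rho\le\frac32$, and neither is proved. For sufficiency the missing step is not a hypergeometric identity but a specific regrouping of factors that makes the cancellation visible. In the paper's variables ($\rho=1+t$, $Z=1-\sqrt{1-z}$, then $Z\mapsto 2w$), Lagrange inversion gives $[z^n]Z^k(1-Z)^{-2p}(1+tZ)^{-q-k}=2^{k-2n}[w^{n-k}](1-2w)^{1-2p}\bigl((1+2tw)(1-w)\bigr)^{-q-k}(1-w)^{-n-1+q+k}$, and the decisive observation is that $(1+2tw)(1-w)=1-(1-2t)w-2tw^2$ is ``one minus a nonnegative series'' exactly when $0\le t\le\frac12$, so its negative powers have nonnegative coefficients, while $p\ge\frac12$ makes $(1-2w)^{1-2p}$ harmless and $q\le2$ makes the exponent $-n-1+q+k\le0$ for $k\le n-1$ (the boundary coefficient $[v^n]P_n$ being checked separately). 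The same device exists in your parametrization, e.g. $(1+bs)(1-s/2)=1-(\tfrac12-b)s-\tfrac b2 s^2$ for $b\le\frac12$, but your write-up never identifies it, and without it the positivity for $1<\rho\le\frac32$ is simply asserted. For necessity, the single evaluation $[v^3z^4]\mathscr{F}_\alpha(1,1,2;z)=-\alpha^4/4$ only rules out $\rho=2$ for one pair $(p,q)$; it does not show that every $\rho>\frac32$ fails for all admissible $(p,q)$. The paper obtains this uniformly from the asymptotic estimate $[v^{n-1}]P_n(v)=\binom{q+n-2}{n-1}(1+t)^{n-1}2^{-n-1}\bigl((1-2t)n+O(1)\bigr)$, which is negative for all large $n$ as soon as $t>\frac12$; some argument of this kind (a family of coefficients whose sign flips at $t=\frac12$) is needed and is absent from your proposal. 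As it stands, the lemma is established only in the regimes $0\le\rho\le1$.
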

\begin{proof}
Assume without loss of generality $\alpha=1$. Consider first the case 
when $p=0$ and $q>0$:
\begin{align*}
    P_n(v)
    = [z^n]\lpa{\lpa{1+\rho(v-1)}
	\sqrt{1-z}-\rho(v-1)}^{-q}
    =: [z^n](1-g(z))^{-q},
\end{align*}
where $g(z) := \tilde{\rho}(v)(1-\sqrt{1-z})$ with 
$\tilde{\rho}(v):= 1+\rho(v-1)$. Then 
\[
    g = z \,
    \frac{\tilde{\rho}(v)^2}{2\tilde{\rho}(v)-g}.
\]
By Lagrange inversion formula \cite{Stanley1999}
\begin{align}
    P_n(v) &= [z^n](1-g(z))^{-q} 
    = \frac qn[t^{n-1}]\frac1{(1-t)^{q+1}}
    \left(\frac{\tilde{\rho}(v)^2}
    {2\tilde{\rho}(v)-t}\right)^n\nonumber \\
    &= \frac{q}{n}
    \sum_{1\le j\le n}\binom{2n-1-j}{n-1}
	\binom{q+j-1}{q}
    \frac{\tilde{\rho}(v)^{j} }{2^{2n-j}}
	\label{F01}.
\end{align}
Then 
\begin{align}\label{coeff-F01}
    [v^k]P_n(v) = \frac{q}{n}\,\rho^k
    \sum_{k\le j\le n}\binom{2n-1-j}{n-1}
	\binom{q+j-1}{q}
    \binom{j}{k}\frac{(1-\rho)^{j-k}}{2^{2n-j}}.
\end{align}
If $0\le \rho\le 1$, then all coefficients are nonnegative and we 
obtain $[v^k]P_n(v)\ge0$. On the other hand, since $P_1(v) = 
\frac12q((1-\rho)+\rho v)$, we see that if $[v^k]P_n(v)\ge0$ for 
$k,n\ge0$, then $\rho\in[0,1]$. This proves the necessity. 

For the second case $p\ge\frac12$ and $0<q\le 2$, writing $\rho=1+t$ 
and $Z := 1-\sqrt{1-z}$, we have 
\begin{align*}
    [v^k]P_n(v) 
    &= [z^n](1-z)^{-p}[v^k](1+tZ-(1+t)vZ)^{-q}\\
    &= \binom{q+k-1}{k}(1+t)^k
    [z^n]Z^k(1-Z)^{-2p}(1+tZ)^{-q-k}.
\end{align*}
By using the relation $Z(2-Z)=z$, applying Lagrange inversion 
formula and then changing the variables $Z\mapsto 2w$, we obtain 
\begin{align*}
    &[z^n]Z^k(1-Z)^{-2p}(1+tZ)^{-q-k}\\
    &\qquad= 2^{k-2n}[w^{n-k}](1-2w)^{1-2p}(1+2tw)^{-q-k}
    (1-w)^{-n-1}\\
    &\qquad= 2^{k-2n}[w^{n-k}](1-2w)^{1-2p}
    \left((1+2tw)(1-w)\right)^{-q-k}
    (1-w)^{-n-1+q+k}.
\end{align*}
Since $p\ge\frac12$, we see that $[w^j](1-w)^{1-2p}\ge0$ for all 
$j\ge0$; on the other hand, since $0<q\le 2$, we have $n+1-q-k\ge0$ 
for $0\le k\le n-1$, implying that $[w^j](1-w)^{-n-1+q+k}\ge0$ for 
$j\ge0$ and $0\le k\le n-1$; also $[v^n]P_n(v) = 
\binom{q+n-1}{n}(1+t)^{-q}2^{-n}$ is always nonnegative. 
Furthermore, for $0\le k\le n$, if $1-2t\ge0$, then 
\[
    [w^j]\left((1+2tw)(1-w)\right)^{-q-k}
    \ge0 \text{ for }j\ge0.
\]
For the necessity, we observe first that $[v]P_1(v) = \frac12q\rho<0$ 
if $\rho<0$; also 
\[
    [v^{n-1}]P_n(v) = \binom{q+n-2}{n-1}(1+t)^{n-1}
    2^{-n-1}((1-2t)n+O(1)),
\]
which becomes negative if $t>\frac12$ or $\rho>\frac32$ for large 
enough $n$. This implies the necessity of $0\le \rho\le\frac32$. 
\end{proof}

\begin{thm} \label{thm:RH}
Assume that $P_n(v)$ satisfies the recurrence \eqref{Pn-nn} with 
$\tau_1=\frac12$ and $\beta'<0$. Let $\sigma := 
-\frac{2\sqrt{2}\beta'}{\alpha}$. Then the coefficients of
the polynomials $\mathbb{E}(v^{X_n}) := \frac{P_n(v)}{P_n(1)}$ are
asymptotically Rayleigh distributed
\[
    \frac{X_n}{\sigma\sqrt{n}}\cid X,
\]
where $X$ has the density $xe^{-\frac12x^2}$ for $x\ge0$ iff 
the EGF $F$ of $P_n$ has one of the following five forms:
$F\in\{\mathcal{R}_1,\dots,\mathcal{R}_5\}$,
where 
\begin{align*}
	\mathcal{R}_1(z) 
	&:= (c_0+c_1(v-1)) 
	\mathscr{F}_\alpha\lpa{0,1,\tfrac{c_1}{c_0};z}, \\
    \mathcal{R}_2(z) 
	&:= c_0\mathscr{F}_\alpha\lpa{0,1,
	-\tfrac{2\beta'}{\alpha};z} \quad\text{with}\;
	-\tfrac12\alpha\le \beta'<0, \\
	\mathcal{R}_3(z) 
	&:=(c_0+c_1(v-1)) \mathscr{F}_\alpha 
	\lpa{\tfrac12,2,\tfrac{c_1}{c_0};z}, \\
	\mathcal{R}_4(z) &:= c_0 \mathscr{F}_\alpha
	\lpa{\tfrac12,2,-\tfrac{2\beta'}{\alpha};z} 
	 \quad\text{with}\;
	-\tfrac34\alpha\le \beta'<0, \\	
    \mathcal{R}_5(z) &:= c_0 \mathscr{F}_\alpha
	\lpa{\tfrac32,2,\tfrac{3c_1}{2c_0};z} + c_1(v-1) 
	\mathscr{F}_\alpha\lpa{\tfrac32,3,\tfrac{3c_1}{2c_0};z}.
\end{align*}

On the other hand, the sequence of random variables $\{X_n\}$ is  
asymptotically half-normally distributed  
\[
    \frac{X_n}{\sigma\sqrt{n}}\cid Y,
\]
where $Y$ has the density $\sqrt{\frac{2}{\pi}}\,e^{-\frac12x^2}$ for
$x\ge0$ iff the EGF of $P_n$ has one of the following three
forms: $F\in\{\mathcal{H}_1,\mathcal{H}_2, \mathcal{H}_3\}$, where
\begin{align*}
	\mathcal{H}_1(z) &:= (c_0+c_1(v-1))\mathscr{F}_\alpha
	\lpa{\tfrac12,1,\tfrac{c_1}{c_0};z}, \\
	\mathcal{H}_2(z) &:= c_0
	\mathscr{F}_\alpha\lpa{\tfrac12,1,-\tfrac{2\beta'}{\alpha};z} 
	\quad\text{with}\;
	-\tfrac34\alpha\le \beta'<0, \\
	\mathcal{H}_3(z) &:= (c_0+c_1(v-1)) \mathscr{F}_\alpha
	\lpa{\tfrac32,2,\tfrac{c_1}{c_0};z}.
\end{align*}
\end{thm}
We see that in either case the seven parameters in \eqref{Pn-nn} are
now reduced to only three (including $\alpha$) as far as the two
limit laws are concerned. Also the coefficients of $\mathcal{R}_1,
\mathcal{R}_3, \mathcal{R}_5, \mathcal{H}_1, \mathcal{H}_3$ are
always nonnegative since $0\le\frac{c_1}{c_0}\le 1$, but for
$\mathcal{R}_2, \mathcal{R}_4, \mathcal{H}_2$ one needs further
restrictions on $\beta'$ using Lemma~\ref{lmm-nonneg}.
\begin{proof}
Consider first the Rayleigh limit law. Since $\tau_1=\frac12$, we 
have, by Proposition~\ref{prop-Km},
\begin{align}\label{Kmt}
    \mathbb{E}\left(\frac{X_n}{\sigma \sqrt{n}}\right)^m
    \sim \tilde{K}_m,\;\text{where} \;
    \tilde{K}_m 
    =  \frac{\Gamma\lpa{m+\tau_3}\Gamma(\tau_2+1)
    (\rho_1 m +\tau_3)}
    {\Gamma\lpa{\tau_3+1}
    \Gamma\lpa{\frac m2+1+\tau_2}2^{\frac m2}}.
\end{align}
Here $\sigma := -\frac{2\sqrt{2}\beta'}\alpha$ and $\rho_1 :=
-\frac{c_1\alpha}{2c_0\beta'}$. By equating $\tilde{K}_m$ to the
moments \eqref{ray-half-mm} of the Rayleigh distribution, we are led
to the identity for all $m\ge0$
\begin{align}\label{Kmt2}
    \tilde{K}_m 
    =  \frac{\Gamma\lpa{m+\tau_3}
    \Gamma(\tau_2+1)(\rho_1 m +\tau_3)}
    {\Gamma\lpa{\tau_3+1}\Gamma\lpa{\frac m2+1+\tau_2}2^{\frac m2}}
	= \sqrt{\pi}\frac{\Gamma(m+1)}
	{\Gamma\lpa{\frac m2+\frac12}2^{\frac m2}}.
\end{align}
If $c_1=0$, then $\rho_1=0$ and the above identity becomes
\[
    \tilde{K}_m 
    =  \frac{\tau_3\Gamma\lpa{m+\tau_3}\Gamma(\tau_2+1)}
    {\Gamma\lpa{\tau_3+1}\Gamma\lpa{\frac m2+1+\tau_2}
    2^{\frac m2}}
	= \sqrt{\pi}\frac{\Gamma(m+1)}
	{\Gamma\lpa{\frac m2+\frac12}2^{\frac m2}}.
\]
Since this holds for $m\ge0$ (including $m\to\infty$), we see that, 
by Stirling's formula,
\[
    \frac{\tilde{K}_m}{\lpa{\frac me}^{\frac m2}}
	= \frac{\tau_3\Gamma(\tau_2+1)}{\Gamma(\tau_3+1)}
	\,m^{\tau_3-\tau_2-1}2^{\tau_2+\frac12}(1+o(1)),
\]
for large $m$, while for the Rayleigh moments
\[
	\frac{\sqrt{\pi}\Gamma(m+1)}
	{\Gamma\lpa{\frac m2+\frac12} 
	\lpa{\frac2e m}^{\frac m2}}
	=\sqrt{\pi m}(1+o(1)).
\]
It follows that $\tau_3=\frac32+\tau_2$. Substituting this into
\eqref{Kmt2} with $m=2$, and then solving for $\tau_2$, we obtain two
solutions: $\tau_2=\pm\frac12$. If $\tau_2=-\frac12$, then
$\tau_3=1$, and $P_n$ has the pattern
\[
    P_n\in\ET{\alpha n-\tfrac12\alpha -\beta'(v-1),
    -\tfrac12\alpha+\beta'(v-1);c_0},
\]
which implies that the EGF equals $\mathcal{R}_2$ by \eqref{nn-egf}. 

On the other hand, if $\tau_2=\frac12$, then $\tau_3=2$, and $P_n$ 
satisfies
\[
    P_n\in\ET{\alpha n+\tfrac12\alpha -2\beta'(v-1),
    -\tfrac12\alpha+\beta'(v-1);c_0},
\]
so that $F=\mathcal{R}_4$. Note that $\mathcal{R}_2$ and 
$\mathcal{R}_4$ are connected by a differentiation:
\begin{align}\label{R2-R4}
	\partial_z\mathscr{F}_\alpha(0,1,\rho;z)
	= \tfrac12\alpha(1+\rho(v-1))
	\mathscr{F}_\alpha\lpa{\tfrac12,2,\rho;z}.
\end{align}

Assume now $c_1>0$. Then $\rho_1>0$ and 
\[
    \frac{\tilde{K}_m}{\lpa{\frac me}^{\frac m2}}
	=\rho_1 \frac{\Gamma(\tau_2+1)}{\Gamma(\tau_3+1)}
	\,m^{\tau_3-\tau_2}2^{\tau_2+\frac12}(1+o(1)),
\]
for large $m$, implying that $\tau_3=\frac12+\tau_2$. Substituting 
this into \eqref{Kmt2} with $m=2,4$ and then solving for $\rho_1$ and 
$\tau_2$, we get three feasible solutions: 
\[
    (\rho_1,\tau_2) = \bigl\{\lpa{1,-\tfrac12},
	\lpa{1,\tfrac12}, \lpa{\tfrac23,\tfrac32}\bigr\},
\]
leading to the three patterns
\begin{align*}
    P_n \in \begin{cases}
		\ET{\alpha n -\tfrac12\alpha,-\tfrac12\alpha
	    -\tfrac{c_1\alpha}{2c_0}(v-1);c_0+c_1(v-1)},\\
		\ET{\alpha n+ \tfrac12\alpha+ \tfrac{c_1\alpha}{2c_0}
        (v-1), 
		-\tfrac12\alpha-\tfrac{c_1\alpha}{2c_0}(v-1);c_0+c_1(v-1)},\\
		\ET{\alpha n+\tfrac32\alpha+\tfrac{3c_1\alpha}{2c_0}(v-1),
		-\tfrac12\alpha-\tfrac{3c_1\alpha}{4c_0}(v-1);c_0+c_1(v-1)},
    \end{cases}
\end{align*}
respectively in sequential order. These correspond to
$\mathcal{R}_1$, $\mathcal{R}_3$ and $\mathcal{R}_5$, respectively.
Note that $\sigma = \frac{\sqrt{2}c_1}{c_0}$ in the cases of
$\mathcal{R}_1$ and $\mathcal{R}_3$, and $\sigma =
\frac{3c_1}{\sqrt{2}c_0}$ in the other case, so that $\sigma$ equals
$\sqrt{2}$ times the third parameter of the function
$\mathscr{F}_\alpha$ in all cases $\mathcal{R}_1,\dots,
\mathcal{R}_5$. Also $\mathcal{R}_1$, $\mathcal{R}_3$ and
$\mathcal{R}_5$ are essentially connected by successive derivatives
(up to change of parameters and multiplicative factors) by the
relations \eqref{R2-R4} and
\begin{align*}
	\partial_z\mathscr{F}_\alpha
	\lpa{\tfrac12,2,\rho;z}
	= \tfrac32\alpha
	\mathscr{F}_\alpha\lpa{\tfrac32,2,\rho;z}
	+\alpha \rho(v-1)\mathscr{F}_\alpha\lpa{\tfrac32,3,\rho;z}.
\end{align*}

The proof for half-normal limit law is similar, starting from the 
asymptotic estimate
\[
	\frac{\Gamma(m+1)}
	{\Gamma\lpa{\frac m2+1} 
	\lpa{\frac2e m}^{\frac m2}}
	=\sqrt{2}(1+o(1)),
\]
implying either $\rho_1=0, \tau_3=1+\tau_2$ or $\rho_1>0,
\tau_3=\tau_2$. By the same arguments used above, we then obtain
$\tau_2=0$ in the former case, and $(\rho_1,\tau_2)= (1,0)$ or
$\lpa{\frac12,1}$ in the latter case, yielding the three patterns
\begin{align*}
    P_n
    \in \begin{cases}
		\ET{\alpha n -\beta'(v-1),
		-\tfrac12\alpha+\beta'(v-1);c_0},\\
		\ET{\alpha n,
		-\tfrac12\alpha-\tfrac{c_1}{2c_0}\alpha(v-1);
        c_0+c_1(v-1)},\\
		\ET{\alpha n+ \alpha+ \tfrac{c_1}{c_0}\alpha(v-1),
		-\tfrac12\alpha-\tfrac{c_1}{c_0}\alpha(v-1);c_0+c_1(v-1)},
    \end{cases}
\end{align*}
corresponding to $\mathcal{H}_2$, $\mathcal{H}_1$, and 
$\mathcal{H}_3$, respectively. 
\end{proof}

Another interesting property of $X_n$ is that the 
difference polynomials $\Delta_n(v) := P_n(v)-P_{n-1}(v)$ have 
only positive coefficients and the same limit law as that for 
$P_n(v)$. 
\begin{cor} \label{cor:diff-ll}
Assume that $P_n(v)$ is as in Theorem~\ref{thm:RH},  
$[v^k]\Delta_n(v)\ge0$ and $\mathbb{E}\lpa{v^{Z_n}} := 
\frac{\Delta_n(v)}{\Delta_n(1)}$. Then $X_n$ and $Z_n$ follow the 
same limit laws. 
\end{cor}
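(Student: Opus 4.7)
The plan is to prove the corollary by the method of moments, leveraging the asymptotic factorial moment estimates of Proposition~\ref{prop-Km}. The heuristic reason why $X_n$ and $Z_n$ share the same limit law is that $P_n(1)$ dominates $P_{n-1}(1)$ by a multiplicative factor of order $n$ (via $P_n(1)=(\alpha n+\gamma)P_{n-1}(1)$), so the subtraction $P_n(v)-P_{n-1}(v)$ barely perturbs the normalized moments.

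First I would express the factorial moments of $Z_n$ directly in terms of those of $X_n$. Using the notation $Q_{n,m}=P_n^{(m)}(1)/P_n(1)$ from Section~\ref{sec-nnll} and the identity $P_n(1)=(\alpha n+\gamma)P_{n-1}(1)$, a one-line calculation gives
\[
    \mathbb{E}\lpa{Z_n^{\underline{m}}}
    = \frac{P_n^{(m)}(1)-P_{n-1}^{(m)}(1)}{\Delta_n(1)}
    = \frac{(\alpha n+\gamma)\,Q_{n,m}-Q_{n-1,m}}
           {\alpha n+\gamma-1}.
\]
(Here $\Delta_n(1)=(\alpha n+\gamma-1)P_{n-1}(1)$, which is eventually positive by the assumption $[v^k]\Delta_n(v)\ge0$ and $\alpha>0$.)

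Next, since Theorem~\ref{thm:RH} operates under $\tau_1=\frac12$, Proposition~\ref{prop-Km} gives $Q_{n,m}\sim K_m n^{m/2}$ with the explicit constant \eqref{Km}. Substituting into the display above, the numerator is asymptotically $\alpha K_m\,n^{1+m/2}$ (the $Q_{n-1,m}$ term being lower order by a full factor of $n$), while the denominator is $\alpha n(1+O(n^{-1}))$. Hence
\[
    \mathbb{E}\lpa{Z_n^{\underline{m}}}\sim K_m\,n^{m/2}
    \qquad(m\ge0),
\]
which matches asymptotically $\mathbb{E}\lpa{X_n^{\underline{m}}}$. Dividing by $(\sigma\sqrt{n})^m$ yields the limiting moment $K_m\sigma^{-m}$, i.e., exactly the Rayleigh or half-normal moments already identified in the proof of Theorem~\ref{thm:RH}. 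Since both target distributions are uniquely determined by their moments, Fr\'echet--Shohat's theorem delivers the conclusion $Z_n/(\sigma\sqrt{n})\cid X$ (resp.\ $Y$), matching $X_n$.

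There is no serious obstacle in this plan; the only point requiring a moment of care is to confirm that the subtraction in the numerator does not destroy the leading-order behavior. This is automatic because the two summands $(\alpha n+\gamma)Q_{n,m}$ and $Q_{n-1,m}$ differ by a factor of order $n$ rather than $1+o(1)$, so no cancellation occurs and the same constant $K_m$ propagates from $X_n$ to $Z_n$. If a convergence rate were desired, one could refine the estimate $Q_{n,m}=K_m n^{m/2}(1+O(n^{-1}))$ (obtainable from the exact formula \eqref{xn-mm}) and propagate it through the same identity, but this is not needed for the stated corollary.
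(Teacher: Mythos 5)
Your proof is correct and is essentially the paper's own argument: the paper also derives the corollary from Proposition~\ref{prop-Km} (giving $P_n^{(m)}(1)\sim P_n(1)K_m n^{m\tau_1}$) combined with the relation $P_n(1)=(\alpha n+\gamma)P_{n-1}(1)$, so that the subtracted term $P_{n-1}^{(m)}(1)$ is smaller by a factor of order $n$ and the normalized factorial moments of $Z_n$ coincide asymptotically with those of $X_n$. You have simply written out explicitly the bookkeeping that the paper's one-line proof leaves implicit, plus the (standard) appeal to moment-determinacy of the Rayleigh and half-normal laws.
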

The result holds in more general settings but we content ourselves 
with the current formulation due to limited applications. 
\begin{proof}
By Proposition~\ref{prop-Km}, we see that 
\[
    P_n^{(m)}(1) \sim P_n(1) K_m n^{m\tau_1}
    \qquad(m\ge0),
\]
and the corollary follows from the relation $P_n(1)=(\alpha n+\gamma)
P_{n-1}(1)$. 
\end{proof}

\subsubsection{Examples. I. Rayleigh laws}

Consider the Catalan triangle \href{https://oeis.org/A039598}{A039598}:
\[
    P_n(v) 
    := \sum_{0\le k\le n}
	\frac{2(k+1)}{n+k+2}\binom{2n+1}{n-k}v^k,
\]
which has a large number of combinatorial interpretations such as 
the number of leaves at level $k+1$ in ordered trees with $n+1$ 
edges. This sequence of polynomials satisfies the recurrence 
\begin{align}\label{A039598}
    P_n\in\EET{\frac{4n+2v}{n+1}}{-\frac{1+v}{n+1};1}.
\end{align}
The EGF of $(n+1)!P_n(v)$ is of type $\mathcal{R}_4$ (with $c_1=0$ 
and $\frac\gamma\alpha=\frac12$) and equals
$\mathscr{F}_4\lpa{\frac12,2,\frac12;z}$, which, by an
integration, gives
\[
    \sum_{n\ge0}P_n(v)z^{n+1}
    = \frac{1-\sqrt{1-4z}}{(1+v)\sqrt{1-4z}+1-v}.
\]
By Theorem~\ref{thm:RH}, we see that the limit law of the 
coefficients is Rayleigh with $\sigma=\frac12$, which also follows 
from the closed-form expression; see Figure~\ref{fig:RH}. Stronger 
asymptotic approximations and local limit theorems can also be 
derived.

This sequence has many minor variants that do not change the Rayleigh 
limit distribution of the coefficients; for example (the case 
\href{https://oeis.org/A122919}{A122919} following from Corollary~\ref{cor:diff-ll}): 
\begin{center}
\begin{tabular}{ccll} \hline
\href{https://oeis.org/A039598}{A039598} & $:=$ & $P_n(v)$ & Rayleigh$\lpa{\frac1{\sqrt{2}}}$ \\
\href{https://oeis.org/A039599}{A039599} & $=$ & $\frac{vP_n(v)+P_{n-1}(0)}{1+v}=:R_n(v)$ 
& Rayleigh$\lpa{\frac1{\sqrt{2}}}$ \\
\href{https://oeis.org/A050166}{A050166} & $=$ & Reciprocal of $P_n(v)$
& $\text{Rayleigh}\lpa{\frac1{\sqrt{2}}}$\\
\href{https://oeis.org/A122919}{A122919} & $=$ & $P_n(v)-P_{n-1}(v)$& Rayleigh$\lpa{\frac1{\sqrt{2}}}$\\
\href{https://oeis.org/A128899}{A128899} & $=$ & $vP_{n-1}(v)$& Rayleigh$\lpa{\frac1{\sqrt{2}}}$\\
\href{https://oeis.org/A118920}{A118920} & $=$ & $2P_n(v)$ & Rayleigh$\lpa{\frac1{\sqrt{2}}}$\\ 
\href{https://oeis.org/A053121}{A053121} & $=$ & $\left\{\begin{array}{ll}
    vP_{\tr{\frac12n}}(v^2), & n \text{ odd}\\
    R_{\frac12n}(v^2), & n \text{ even}
\end{array}\right.$ & Rayleigh$\lpa{\frac1{\sqrt{2}}}$ \\ 
\hline
\end{tabular}    
\end{center}

Some other OEIS sequences leading to Rayleigh limit laws are listed 
in the following table (using the format \eqref{Pnv-ab}). 

\begin{center}
\renewcommand{\arraystretch}{1.3}
\begin{tabular}{lllll} 
\multicolumn{1}{c}{OEIS} &
\multicolumn{1}{c}{Type} &
\multicolumn{1}{c}{$[v^k]P_n(v)$} &
\multicolumn{1}{c}{Limit law} \\ \hline	
\href{https://oeis.org/A039599}{A039599} &  
$\ET{\frac{4n+v-3}{n},-\frac{1+v}{n};1}$ &
$\frac{2k+1}{n+k+1}\binom{2n}{n-k}$ 
& Rayleigh$\lpa{\frac1{\sqrt{2}}}$\\
\href{https://oeis.org/A102625}{A102625} & $\ET{2n+v,-v;v}$ &
$\frac{k(2n-k+1)!}{(n-k+1)!2^{n-k+1}}$ 
& Rayleigh$\lpa{\sqrt{2}}$\\ 
\href{https://oeis.org/A108747}{A108747} 
& $\ET{\frac{4n+2v}{n+1},-\frac{2v}{n+1};2v}$ &
$\frac{k2^k}{2n+2-k}\binom{2n+2-k}{n+1}$
& Rayleigh$\lpa{\sqrt{2}}$\\ \hline
\end{tabular} 
\end{center}

Their reciprocal polynomials also follow the same Rayleigh limit laws.
\begin{small}
\begin{center}
\begin{tabular}{lllll}
\multicolumn{1}{c}{Recip. of} &    
\multicolumn{1}{c}{OEIS} &
\multicolumn{1}{c}{Type} &
\multicolumn{1}{c}{Limit law} \\ \hline	 
\href{https://oeis.org/A039599}{A039599} & 
\href{https://oeis.org/A050165}{A050165} & 
$\ET{\frac{(1+4v-v^2)n-3v+v^2}{n},-\frac{v(1+v)}{n};1}$ &
Rayleigh$\lpa{\frac1{\sqrt{2}}}$\\
\href{https://oeis.org/A102625}{A102625} & 
\href{https://oeis.org/A193561}{A193561} 
& $\ET{(1+v)n+1,-v;1}$ &
Rayleigh$\lpa{\sqrt{2}}$\\
\href{https://oeis.org/A039598}{A039598} & 
\href{https://oeis.org/A050166}{A050166} & 
$\ET{\frac{(1+4v-v^2)n+1+v^2}{n+1},-\frac{v(1+v)}{n+1};1}$ &
Rayleigh$\lpa{\frac1{\sqrt{2}}}$\\
\hline
\end{tabular} 
\end{center}
\end{small}

Among these OEIS sequences, \href{https://oeis.org/A102625}{A102625}
was one of our motivating examples of non-normal limit laws (see
Figure~\ref{fig:RH}), and has many combinatorial interpretations such
as the root degree of plane-oriented recursive trees and the waiting
time in a memory game; see \cite{Acan2016, Bergeron1992, Ma2013b} and
OEIS \href{https://oeis.org/A102625}{A102625} page for more
information.
 
Yet another occurrence of \href{https://oeis.org/A102625}{A102625}
and Rayleigh limit law is as follows. Consider the Catalan triangle
\href{https://oeis.org/A009766}{A009766} (or ballot numbers):
\[
    R_n(v) := \sum_{0\le k\le n}
	\frac{n-k+1}{n+1}\binom{n+k}{k}v^k.
\]
Then $R_n(v)$ satisfies the recurrence
\[
    (n+1)R_n(v) = ((1+2v)n+1)R_{n-1}(v)
	-v(1-2v)R_{n-1}'(v)\qquad(n\ge1),
\]
with $R_0(v)=1$. The distribution of the coefficients is negative 
binomial with parameters $2$ and $\frac12$. Also they are related to 
$P_n(v)$ of \href{https://oeis.org/A102625}{A102625} by $P_{n+1}(v) = v^{n+2}R_n\lpa{\frac1{2v}}$. 

These sequences are rather simple in nature as they all have a neat
closed-form expression for the coefficients. Less trivial examples 
can be generated by using \eqref{F01} with $\rho\in\lpa{0,\frac12}$, 
say. 

\subsubsection{Examples. II. Half-normal laws} 
\label{sec-half-normal}

Consider sequence \href{https://oeis.org/A193229}{A193229}:
\[
    P_n(v) 
	= \sum_{0\le k\le n}\frac{(2n-k)!}{(n-k)!2^{n-k}}\,v^k;
\]
see \cite{Ma2013b} for a characterization via grammars. Then $P_n$
satisfies $\ET{2n-1+v,-v;1}$, which is of type $\mathcal{H}_2$, and
we get a half-normal limit law for the coefficients; see
Figure~\ref{fig:RH}. Note that a conjecture mentioned on the OEIS
webpage for \href{https://oeis.org/A193229}{A193229} can be easily proved, stating that $[v^k]P_n(v)$
is equal to the $(k+1)$st term in the top row of $M^n$, where
$M=(m_{i,j})$ with $m_{i,j}=i$ for $1\le j\le i+1$ and $i=0$ for
$j\ge i+2$.

An essentially identical sequence connected to Banach's matchbox 
problem is \href{https://oeis.org/A164705}{A164705}, which can be 
generated by $P_n\in \ET{4,-\frac{2v}{n};\tfrac12v}$ and has the 
closed-form expression $\binom{2n-k}{n}2^{k-1}$. The EGF is then of 
type $\mathcal{H}_1$, and we get the same half-normal limit law.

Interestingly, the sequence \href{https://oeis.org/A001497}{A001497},
which corresponds to Bessel polynomials, differs from
\href{https://oeis.org/A193229}{A193229} by a factor of $k!$, namely,
the EGF equals
\[
    \frac{e^{v(1-\sqrt{1-2z})}}{\sqrt{1-2z}},
\]
whose coefficients lead to a Poisson$(1)$ limit law. 

Another instance is \href{https://oeis.org/A111418}{A111418} (right-hand side of odd-numbered rows of 
Pascal's triangle): $[v^k]P_n(v)=\binom{2n+1}{n-k}$, and $P_n$ 
satisfies 
\(
    P_n\in\ET{\frac{4n-1+v}{n},-\frac{1+v}{n};1},
\)
again of type $\mathcal{H}_1$, so that the coefficients lead to a
half-normal limit law; see Figure~\ref{fig:RH}. The reciprocal
polynomial of $P_n$ corresponds to sequence
\href{https://oeis.org/A122366}{A122366}, which satisfies
\(
    Q_n\in\ET{\frac{(1+4v-v^2)n-v(1-v)}{n},-\frac{v(1+v)}{n};1};
\)
compare with the normal examples in Section~\ref{sec-3v2v}. A signed
version of \href{https://oeis.org/A111418}{A111418} is
\href{https://oeis.org/A113187}{A113187}:
\(
    R_n\in\ET{-\frac{4n-1+v}{n},-\frac{1+v}{n};1}.
\)
We have $(-1)^nR_n(-v)=P_n(v)$, and we get the same half-normal limit 
law for the absolute values of the coefficients.

These examples are summarized in the following table.
\begin{center}
\begin{tabular}{lllll}
\multicolumn{1}{c}{OEIS} &
\multicolumn{1}{c}{Type} &
\multicolumn{1}{c}{$[v^k]P_n(v)$} &
\multicolumn{1}{c}{Limit law} \\ \hline		
\href{https://oeis.org/A193229}{A193229} & $\ET{2n+v-1,-v;1}$ 
& $\frac{(2n-k)!}{(n-k)!2^{n-k}}$ & Half-Normal$\lpa{\sqrt{2}}$\\
\href{https://oeis.org/A164705}{A164705} &  
$\ET{4,-\frac{2v}{n};\frac12v}$ 
& $\binom{2n-k}{n}2^{k-1}$ & Half-Normal$\lpa{\sqrt{2}}$\\
\!\!$\begin{array}{l}
    \text{\href{https://oeis.org/A111418}{A111418}}\\
    |\text{\href{https://oeis.org/A113187}{A113187}}|
\end{array}$  & $\ET{\frac{4n+v-1}{n},-\frac{1+v}{n};1}$
& $\binom{2n+1}{n-k}$ & Half-Normal$\lpa{\frac1{\sqrt{2}}}$ \\ 
\href{https://oeis.org/A122366}{A122366} & 
$\ET{\frac{(1+4v-v^2)n-v(1-v)}{n},-\frac{v(1+v)}{n};1}$
& $\binom{2n+1}{k}$ & Half-Normal$\lpa{\frac1{\sqrt{2}}}$ \\
\hline    
\end{tabular}
\end{center}

\begin{figure}[!ht]
\begin{center}
\begin{tabular}{c c c c}
\includegraphics[width=3cm]{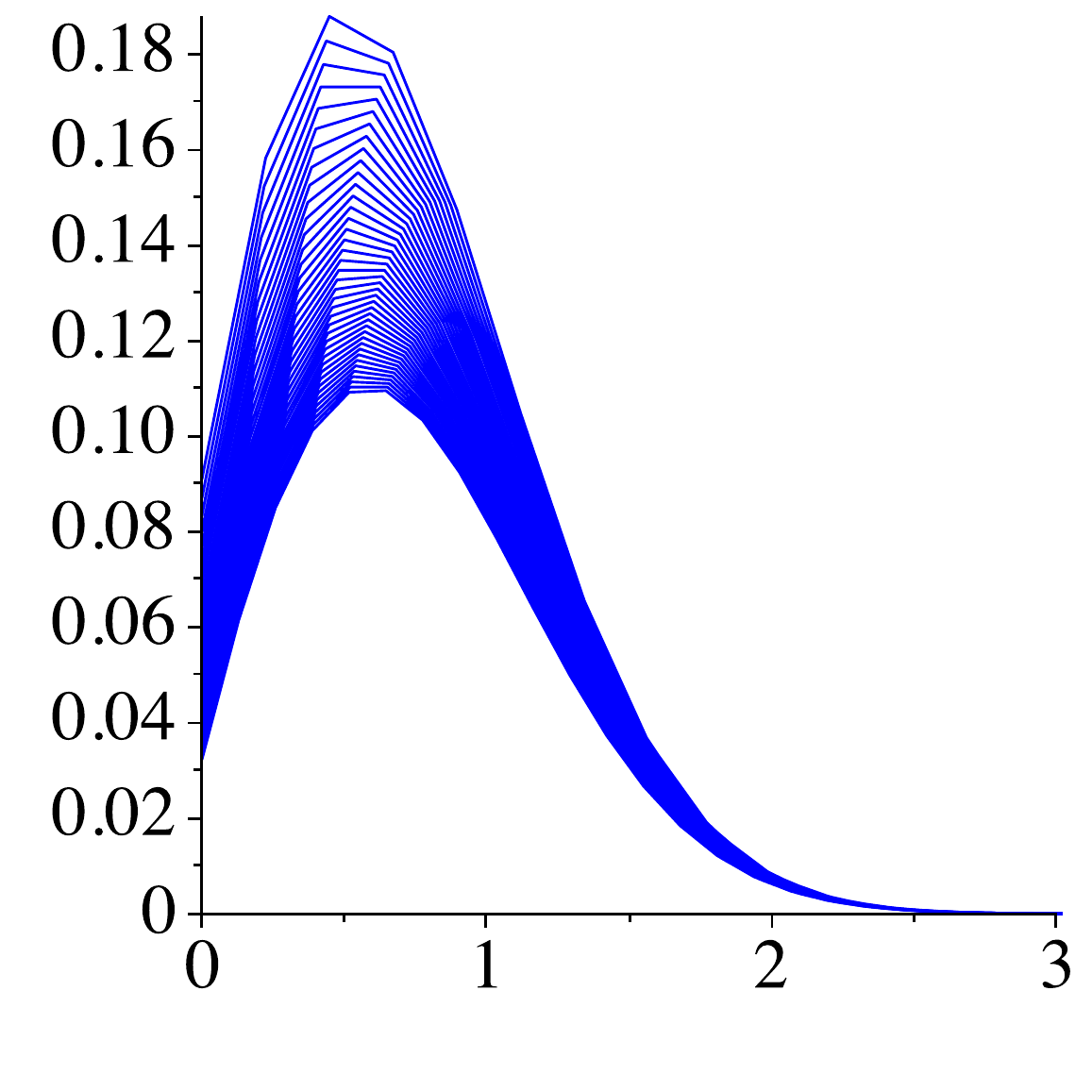} &
\includegraphics[width=3cm]{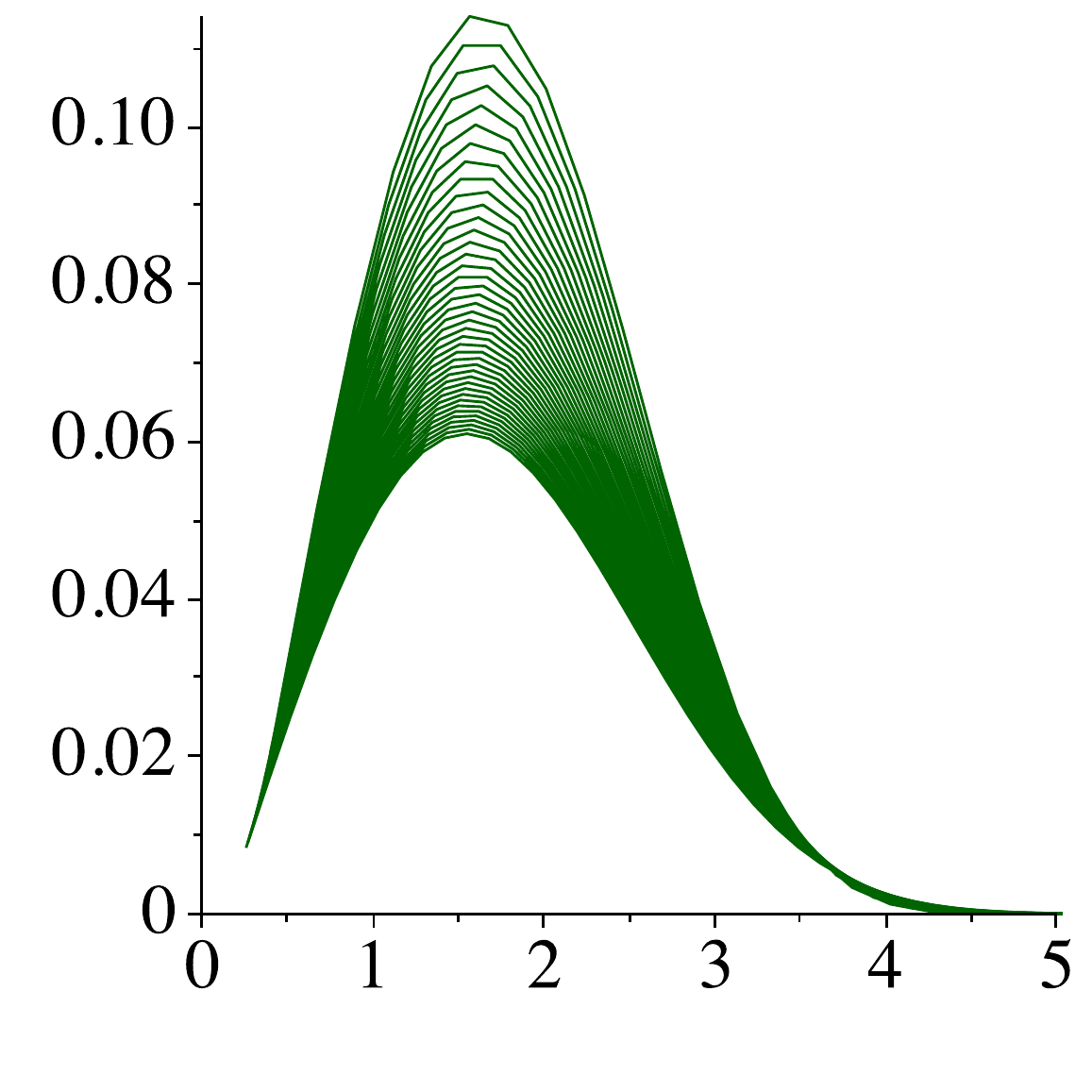} &
\includegraphics[width=3cm]{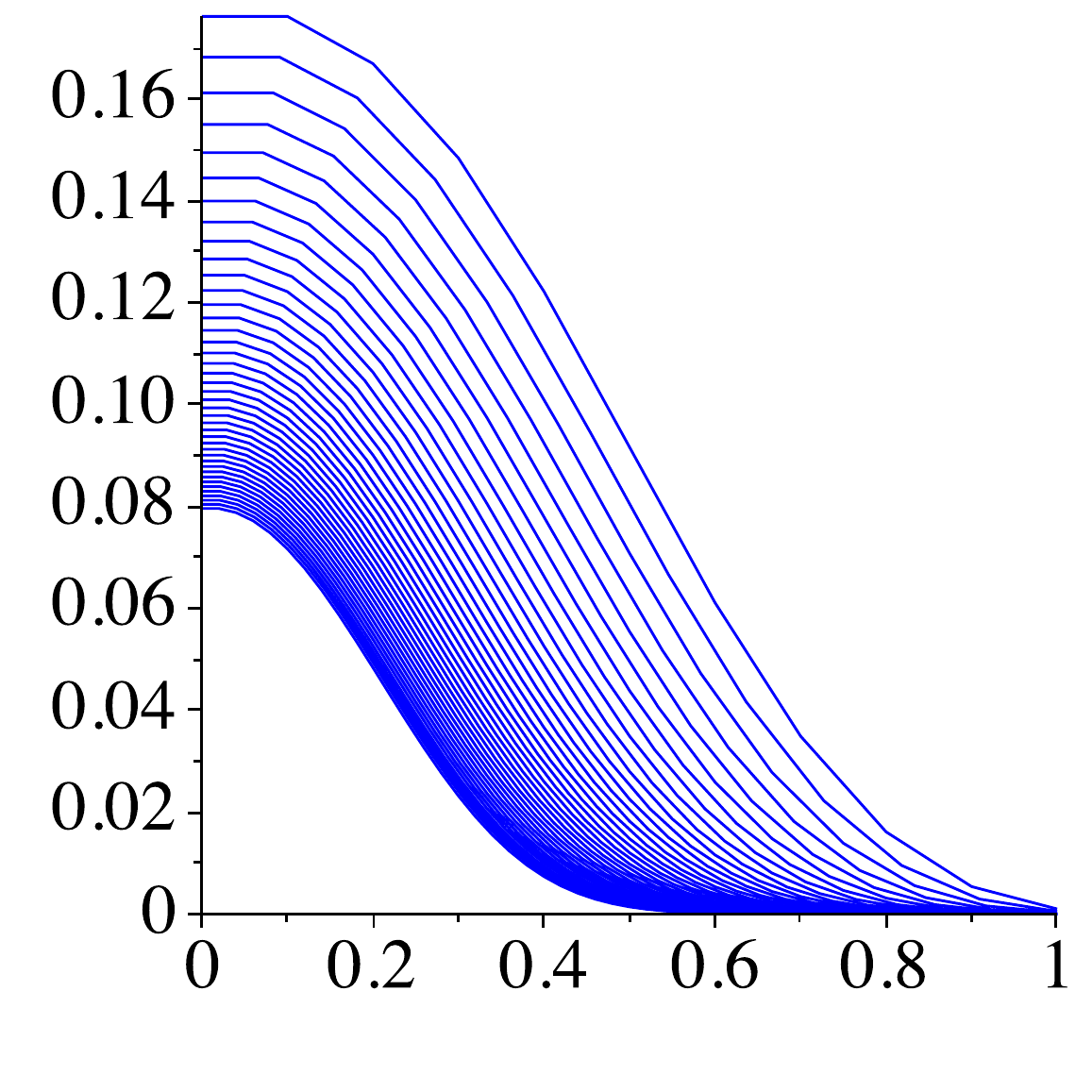} &
\includegraphics[width=3cm]{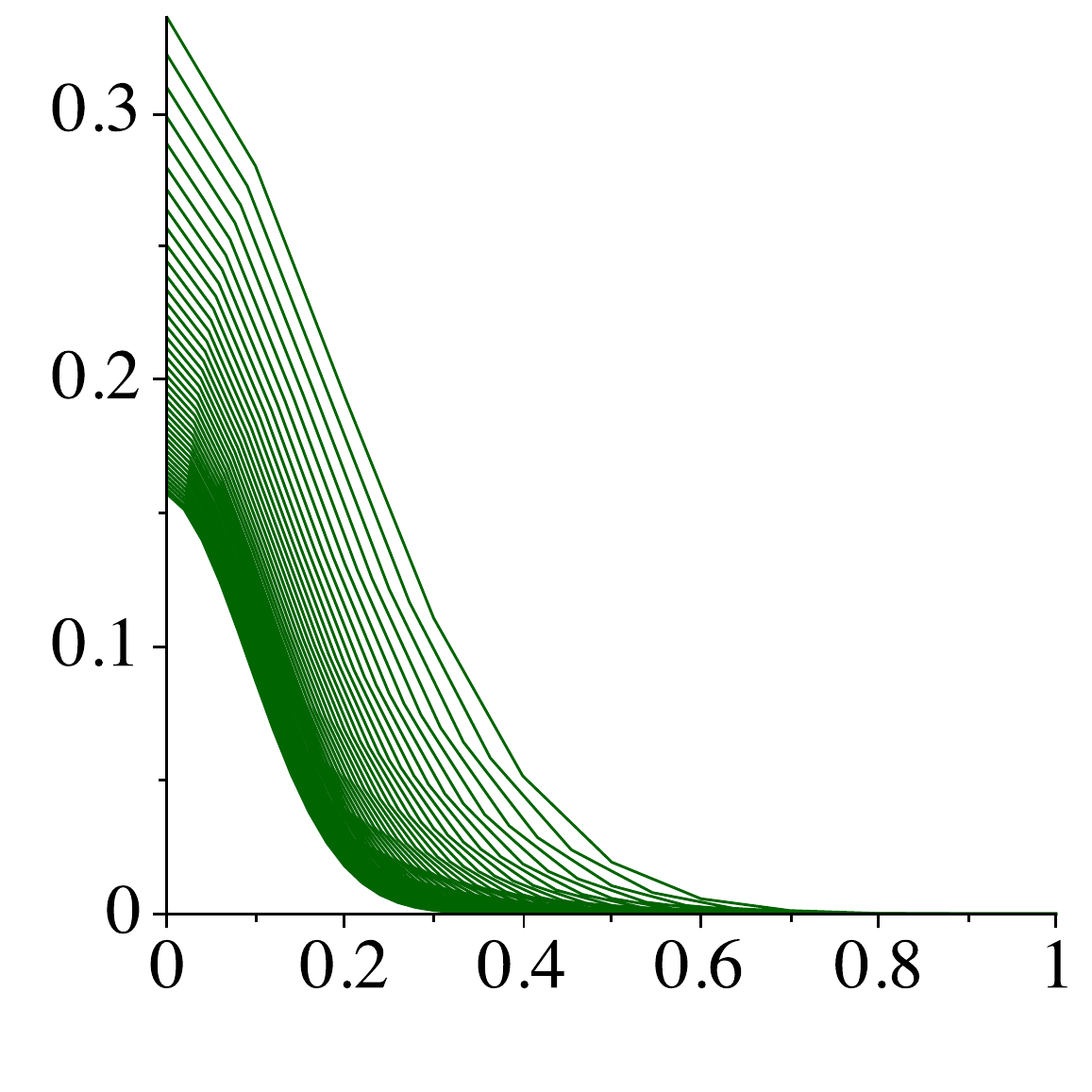} \\ 
\href{https://oeis.org/A039598}{A039598} & \href{https://oeis.org/A102625}{A102625} & \href{https://oeis.org/A193229}{A193229} & \href{https://oeis.org/A111418}{A111418} \\
\end{tabular}
\end{center}
\vspace*{-.4cm}
\caption{Rayleigh and half-normal limit laws: the two left  
histograms for $n=20,\dots,60$ and plotted against $\sqrt{n}$; the 
two right histograms for $n=10,\dots,50$ and plotted against $n$.}
\label{fig:RH}
\end{figure}

\subsection{Other limit laws}

We discuss other limit laws based on the recurrence \eqref{Pn-nn} in 
this subsection. 

\subsubsection{Mittag-Leffler limit laws}

Consider \href{https://oeis.org/A202550}{A202550}, which is defined 
by (with a shift of index)
\[
    [v^{k}]P_n(v) := [z^{n+1}]
    \left(\frac{1-(1-8z)^{\frac14}}
    {1+(1-8z)^{\frac14}}\right)^{k+1}
	\qquad(0\le k\le n).
\]
Then $P_n(v)$ satisfies the recurrence
\begin{align}\label{A202550}
    P_n\in\EET{\frac{8n+2v}{n+1}}{-\frac{1+v}{n+1};1}.
\end{align}
By Proposition~\ref{prop-Km}, we see that the $m$th moment of $X_n$
is asymptotic to
\[
    \frac{\Gamma(\tfrac14)\Gamma(m+1)}
	{2^m\Gamma\lpa{\frac m4+\frac14}}\qquad(m\ge0),
\]
and thus the limit law of the coefficients is a Mittag-Leffler 
distribution (with the moment generating function \eqref{E-pqr-s} 
with $r=1$) with $p=q=\frac14$. 

\begin{center}
\begin{tabular}{ccc}\hline
\href{https://oeis.org/A202550}{A202550} &  $P_n\in\ET{\frac{8n+2v}{n+1},-\frac{1+v}{n+1};1}$ &
Mittag-Leffler limit law\\ \hline
\end{tabular}    
\end{center}

In general, replacing $8$ by $\alpha\ge2$ in \eqref{A202550} 
guarantees $[v^k]P_n(v)\ge0$ and leads to the moment sequence
\[
    \frac{\Gamma\lpa{\frac2\alpha}\Gamma(m+1)}
	{2^m\Gamma\lpa{\frac2\alpha(m+1)}}\qquad(m\ge0),
\]
which yields a Mittag-Leffler distribution when $\alpha>2$. 
Interestingly, the case $\alpha=2$ gives the binomial coefficients 
(\href{https://oeis.org/A007318}{A007318}), 
namely, $P_n(v) = (1+v)^n$, and we get a CLT $\mathscr{N}\lpa{\frac 
12n, \frac14n}$ instead of a Mittag-Leffler distribution.

Another example leading to a Mittag-Leffler limit law is to extend
the recurrence for \href{https://oeis.org/A102625}{A102625} by considering $P_n\in\ET{\alpha
n-1,-v;v}$, for $\alpha\ge 2$. We then deduce, again by
Proposition~\ref{prop-Km}, that the limit law is a Mittag-Leffler
distribution:
\[
    \frac{X_n}{n^{\frac1q}}
	\stackrel{d}{\longrightarrow} X_q,
	\text{ where\;  }
    \mathbb{E}\lpa{e^{X_qs}}=\sum_{m\ge0}
	\frac{\Gamma\lpa{1-\frac1q}}
	{\Gamma\lpa{1+\frac{m-1}q}}\, s^m.
\]

Finally, the limit law for the coefficients of the polynomials
$P_n\in\ET{\alpha n,-(1+v);1+v}$ with $\alpha>2$ is also a 
Mittag-Leffler. 

\subsubsection{A mixture of discrete and continuous laws} 

\noindent
\begin{minipage}{0.7\textwidth}
\quad\; An example of a similar pattern to \eqref{Pn-Bernoulli} but
with a completely different behavior is
\href{https://oeis.org/A139524}{A139524}:
$P_n\in\ET{2,-\frac{1+v}{n};4+2v}$. A closed-form expression of $P_n$
is
\[
    P_n(v) = 2^{n+1} + 2(1+v)^{n+1} \qquad(n\ge0).
\]
The limit law is a mixture of Dirac (at zero) and a normal: 
$\mathbb{P}(X_n=0)\to\frac13$ and 

\end{minipage}
\begin{minipage}{0.3\textwidth}
\centering 
\includegraphics[height=4cm]{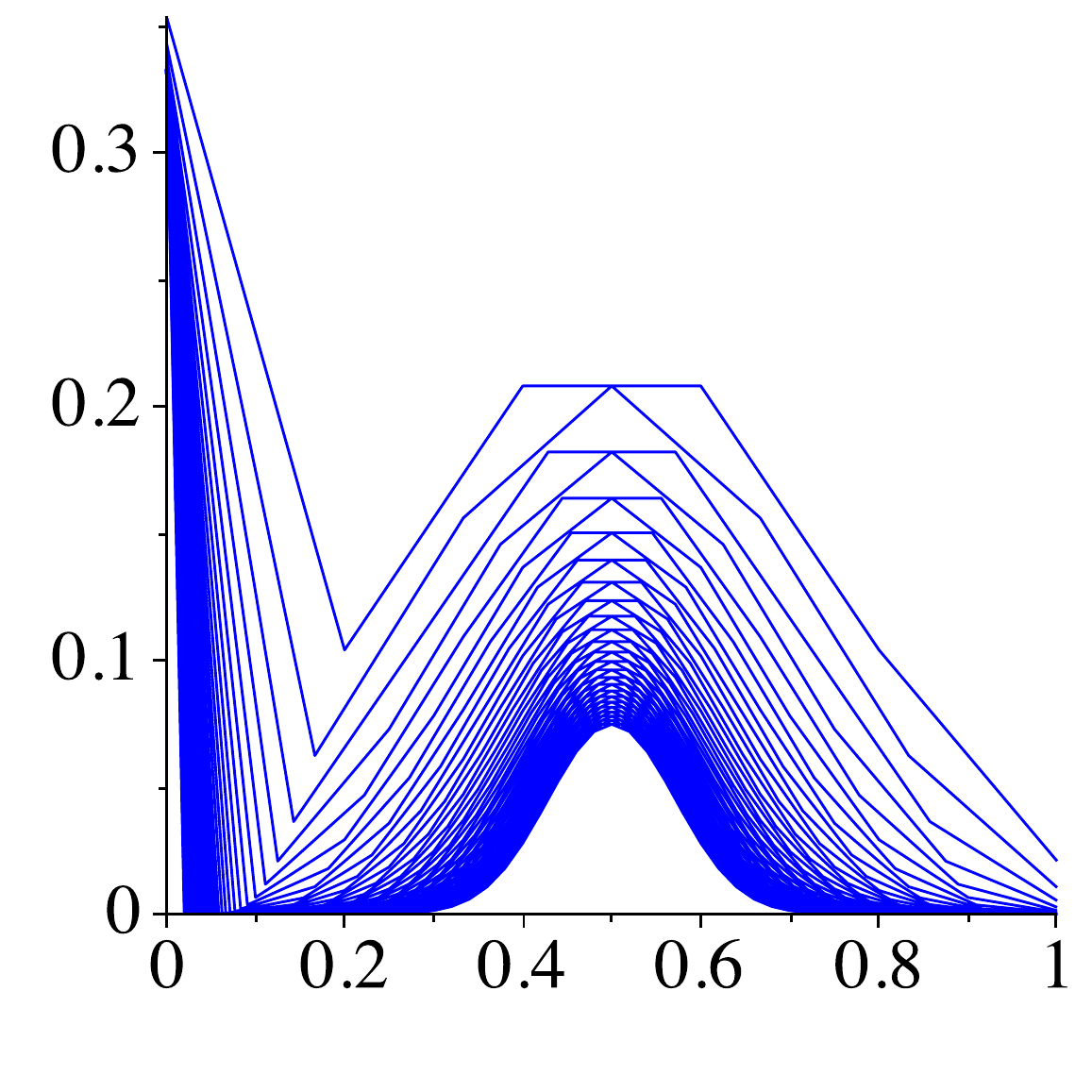}
\end{minipage}

\medskip

\[
    \mathbb{P}\left(X_n = \Bigl\lfloor
    \frac n2+\frac {\sqrt{n}}4x\Bigr\rfloor\right)
    = \frac23\cdot\frac{2e^{-\frac12 x^2}}{\sqrt{2\pi n}}
    \left(1+O\left(\frac{|x|+|x|^3}{\sqrt{n}}\right)\right),
\]
uniformly for $x=o(n^{\frac16})$.

Another similar example is $P_n\in\ET{\frac{n+1}{n},-\frac{v}{n}; 
1+v}$. Then 
\[
    P_n(v) = n+1+v+\cdots + v^{n+1}
    \qquad(n\ge0),
\]
and one gets a mixture of Dirac and uniform as the limit law. This 
sequence of polynomials corresponds to the signless version of  
\href{https://oeis.org/A167407}{A167407}. A similar variant is 
\href{https://oeis.org/A130296}{A130296} ($P_n(v) = nv+v^2+\cdots
+v^n$ for $n\ge1$), but it satisfies a rather messy recurrence 
involving $P_{n-1}'(v)$ and $P_{n-1}''(v)$ and is not Eulerian;
its reciprocal is \href{https://oeis.org/A051340}{A051340}.

\section{Extensions}
\label{sec-extensions}

In view of the richness and diversity of Eulerian recurrences, many
extensions have been made; here we briefly discuss some of them and
examine the extent to which the tools used in this paper applies as
far as the limit distribution of the coefficients is concerned. For
simplicity, we content ourselves with concrete examples rather than
the formulation of general theorems. Some extensions and 
generalizations will be elaborated elsewhere.

Throughout this section, we denote the Eulerian polynomials by 
$A_n(v) := \sum_{0\le k<n}\eulerian{n}{k}v^k$. 

\subsection{Non-homogeneous recurrence}
Eulerian recurrences containing an additional non-homogeneous term of 
the form 
\[
    P_n(v) = (\alpha(v)n+\gamma(v))P_{n-1}(v)
    +\beta(v) (1-v)P_{n-1}'(v) + T_n(v)\qquad(n\ge1),
\]
with $P_0(v)$ and $T_n(v)$ given, already appeared in our discussions 
of Lehmer's polynomials \eqref{lehmer2} and in 
Section~\ref{ss-type-d} on type $D$ Eulerian numbers.

We discuss here two more examples beginning with 
\href{https://oeis.org/A065826}{A065826}, which enumerates the 
descents in permutations starting with an ascent:
\[
    P_n(v) = (vn-1)P_{n-1}(v)+v(1-v)P_{n-1}'(v)+vA_{n}(v) 
    \qquad(n\ge2),
\]
with $P_1(v)=v$. It is easy to see that 
\[
    P_n(v) = \sum_{1\le k\le n}k\eulerian{n}{k-1}v^{k}
    \qquad(n\ge1),
\]
so that the EGF is given by 
\[
    v\frac{\partial}
    {\partial z}\frac{e^{(1-v)z}-1-(1-v)z}
	{(1-v)(1-ve^{(1-v)z})}. 
\]
This implies an optimal CLT $\mathscr{N}\lpa{\frac12n,\frac1{12}n;
n^{-\frac12}}$ by Theorem~\ref{thm-saqp}. 

The reciprocal polynomial $Q_n$ of $P_n$, satisfying the recurrence
\[
    Q_n(v) = (vn-2v)Q_{n-1}(v)+v(1-v)Q_{n-1}'(v)+vA_{n-1}(v) 
    \qquad(n\ge3),
\]
with $Q_2(v)=v$, appeared in a context of decoding schemes 
\cite{Sharon2007}. 

On the other hand, the derivative $R_n(v)$ ($=$
\href{https://oeis.org/A142706}{A142706}) of $A_n(v)$ also satisfies
a similar recurrence
\[
    R_n(v) 
    = (vn+2-3v)R_{n-1}(v)+v(1-v)R_{n-1}'(v)+(n-1)A_{n-1}(v) 
    \qquad(n\ge1),
\]
with $R_0(v)=0$. The same CLT $\mathscr{N}\lpa{\frac12n,\frac1{12}n;
n^{-\frac12}}$ for the coefficients hold. 
%

\begin{center}
\begin{tabular}{lll}\hline
$v(vA_n)'$ & 
\href{https://oeis.org/A065826}{A065826} & 
$\mathscr{N}\lpa{\frac12n,\frac1{12}n;n^{-\frac12}}$ \\
$A_n'(v)$ & \href{https://oeis.org/A142706}{A142706} & 
$\mathscr{N}\lpa{\frac12n,\frac1{12}n;n^{-\frac12}}$ \\  \hline
\end{tabular}	
\end{center}

Another recurrence appears in \cite{Conway1988} (in the context of 
Voronoi cells of lattices):
\begin{align*}
    a_{n,k} &= ka_{n-1,k}+(n-k+1)a_{n-1,k-1}
    +k^3\eulerian{n-1}{k-1}+(n-k+1)^3\eulerian{n-1}{k-2}.
\end{align*}
\begin{minipage}{0.7\textwidth}
If $P_n(v):=\sum_k a_{n+1,k}v^k$, then (not in OEIS)
\begin{align*}
    P_n(v) &= (vn+v)P_{n-1}(v)+v(1-v)P_{n-1}'(v)
    \\ &\quad
	+v(n+1)^3A_{n}(v) 
    -v(3vn(n+1)-1+v)A_{n-1}'(v)
    \\ &\quad
	+3v^2(vn+1)A_{n-1}''(v)
    +v^3(1-v)A_{n-1}'''(v),
\end{align*}
for $n\ge1$ with $P_0(v)=v$ (we shift $n$ by one).
By a direct use of our method of moments, we can prove the CLT
$\mathscr{N}\lpa{\frac12n,\frac1{12}n}$.
\end{minipage}\;\;
\begin{minipage}{0.25\textwidth}
\centering 
\includegraphics[height=3.5cm]{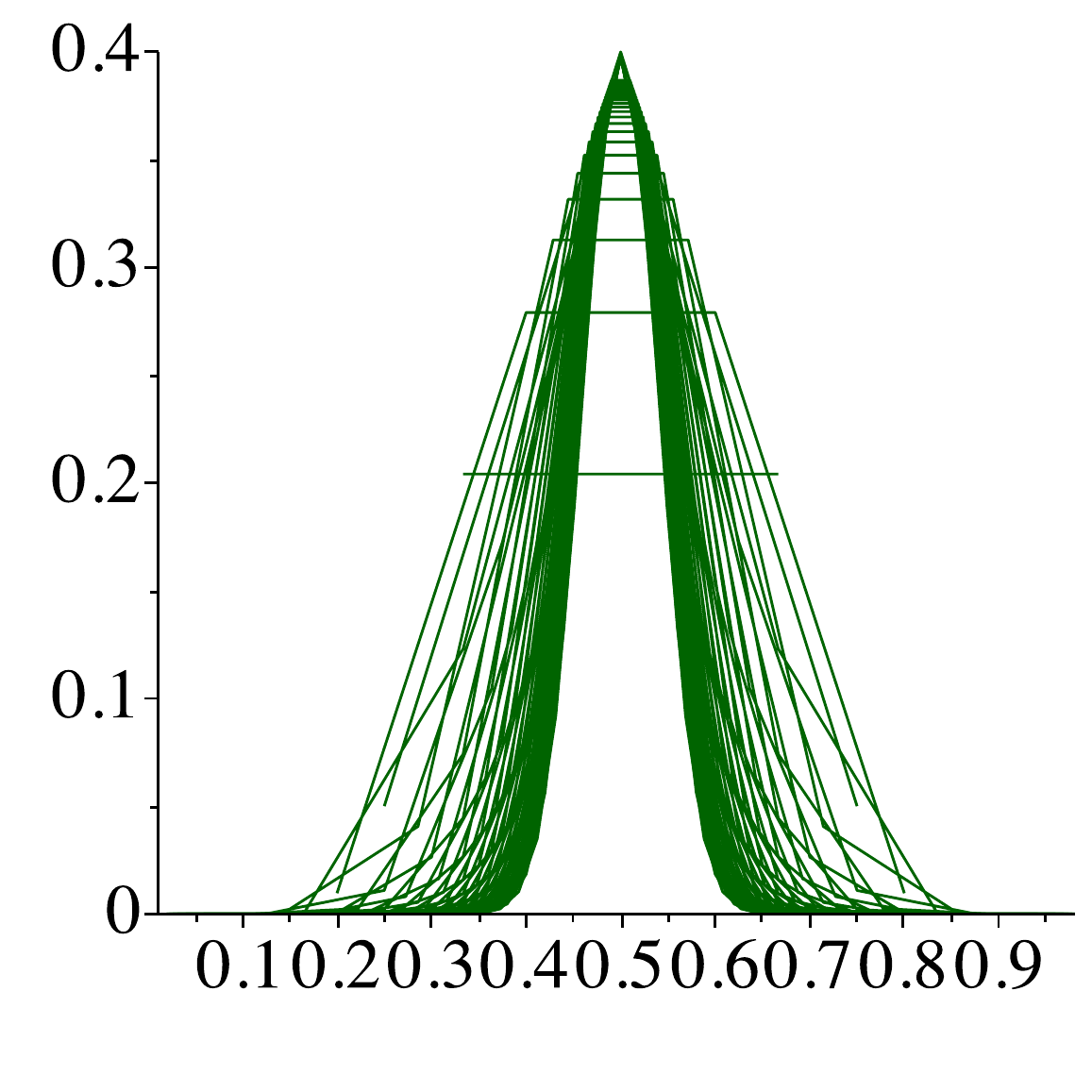}	
\end{minipage}

\medskip

\subsection{Eulerian recurrences involving $P_{n-2}(v)$}
Similar to the previous subsection, the framework 
\begin{align}\label{Pnv-abc}
    P_n(v) = a_n(v) P_{n-1}(v) + b_n(v)(1-v)P_{n-1}'(v)
    +c_n(v) P_{n-2}(v),
\end{align}
is also manageable by the approaches we use in this paper. We already
saw two examples in Section~\ref{sec-1plusv}. We consider more
examples here.

\paragraph{Fibonacci-Eulerian polynomials}
An example of the above type appeared in \cite{Carlitz1978}:
\[
    P_n(v) = vnP_{n-1}(v) + v(1-v)P_{n-1}'(v) +(1-v)^2
    P_{n-2}(v)\qquad(n\ge2),
\]
with $P_0(v)=1$ and $P_1(v)=v$. The polynomial $P_n(v)$ is closely
connected to Fibonacci polynomials $F_n(v) = vF_{n-1}(v) +F_{n-2}(v)$
for $n\ge2$ with $F_0(v)=1$ and $F_1(v)=v$ by the relations
\[
    \sum_{k\ge0}F_n(k)v^k = \frac{P_n(v)}{(1-v)^{n+1}}. 
\]
Note that $P_2(v) = 1-v+2v^2$ (the only polynomial with negative
coefficients). This ($P_n$) corresponds to 
\href{https://oeis.org/A259708}{A259708}. A CLT
$\mathscr{N}\lpa{\frac12n,\frac1{12}n}$ holds for the coefficients by 
the method of moments. In terms of Eulerian polynomials, we have 
(redefining $A_0(v) := v^{-1}$)
\[
    P_n(v) = v\sum_{0\le j\le \tr{\frac12n}}
    \binom{n-j}{j}(v-1)^{2j}A_{n-2j}(v)
    \qquad(n\ge1);
\]
see \cite{Carlitz1978}. This can alternatively be derived by solving 
the PDE of second order satisfied by the EGF using Riemann's method. 
Note that this expression of $P_n$ is itself an asymptotic expansion 
for large $n$ and finite $v$; in particular, 
\[
    P_n(v) = vA_n(v)\left(1+O\lpa{n^{-1}}\right),
\]
uniformly for bounded $v$, and the CLT 
$\mathscr{N}\lpa{\frac12n,\frac1{12}n; n^{-\frac12}}$ then follows. 

On the other hand, the Fibonacci polynomials $F_n(v)$ correspond to
\href{https://oeis.org/A168561}{A168561} (integer compositions into
odd parts); see also the Chebyshev polynomials (with signs)
\href{https://oeis.org/A049310}{A049310} and 
\href{https://oeis.org/A053119}{A053119}. Since the OGF of $F_n(v)$ 
is given by $(1-vz-z^2)^{-1}$, we deduce the CLT 
$\mathscr{N}\lpa{\frac1{\sqrt{5}}n,
\frac{4}{5\sqrt{5}}n;n^{-\frac12}}$ for the coefficients of $F_n(v)$ 
by Theorem~\ref{thm-saqp} with $\rho(v) = 
\frac12\lpa{\sqrt{4+v^2}-v}$. Note that $F_n$ also satisfies the
recurrence
\[
    2nF_n(v) = (n+1)vF_{n-1}(v)
    +(4+v^2)F_{n-1}'(v)\qquad(n\ge1).
\]
The sequence of polynomials corresponding to
\href{https://oeis.org/A102426}{A102426} satisfies the same
recurrence as $F_n$ but with different initial conditions; see also 
\href{https://oeis.org/A098925}{A098925}, 
\href{https://oeis.org/A169803}{A169803}, 
\href{https://oeis.org/A011973}{A011973}, and 
\href{https://oeis.org/A092865}{A092865}.

\begin{center}
\begin{tabular}{lll}\hline
\href{https://oeis.org/A168561}{A168561}
& $\frac1{1-vz-z^2}$
& $\mathscr{N}\lpa{\frac1{\sqrt{5}}n,
\frac{4}{5\sqrt{5}}n;n^{-\frac12}}$ \\ 
\href{https://oeis.org/A049310}{A049310}
& $\frac1{1-vz+z^2}$ 
& signed version of \href{https://oeis.org/A168561}{A168561} \\
\href{https://oeis.org/A053119}{A053119}
& $\frac1{1-z+v^2z^2}$ 
& reciprocal of \href{https://oeis.org/A049310}{A049310} \\
\href{https://oeis.org/A098925}{A098925}
& $\frac1{1-vz-vz^2}$ 
& $\mathscr{N}\lpa{\lpa{\frac12+\frac{\sqrt{5}}{10}}n,
\frac{\sqrt{5}}{25} n;n^{-\frac12}}$\\
\href{https://oeis.org/A092865}{A092865}
& $\frac1{1+vz+vz^2}$ 
& signed version of \href{https://oeis.org/A098925}{A098925}\\
\href{https://oeis.org/A011973}{A011973}
& $\frac1{1-z-vz^2}$ 
& $\mathscr{N}\lpa{\lpa{\frac12-\frac{\sqrt{5}}{10}}n,
\frac{\sqrt{5}}{25} n;n^{-\frac12}}$\\
\href{https://oeis.org/A169803}{A169803}
& $\frac{1+vz}{1-z-vz^2}$ 
& $\mathscr{N}\lpa{\lpa{\frac12-\frac{\sqrt{5}}{10}}n,
\frac{\sqrt{5}}{25} n;n^{-\frac12}}$\\
\href{https://oeis.org/A102426}{A102426}
& $\frac{z(1+z-z^2)}{(1-z^2)^2-vz^2}$ 
& $\mathscr{N}\lpa{\frac{\sqrt{5}}{10}n,
\frac{\sqrt{5}}{25} n;n^{-\frac12}}$\\
\hline
\end{tabular}	
\end{center}

\paragraph{Derangement polynomials}
The derangement polynomials in permutations represent another example
of \eqref{Pnv-abc}. They enumerate for example the number of
$n$-derangements with $k$ exceedances, and can be defined by (see
\cite{Brenti1990})
\begin{align}\label{de-poly}
    P_n(v) = \sum_{0\le k\le n}\binom{n}{k}(-1)^{n-k}
    A_k(v)\qquad(n\ge0),
\end{align}
which is sequence \href{https://oeis.org/A046739}{A046739} and  
\href{https://oeis.org/A271697}{A271697} (see also 
\href{https://oeis.org/A168423}{A168423} for a signed 
version) and satisfies the recurrence
\[
    P_n(v) = (n-1)vP_{n-1}(v) +v(1-v)P_{n-1}'(v)
    +(n-1)vP_{n-2}(v)\qquad(n\ge2),
\]
with $P_0(v)=1$ and $P_1(v)=0$. 

A CLT of the form $\mathscr{N}\lpa{\frac12n,\frac{25}{12}n}$ for the 
coefficients was given in \cite{Clark2002} but the variance 
coefficient $\frac{25}{12}$ there should be corrected to 
$\frac1{12}$. See also \cite{Chen2009} for the same CLT 
$\mathscr{N}\lpa{\frac12n,\frac1{12}n}$ for a type $B$ analogue with 
the recurrence 
\[
    R_n(v) = (2n-1)vR_{n-1}(v) +2v(1-v)R_{n-1}'(v)
    +2(n-1)vR_{n-2}(v)\qquad(n\ge2),
\]
with $R_0(v)=1$ and $R_1(v)=v$. Both proofs rely on the 
real-rootedness of the polynomials. 

In both cases, while it is possible to apply the method of moments, 
it is simpler to apply Theorem~\ref{thm-saqp} to the EGFs
\[
    e^{-vz}\frac{1-v}{1-ve^{(1-v)z}},
    \quad\text{and}\quad
    e^{-vz}\frac{1-v}{1-ve^{2(1-v)z}},
\]
respectively, yielding the stronger result 
$\mathscr{N}\lpa{\frac12n,\frac1{12}n; n^{-\frac12}}$.

\paragraph{Binomial-Eulerian and Eulerian-binomial polynomials}
The analytic approach based on EGF has an advantage that it applies 
easily to other variants whose EGFs are available in manageable forms 
such as sequence \href{https://oeis.org/A046802}{A046802}, the 
binomial-Eulerian polynomials (see \cite{Postnikov2008, 
Shareshian2017}):
\[
    F(z,v) = e^{z}\frac{1-v}{1-ve^{(1-v)z}}.
\]
This corresponds essentially to dropping the powers of $-1$ in 
\eqref{de-poly}:
\[
    P_n(v) = n![z^n]F(z,v) 
    = 1 + v\sum_{1\le k\le n}\binom{n}{k}A_k(v)
    = 1 + v\sum_{1\le k\le n}\binom{n}{k}
    \sum_{0\le j\le k}\eulerian{k}{j}v^j.
\]

Furthermore, exchanging the role of binomial and Eulerian numbers 
in the last double sum and dropping $1$ and the multiplicative factor 
$v$ yield the Eulerian-binomial polynomials 
\begin{align}\label{A090582}
    P_n(v) = \sum_{0\le k\le n}\eulerian{n}{k}
    \sum_{0\le j\le k}\binom{k}{j}v^j
\end{align}
whose EGF is $\frac{v}{1+v-e^{vz}}$.
This gives sequence \href{https://oeis.org/A090582}{A090582} and 
$P_n$ satisfies a different type of recurrence
\[
    P_n(v) = ((1+v)n-v)P_{n-1}(v) 
	-v(1+v)P_{n-1}'(v)\qquad(n\ge2),
\]
with $P_1(v)=1$. While the binomial-Eulerian polynomials lead to a CLT
$\mathscr{N}\lpa{\frac12n,\frac1{12}n;n^{-\frac12}}$, the 
Eulerian-binomial ones lead to the CLT 
\[
    \mathscr{N}\left(\frac{2\log 2 -1}{2\log 2}\,n,
    \frac{1-\log 2}{4(\log 2)^2}\,n;n^{-\frac12}\right),
\]
by Theorem~\ref{thm-saqp} with $\rho(v) = \frac{\log(1+v)}v$. 
Replacing $\eulerian{n}{k}$ by $\eulerian{n}{k-1}$ in \eqref{A090582} 
yields \href{https://oeis.org/A130850}{A130850}, and the same 
CLT holds.

\begin{center}
\begin{tabular}{lll}\hline
Fibonacci-Eulerian polynomials &
\href{https://oeis.org/A259708}{A259708} 
& $\mathscr{N}\lpa{\frac12n,\frac1{12}n;n^{-\frac12}}$ \\ 
Derangement polynomial & \makecell[l]{
\href{https://oeis.org/A046739}{A046739} \\
\href{https://oeis.org/A271697}{A271697}} & 
$\mathscr{N}\lpa{\frac12n,\frac1{12}n;n^{-\frac12}}$ \\ 
Binomial-Eulerian polynomial & 
\href{https://oeis.org/A046802}{A046802} 
& $\mathscr{N}\lpa{\frac12n,\frac1{12}n;n^{-\frac12}}$ \\ 
Eulerian-binomial polynomial 
& \href{https://oeis.org/A090582}{A090582} &
$\mathscr{N}\left(\frac{2\log 2 -1}{2\log 2}\,n,
\frac{1-\log 2}{4(\log 2)^2}\,n;n^{-\frac12}\right)$ \\ 
A simple variant of \href{https://oeis.org/A090582}{A090582} 
& \href{https://oeis.org/A130850}{A130850} &
$\mathscr{N}\left(\frac{2\log 2 -1}{2\log 2}\,n,
\frac{1-\log 2}{4(\log 2)^2}\,n;n^{-\frac12}\right)$ \\ \hline
\end{tabular}	
\end{center}

\subsection{Systems of Eulerian recurrences}
The following system of recurrences 
\[
    \left\{
        \begin{split}
            P_n(v) &= (n-1)vQ_{n-1}(v)
            +v(1-v)Q_{n-1}'(v)+vP_{n-1}(v);\\
            Q_n(v) &= (n-1)vP_{n-1}(v)
            +v(1-v)P_{n-1}'(v)+vQ_{n-1}(v),
        \end{split}
    \right.
\]
with $P_0(v)=0$ and $Q_0(v)=1$ appeared in \cite{Mantaci1993} and
enumerates the number of times $\pi(i)\le i$ in permutations
factorizable into odd and even number of transpositions,
respectively; see also \cite{Tanimoto2006}. Since $P_n(v)+Q_n(v)$
equals the Eulerian polynomials, we then consider $P_n-Q_n$ for which
a direct resolution of the corresponding PDE gives the solution ($F$
for $P_n$ and $G$ for $Q_n$)
\[
\left\{\begin{split}
	F(z,v) &= \frac12\cdot\frac{2v-1-ve^{-(1-v)z}}{1-v}
	+\frac12\cdot\frac{1-v}{1-ve^{(1-v)z}}, \\
	G(z,v) &= -\frac12\cdot \frac{1-e^{-(1-v)z}}{1-v}
	+\frac12\cdot\frac{1-v}{1-ve^{(1-v)z}}.
\end{split}\right.
\]
Observe that the first terms on the right-hand side are both
asymptotically negligible. Thus the coefficients follow
asymptotically the same CLT $\mathscr{N}\lpa{\frac12n, \frac1{12}n;
n^{-\frac12}}$.

Another example of a similar type appeared in \cite{Tanimoto2006} of 
the form 
\[
    \begin{cases}
    &P_n(v) = \begin{cases}
        vnP_{n-1}(v)+v(1-v)P_{n-1}'(v)& \\
        \qquad+(vn+1-v)Q_{n-1}(v)+v(1-v)Q_{n-1}'(v),
        &\text{if }n \text{ is even};\\
        (vn+1-v)P_{n-1}(v)+v(1-v)P_{n-1}'(v),
        &\text{if }n \text{ is odd};
    \end{cases}\\
    &Q_n(v) = \begin{cases}
        vnQ_{n-1}(v)+v(1-v)Q_{n-1}'(v)& \\
        \qquad+(vn+1-v)P_{n-1}(v)+v(1-v)P_{n-1}'(v),
        &\text{if }n \text{ is even},\\
        (vn+1-v)Q_{n-1}(v)+v(1-v)Q_{n-1}'(v),
        &\text{if }n \text{ is odd};
    \end{cases}
    \end{cases}
\]
with the initial conditions $P_n(v)=Q_n(v)=0$ for $n<2$, $P_2(v)=v$ 
and $Q_2(v)=1$. The coefficients of $P_n(v)$ and those of $Q_n(v)$ 
correspond to \href{https://oeis.org/A128612}{A128612} and 
\href{https://oeis.org/A128613}{A128613}, respectively, and they 
enumerate ascents in permutations of $n$ elements with an even and 
odd number of inversions, respectively. It is straightforward to 
check that 
\[
    P_n(v) = \frac{A_n(v) +(v-1)^{\tr{\frac12n}}
    A_{\cl{\frac12n}}(v)}{2}\quad\text{and}\quad
    Q_n(v) = \frac{A_n(v) -(v-1)^{\tr{\frac12n}}
    A_{\cl{\frac12n}}(v)}{2}.
\]
Following the same ideas of the method of moments, the terms
$(v-1)^{\tr{\frac12n}} A_{\cl{\frac12n}}(v)$ are asymptotically
negligible because they involve higher order derivatives at $v=1$,
and we get the same $\mathscr{N}\lpa{\frac12n,\frac1{12}n}$ for the
coefficients of both $P_n$ and $Q_n$.

See also \cite{Chow2014a} for the system of recurrences
\[
	\left\{\begin{split}
	    P_n(v) &= (2vn+1-2v)P_{n-1}(v)
	    +4v(1-v)P_{n-1}'(v)+vQ_{n-1}(v);\\
	    Q_n(v) &= (2vn+3-4v)Q_{n-1}(v)
	    +4v(1-v)Q_{n-1}'(v)+P_{n-1}(v),
	\end{split}\right.
\]
with $P_1(v)=Q_1(v)=1$, which is closely connected to
\eqref{chow-ma-2014}. Closed-form expressions for the EGFs of both
recurrences were derived in \cite{Chow2014a}, and from there we can
prove the CLT $\mathscr{N}\lpa{\frac13n, \frac2{45}n;n^{-\frac12}}$
for both recurrences.

\subsection{Recurrences depending on parity}
An example of this type is \href{https://oeis.org/A231777}{A231777}, 
which is more involved than \eqref{A244312} in the Introduction and 
enumerates the number of ascents from odd to even numbers:
\begin{align}\label{A231777}
    P_n(v) = 
    \begin{cases}
        \frac12(1+v)nP_{n-1}(v)
        +v(1-v)P_{n-1}'(v), 
        & \text{if } n \text{ is even};\\
        nP_{n-1}(v)
        +(1-v)P_{n-1}'(v), & \text{if } n \text{ is odd},
    \end{cases}
\end{align}
for $n\ge1$ with $P_0(v)=1$. These relations can be proved as
follows. When $n$ is even, the number of odd-to-even ascents remains
unchanged if $n$ is inserted (into a permutation of $n-1$ elements)
after an even number or between odd-to-even ascents (say $k$ of them)
or in front of all elements; there is a total of $\frac12n+k$ 
of them. Inserting into the remaining $\frac12n-k$ positions adds 
an additional odd-to-even ascent. We then obtain 
\[
    [v^k]P_n(v) = \lpa{\tfrac12n-k}[v^{k-1}]P_{n-1}(v) 
    +\lpa{\tfrac12n+k}[v^k]P_{n-1}(v).
\]
This proves the even case in \eqref{A231777}. The proof for the odd 
case is similar. 

From the previous analysis, the recurrence in the odd case appears
``less normal-like''; compare \eqref{Pn-nn}. However, we can still 
prove the CLT $\mathscr{N}\lpa{\frac18n,\frac{11}{192}n}$ for the 
coefficients of $P_n(v)$, the mean and the variance being equal to 
\[
    \mathbb{E}(X_n) 
    = \begin{cases}
        \frac{n+2}8, &\text{ if }n\text{ is even};\\
        \frac{n^2-1}{8n},&\text{ if }n\text{ is odd},
    \end{cases}
    \quad\text{and}\quad
    \mathbb{V}(X_n)
    = \begin{cases}
        \frac{(n+2)(11n-10)}{192(n-1)},
        &\text{ if }n\text{ is even};\\
        \frac{(n+1)(11n^2-3)}{192n^2},
        &\text{ if }n\text{ is odd}.
    \end{cases}    
\]

A related example is \href{https://oeis.org/A232187}{A232187}, which enumerates descents from odd to 
even numbers in parity alternating permutations:
\begin{align}\label{A232187}
    P_n(v) =\begin{cases}
        nP_{n-1}(v) + (1-v)P_{n-1}'(v), &
        \text{ if }n \text{ is even}; \\
        \ltr{\tfrac n2}!A_{\cl{\frac12n}}(v), &
        \text{ if }n \text{ is odd},
    \end{cases}
\end{align}
with $P_0(v)=1$. To prove these recurrences, we begin with $n$ even.
Insert $n$ at the end of a parity alternating permutation of $n-1$
elements with $k$ odd-to-even descents, which is started and ended
with an odd element. Rotate this permutation cyclically with an
arbitrary shift. Then $k$ such rotations decrease the number of
odd-to-even descents by $1$, while the other $n-k$ ones do not change
the odd-to-even descents count. We thus obtain the recurrence relation
\[
    [v^k]P_n(v)
    = (n-k)[v^k]P_{n-1}(v)+(k+1)[v^{k+1}]P_{n-1}(v),
\]
which proves the first recurrence in \eqref{A232187}. On the other
hand, when $n$ is odd, we construct $\lfloor\frac{n}2\rfloor!$ parity
alternating permutations of size $n$ with $k$ odd-to-even descents
from permutations $(\sigma_1, \sigma_2, \ldots,
\sigma_{\lceil\frac{n}2\rceil})$ of $\lceil\frac{n}2\rceil$ elements
with $k$ exceedances. For any $i=1,2,\ldots,\lfloor\frac{n}2\rfloor$,
construct the blocks $(2\sigma_{i+1}-1, 2i)$. Concatenate these
blocks arbitrarily (there being a total of $\lfloor\frac{n}2\rfloor!$
ways to permutes these blocks), and then append an element
$2\sigma_{\lceil\frac{n}2\rceil}-1$ to the tail, yielding parity
alternating permutations with the required property. Since this
construction is reversible, this proves \eqref{A232187} in the odd
case.

Let $\bar{A}_n(v) := \frac{A_n(v)}{n!}$. Then 
\[
    \frac{P_n(v)}{P_n(1)}
    = \bar{A}_{\cl{\frac12n}}(v)
    +\begin{cases}
        \frac1n(1-v)\bar{A}_{\frac12n}'(v),&
        \text{ if } n\text{ is even};\\
        0, &\text{ if } n\text{ is odd}. 
    \end{cases}
\]
The term $\bar{A}_{\cl{\frac12n}}(v)$ being asymptotically dominant, 
we then deduce the CLT $\mathscr{N}\lpa{\frac14n,\frac1{24}n}$ with 
the mean and the variance given by 
\[
    \mathbb{E}(X_n) 
    = \begin{cases}
        \frac{(n-1)(n-2)}{4n},& \text{ if }n\text{ is even};\\
        \frac{n-1}4,& \text{ if }n\text{ is odd},
    \end{cases}
    \quad\text{and}\quad
    \mathbb{V}(X_n)
    = \begin{cases}
        \frac{(n-2)(n+6)(2n+1)}{48n^2};
        &\text{ if }n\text{ is even},\\
        \frac{n+3}{24};
        &\text{ if }n\ge3\text{ is odd}.
    \end{cases}    
\]

The last example is \href{https://oeis.org/A136718}{A136718}, defined as (properly shifted)
\begin{align*}
    P_n(v) = 
    \begin{cases}
        nP_{n-1}(v) +(1-v)P_{n-1}'(v), & 
        \text{if }n\equiv \{1,2\} \bmod 3;\\
        (vn+1-v)P_{n-1}(v) +v(1-v)P_{n-1}'(v), & 
        \text{if }n\equiv 0 \bmod 3,
    \end{cases}
\end{align*}
with $P_0(v)=1$. The CLT $\mathscr{N}\lpa{\frac16n,\frac1{36}n}$ can 
be established by the method of moments. 

\subsection{$1-v \mapsto 1-sv$}
\label{ss-1-sv}
There exist dozens of examples satisfying a recurrence similar to
\eqref{Pnv-gen} but with ``$1-v$'' replaced by ``$1-sv$'' for some
constant $s>0$. We content ourselves with a brief discussion of some
examples that can be dealt with by simple modifications of our
approach.

\subsubsection{From $\mathscr{N}\lpa{\frac12n,\frac1{12}n}$ to
$\mathscr{N}\lpa{\lpa{2-\frac1{\log2}}n,
\lpa{\frac1{(\log 2)^2}-2}n}$}
Consider \href{https://oeis.org/A156920}{A156920}, which corresponds 
to the recurrence
\[
    P_n(v) = (2vn+1-v) P_{n-1}(v) 
    +v(1-2v)P_{n-1}'(v)\qquad(n\ge1),
\]
with $P_0(v)=1$. This is not of the form \eqref{Pnv-Eabc}, but is so 
after a simple change of variables $R_n(v):=P_n\lpa{\frac12v}$: 
\[
    R_n(v) = \lpa{vn+1-\tfrac12v} R_{n-1}(v) 
    +v(1-v)R_{n-1}'(v)\qquad(n\ge1),
\]
which is then of type $\mathscr{A}(1,1,\frac32)$ in the notation of 
Section~\ref{sec-abv}. By changing back $v\mapsto 2v$, we then obtain 
the EGF for \href{https://oeis.org/A156920}{A156920}
\[
    \sum_{n\ge0}\frac{P_n(v)}{n!}\, z^n
	= e^{(1-2v)z}\left(\frac{1-2v}{1-2ve^{(1-2v)z}}\right)^{\frac32}. 
\]
Note that $\partial_z \mathscr{A}(0,1,\frac12) = 
v\mathscr{A}(1,1,\frac32)$, and the former with $v\mapsto 2v$ 
corresponds to sequence \href{https://oeis.org/A211399}{A211399} 
whose reciprocal is sequence 
\href{https://oeis.org/A102365}{A102365}. 

Although the coefficients of $R_n(v)$ follows the same CLT 
$\mathscr{N}\lpa{\frac12n,\frac1{12}n;n^{-\frac12}}$ as in 
Section~\ref{sec-abv}, those of $P_n$ follow a CLT with 
(see Figure~\ref{fig:1msv})
\[
    \mathbb{E}(X_n)\sim \left(2-\frac1{\log 2}\right)n
	\quad\text{and}\quad
	\mathbb{V}(X_n)\sim \left(\frac1{(\log 2)^2}-2\right)n,
\]
by applying Theorem~\ref{thm-saqp} with $\rho(v) = 
\frac{\log(2v)}{2v-1}$. Numerically, both $2-\frac1{\log 2}\approx 
0.557$ and $\frac1{(\log 2)^2}-2\approx 0.0813$ are close to 
$\frac12$ and $\frac1{12}$, respectively. 

\begin{center}
\begin{tabular}{lll}\hline
\href{https://oeis.org/A156920}{A156920} & $\mathscr{A}(1,1,\frac32; v\mapsto 2v)$ &
$\mathscr{N}\lpa{\lpa{2-\frac1{\log 2}}n, 
\lpa{\frac1{(\log 2)^2}-2}n;n^{-\frac12}}$ \\ 
\href{https://oeis.org/A211399}{A211399} & $\mathscr{A}(0,1,\frac12; v\mapsto 2v)$ &
$\mathscr{N}\lpa{\lpa{2-\frac1{\log 2}}n, 
\lpa{\frac1{(\log 2)^2}-2}n;n^{-\frac12}}$ \\ 
\href{https://oeis.org/A102365}{A102365} & reciprocal of \href{https://oeis.org/A211399}{A211399} &
$\mathscr{N}\lpa{\lpa{\frac1{\log 2}-1}n, 
\lpa{\frac1{(\log 2)^2}-2}n;n^{-\frac12}}$ \\ \hline
\end{tabular}	
\end{center}

More generally, consider the recurrence ($s\in\mathbb{R}^+$)
\[
    P_n(v) = (qsvn+p+s(qr-p-q)v) P_{n-1}(v) 
    +qv(1-sv)P_{n-1}'(v)\qquad(n\ge1),
\]
with $P_0(v)=1$. Then $R_n(v):=P_n\lpa{\frac vs}$ satisfies 
\[
    R_n(v) = \lpa{q vn+p+(qr-p-q)v} R_{n-1}(v) 
    +qv(1-v)R_{n-1}'(v)\qquad(n\ge1),
\]
which is then of type $\mathscr{A}(p, q,r)$. We then deduce that the 
EGF of $P_n$ is given by 
\[
    e^{p(1-sv)z}\left(
	\frac{1-sv}{1-sve^{q(1-sv)z}}
	\right)^r.
\]
It follows, by Theorem~\ref{thm-saqp} with $\rho(v) 
=\frac{-\log(sv)}{q(1-sv)}$, that the CLT 
\begin{align}\label{mv-clt}
    \mathscr{N}\left(
	\left(\frac s{s-1}-\frac1{\log s} \right)n,
	\left(\frac1{\log^2 s}-\frac s{(s-1)^2}\right)n
	;n^{-\frac12} \right)
\end{align}
holds as long as $p\ge0, q,r>0$ and $qr\ge p$. Note that the two 
coefficients (of the mean and the variance) are positive for $s>0$ 
and equal to $\lpa{\frac12,\frac1{12}}$ when $s=1$. 

Some other examples are listed as follows. 
\begin{center}
\begin{tabular}{llll}\hline
\href{https://oeis.org/A141660}{A141660} & $2^k\eulerian{n}{k-1}$
& $\mathscr{A}(0,1,1; v\mapsto 2v)$ &
$\mathscr{N}\lpa{\lpa{2-\frac1{\log 2}}n, 
\lpa{\frac1{(\log 2)^2}-2}n;n^{-\frac12}}$ \\ 
\href{https://oeis.org/A142075}{A142075} & $2^k\eulerian{n}{k}$
& $\mathscr{A}(1,1,2; v\mapsto 2v)$ &
$\mathscr{N}\lpa{\lpa{2-\frac1{\log 2}}n, 
\lpa{\frac1{(\log 2)^2}-2}n;n^{-\frac12}}$ \\ 
\href{https://oeis.org/A156365}{A156365} & $2^k\eulerian{n}{k}$
& $\mathscr{A}(1,1,1; v\mapsto 2v)$ &
$\mathscr{N}\lpa{\lpa{2-\frac1{\log 2}}n, 
\lpa{\frac1{(\log 2)^2}-2}n;n^{-\frac12}}$  \\ 
\href{https://oeis.org/A156366}{A156366} & $3^k\eulerian{n}{k}$
& $\mathscr{A}(1,1,1; v\mapsto 3v)$ &
$\mathscr{N}\lpa{\lpa{\frac32-\frac1{\log 3}}n, 
\lpa{\frac1{(\log 3)^2}-\frac34}n;n^{-\frac12}}$ \\
\href{https://oeis.org/A142963}{A142963} & $(\star)$
& $\mathscr{A}(0,1,\frac12; v\mapsto 4v)$ &
$\mathscr{N}\lpa{\lpa{\frac43-\frac1{2\log 2}}n, 
\lpa{\frac1{4(\log 2)^2}-\frac49}n;n^{-\frac12}}$ \\ \hline
\end{tabular}	
\end{center}
Here $(\star) = [v^k](1-4v)^{n+\frac12}
(v\mathbb{D}_v)^n\frac1{\sqrt{1-4v}}$. 

Along another direction, the $\theta$-derivative polynomials 
\[
    P_n(v) := (1-sv)^{n+r}
	(v\mathbb{D}_v)^n(1-sv)^{-r}\qquad(s>0;r>0),
\]
are of type $\mathscr{A}(0,1,r; v\mapsto sv)$ and satisfy the CLT 
\eqref{mv-clt}. The same CLT holds for the coefficients 
$s^k\eulerian{n}{k}$ with $s>0$. 

\begin{figure}[!h]
\begin{center}\small
\begin{tabular}{c c c c}
\includegraphics[height=3.5cm]{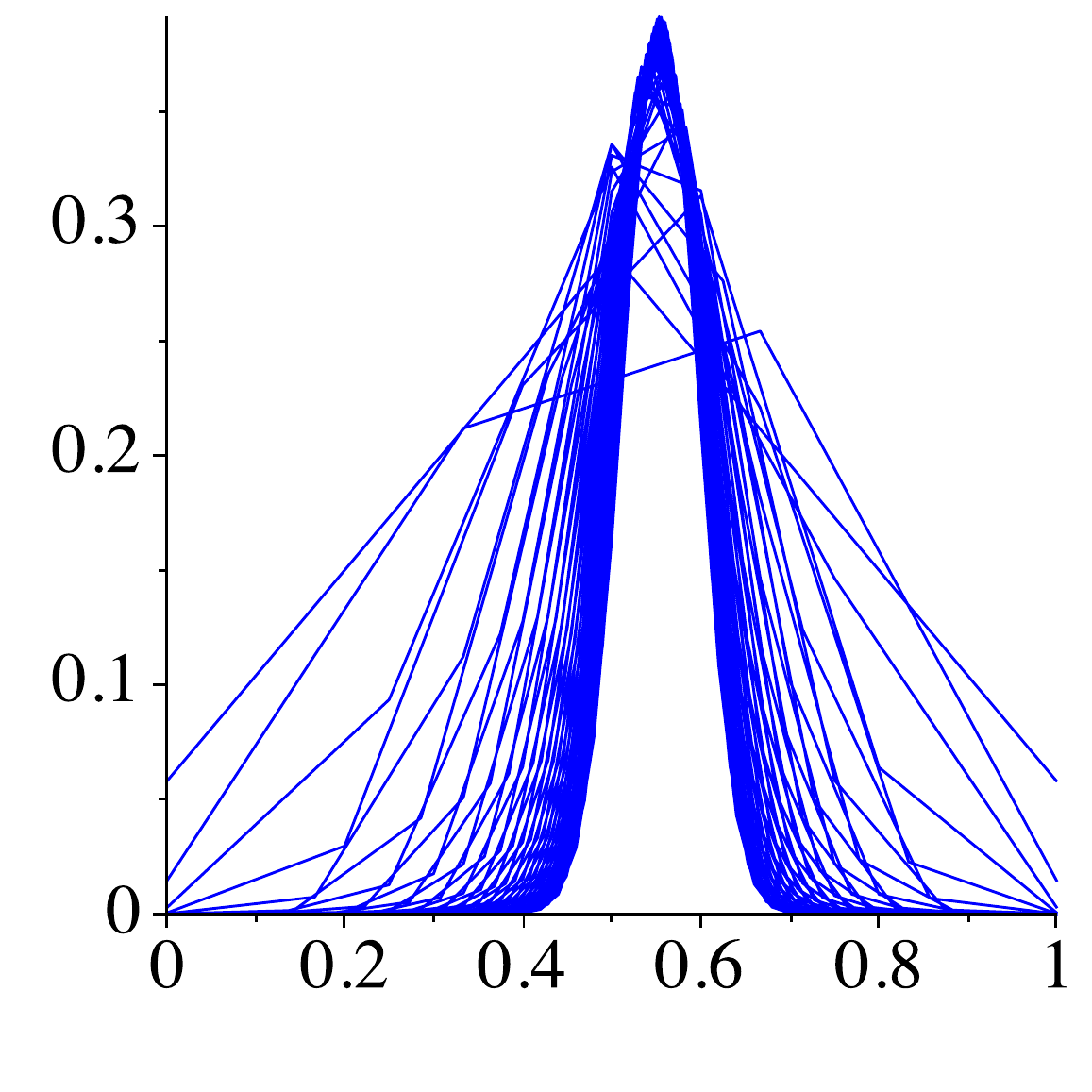} &
\includegraphics[height=3.5cm]{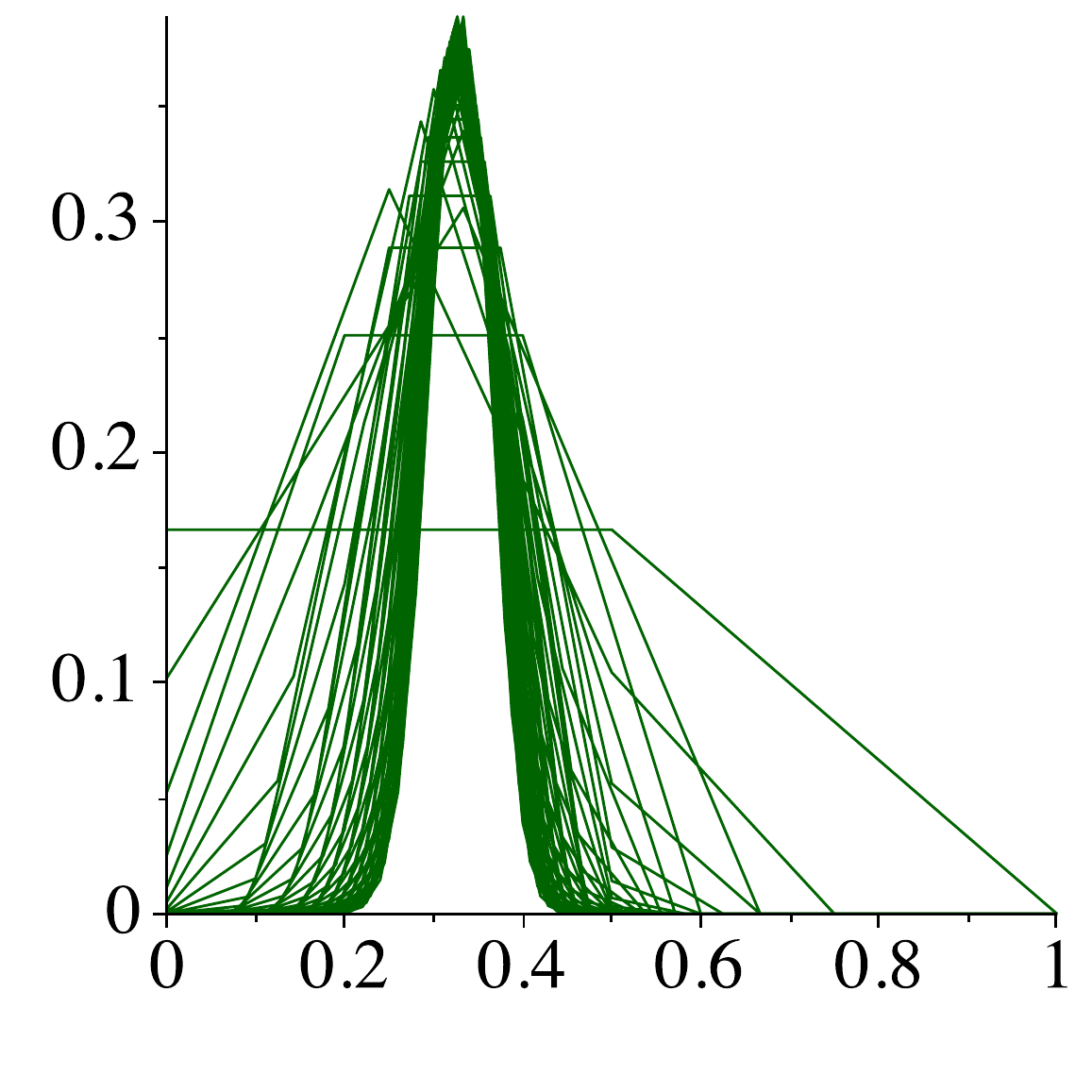} & 
\includegraphics[height=3.5cm]{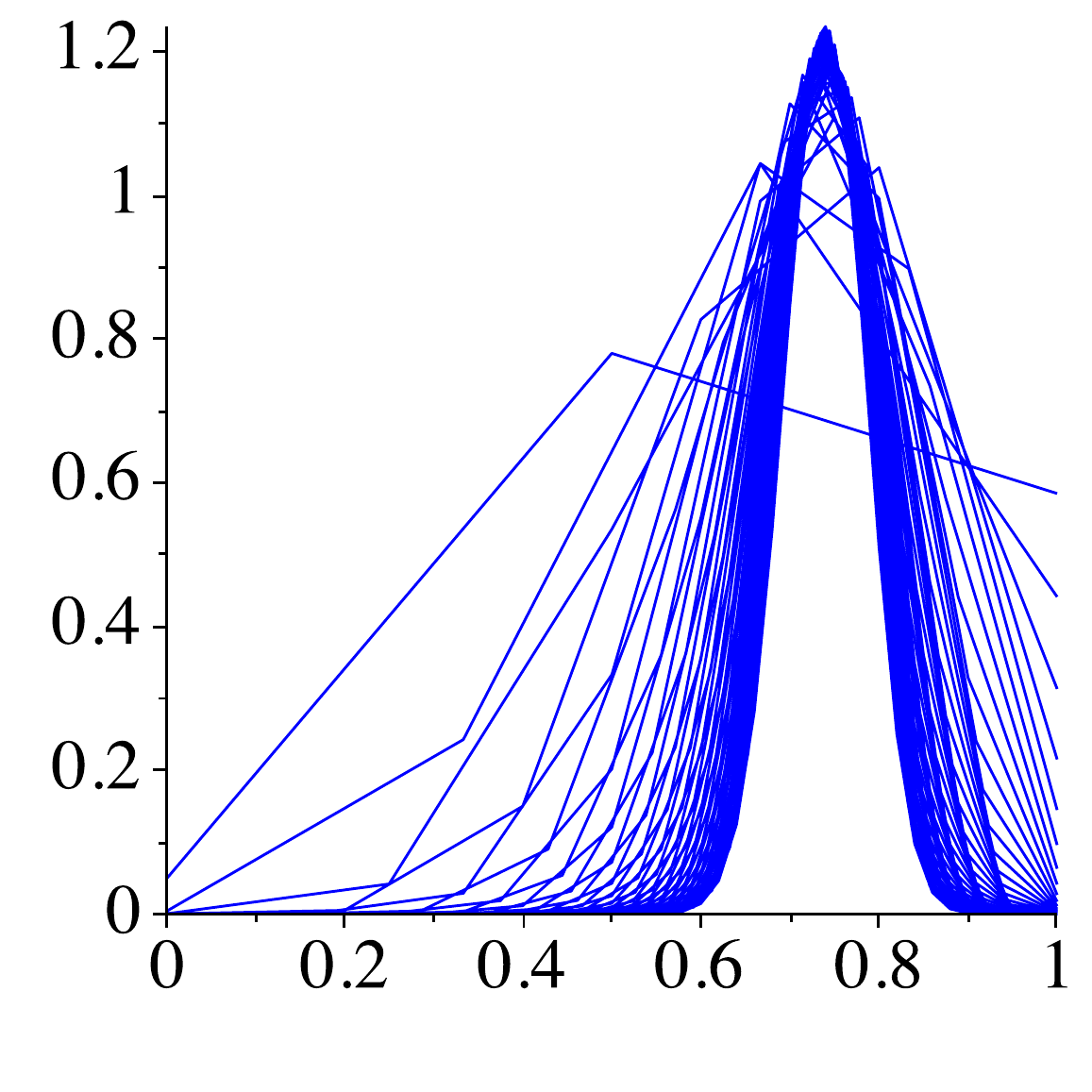} &
\includegraphics[height=3.5cm]{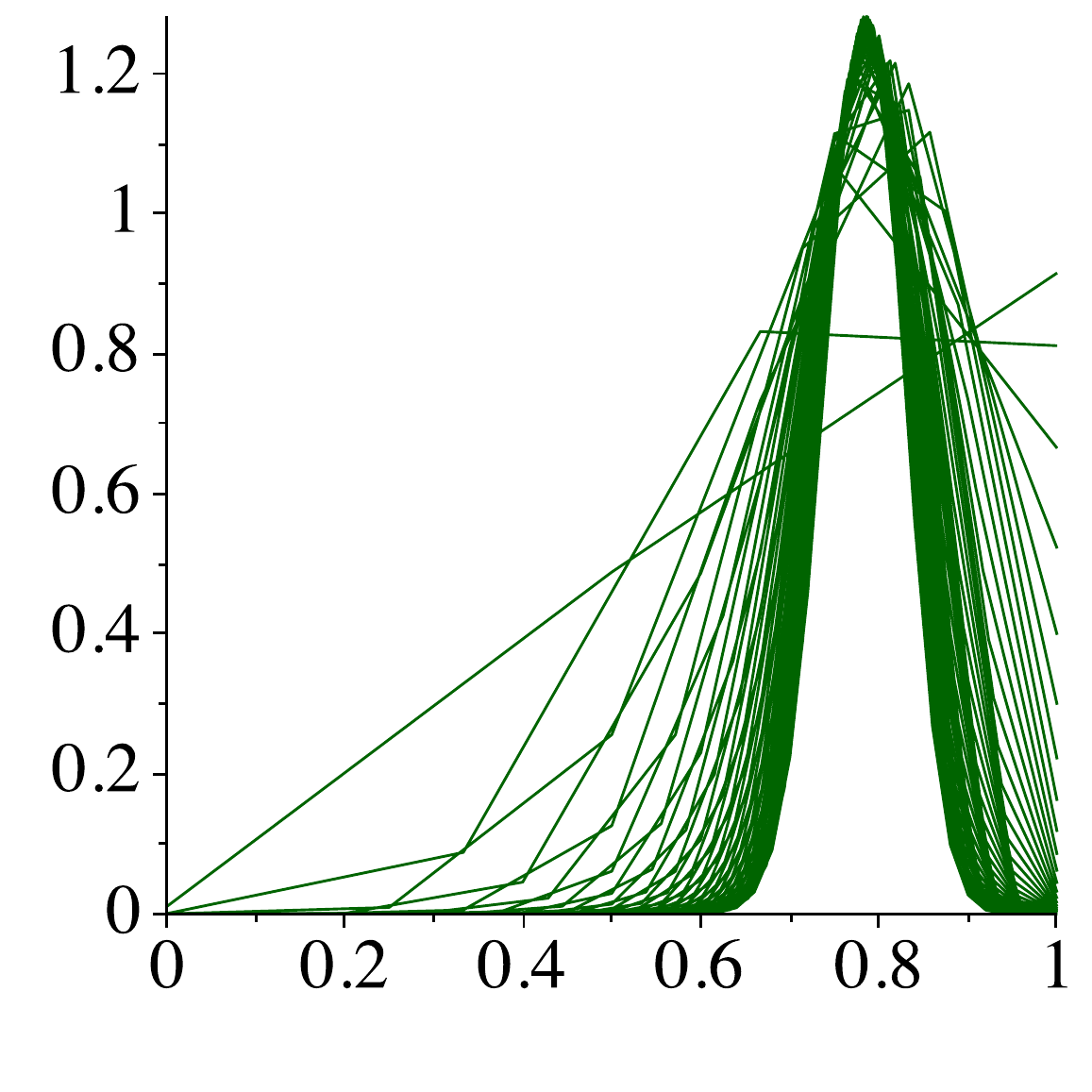} \\
\href{https://oeis.org/A156920}{A156920} &
\href{https://oeis.org/A055151}{A055151} &
\href{https://oeis.org/A290315}{A290315} & 
\href{https://oeis.org/A290316}{A290316} 
\end{tabular}    
\end{center}
\caption{Normalized histograms (by their standard deviations
or by $\sqrt{n}$) of \href{https://oeis.org/A156920}{A156920}, 
\href{https://oeis.org/A055151}{A055151}, 
\href{https://oeis.org/A290315}{A290315},  and 
\href{https://oeis.org/A290316}{A290316} in the unit interval 
(namely, $[v^{\theta n}]P_n(v)$ with $\theta\in[0,1]$).}
\label{fig:1msv}
\end{figure}

\subsubsection{From $\mathscr{N}\lpa{\frac14n,\frac1{16}n}$ to
$\mathscr{N}\lpa{\frac13n,\frac1{18}n}$}

Consider \href{https://oeis.org/A055151}{A055151}, which enumerates 
Motzkin paths of length $n$ with $k$ up steps. This sequence of 
polynomials satisfies the recurrence
\[
    (n+2)P_n(v) = ((1+4v)n+2-4v)P_{n-1}(v)
    +2v(1-4v)P_{n-1}'(v) \qquad(n\ge1),
\]
with $P_0(v)=1$. Changing $v\mapsto \frac14v$ and then considering
the reciprocal, we are led to the polynomials of type
$\mathscr{M}(0,2,\frac32)$ (see \S~\ref{ss-ck}), which has the CLT
$\mathscr{N}\lpa{\frac14n,\frac1{16}n;n^{-\frac12}}$. Reversing these
two steps back and then integrating twice (due to the factor
$(n+2)!$), we deduce that the OGF of $P_n$ is of the form
\[
    \sum_{n\ge0}P_n(v)z^n
	= \frac{1-z-\sqrt{(1-z)^2-4vz^2}}{2vz^2},
\]
yielding the CLT $\mathscr{N}\lpa{\frac13n,\frac n{18};n^{-\frac12}}$
by Theorem~\ref{thm-saqp} with $\rho(v) = \lpa{1+2\sqrt{v}}^{-1}$; 
see Figure~\ref{fig:1msv}. An essentially the same sequence is
\href{https://oeis.org/A080159}{A080159}, and the reciprocal of $P_n$
corresponds to \href{https://oeis.org/A107131}{A107131}.

\begin{center}
\begin{tabular}{lll}\hline
Up steps in Motzkin paths & \href{https://oeis.org/A055151}{A055151} 
($=$\href{https://oeis.org/A080159}{A080159}) & 
$\mathscr{N}\lpa{\frac13n,\frac1{18}n;n^{-\frac12}}$ \\
Reciprocal of \href{https://oeis.org/A055151}{A055151} & 
\href{https://oeis.org/A107131}{A107131} &
$\mathscr{N}\lpa{\frac 23n,\frac1{18}n;n^{-\frac12}}$\\ \hline
\end{tabular}	
\end{center}

Recurrences of a similar form can be found in 
\cite{Ma2013a,Ma2014,Ma2016}.

\subsubsection{From $\mathscr{N}\lpa{\frac23n,\frac19n}$ to
\\ $\mathscr{N}\left(\left(\frac{2q}{q-1}
-\frac{q-1}{q-1-\log q}\right)n,
\left(\frac{(q-1)^2-q(q-1-\log q)}{(q-1-\log q)^2} 
-\frac{2q}{(q-1)^2}\right)n\right)$}

The polynomials defined by (see Section~\ref{sec-barbero} for the 
class $\mathscr{T}$)
\[
    P_n(v) := n![z^n]\mathscr{T}\lpa{q^{-1},2,1; 
    v\mapsto qv,z\mapsto qz}\qquad(q\ge1),
\]
satisfy the recurrence
\[
    P_n(v) = (2q^2vn+1-q(q+1)v)P_{n-1}(v)
    +qv(1-qv)P_{n-1}'(v)\qquad(n\ge1),
\]
with $P_0(v)=1$. The coefficients are nonnegative when $q\ge1$. 
When $q=1$, the $P_n$'s generate the second order Eulerian 
numbers \href{https://oeis.org/A008517}{A008517}, and when $q=2, 3$, 
they correspond to \href{https://oeis.org/A290315}{A290315} and
\href{https://oeis.org/A290316}{A290316}, respectively, which 
appeared in \cite{Lang2017}. Since the EGF of $P_n$ equals (by 
\eqref{T-p2r})
\[
    \left(\frac{T_2\lpa{qve^{-qv+q(1-qv)^2z}}}{qv}\right)^{\frac1q}
    \frac{1-qv}{1-T_2\lpa{qve^{-qv+q(1-qv)^2z}}},
\]
we deduce, by Theorem~\ref{thm-saqp} with $\rho(v) = 
\frac{qv-1-\log qv}{q(qv-1)^2}$, the CLT (see Figure~\ref{fig:1msv})
\[
    \mathscr{N}\left(\left(\frac{2q}{q-1}
    -\frac{q-1}{q-1-\log q}\right)n,
    \left(\frac{(q-1)^2-q(q-1-\log q)}{(q-1-\log q)^2} 
    -\frac{2q}{(q-1)^2}\right)n;n^{-\frac12}\right),
\]
in contrast to the CLT 
$\mathscr{N}\lpa{\frac23n,\frac19n;n^{-\frac12}}$
for $\mathscr{T}\lpa{\frac1q,2,1}$. Note that $q>1$ need not to be an 
integer.
\begin{center}
\begin{tabular}{ll}\hline
\href{https://oeis.org/A290315}{A290315}  & 
$\mathscr{N}\lpa{\frac{3-4\log 2}{1-\log 2}n,
\frac{-5+10\log 2-4\log^22}{(1-\log 2)^2}n;n^{-\frac12}}$ \\ 
\href{https://oeis.org/A290316}{A290316} &
$\mathscr{N}\lpa{\frac{4-3\log 3}{2-\log 3}n,
\frac{-16+18\log 3-3\log^23}{2(2-\log 3)^2}n;n^{-\frac12}}$
\\ \hline
\end{tabular}	
\end{center}

\subsubsection{Non-normal limit laws}
Concrete examples with the factor ``$1-v$'' replaced by ``$1-sv$'' in
the derivative term of \eqref{Pnv-gen} and leading to non-normal
limit laws also exist and most of them are much simpler in nature.
For example, the following sequences all lead to geometric limit laws.
\begin{center}
\begin{tabular}{llll}
\multicolumn{1}{c}{OEIS} &
\multicolumn{1}{c}{$e_n$} & 
\multicolumn{1}{c}{Type} &
\multicolumn{1}{c}{$[v^k]P_n(v)$} \\ \hline	
\href{https://oeis.org/A059268}{A059268} & $n$ & $\ET{n+2v,-v(1-2v)}$ & $2^k$\\ 
\href{https://oeis.org/A152920}{A152920} & $n-1$ & $\ET{n+2v,-v(1-2v)}$
    & $(2n-k)2^{k-1}$\\
\href{https://oeis.org/A118413}{A118413} & $\frac{n(2n-1)}{2n+1}$ & 
$\ET{n+2v,-v(1-2v)}$ & $(2n-1)2^{k-1}$\\
\href{https://oeis.org/A233757}{A233757} & $\frac{n(2^n-1)}{2^{n+1}-1}$ & 
$\ET{n+2v,-v(1-2v)}$  & $(2^n-1)2^{k-1}$\\  
\href{https://oeis.org/A130128}{A130128} & $n$ & $\ET{n+1+2v,-v(1-2v)}$
    & $(n-k+1)2^{k-1}$\\       
\href{https://oeis.org/A100851}{A100851} & $\frac12n$ & $\ET{n+3v,-v(1-3v)}$ & $2^n3^k$\\ \hline
\end{tabular}    
\end{center}

\subsection{P\'olya urn models} \label{ss-polya}
P\'olya's urn schemes \cite{Mahmoud2008} are simple yet very useful 
in many modeling applications and are based on the ball-replacement 
matrix 
\begin{center}
\begin{tabular}{c|cc}
    ball color & white & black \\ \hline
    white & $a$ & $b$ \\
    black & $c$ & $d$    
\end{tabular}    
\end{center}
and the initial configuration with $s_0\ge1$ balls in the urn. At each
stage draw a ball uniformly at random from the urn, and then
return the ball together with $a$ white and $b$ black balls if its
color is white, or $c$ white and $d$ black balls if its color is
black. Repeat this procedure $n$ times and we are interested in the
number $X_n$ of white balls after stage $n$. Assume $a+b=c+d=q\ge1$
and $a\ne c$. Then the probability generating function $W_n(v)$ of
$X_n$ satisfies the recurrence
\[
    W_n(v) = v^c W_{n-1}(v) 
    +\frac{v^{a+1}-v^{c+1}}{s_0+q(n-1)}\, W_{n-1}'(v)
    \qquad(n\ge1),
\]
with $W_0(v) = v^{X_0}$, where $0\le X_0\le s_0$ is a constant. This 
fits into our framework \eqref{Pnv-gen} if we consider $P_n(v) = 
W_n(v) \prod_{0\le j<n}(s_0+qj)$, leading to the recurrence
\begin{align}\label{polya-Pnv}
    P_n(v) = v^c(s_0+q(n-1)) P_{n-1}(v) 
    +\lpa{v^{a+1}-v^{c+1}} P_{n-1}'(v)
    \qquad(n\ge1),
\end{align}
with $P_0(v) = v^{s_0}$, which, in terms of the notations of 
\eqref{Pnv-gen}, gives $\alpha(v)=qv^c$, $\beta(v)=
\frac{v^{a+1}-v^{c+1}}{1-v}$ and $\gamma(v) = (s_0-q)v^c$. To apply 
our Theorem~\ref{thm-clt} on normal limit laws, we require $\alpha(v),
\beta(v)$ and $\gamma(v)$ to be analytic in $|v|\le 1$, which forces 
$a,c\ge-1$. Then the condition \eqref{thm-clt-positivity} becomes
\[
    \alpha(1)+2\beta(1) = a+b+2(c-a) >0
    \quad \Longrightarrow\quad
    \frac{a-c}{a+b}<\frac12,
\]
and 
\[
    \sigma^2 = \frac{(a+b)bc(c-a)^2}
    {(b+c)^2(2c+b-a)}>0,
\]
which requires that $b, c\ge1$ (if both $b,c<0$, then $a>0$, which
would imply $2c+b-a<0$). Thus if 
\[
    a\ge0, a\ne c, a+b=c+d\ge1 \;\text{and} \;
    b,c, 2c+b-a\ge1, 
\]
then the number of white balls follows the CLT
\[
    \mathscr{N}\left(\frac{c(a+b)}{b+c}\,n,
    \frac{(a+b)bc(c-a)^2}
    {(b+c)^2(2c+b-a)}\,n \right).
\]
This result was derived in \cite{Bagchi1985} by the method of moments
but with a manipulation different from ours; see also
\cite{Freedman1965} for the case when $a=d$ and $b=c$. The condition
$a\ge0$ can be relaxed but then additional conditions are needed to
guarantee that $[v^k]P_n(v)\ge0$; see \cite{Bagchi1985} for details.

On the other hand, it is also possible to solve the PDE associated 
with the EGF of \eqref{polya-Pnv}, and we obtain, in particular, 
\[
    \rho(v) = \lpa{1-v^{c-a}}^{-\frac{a+b}{c-a}}
    \int_v^1 t^{-a-1}\lpa{1-t^{c-a}}^{\frac{a+b}{c-a}-1}
    \dd t.
\]
We then deduce not only the same CLT but also a convergence rate. 
See also \cite{Flajolet2006} for an analytic approach, 
\cite{Janson2004,Knape2014} for probabilistic approaches and
\cite{Mahmoud2008} for a general introduction and more information.

If $a=0$, $c=1$, then, with $r=X_0$, $P_n(v)$ is essentially (up to a 
factor $v^r$) of type $\mathscr{T}(r,q,r)$ (see 
Section~\ref{sec-barbero}); in particular, we obtain the Eulerian 
numbers when $q=r=1$. Many other cases (normal or non-normal) can be 
further examined; we omit the details here. 

\subsection{$P_n'(v) = (\alpha(v)n+\gamma(v))P_{n-1}(v)
+\beta(v)(1-v)P_{n-1}'(v)$}

When the left-hand side of the Eulerian recurrence \eqref{Pnv-gen} is
replaced by $P_n'(v)$ (together with some boundary conditions), the
same method of moments still applies, as already described in
\cite{Hitczenko2016}. Note that in such cases, the presence of the
crucial factor ``$1-v$'' in Eulerian recurrences is not essential for
the application of the method of moments. We briefly consider two
examples from \cite{Laborde-Zubieta2015} in the context of tree-like
tableaux; see also \cite{Hitczenko2018}. The first one is of the form
\begin{align}\label{lz}
    P_n'(v) = nP_{n-1}(v)
    +2(1-v)P_{n-1}'(v)\qquad(n\ge1),
\end{align}
with $P_0(v)=1$ and $P_n(1)=n!$, where $[v^k]P_n(v)$ equals the 
number of tree-like tableaux of size $n$ with $k$ occupied corners. 
By a direct calculation of the factorial moments, we see that 
\[
    P_n(v) = \sum_{0\le m\le \cl{\frac12n}}
    \binom{n-m+1}{m}(n-m)!(v-1)^m\qquad(n\ge0). 
\]
Thus the limit law of the coefficients is Poisson$(1)$ because
\[
    \frac{P_n(v)}{P_n(1)}\to e^{v-1}. 
\]

Another sequence of polynomials studied in 
\cite{Hitczenko2016, Laborde-Zubieta2015} is 
\[
    Q_n'(v) = 2vnQ_{n-1}(v)
    +2(1-v^2)Q_{n-1}'(v)\qquad(n\ge1),
\]
with $Q_0(v)=1$ and $Q_n(1)=2^nn!$. Since the $Q_n$'s all contain
even powers of $v$, we consider $R_n(v) = Q_n(\sqrt{v})$, which then 
satisfies the recurrence
\[
    R_n'(v) = nR_{n-1}(v)
    +2(1-v)R_{n-1}'(v)\qquad(n\ge1),
\]
with $R_0(v)=1$ and $R_n(1)=2^nn!$. This is of the same form as 
\eqref{lz} but with a different boundary condition. By the same 
method of moments, we can show that the distribution of the 
coefficients of $R_n$ is asymptotically Poisson$(\frac12)$. 

Interestingly, if we use the boundary condition $P_n(0)=0$ for 
$n\ge1$ instead of $P_n(1)=n!$, then by solving the PDE satisfied by 
the OGF of $P_n$
\[
    z^2 F_z' +zF = (1-2(1-v)z)F_v',
\]
we obtain 
\[
    F(z,v) = \frac{2(1-(1-v)z)}
	{1+\sqrt{1-4z+4(1-v)z^2}}.
\]
This leads instead to the CLT
$\lpa{\frac14n,\frac1{16}n;n^{-\frac12}}$ by Theorem~\ref{thm-saqp}
with $\rho(v)=\frac12\lpa{1+\sqrt{v}}^{-1}$. Also in this case,
$P_n(1)=\frac1{n+1}\binom{2n}n$. This coincides, up to a shift of
indices, \href{https://oeis.org/A091894}{A091894} (Touchard
distribution), which counts particularly the $231$-avoiding
permutations according to the number of peaks. Furthermore,
\[
    [v^kz^n]F(z,v) = \frac{2^{n+1-2k}}{k}
    \binom{n-1}{2k-2}\binom{2k-2}{k-1}
    \qquad(1\le k\le \cl{\tfrac n2}).
\]

\subsection{Extended Eulerian recurrences of Cauchy-Euler type}
\label{ss-ext-ce}
Similar to the Cauchy-Euler differential equations (see
\cite{Chern2002}), the same method of moments can be extended further
to the equi-dimensional Eulerian recurrence,
\begin{align}\label{ER-CE}
    e_nP_n(v) = \sum_{0\le j\le \ell}
    (1-v)^jP_{n-1}^{(j)}(v)\sum_{0\le i\le r-j}
    d_{j,i}(v) n^i\qquad(r\ge\ell\ge1).
\end{align}

Consider first \href{https://oeis.org/A091156}{A091156}, counting big 
ascents in Dyck paths of a given semilength:
\[
    P_n(v) = \frac1{n+1}\sum_{0\le k\le \tr{\frac12n}}
    \binom{n+1}{k}v^k\sum_{0\le j\le n-2k}\binom{k+j-1}{k-1}
    \binom{n+1-k}{n-2k-j}\qquad(n\ge1).
\]
Such $P_n$ satisfies the recurrence
\[
    P_n(v) = \frac{(1+3v)n+1-3v}{n+1}\,P_{n-1}(v)
    +\frac{2(4n-3)v(1-v)}{n(n+1)}P_{n-1}'(v)
    +\frac{4v(1-v)^2}{n(n+1)}\,P_{n-1}''(v),
\]
for $n\ge1$ with $P_0(v)=1$, which can be proved by the OGF 
\[
    \sum_{n\ge0}P_n(v)z^n=
	\frac{1-\sqrt{1-4z+4(1-v)z^2}}{2z(1-(1-v)z)}.
\]
We then deduce the CLT $\mathscr{N}\lpa{\frac14n,\frac1{16}n;
n^{-\frac12}}$ by Theorem~\ref{thm-saqp} with $\rho(v) = 
\frac12\lpa{1+\sqrt{v}}^{-1}$. 

We consider another example (with $\ell=r=2$ in \eqref{ER-CE}) from
Legendre-Stirling permutations \cite{Egge2010}, where an extension of
Eulerian numbers using Legendre-Stirling numbers \cite{Everitt2002} 
was studied:
\[
    D_n(v) := \sum_{j\ge0}d_n(j)v^j 
	= \frac{P_n(v)}{(1-v)^{3n+1}}
    = \frac{v}{1-v}\lpa{vD_{n-1}(v)}''.
\]
Here $d_n(j) = d_n(j-1)+j(j+1)d_{n-1}(j)$ with $d_0(j)=1$ for $j\ge0$ 
and $d_n(0)=0$ for $n\ge1$, and 
\begin{align*}
    P_n(v) &= v(3n-2)(3vn+2-3v)P_{n-1}(v)
	+2v(1-v)(3vn+1-3v)P_{n-1}'(v) \\ &\quad 
	+v^2(1-v)^2P_{n-1}''(v) \qquad(n\ge2),
\end{align*}
with $P_0(v)=1$ and $P_1(v)=2v$. A CLT $\mathscr{N}\lpa{\frac65
n,\frac{36}{175}n}$ for the coefficients of $P_n$ was derived in
\cite{Egge2010} by the real-rootedness approach. The same CLT can aso
be obtained by the method of moments; in particular, the mean and the
variance are given by
\[
    \mathbb{E}(X_n) = \frac{6n-1}{5}
    \quad\text{and}\quad
    \mathbb{V}(X_n) = \frac{9(n-1)(12n+11)}{175(3n-1)}
    \qquad(n\ge1).
\]
However, we have no Berry-Esseen bound because no solution is 
available for the PDE 
\[
    \left(v^{-2}\partial_z^3-z^2\partial_z^2-2z(1-v)
    \partial_{zv}^2 - (1-v)^2\partial_v^2
    -4z\partial_z-2(1-v)\partial_v-2\right)F=0,
\]
satisfied by the EGF $F(z,v) :=
v\sum_{n\ge0}\frac{P_n(v)}{(3n)!}\,z^{3n}$. Note that the 
real-rootedness approach used in \cite{Egge2010} can be refined to 
get the optimal Berry-Esseen bound. 

\subsection{A multivariate Eulerian recurrence}
\label{sec:multi}

Enumerating simultaneously the number of descents $X_n$ in a random 
permutation of $n$ elements and that $Y_n$ of its inverse leads to 
the recurrence for the probability generating function of $X_n$ and 
$Y_n$ (see \cite{Carlitz1966, Petersen2013, Visontai2013})
\begin{align*}
    P_n(v,w) &:= \mathbb{E}\lpa{v^{X_n}w^{Y_n}}\\
	&= \biggl( \frac{(n-1)(1-v)(1-w)}{n^2}
	+vw\left(1+\frac{1-v}n
	\,{\partial_v}\right)
	\left(1+\frac{1-w}{n}\,{\partial_w}
	\right)\biggr)P_{n-1}(v,w),
\end{align*}
for $n\ge1$, with $P_0(v,w)=1$. Recently, Chatterjee and Diaconis 
\cite{Chatterjee2017} proved the CLT $\mathscr{N}\lpa{n,\frac16n}$ 
for $X_n+Y_n$, the total number of descents of a permutation and its 
inverse:
\[
    P_n(v,v) = \mathbb{E}\lpa{v^{X_n+Y_n}}
	= \frac{(1-v)^{2n+2}}{n!}\sum_{j,l\ge0}\binom{jl+n-1}{n}
	v^{j+l}.
\]
This paper also mentions six different ways to prove the CLT for  
Eulerian numbers: sum of $2$-dependent random variables, sum of 
Uniform$[0,1]$ random variables, Harper's real-rootedness (sum of 
Bernoullis), Stein's method, Bender's analytic method and the method 
of moments, but none of the six applies to the coefficients of 
$[v^k]P_n(v,v)$; see also \cite{Ozdemir2019}. 

While a direct use of the method of moments fails, we show that it is
possible to extend the method to establish the CLT for $X_n+Y_n$; in
particular, we derive the asymptotics of the central moments
$\mathbb{E}(\bar{X}_n+\bar{Y}_n)^m$ through those of the joint
moments $\mathbb{E}(\bar{X}_n^j\bar{Y}_n^{m-j})$, where $\bar{X}_n :=
X_n-\frac{n+1}2$ and $\bar{Y}_n := Y_n-\frac{n+1}2$, so that
$\mathbb{E}(\bar{X}_n)=\mathbb{E}(\bar{Y}_n)=0$. For that purpose, we
define
\[
    Q_n(s,t) := \exp\left(-\frac{n+1}2\,s-\frac{n+1}2\,t\right)
	P_n(e^s,e^t),
\]
which satisfies the recurrence
\begin{equation}\label{Qn-rr}
\begin{split}
    Q_n(s,t) &= \biggl(\frac{4(n-1)}{n^2}\,\sinh\lpa{\tfrac12s}
	\sinh\lpa{\tfrac12t}\biggr)Q_{n-1}(s,t)\\
	&\quad 
	+\left(\cosh\lpa{\tfrac12s}-\frac{2}{n}\sinh\lpa{\tfrac12s}
	\,{\partial_s}\right)
	\left(\cosh\lpa{\tfrac12t}-\frac{2}{n}\sinh\lpa{\tfrac12t}
	\,{\partial_t}\right)Q_{n-1}(s,t),
\end{split}
\end{equation}
for $n\ge1$, with $Q_0(s,t)=e^{-\frac12s-\frac12t}$ and $Q_1(s,t)=1$. 
Write now 
\[
    Q_n(s,t) = 1+\sum_{m+l\ge2}Q_{n;m,l}
	\,\frac{s^mt^l}{m!l!}\,
	\qquad(n\ge1),
\]
with $Q_{0;m,l}=(-1)^{m+l}2^{-m-l}$. Then by the recurrence 
\eqref{Qn-rr} and induction, we see that 
\[
    Q_{n;l,2m+1-l}= 0 \qquad(n\ge1;0\le l\le 2m+1).
\]
To compute the asymptotics of $Q_{n;l,2m-l}$, we use the recurrence 
\[
    Q_{n;m,l} = \left(1-\frac{m}{n}\right)
    \left(1-\frac{l}{n}\right)Q_{n-1;m,l}
    +R_{n;m,l},
\]
where
\begin{equation}\label{rnml}
    \begin{split}
	R_{n;m,l}&=\frac{n-1}{n^2}
	\sum_{\substack{0\le i\le \tr{\frac12(m-1)}
	\\ 0\le j\le \tr{\frac12(l-1)}}} 
	\binom{m}{2i+1}\binom{l}{2j+1}
	2^{-2i-2j}Q_{n-1;m-2i-1,l-2j-1}\\
	&\quad + \sum_{\substack{0\le i\le \tr{\frac12m}
	\\ 0\le j\le \tr{\frac12l} \\ i+j\ge 1}} 
	\binom{m}{2i}\binom{l}{2j}
	2^{-2i-2j}Q_{n-1;m-2i,l-2j}
	\left(1-\frac{m-2i}{n(2i+1)}\right)
	\left(1-\frac{l-2j}{n(2j+1)}\right).
    \end{split}
\end{equation}
Then by induction, we show that 
\begin{align}\label{qml-ae}
    Q_{n;2m-l,l} \sim d(l)d(2m-l)\sigma_n^{2m}
    \qquad(0\le l\le 2m; m\ge0),
\end{align}
where $\sigma_n^2 := \frac1{12}n$, $d(2j+1)=0$ and 
$d(2j)=(2j-1)!!=\frac{(2j)!}{j!2^j}$. See Appendix~\ref{App-A} for 
details. By the expansion   
\[
    \mu_m := \mathbb{E}\lpa{\bar X_n+\bar Y_n}^m
    = \sum_{0\le l\le m}\binom{m}{l}Q_{n;m-l,l},
\]
we see that $\mu_{2m+1}=0$ because $Q_{n;2m+1-l,l}=0$; furthermore,
using the estimate \eqref{qml-ae}, we deduce that 
\[
    \mu_{2m} \sim \sigma_n^{2m} 
    \sum_{0\le l\le m}\binom{2m}{2l}d(2l)d(2m-2l)
    = d(2m) 2^m \sigma_n^{2m}.
\]
We then conclude that $X_n+Y_n\sim \mathscr{N}\lpa{n,\frac16n}$. 

A type $B$ analogue is given in \cite{Visontai2013} and the same CLT 
$\mathscr{N}\lpa{n,\frac16n}$ can be established by the same approach.

\section{The degenerate case: $\beta(v)\equiv0\Longrightarrow P_n(v) 
= a_n(v)P_{n-1}(v)$}\label{sec-beta-is-0}

For completeness, we briefly discuss a special class of Eulerian 
recurrences of the form (without derivative terms)
\begin{align}\label{beta-is-zero}
	P_n(v) = a_n(v)P_{n-1}(v)\qquad(n\ge1),
\end{align}
with $P_0(v)$ given. Typical examples include binomial coefficients
with $a_n(v) = 1+v$ and Stirling numbers of the first kind with
$a_n(v) = n-1+v$. Assume that $[v^k]a_n(v)\ge0$, $a_n(v)$ is analytic
in $|v|\le1$ and $a_n(1)>0$ for $k,n\ge0$. Define $X_n$ as in
\eqref{Xnk-general}. Then, with $a_0(v):= P_0(v)$, $X_n$ is
expressible as the sum of independent random variables:
\[
    X_n = \sum_{0\le j\le n}Y_{j},
    \quad\text{where}\quad 
    \mathbb{E}\lpa{v^{Y_{j}}}
    = \frac{a_j(v)}{a_j(1)}. 
\]
Thus $X_n$ is asymptotically normally distributed if the Lyapunov 
condition (see \cite{Fischer2011}) holds:
\[
    \sum_{0\le j\le n}\mathbb{E}|Y_j-\mathbb{E}(Y_j)|^3
    = o(\mathbb{V}(X_n)^{3/2}).
\]
This condition is not optimal but is simpler to use in a setting like 
ours. In particular, it holds when each $Y_j$ is bounded. 

\paragraph{A simple linear framework}
To be more precise, we consider the linear framework when 
$a_n(v) = \alpha(v)n+\gamma(v)$, where $\alpha$ and $\gamma$ are in 
most cases polynomials. Then we have
\[
    \mathbb{E}(X_n) = \frac{P_0'(1)}{P_0(1)}
	+\sum_{1\le j\le n}\frac{\alpha'(1)j+\gamma'(1)}
	{\alpha(1)j+\gamma(1)}.
\]
It follows that 
\begin{align*}
	\mathbb{E}(X_n) 
	= \begin{cases}
	    \mu(\alpha) n +\nu \log n +O(1), &
		\text{if }\mu(\alpha)>0;\\
		\frac{\gamma'(1)}{\alpha(1)}\log n+O(1),&
		\text{if } \mu(\alpha)=0, \alpha(1),\gamma'(1)>0;\\
	    \mu(\gamma) n +O(1),&
		\text{if }\mu(\alpha)=\alpha(1)=0,
		\mu(\gamma)>0,\\
	\end{cases}
\end{align*}
where 
\[
    \mu(f) := \frac{f'(1)}{f(1)}, \quad\text{and}\quad
    \nu := \frac{\alpha(1)\gamma'(1)-\alpha'(1)\gamma(1)}
    {\alpha(1)^2}. 
\]
For the variance, with the notation 
\[
    \sigma^2(f) := \frac{f'(1)}{f(1)}+\frac{f''(1)}{f(1)}
    -\left(\frac{f'(1)}{f(1)}\right)^2,
\]
we have
\begin{align*}
	\mathbb{V}(X_n) 
	= \begin{cases}
	    \sigma^2(\alpha) n 
		+O(\log n), &
		\text{if }\sigma^2(\alpha)>0;\\
		\varsigma\log n+O(1),&
		\text{if } \sigma^2(\alpha)=0, \alpha(1),\varsigma>0;\\
	    \sigma^2(\gamma) n +O(1),&
		\text{if }\sigma^2(\alpha)=\alpha(1)=0,
		\sigma^2(\gamma)>0,\\
	\end{cases}
\end{align*}
where 
\[
    \varsigma := \frac{\gamma'(1)+\gamma''(1)}{\alpha(1)}
    -\frac{2\alpha'(1)\gamma'(1)}{\alpha(1)^2}
    +\frac{\gamma(1)\alpha'(1)^2}{\alpha(1)^3}. 
\]
In all cases, the distribution of $X_n$ is asymptotically normal if 
$\mathbb{V}(X_n)\to\infty$:
\begin{align*}
	X_n\sim \begin{cases}
		\mathscr{N}\lpa{\mu(\alpha)n,\sigma^2(\alpha)n},&
		\text{if }\sigma^2(\alpha)>0;\\
		\mathscr{N}\lpa{\mu(\alpha)n+\nu\log n,
		\varsigma \log n},&
		\text{if }\sigma^2(\alpha)=0, \varsigma>0;\\
		\mathscr{N}\lpa{\mu(\gamma)n,
		\sigma^2(\gamma)n},&
		\text{if }\sigma^2(\alpha)=\alpha(1)=0,
		\sigma^2(\gamma)>0.
	\end{cases}
\end{align*}

\paragraph{Applications}
The literature and the database OEIS abound with examples satisfying
\eqref{beta-is-zero}, and they are mostly of a simpler nature when
compared with \eqref{Pnv-gen}. The prototypical example is binomial
coefficients $\binom{n}{k}$: \href{https://oeis.org/A007318}{A007318}
(or \href{https://oeis.org/A135278}{A135278}) for which $a_n(v) =
1+v$. We then obtain the CLT $\mathscr{N}\lpa{\frac12n,\frac14n}$, a
result first established by de Moivre in 1738 \cite{deMoivre1738}.
Another 80 OEIS sequences of the form \eqref{beta-is-zero} with
$a_n(v) = e_n(1+v)$ are collected in Appendix~\ref{App-Bino}, where
$e_n$ is either a constant or a sequence of $n$. We get the same CLT
$\mathscr{N}\lpa{\frac12n,\frac14n}$ for the coefficients.

We also identified another 182 sequences satisfying 
\eqref{beta-is-zero} with $a_n(v) = p+qv+rv^2$ with $p, q, r$
nonnegative integers. The corresponding coefficients follow the CLT 
\[
    \mathscr{N}\left(\frac{q+2r}{p+q+r}\,n,
    \frac{pq+4pr+qr}{(p+q+r)^2}\,n\right) ; 
\]
see Appendix~\ref{App-Bino} for the tables of these sequences.

Examples for which $\alpha(v), \sigma^2(\alpha)\neq0$ are scarce: 
\begin{center}
\begin{tabular}{ccc}\hline
\href{https://oeis.org/A059364}{A059364}$(n,k)$ $=\sum_{k\le j<n}\binom{j}{k}\stirling{n}{n-j}$
& $a_n(v)=(1+v)n+1$ & $\mathscr{N}\lpa{\frac12n,\frac14n}$\\
\href{https://oeis.org/A088996}{A088996}: reciprocal of \href{https://oeis.org/A059364}{A059364} & $(1+v)n-1$ & 
$\mathscr{N}\lpa{\frac12n,\frac14n}$  \\ 
\href{https://oeis.org/A322225}{A322225} & $(1+v^2)n+v$ & 
$\mathscr{N}\lpa{n,n}$\\
\href{https://oeis.org/A322235}{A322235} & $(1+2v^2)n+v$ & 
$\mathscr{N}\lpa{\frac43n,\frac89n}$\\
\hline
\end{tabular}	
\end{center} 
Here $\stirling{n}{k}$ denotes the unsigned Stirling numbers of the
first kind (\href{https://oeis.org/A132393}{A132393},
\href{https://oeis.org/A094638}{A094638},
\href{https://oeis.org/A130534}{A130534}), another prototypical 
example with log-variance CLT. 

We now group other examples with logarithmic variance according as
$\mu(\alpha)>0$ or $\mu(\alpha)=0$, respectively.

\paragraph{Polynomials with $\mu(\alpha)>0$ and 
$\sigma^2(\alpha)=0$, and $\varsigma>0$}

\begin{footnotesize}
\begin{center}
\begin{longtable}{clll }
\multicolumn{1}{c}{OEIS} &
\multicolumn{1}{c}{$a_n(v)$} &
\multicolumn{1}{c}{Initial} &
\multicolumn{1}{c}{CLT} \\ \hline
\href{https://oeis.org/A094638}{A094638} & $v n+1$ & $P_{0}(v)=1$ & \nom{n-\log n}{ \log n} \\
\href{https://oeis.org/A109692}{A109692} & $2 v n +1-v$ & $P_{0}(v)=1$ & 
\nom{n-\frac{1}{2}\log n}{\frac{1}{2} \log n} \\
\href{https://oeis.org/A145324}{A145324} & $v n+1+v$ & $P_{0}(v)=1$ & \nom{n-\log n}{ \log n} \\
\href{https://oeis.org/A196841}{A196841} & $v n+1+v$ & $P_{1}(v)=1+v$ & \nom{n-\log n}{ \log n} \\
\href{https://oeis.org/A196842}{A196842} & $v n+1+v$ & $P_{2}(v)=1+3 v+2v^2$ 
& \nom{n-\log n}{ \log n} \\
\href{https://oeis.org/A196843}{A196843} & $v n+1+v$ & $P_{3}(v)=1+6 v+11 v^2+6 v^3$ 
& \nom{n-\log n}{ \log n} \\
\href{https://oeis.org/A196844}{A196844} & $v n+1+v$ & $P_{4}(v)=1+10 v+35 v^2+50 v^3 + 24 v^4$ 
& \nom{n-\log n}{ \log n} \\
\href{https://oeis.org/A196845}{A196845} & $v n+1+2 v$ & $P_{0}(v)=1$ & \nom{n-\log n}{ \log n} \\
\href{https://oeis.org/A196846}{A196846} & $v n+1+2 v$ & $P_{2}(v)=1+3 v+2 v^2$ 
& \nom{n-\log n}{ \log n} \\
\href{https://oeis.org/A201949}{A201949} & $v n+1-v+v^2$ & $P_{0}(v)=1$ & \nom{n}{ \log n} \\
\href{https://oeis.org/A249790}{A249790} & $v n+1+v^2$ & $P_{0}(v)=1$ & \nom{n}{ \log n} \\
\href{https://oeis.org/A291845}{A291845} & $2 v n+1-v+v^2$ & $P_{0}(v)=1$ & \nom{n}{ \log n} \\ 
\href{https://oeis.org/A324960}{A324960} & $v n+1+2v+v^2$ & $P_{0}(v)=1$ & \nom{n}{2\log n} \\ 
\hline
\end{longtable}
\end{center}
\end{footnotesize} 

\paragraph{Polynomials with $\mu(\alpha)=\sigma^2(\alpha)=0$, and 
$\varsigma>0$}
\begin{footnotesize}
\begin{center}
\begin{longtable}{llll}
\multicolumn{1}{c}{OEIS} &
\multicolumn{1}{c}{$a_n(v)$} &
\multicolumn{1}{c}{Initial} &
\multicolumn{1}{c}{CLT} \\ \hline	
\href{https://oeis.org/A028338}{A028338} & $2 n-1+v$ & $P_{0}(v)=1$
 & \nom{\frac{1}{2} \log n}{\frac{1}{2} \log n} \\
\href{https://oeis.org/A125553}{A125553} & $n+2 v$ & $P_{0}(v)=2$
 & \nom{2 \log n}{2 \log n} \\
\href{https://oeis.org/A130534}{A130534} & $n+v$ & $P_{0}(v)=1$
 & \nom{ \log n}{ \log n} \\
\href{https://oeis.org/A132393}{A132393} & $n-1+v$ & $P_{0}(v)=1$
 & \nom{ \log n}{ \log n} \\
\href{https://oeis.org/A136124}{A136124} & $n+1+v$ & $P_{0}(v)=1$
 & \nom{ \log n}{ \log n} \\
\href{https://oeis.org/A137320}{A137320} & $n-1+2 v$ & $P_{0}(v)=1$
 & \nom{2 \log n}{2 \log n} \\
\href{https://oeis.org/A137339}{A137339} & $n-1+3 v$ & $P_{0}(v)=1$
 & \nom{3 \log n}{3 \log n} \\
\href{https://oeis.org/A143491}{A143491} & $n+1+v$ & $P_{0}(v)=1$
 & \nom{ \log n}{ \log n} \\
\href{https://oeis.org/A143492}{A143492} & $n+2+v$ & $P_{0}(v)=1$
 & \nom{ \log n}{ \log n} \\
\href{https://oeis.org/A143493}{A143493} & $n+3+v$ & $P_{0}(v)=1$
 & \nom{ \log n}{ \log n} \\
\href{https://oeis.org/A161198}{A161198} & $2 n-1+2 v$ & $P_{0}(v)=1$
 & \nom{ \log n}{ \log n} \\
\href{https://oeis.org/A180013}{A180013} & $\frac{1+n}{n} \left(n-1+v\right)$ & $P_{0}(v)=1$
 & \nom{ \log n}{ \log n} \\
\href{https://oeis.org/A204420}{A204420} & $ \left(2 n-1\right) \left(2 n-2+v\right)$ & $P_{0}(v)=1$
 & \nom{\frac{1}{2} \log n}{\frac{1}{2} \log n} \\
\href{https://oeis.org/A216118}{A216118} & $\frac{n+3}{n-1} \left(n+v\right)$ & $P_{1}(v)=1+v$
 & \nom{ \log n}{ \log n} \\
\href{https://oeis.org/A225470}{A225470} & $3 n-1+v$ & $P_{0}(v)=1$
 & \nom{\frac{1}{3} \log n}{\frac{1}{3} \log n} \\
 \href{https://oeis.org/A286718}{A286718} & $3 n-2+v$ & $P_{0}(v)=1$
 & \nom{\frac{1}{3} \log n}{\frac{1}{3} \log n} \\
\href{https://oeis.org/A225471}{A225471} & $4 n-1+v$ & $P_{0}(v)=1$
 & \nom{\frac{1}{4} \log n}{\frac{1}{4} \log n} \\
 \href{https://oeis.org/A290319}{A290319} & $4 n-3+v$ & $P_{0}(v)=1$
  & \nom{\frac{1}{4} \log n}{\frac{1}{4} \log n} \\
\href{https://oeis.org/A225477}{A225477} & $3 n-1+3 v$ & $P_{0}(v)=1$
 & \nom{ \log n}{ \log n} \\
\href{https://oeis.org/A225478}{A225478} & $4 n-1+4 v$ & $P_{0}(v)=1$
 & \nom{ \log n}{ \log n} \\
\href{https://oeis.org/A254881}{A254881} & $ \left(n-1+v\right) \left(n+v\right)$ & $P_{0}(v)=1$
 & \nom{2 \log n}{2 \log n} \\
\end{longtable}	
\end{center}	
\end{footnotesize} 

Historically, the Stirling numbers of the first kind numbers were 
found as early as the 17th century in Thomas Harriot's unpublished 
manuscripts in addition to James Stirling's book \emph{Methodus 
Differentialis} published in 1730; see \cite[p.\ 61]{Beery2009} and 
\cite{Knuth1992} for more historical notes. The CLT for 
$\stirling{n}{k}$ first appeared in Goncharov's 1942 paper 
\cite{Goncharov1942} (see also \cite{Feller1945,Goncharov1944}) in 
the form of cycles in permutations. 

\paragraph{$a_n(v)$ depending on the parity of $n$} 
\begin{footnotesize}
\begin{center}
\begin{longtable}{lllll}
\multicolumn{5}{c}{{}} \\
\multicolumn{1}{c}{OEIS} &
\multicolumn{1}{c}{$a_n(v)$ ($n$ odd)} &
\multicolumn{1}{c}{$a_n(v)$ ($n$ even)} &
\multicolumn{1}{c}{Initial} &
\multicolumn{1}{c}{CLT} \\ \hline
\endhead
\multicolumn{5}{c}{{Continued on next page}} \\
\endfoot
\endlastfoot
\href{https://oeis.org/A060523}{A060523} & $n$ & $n-1+v$ & $P_{0}(v)=1$ 
& \nom{\frac{1}{2}\log n}{\frac{1}{2}\log n} \\
\href{https://oeis.org/A064861}{A064861} & $1+2 v$ & $1+v$ & $P_{0}(v)=1$ 
& \nom{\frac{7}{12}n}{\frac{17}{72}n} \\ 
\href{https://oeis.org/A152815}{A152815} & $1$ & $1+v$ & $P_{0}(v)=1$ 
& \nom{\frac{1}{4}n}{\frac{1}{8}n} \\  
\href{https://oeis.org/A152842}{A152842} & $1+3 v$ & $1+v$ & $P_{0}(v)=1$ 
& \nom{\frac{5}{8}n}{\frac{7}{32}n} \\ 
\href{https://oeis.org/A188440}{A188440} & $1$ & $1+2 v$ & $P_{0}(v)=1$ 
& \nom{\frac{1}{3}n}{\frac{1}{9}n} \\ 
\href{https://oeis.org/A246117}{A246117} & $\frac12(n-1)+v$ & $\frac12n+v$ & $P_{0}(v)=1$ 
& \nom{2\log n}{2\log n} \\
\href{https://oeis.org/A274496}{A274496} & $2$ & $1+v$ & $P_{0}(v)=1$ 
& \nom{\frac{1}{4}n}{\frac{1}{8}n} \\  
\href{https://oeis.org/A274498}{A274498} & $3$ & $1+2 v$ & $P_{0}(v)=1$ 
& \nom{\frac{1}{3}n}{\frac{1}{9}n} \\ 
\href{https://oeis.org/A026519}{A026519} & $1+v+v^2$ & $1+v^2$ & $P_{0}(v)=1$ 
& \nom{n}{\frac{5}{6}n} \\  
\href{https://oeis.org/A026536}{A026536} & $1+v^2$ & $1+v+v^2$ & $P_{0}(v)=1$ 
& \nom{n}{\frac{5}{6}n} \\  
\href{https://oeis.org/A026552}{A026552} & $1+v^2$ & $1+v+v^2$ & $P_{1}(v)=1+v+v^2$ 
& \nom{n}{\frac{5}{6}n} \\  \hline
\end{longtable}	
\end{center}	
\end{footnotesize} 

\paragraph{Nonlinear $a_n(v)$} Let $p_n$ denote the $n$th prime and 
$f_n$ the $n$th Fibonacci number. Then by the prime number theorem it 
is known that $p_n \sim n\log n$; also $f_n \sim 
5^{-\frac12}\phi^{-n-1}$, where $\phi = \frac{\sqrt{5}-1}2$ is the 
golden ratio. Then the following CLTs follow from these estimates and 
Lyapunov's condition. 
\begin{center}
\begin{tabular}{llll}
\multicolumn{1}{c}{OEIS} &
\multicolumn{1}{c}{$a_n(v)$} &
\multicolumn{1}{c}{Initial} &
\multicolumn{1}{c}{CLT} \\ \hline
\href{https://oeis.org/A096294}{A096294} & $p_n-1+v$ & $P_0(v)=1$ &
$\mathscr{N}(\log\log n,\log\log n)$ \\
\href{https://oeis.org/A260613}{A260613} & $1+p_nv$ & $P_0(v)=1$ &
$\mathscr{N}(n-\log\log n,\log\log n)$ \\	
\href{https://oeis.org/A130405}{A130405} & $f_n+f_{n-1}v$ & $P_0(v)=1$ &
$\mathscr{N}\lpa{\frac{\phi}{1+\phi}n,
\frac{\phi}{(1+\phi)^2}n}$ \\ \hline
\end{tabular}	
\end{center}

\paragraph{A CLT $\mathscr{N}\lpa{\frac12n,\frac1{6}n}$}
Sequence \href{https://oeis.org/A220884}{A220884} is not of the type 
\eqref{beta-is-zero} but has a similar product form
\[
    P_n(v) = \prod_{1\le j<n}(jv+n+1-j),
\]
which leads to the CLT $\mathscr{N}\lpa{\frac12n,\frac1{6}n}$.

\paragraph{Non-normal limit laws} Non-normal limit laws arise when 
the variance remains bounded and the analysis is simple because the 
probability generating function (PGF) tends to a finite limit. 
Consider the case when $a_n(v) = e_n + v$, where 
$\sum_{j\ge1}e_j^{-1}$ is convergent. Then 
\[
    \mathbb{E}\lpa{v^{X_n}}
	= \frac{P_n(v)}{P_n(1)}
	= \prod_{1\le j\le n}\frac{e_j+v}
	{e_j+1}
	\to \prod_{j\ge1}\frac{1+\frac{v}{e_j}}
	{1+\frac{1}{e_j}}.
\]
When $a_n(v) = e_nv+1$, we consider $n-X_n$, and we get the same 
limit law. Some examples of these types are collected in the  
following table ($P_0(v)=1$ in all cases). 
\begin{footnotesize}
\begin{center}
\begin{tabular}{lll|lll}
\multicolumn{1}{c}{OEIS} &
\multicolumn{1}{c}{$a_n(v)$} &
\multicolumn{1}{c}{$\begin{array}{c} 
\text{PGF of the}\\
\text{limit law}	
\end{array}$} &
\multicolumn{1}{c}{OEIS} &
\multicolumn{1}{c}{$a_n(v)$} &
\multicolumn{1}{c}{$\begin{array}{c} 
\text{PGF of the}\\
\text{limit law}	
\end{array}$} 
\\ \hline	
\href{https://oeis.org/A008955}{A008955} & $vn^2+1$ & 
$\frac{\sinh\lpa{\pi\sqrt{v}}}{\sqrt{v}\sinh(\pi)}$ & 
\href{https://oeis.org/A008956}{A008956} & $v(2n-1)^2+1$	& 
$\frac{\cosh\lpa{\frac12\pi\sqrt{v}}}
{\cosh\lpa{\frac12\pi}}$ \\
\href{https://oeis.org/A108084}{A108084} & $2^n+v$ & 
$\frac{\prod_{j\ge1}\lpa{1+2^{-j}v}}
{\prod_{j\ge1}\lpa{1+2^{-j}}}$ & 
\href{https://oeis.org/A128813}{A128813} & $\frac12vn(n+1)+1$ &
$\frac{\cos\lpa{\frac12\pi\sqrt{1-8v}}}
{v\cosh\lpa{\frac{\sqrt{7}}2\pi}}$ \\
\href{https://oeis.org/A160563}{A160563} & $(2n-1)^2+v$ & 	
$\frac{\cosh\lpa{\frac{\pi}2\sqrt{v}}}
{\cosh\lpa{\frac{\pi}2}}$ &
\href{https://oeis.org/A173007}{A173007} & $3^n+v$ & $\frac{\prod_{j\ge1}\lpa{1+3^{-j}v}}
{\prod_{j\ge1}\lpa{1+3^{-j}}}$ \\
\href{https://oeis.org/A173008}{A173008} & $4^n+v$ & $\frac{\prod_{j\ge1}\lpa{1+4^{-j}v}}
{\prod_{j\ge1}\lpa{1+4^{-j}}}$ &
\href{https://oeis.org/A249677}{A249677} & $vn^3+1$ & $\frac{\prod_{j\ge1}\lpa{1+j^{-3}v}}
{\prod_{j\ge1}\lpa{1+j^{-3}}}$ \\
\href{https://oeis.org/A269944}{A269944} & $(n-1)^2+v$ & $\frac{\sinh\lpa{\pi\sqrt{v}}}
{\sqrt{v}\sinh(\pi)}$ &
\href{https://oeis.org/A269947}{A269947} & $(n-1)^3+v$ & $v\frac{\prod_{j\ge1}\lpa{1+j^{-3}v}}
{\prod_{j\ge1}\lpa{1+j^{-3}}}$ \\ \hline
\end{tabular}	
\end{center}	
\end{footnotesize}

\section{Conclusions}
\label{sec-conclusions}

In connecting Eulerian numbers to descents in permutations in the
preface of Petersen's book \cite{Petersen2015}, Richard Stanley
writes: ``\emph{Who could believe that such a simple concept would
have a deep and rich theory, with close connections to a vast number
of other subjects?}'' We demonstrated in this paper, through a
considerable number (more than 500) of examples from the literature
and the OEIS database, that not only have the Eulerian numbers been 
very fruitfully explored, but its simple extension to Eulerian
recurrences is very effective and powerful in modeling many different
laws---a prolific source of various phenomena indeed, although we
limited our study mostly to linear (in $n$) factors $a_n(v)$ and
$b_n(v)$. The combined use of an elementary approach (method of
moments) and an analytic one (notably Theorem~\ref{thm-saqp}) also
proved to be functional, handy and very successful. To see further
the modeling versatility of Eulerian recurrences, we conclude with a
few special Eulerian examples from OEIS of the recursive form
$\ET{a_n(v),b_n(v)}$, where $a_n(v)$ and $b_n(v)$ are quadratic
either in $n$ or in $v$.

\paragraph{A mixture of two Betas $\Longrightarrow$ Uniform} 
Writing all rational numbers $\frac{p}q\in(0,1)$ as ordered pairs 
$(p,q)$ gives sequence \href{https://oeis.org/A181118}{A181118} or 
the polynomials 
\[
    P_n(v) = \sum_{1\le k\le n}
    \lpa{kv^{2k}+(n+1-k)v^{2k-1}},
\]
which satisfy the recurrence 
\[
    P_n\in\GET{1}{\frac{2n^2+v^2n-v}{(n-1)(2n-1)},
    -\frac{nv(1+v)}{(n-1)(2n-1)};v+v^2}.
\]
The limit distribution is Uniform$[0,2]$ although the random variable
is a mixture of two Betas; see Figure~\ref{fig-A181118}. On the other 
hand, the sequence \href{https://oeis.org/A215655}{A215655} is twice 
\href{https://oeis.org/A181118}{A181118}. 

\paragraph{A mixture of two normals} The Eulerian recurrence is also
capable of describing the binomial distribution concatenated twice:
$P_n(v) := (1+v)^n(1+v^{n+1})$, which corresponds to
\href{https://oeis.org/A152198}{A152198} and satisfies 
\[
    P_n\in\EET{\frac{(1+2v-v^2)n-v(1-v)}{n}}{-\frac{v(1+v)}{n};1+v}. 
\]
On the other hand, the sequence 
\href{https://oeis.org/A188440}{A188440}
corresponds to the polynomials $\sum_{0\le k\le \tr{\frac
n2}}\binom{\tr{\frac12n}}{k}v^k$. If we concatenate the two 
polynomial rows with the same row number $\tr{\frac12n}$ and read 
them sequentially as one, we get 
\[
    P_n\in\EET{\frac{(1+4v-2v^2)n-2v(1-v)}{n}}
    {-\frac{v(1+2v)}{n};1+v},
\]
and the resulting distributions are similar to those of
\href{https://oeis.org/A152198}{A152198}.

\begin{figure}[!ht]
\begin{center}
\includegraphics[width=3.5cm]{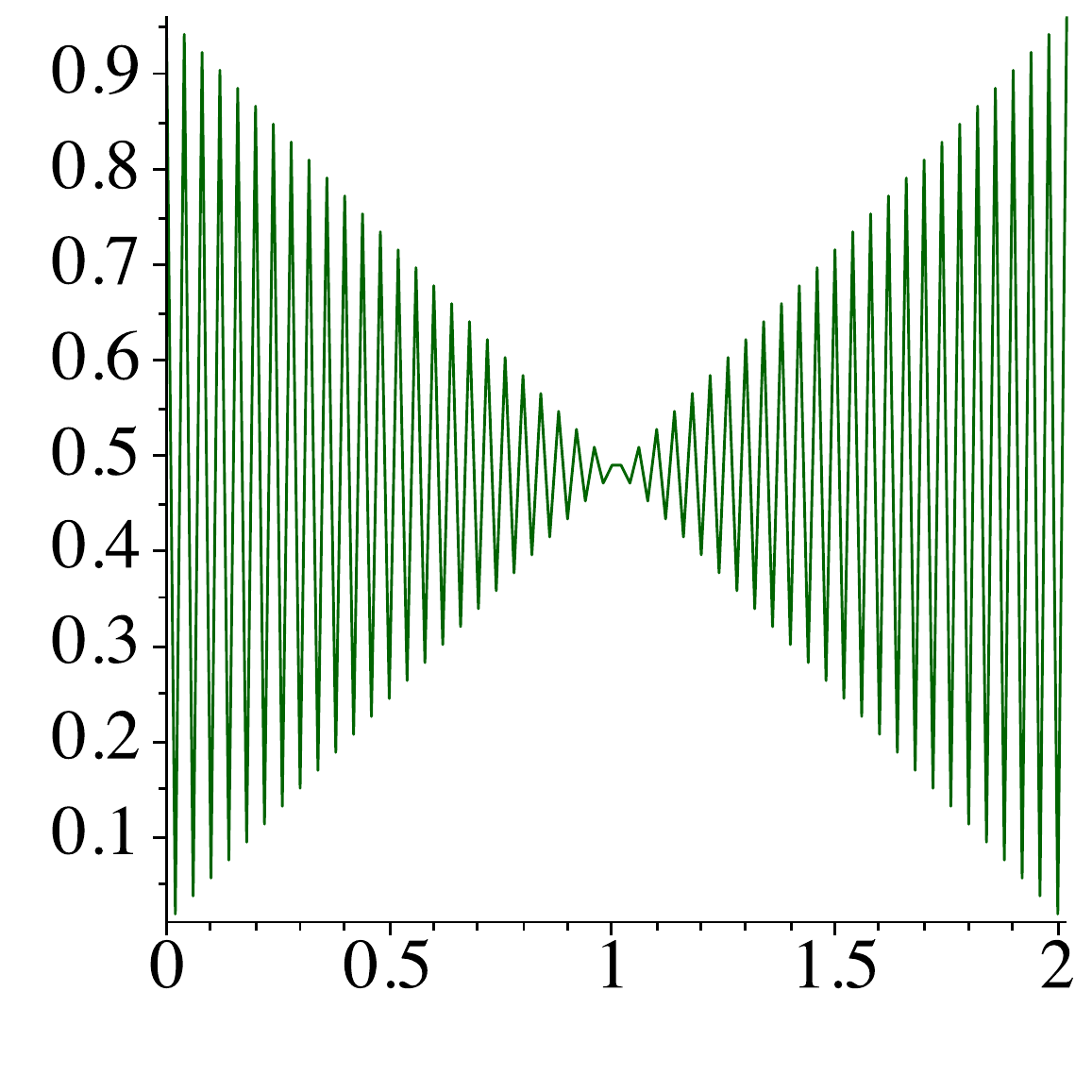} \;   
\includegraphics[width=3.5cm]{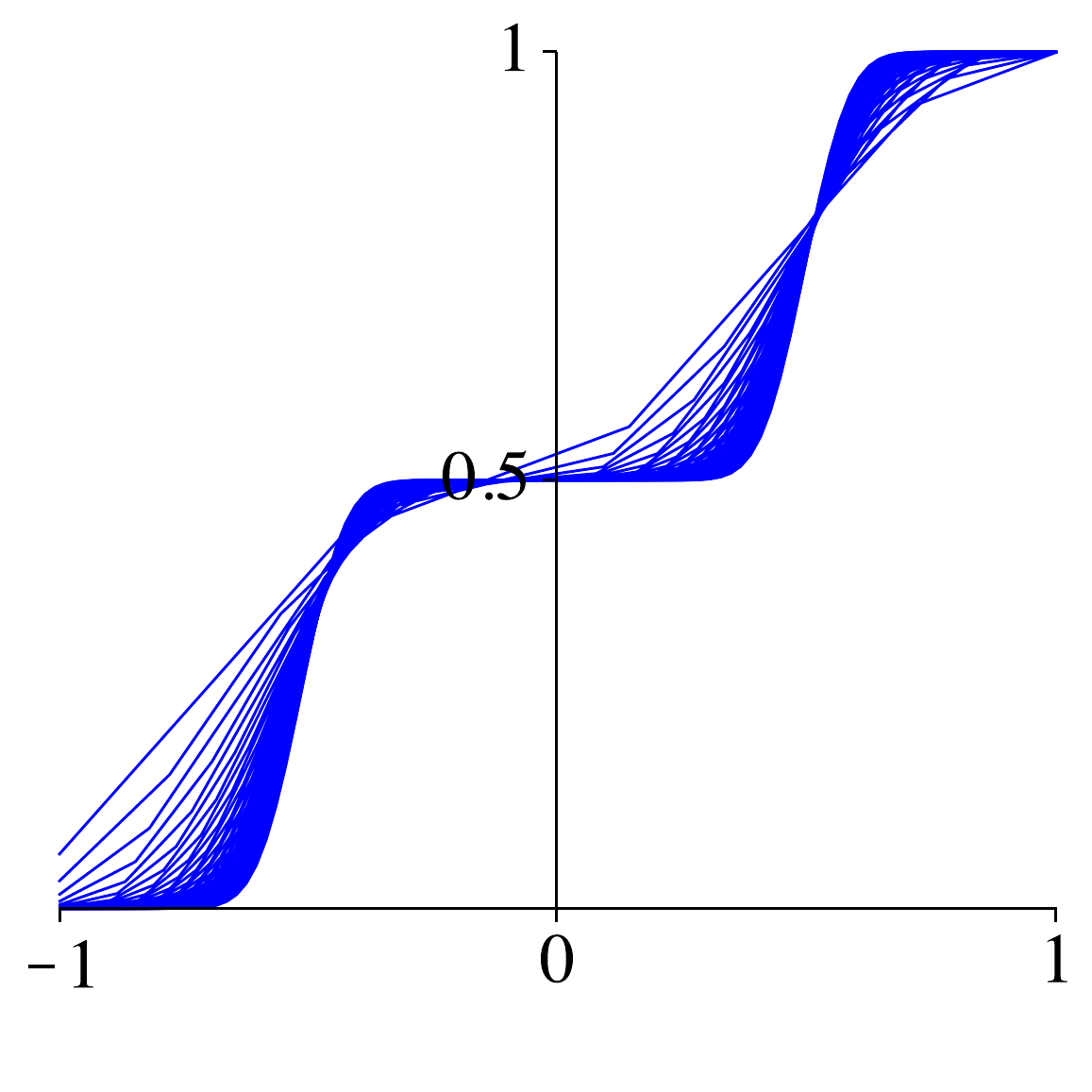} \;   
\includegraphics[width=3.5cm]{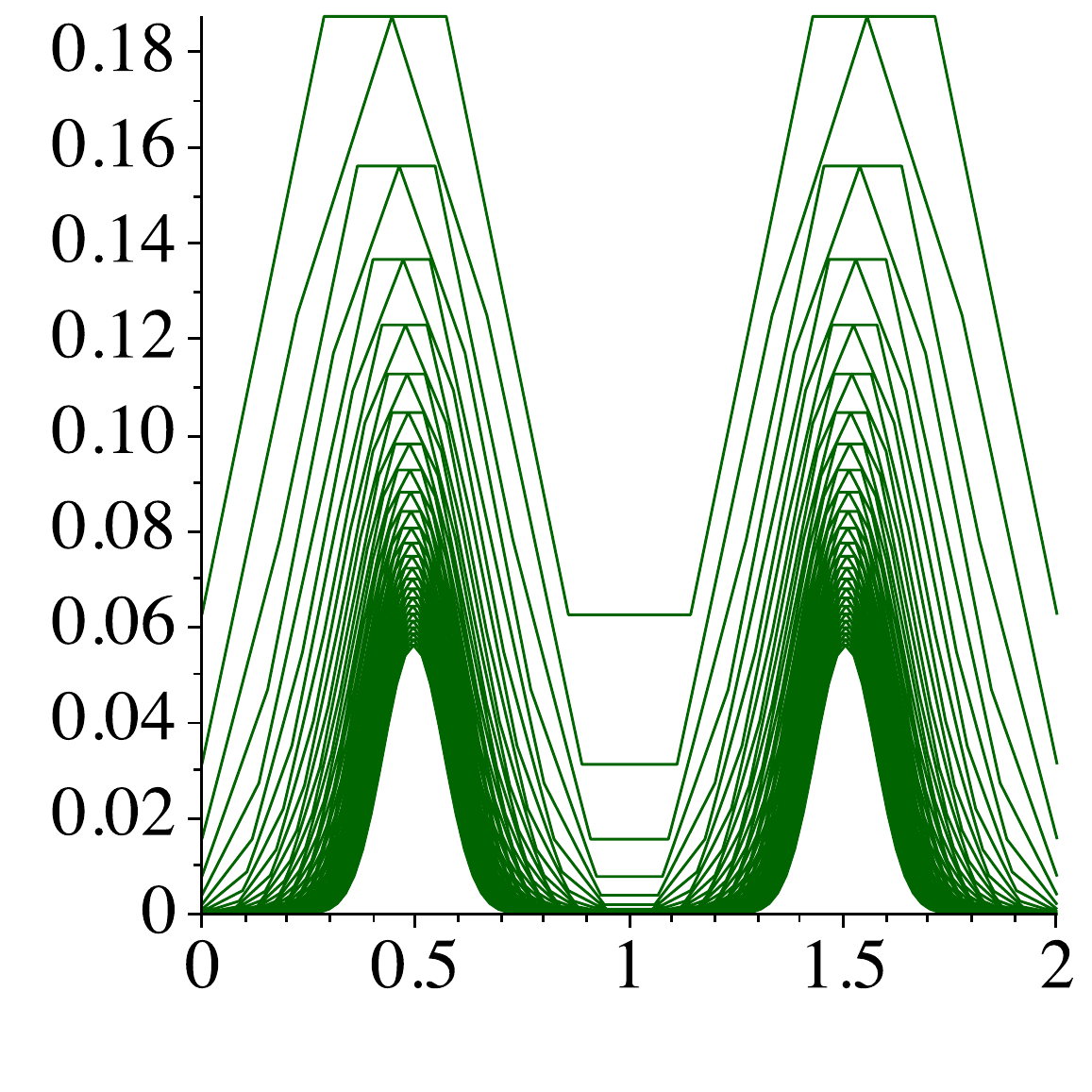}
\end{center} 
\caption{Histograms of \href{https://oeis.org/A181118}{A181118} when
$n=50$ (left) and of \href{https://oeis.org/A152198}{A152198} for
$n=3,\dots,50$ (right), and the (normalized) distribution functions
of \href{https://oeis.org/A152198}{A152198} (middle).}
\label{fig-A181118}
\end{figure}

\paragraph{Degenerate limit law} While the recurrence
$P_n\in\ET{\frac{n+v^2}{n},-\frac{v(1+v)}{2n};1}$ leads to a uniform
limit law (see Section~\ref{ss-ull}), changing the minus sign to a
positive one 
\[
    P_n\in\EET{\frac{n+v^2}{n}}{\frac{v(1+v)}{2n};1}
\] 
gives the closed-form solution $P_n(v) = 1+nv^2$. 

Similarly, the recurrence $P_n\in\ET{\frac{n+v}{n},-\frac{v}{n};1}$
leads to a uniform limit law
(\href{https://oeis.org/A000012}{A000012}, as we examined in 
\S~\ref{sec:ull}), but $P_n\in\ET{\frac{n+v}{n},\frac{v}{n};1}$ gives 
$P_n(v) = 1+nv$, and yields the degenerate limit law, which (when 
read sequentially) corresponds to
\href{https://oeis.org/A057979}{A057979}, and up to different initial
conditions, to \href{https://oeis.org/A133622}{A133622},
\href{https://oeis.org/A152271}{A152271} and
\href{https://oeis.org/A158416}{A158416}. Note that these four
sequences are not triangular sequences.

\paragraph{Another normal limit law}
For the examples examined in this section, if we have no a priori
information about the solution, then the method of moments still 
works well except for the normal mixtures. But the analytic method 
will generally become more messy as the PDEs involved will have higher
orders. To see this, we look briefly at another example 
\href{https://oeis.org/A136267}{A136267} (with normal limit law), 
which is defined via Narayana numbers (see Section~\ref{sec-3v2v}) by
\[
    P_n(v) = \frac1{1+v}\sum_{1\le k\le 2n+2}
    \binom{2n+1}{k-1}\binom{2n+2}{k-1}\frac{v^{k-1}}{k},
\]
a polynomial of degree $2n$. Such polynomials satisfy the rather 
cumbersome recurrence 
\begin{small}
\[
    P_n\in\mathscr{E}\left\langle\!\left\langle
    \frac{2(1+10v+5v^2)n^2+(5+24v+3v^2)n
    +3(1+2v-v^2)}{(n+1)(2n+3)}, 
    \frac{(4n+3)v(1+v)}{(n+1)(2n+3)};1
    \right\rangle\!\right\rangle.
\]
\end{small}
To prove this, we see first that the OGF of $P_n(v)$ satisfies the 
PDE 
\begin{align*}
    &2z^2(1-(1+10v+5v^2)z)\partial_z^2Y
    -4vz^2(1-v^2)\partial_z\partial_vY
    +z(7-(11+84v+33v^2)z)\partial_z Y\\
    &-7vz(1-v^2)\partial_v Y
    +(3-10(1+5v+v^2)z)Y-3=0,
\end{align*}
with $Y(0,v)=1$. Although this equation is not easy to solve, it is 
easy to check that the solution is given by 
\[
    Y(z,v) 
    := \frac{f\lpa{\sqrt{z},v}+f\lpa{-\sqrt{z},v}}{2v(1+v)z},
\]
where $f$ is the OGF for Narayana numbers; see \eqref{ogf-narayana}. 
By the recurrences in Section~\ref{ss-mv-gen}, the mean and the 
variance are
\[
    \mathbb{E}(X_n) = n, \quad\text{and}\quad
    \mathbb{V}(X_n) = \frac{n(n+1)}{4n+3}\qquad(n\ge1),
\]
and the asymptotic normality $\mathscr{N}\lpa{n,\frac14n}$ can either 
be derived by the method of moments or by the CLT for Narayana 
numbers. The complex-analytic approach (Theorem~\ref{thm-saqp}) also 
applies here with $\rho(v) = (1+v)^{-4}$, and we get an optimal 
convergence rate in the CLT $\mathscr{N}\lpa{n,\frac14n; 
n^{-\frac12}}$. Yet another approach is to apply Stirling's formula 
to $[v^k]P_n(v)$ and derive the corresponding LLT when 
$k=n+o\lpa{n^{\frac23}}$, but this approach is often limited to the 
situations when simple closed-form expression is available.

\paragraph{Perspectives}
A natural, fundamental question regarding more general Eulerian 
recurrence $P_n\in\ET{a_n(v),b_n(v)}$ is ``are there simple criteria 
(on $a_n(v)$ and $b_n(v)$) to guarantee the nonnegativity of the 
coefficients $[v^k]P_n(v)$?''

On the other hand, from a methodological point of view, how to
address the finer properties such as local limit theorems and large
deviations by a more systematic approach? Much remains to be
clarified.

B\'ona writes in \cite{Bona2004}: ``\emph{While Eulerian numbers have
been given plenty of attention during the last 200 years, most of the
research was devoted to analytic concepts.}'' Despite the large
literature on analytic aspects, a more complete compilation of the
Eulerian recurrences seems lacking and this paper also aims to
provide an attempt to gather more examples and types of Eulerian
recurrences, focusing on distributional aspect of the coefficients.
We believe that such an extensive compilation will also be helpful for
the study of other properties of Eulerian recurrences and related
structures.

Our method of moments relies crucially on the presence of the factor
``$1-v$'' in the derivative term in \eqref{Pnv-gen}; it fails when
$1-v$ is not there as we already saw many examples in
Section~\ref{ss-1-sv}. Such recurrences also occur frequently in
combinatorics and a systematic study of the corresponding
distributional properties of the coefficients will be given elsewhere.

Finally, from a computational point of view, the Eulerian recurrence
is a Markovian one in that the $n$th row of the polynomials $P_n$
depends only on $P_{n-1}$ and its derivative. This property not only
facilites the systematic computer search through all OEIS
sequences but also provides a good framework for mathematical
analysis; yet the total number (594) we worked out is still
relatively small compared with the 25,000+ nonnegative polynomial
sequences in OEIS (over a total of 327,000+). Although many such
polynomial sequences do not have combinatorial or structural
interpretations or are rather artificially constructed, they do
provide a very rich and valuable source for the study of various  
properties such as the distribution of the coefficients, and that of 
the zeros. A complete characterization of the corresponding limit 
laws is of special methodological and phenomenal interest but seems 
too early at this stage.

\section*{Acknowledgements}

The major part of this work was presented in several meetings,
seminars, and conferences in a few different cities (Taipei, Tokyo,
Vienna, Sydney, Vilnius, Strasbourg, Dalian, Changchun and
Qinhuangdao) during 2017--2018, and this paper has benefited from the
feedback and comments of the audience; we thank specially Luc
Devroye, Svante Janson, Christian Krattenthaler, Shi-Mei Ma, Bao-Xuan 
Zhu, Yeong-Nan Yeh, Thorsten Neuschel, a referee and the 
Editor-in-Chief Catherine Yan for very helpful comments and 
suggestions.

\appendix
\addcontentsline{toc}{section}{Appendices}

\section{Proof of \eqref{qml-ae}}\label{App-A}

By induction hypothesis, we see that the largest terms in the sum 
expression \eqref{rnml} of $R_{n;2m-l,l}$ occur when $(i,j)=(0,1)$ 
and $(i,j)=(1,0)$, giving
\[
    R_{n;2m-l,l} \sim \frac14\binom{2m-l}{2}Q_{n-1;2m-l-2,l}
	+\frac14\binom{l}{2}Q_{n-1;2m-l,l-2},
\]
where $Q$ with negative indices are interpreted as zero. By 
\eqref{qml-ae}
\begin{align}
	R_{n;2m-l,l} \sim C_{2m-l,l} \sigma_n^{2m-2},
\end{align}
where
\[
    C_{2m-l,l} := \frac{d(l)d(2m-l-2)}4\binom{2m-l}{2}
	+\frac{d(2m-l)d(l-2)}4\binom{l}{2}.
\]
Consider now the recurrence
\[
    x_n = \left(1-\frac{m}{n}\right)
	\left(1-\frac{l}{n}\right)x_{n-1}+y_n
	\qquad(n\ge n_0),
\]
with the given initial condition $x_{n_0}$, where $n_0 := 
\max\{m,l\}+1$. Then (with $y_{n_0} := x_{n_0}$) 
\[
    x_n = \frac{(n-m)!(n-l)!}{n!^2}
	\sum_{n_0\le j\le n}\frac{j!^2\, y_j}{(j-m)!(j-l)!}
	\qquad(n\ge m). 
\]
From this exact expression, we deduce the asymptotic transfer:  
\[
    \text{if }y_n \sim c n^\alpha,
    \text{ then }
	x_n\sim \frac{c}{m+l+\alpha+1}\, n^{\alpha+1}
	\qquad(m+l+\alpha>0). 
\]
Applying this transfer, we see that 
\[
    Q_{n;2m-l,l} \sim \frac{4C_{2m-l,l}}{m}\,\sigma_n^{2m}.
\]
Now the leading constant equals
\begin{align*}
	\frac{d(l)d(2m-l-2)}m\binom{2m-l}{2}
	+\frac{d(2m-l)d(l-2)}m\binom{l}{2}
	= d(l)d(2m-l),
\end{align*}
after a straightforward simplification. By induction, this proves 
\eqref{qml-ae}.

\section{Some OEIS sequences satisfying 
$P_n(v)=a_n(v)P_{n-1}(v)$} \label{App-Bino}

In this Appendix, we collect some OEIS sequences satisfying the 
recurrence $P_n(v)=a_n(v)P_{n-1}(v)$ and give their limit laws. 
For convenience, we use the notation $\GT{m}{a_n(v),0;B(v)}$ for an 
abbreviation of $\ET{a_n(v),0;P_m(v)=B(v)}$ (those without subscripts 
stand for $\GT{0}{a_n(v),0;B(v)}$ as above).

\paragraph{$a_n(v) = c \Longrightarrow 
\mathscr{N}\lpa{\frac12n,\frac14n}$, where $c$ is a constant}
\begin{footnotesize}
\begin{center}
\renewcommand{\arraystretch}{1.3}
\begin{longtable}{ll|ll}
\multicolumn{4}{c}{{}} \\
\multicolumn{1}{c}{OEIS} &
\multicolumn{1}{c}{Type} &
\multicolumn{1}{c}{OEIS} &
\multicolumn{1}{c}{Type} \\ \hline
\endhead
\multicolumn{4}{c}{{Continued on next page}} \\
\endfoot
\endlastfoot
\href{https://oeis.org/A007318}{A007318} & $\ET{1+v,0;1}$ & 
\href{https://oeis.org/A028262}{A028262} & $\GT{2}{1+v,0;1+3v+v^2}$ \\
\href{https://oeis.org/A028275}{A028275} & $\GT{2}{1+v,0;1+4v+v^2}$ & 
\href{https://oeis.org/A028313}{A028313} & $\GT{2}{1+v,0;1+5v+v^2}$ \\ 
\href{https://oeis.org/A028326}{A028326} & $\ET{1+v,0,2}$ & 
\href{https://oeis.org/A029600}{A029600} & $\GT{1}{1+v,0;2+3v}$ \\ 
\href{https://oeis.org/A029618}{A029618} & $\GT{1}{1+v,0;3+2v}$ & 
\href{https://oeis.org/A029635}{A029635} & $\GT{1}{1+v,0;1+2v}$ \\ 
\href{https://oeis.org/A029653}{A029653} & $\GT{1}{1+v,0;2+v}$ & 
\href{https://oeis.org/A038208}{A038208} & $\ET{2(1+v),0;1}$ \\ 
\href{https://oeis.org/A038221}{A038221} & $\ET{3(1+v),0;1}$ & 
\href{https://oeis.org/A038234}{A038234} & $\ET{4(1+v),0;1}$ \\ 
\href{https://oeis.org/A038247}{A038247} & $\ET{5(1+v),0;1}$ & 
\href{https://oeis.org/A038260}{A038260} & $\ET{6(1+v),0;1}$ \\ 
\href{https://oeis.org/A038273}{A038273} & $\ET{7(1+v),0;1}$ & 
\href{https://oeis.org/A038286}{A038286} & $\ET{8(1+v),0;1}$ \\ 
\href{https://oeis.org/A038299}{A038299} & $\ET{9(1+v),0;1}$ & 
\href{https://oeis.org/A038312}{A038312} & $\ET{10(1+v),0;1}$ \\ 
\href{https://oeis.org/A038325}{A038325} & $\ET{11(1+v),0;1}$ & 
\href{https://oeis.org/A038338}{A038338} & $\ET{12(1+v),0;1}$ \\ 
\href{https://oeis.org/A055372}{A055372} & $\GT{1}{2(1+v),0;1+v}$ & 
\href{https://oeis.org/A055373}{A055373} & $\GT{1}{3(1+v),0;1+v}$ \\ 
\href{https://oeis.org/A055374}{A055374} & $\GT{1}{4(1+v),0;1+v}$ & 
\href{https://oeis.org/A071919}{A071919} & $\GT{1}{1+v,0;1}$ \\ 
\href{https://oeis.org/A072405}{A072405} & $\GT{2}{1+v,0;1+v+v^2}$ & 
\href{https://oeis.org/A087698}{A087698} & $\GT{2}{1+v,0;1+v^2}$ \\ 
\href{https://oeis.org/A093560}{A093560} & $\GT{1}{1+v,0;3+v}$ & 
\href{https://oeis.org/A093561}{A093561} & $\GT{1}{1+v,0;4+v}$ \\ 
\href{https://oeis.org/A093562}{A093562} & $\GT{1}{1+v,0;5+v}$ & 
\href{https://oeis.org/A093563}{A093563} & $\GT{1}{1+v,0;6+v}$ \\ 
\href{https://oeis.org/A093564}{A093564} & $\GT{1}{1+v,0;7+v}$ & 
\href{https://oeis.org/A093565}{A093565} & $\GT{1}{1+v,0;8+v}$ \\ 
\href{https://oeis.org/A093644}{A093644} & $\GT{1}{1+v,0;9+v}$ & 
\href{https://oeis.org/A093645}{A093645} & $\GT{1}{1+v,0;10+v}$ \\ 
\href{https://oeis.org/A095660}{A095660} & $\GT{1}{1+v,0;1+3v}$ & 
\href{https://oeis.org/A095666}{A095666} & $\GT{1}{1+v,0;1+4v}$ \\ 
\href{https://oeis.org/A096940}{A096940} & $\GT{1}{1+v,0;1+5v}$ & 
\href{https://oeis.org/A096956}{A096956} & $\GT{1}{1+v,0;1+6v}$ \\ 
\href{https://oeis.org/A097805}{A097805} & $\GT{1}{1+v,0;v}$ & 
\href{https://oeis.org/A122218}{A122218} & $\GT{2}{1+v,0;1+v+v^2}$ \\ 
\href{https://oeis.org/A124459}{A124459} & $\GT{1}{1+v,0;3+2v}$ & 
\href{https://oeis.org/A129687}{A129687} & $\GT{2}{1+v,0;2+2v+v^2}$ \\ 
\href{https://oeis.org/A131084}{A131084} & $\GT{2}{1+v,0;2v+v^2}$ & 
\href{https://oeis.org/A132200}{A132200} & $\GT{1}{1+v,0;4+4v}$ \\ 
\href{https://oeis.org/A134058}{A134058} & $\GT{1}{1+v,0;2+2v}$ & 
\href{https://oeis.org/A134059}{A134059} & $\GT{1}{1+v,0;3+3v}$ \\
\href{https://oeis.org/A135089}{A135089} & $\GT{1}{1+v,0;5+5v}$ & 
\href{https://oeis.org/A144225}{A144225} & $\GT{2}{1+v,0;v}$ \\ 
\href{https://oeis.org/A147644}{A147644} & $\GT{3}{1+v,0;1+5v+5v^2+v^3}$ & 
\href{https://oeis.org/A159854}{A159854} & $\GT{2}{1+v,0;v^2}$ \\ 
\href{https://oeis.org/A172185}{A172185} & $\GT{1}{1+v,0;9+11v}$ & 
\href{https://oeis.org/A202241}{A202241} & $\GT{3}{1+v,0;4v+4v^2+v^3}$ \\ \hline
\end{longtable}    
\end{center}
\end{footnotesize}

\paragraph{$a_n(v) = d_n (1+v)\Longrightarrow 
\mathscr{N}\lpa{\frac12n,\frac14n}$, where $d_n$ is a sequence of $n$ 
and independent of $v$}
Here $f_n$ denotes the $n$th Fibonacci number (\href{https://oeis.org/A000045}{A000045}) and $B_n$ that of Bell 
numbers (\href{https://oeis.org/A000110}{A000110}). 

\begin{footnotesize}
\begin{center}
\renewcommand{\arraystretch}{1.5}
\begin{longtable}{cl|cl}
\multicolumn{4}{c}{{}} \\
\multicolumn{1}{c}{OEIS} &
\multicolumn{1}{c}{Type} &
\multicolumn{1}{c}{OEIS} &
\multicolumn{1}{c}{Type} \\ \hline
\endhead
\multicolumn{4}{c}{{Continued on next page}} \\
\endfoot
\endlastfoot
\href{https://oeis.org/A003506}{A003506} & 
$\ET{\frac{n+1}{n}(1+v),0;1}$ &  
\href{https://oeis.org/A016095}{A016095} & 
$\ET{\frac{f_{n+1}}{f_n}(1+v),0;1}$ \\  
\href{https://oeis.org/A055883}{A055883} & 
$\ET{\frac{B_{n}}{B_{n-1}}(1+v),0;1}$ &  
\href{https://oeis.org/A085880}{A085880} & 
$\ET{\frac{2(2n-1)}{n+1}(1+v),0;1}$ \\  
\href{https://oeis.org/A085881}{A085881} & 
$\ET{(2n-1)(1+v),0;1}$ &  
\href{https://oeis.org/A094305}{A094305} & 
$\ET{\frac{n+2}{n}(1+v),0;1}$ \\  
\href{https://oeis.org/A121547}{A121547} & 
$\GT{1}{\frac{n+2}{n-1}(1+v),0;v}$ &  
\href{https://oeis.org/A124860}{A124860} 
& 
$\ET{\frac{2^{n+1}-(-1)^{n+1}}{2^n-(-1)^n}(1+v),0;1}$ \\  
\href{https://oeis.org/A127952}{A127952} & 
$\GT{1}{\frac{n+1}{n}(1+v),0;2v}$ &  
\href{https://oeis.org/A129533}{A129533} & 
$\GT{2}{\frac{n}{n-2}(1+v),0;v}$ \\  
\href{https://oeis.org/A132775}{A132775} & 
$\ET{\frac{2n+1}{2n-1}(1+v),0;1}$ &  
\href{https://oeis.org/A134239}{A134239} & 
$\GT{1}{\frac{n+1}{n}(1+v),0;4+2v}$ \\  
\href{https://oeis.org/A134346}{A134346} & 
$\ET{\frac{2^{n+1}-1}{2^n-1}(1+v),0;1}$ &  
\href{https://oeis.org/A134400}{A134400} & 
$\GT{1}{\frac{n}{n-1}(1+v),0;1+v}$ \\  
\href{https://oeis.org/A135065}{A135065} & 
$\ET{\frac{(n+1)^2}{n^2}(1+v),0;1}$ &  
\href{https://oeis.org/A140880}{A140880} & 
$\ET{\frac{n+2}{n}(1+v),0;2}$ \\  
\href{https://oeis.org/A156992}{A156992} & 
$\ET{(n+1)(1+v),0;1}$ &  
\href{https://oeis.org/A164961}{A164961} & 
$\ET{(4n-2)(1+v),0;1}$ \\  
\href{https://oeis.org/A178820}{A178820} & 
$\ET{\frac{n+3}{n}(1+v),0;1}$ &  
\href{https://oeis.org/A178821}{A178821} & 
$\ET{\frac{n+4}{n}(1+v),0;1}$ \\  
\href{https://oeis.org/A178822}{A178822} & 
$\ET{\frac{n+5}{n}(1+v),0;1}$ &  
\href{https://oeis.org/A196347}{A196347} & 
$\ET{n(1+v),0;1}$ \\  
\href{https://oeis.org/A216973}{A216973} & 
$\GT{1}{\frac{n}{n-1}(1+v),0;1}$ &  
\href{https://oeis.org/A219570}{A219570} & 
$\GT{1}{(n-1)(1+v),0;1+v}$ \\  
\href{https://oeis.org/A237765}{A237765} & 
$\GT{2}{\frac{n}{n-2}(1+v),0;(1+v)^2}$ &  
\href{https://oeis.org/A249632}{A249632} 
& $\GT{1}{\frac{n^{n-2}}{(n-1)^{n-3}}(1+v),0;1+v}$ \\ 
\href{https://oeis.org/A253666}{A253666} & 
$\begin{cases}
\ET{\frac14n(1+v),0;1} \; n\text{ even}\\
\ET{\frac1n(1+v),0;1}\; n\text{ odd}
\end{cases}$  & 
\href{https://oeis.org/A258758}{A258758} & 
$\GT{1}{\frac{4n-2}{n}(1+v),0;1+v}$ \\ \hline
\end{longtable}    
\end{center}
\end{footnotesize}

\paragraph{$a_n(v) = p+qv\Longrightarrow 
\mathscr{N}\lpa{\frac{q}{p+q}n,\frac{pq}{(p+q)^2}n}$}
\begin{scriptsize}
\begin{center}
\renewcommand{\arraystretch}{1.3}
\begin{longtable}{cll|cll}
\multicolumn{6}{c}{{}} \\
\multicolumn{6}{c}{{}} \\
\multicolumn{1}{c}{OEIS} &
\multicolumn{1}{c}{Type} &
\multicolumn{1}{c}{CLT} &
\multicolumn{1}{c}{OEIS} &
\multicolumn{1}{c}{Type} &
\multicolumn{1}{c}{CLT} \\ \hline
\endhead
\multicolumn{6}{c}{{Continued on next page}} \\
\endfoot
\endlastfoot
\href{https://oeis.org/A013609}{A013609} & $\ET{1+2v,0;1}$ 
& \nom{\frac{2}{3}}{\frac{2}{9}n} &
\href{https://oeis.org/A013610}{A013610} & $\ET{1+3v,0;1}$ 
& \nom{\frac{3}{4}}{\frac{3}{16}n} \\
\href{https://oeis.org/A013611}{A013611} & $\ET{1+4v,0;1}$ 
& \nom{\frac{4}{5}}{\frac{4}{25}n}  &
\href{https://oeis.org/A013612}{A013612} & $\ET{1+5v,0;1}$ 
& \nom{\frac{5}{6}}{\frac{5}{36}n}\\
\href{https://oeis.org/A013613}{A013613} & $\ET{1+6v,0;1}$ 
& \nom{\frac{6}{7}}{\frac{6}{49}n} &
\href{https://oeis.org/A013614}{A013614} & $\ET{1+7v,0;1}$ 
& \nom{\frac{7}{8}}{\frac{7}{64}n}\\
\href{https://oeis.org/A013615}{A013615} & $\ET{1+8v,0;1}$ 
& \nom{\frac{8}{9}}{\frac{8}{81}n} &
\href{https://oeis.org/A013616}{A013616} & $\ET{1+9v,0;1}$ 
& \nom{\frac{9}{10}}{\frac{9}{100}n}\\
\href{https://oeis.org/A013617}{A013617} & $\ET{1+10v,0;1}$ 
& \nom{\frac{10}{11}}{\frac{10}{121}n} &
\href{https://oeis.org/A013618}{A013618} & $\ET{1+11v,0;1}$ 
& \nom{\frac{11}{12}}{\frac{11}{144}n}\\
\href{https://oeis.org/A013619}{A013619} & $\ET{1+12v,0;1}$ 
& \nom{\frac{12}{13}}{\frac{12}{169}n} &
\href{https://oeis.org/A013620}{A013620} & $\ET{2+3v,0;1}$ 
& \nom{\frac{3}{5}}{\frac{6}{25}n}\\
\href{https://oeis.org/A013621}{A013621} & $\ET{2+5v,0;1}$ 
& \nom{\frac{5}{7}}{\frac{10}{49}n} &
\href{https://oeis.org/A013622}{A013622} & $\ET{3+5v,0;1}$ 
& \nom{\frac{5}{8}}{\frac{15}{64}n}\\
\href{https://oeis.org/A013623}{A013623} & $\ET{2+7v,0;1}$ 
& \nom{\frac{7}{9}}{\frac{14}{81}n} &
\href{https://oeis.org/A013624}{A013624} & $\ET{3+7v,0;1}$ 
& \nom{\frac{7}{10}}{\frac{21}{100}n}\\
\href{https://oeis.org/A013625}{A013625} & $\ET{4+7v,0;1}$ 
& \nom{\frac{7}{11}}{\frac{28}{121}n} &
\href{https://oeis.org/A013626}{A013626} & $\ET{5+7v,0;1}$ 
& \nom{\frac{7}{12}}{\frac{35}{144}n}\\
\href{https://oeis.org/A013627}{A013627} & $\ET{6+7v,0;1}$ 
& \nom{\frac{7}{13}}{\frac{42}{169}n} &
\href{https://oeis.org/A013628}{A013628} & $\ET{4+5v,0;1}$ 
& \nom{\frac{5}{9}}{\frac{20}{81}n}\\
\href{https://oeis.org/A024462}{A024462} & $\GT{2}{1+3v,0;(1+v)^2}$ 
& \nom{\frac{3}{4}}{\frac{3}{16}n} &
\href{https://oeis.org/A027465}{A027465} & $\ET{3+v,0;1}$ 
& \nom{\frac{1}{4}}{\frac{3}{16}n}\\
\href{https://oeis.org/A027466}{A027466} & $\ET{7+v,0;1}$ 
& \nom{\frac{1}{8}}{\frac{7}{64}n} &
\href{https://oeis.org/A027467}{A027467} & $\ET{15+v,0;1}$ 
& \nom{\frac{1}{16}}{\frac{15}{256}n}\\
\href{https://oeis.org/A038195}{A038195} & $\GT{2}{22+v,0;(1+v)^2}$ 
& \nom{\frac{1}{3}}{\frac{2}{9}n} &
\href{https://oeis.org/A038207}{A038207} & $\ET{2+v,0;1}$ 
& \nom{\frac{1}{3}}{\frac{2}{9}n}\\
\href{https://oeis.org/A038210}{A038210} & $\ET{21+2v,0;1}$ 
& \nom{\frac{2}{3}}{\frac{2}{9}n} &
\href{https://oeis.org/A038212}{A038212} & $\ET{21+3v,0;1}$ 
& \nom{\frac{3}{4}}{\frac{3}{16}n}\\
\href{https://oeis.org/A038214}{A038214} & $\ET{21+4v,0;1}$ 
& \nom{\frac{4}{5}}{\frac{4}{25}n} &
\href{https://oeis.org/A038215}{A038215} & $\ET{2+9v,0;1}$ 
& \nom{\frac{9}{11}}{\frac{18}{121}n}\\
\href{https://oeis.org/A038216}{A038216} & $\ET{21+5v,0;1}$ 
& \nom{\frac{5}{6}}{\frac{5}{36}n} &
\href{https://oeis.org/A038217}{A038217} & $\ET{2+11v,0;1}$ 
& \nom{\frac{11}{13}}{\frac{22}{169}n}\\
\href{https://oeis.org/A038218}{A038218} & $\ET{21+6v,0;1}$ 
& \nom{\frac{6}{7}}{\frac{6}{49}n} &
\href{https://oeis.org/A038220}{A038220} & $\ET{3+2v,0;1}$ 
& \nom{\frac{2}{5}}{\frac{6}{25}n}\\
\href{https://oeis.org/A038222}{A038222} & $\ET{3+4v,0;1}$ 
& \nom{\frac{4}{7}}{\frac{12}{49}n} &
\href{https://oeis.org/A038224}{A038224} & $\ET{31+2v,0;1}$ 
& \nom{\frac{2}{3}}{\frac{2}{9}n}\\
\href{https://oeis.org/A038226}{A038226} & $\ET{3+8v,0;1}$ 
& \nom{\frac{8}{11}}{\frac{24}{121}n} &
\href{https://oeis.org/A038227}{A038227} & $\ET{31+3v,0;1}$ 
& \nom{\frac{3}{4}}{\frac{3}{16}n}\\
\href{https://oeis.org/A038228}{A038228} & $\ET{3+10v,0;1}$ 
& \nom{\frac{10}{13}}{\frac{30}{169}n} &
\href{https://oeis.org/A038229}{A038229} & $\ET{3+11v,0;1}$ 
& \nom{\frac{11}{14}}{\frac{33}{196}n}\\
\href{https://oeis.org/A038230}{A038230} & $\ET{31+4v,0;1}$ 
& \nom{\frac{4}{5}}{\frac{4}{25}n} &
\href{https://oeis.org/A038231}{A038231} & $\ET{4+v,0;1}$ 
& \nom{\frac{1}{5}}{\frac{4}{25}n}\\
\href{https://oeis.org/A038232}{A038232} & $\ET{22+v,0;1}$ 
& \nom{\frac{1}{3}}{\frac{2}{9}n} &
\href{https://oeis.org/A038233}{A038233} & $\ET{4+3v,0;1}$ 
& \nom{\frac{3}{7}}{\frac{12}{49}n}\\
\href{https://oeis.org/A038236}{A038236} & $\ET{22+3v,0;1}$ 
& \nom{\frac{3}{5}}{\frac{6}{25}n} &
\href{https://oeis.org/A038238}{A038238} & $\ET{41+2v,0;1}$ 
& \nom{\frac{2}{3}}{\frac{2}{9}n}\\
\href{https://oeis.org/A038239}{A038239} & $\ET{4+9v,0;1}$ 
& \nom{\frac{9}{13}}{\frac{36}{169}n} &
\href{https://oeis.org/A038240}{A038240} & $\ET{22+5v,0;1}$ 
& \nom{\frac{5}{7}}{\frac{10}{49}n}\\
\href{https://oeis.org/A038241}{A038241} & $\ET{4+11v,0;1}$ 
& \nom{\frac{11}{15}}{\frac{44}{225}n} &
\href{https://oeis.org/A038242}{A038242} & $\ET{41+3v,0;1}$ 
& \nom{\frac{3}{4}}{\frac{3}{16}n}\\
\href{https://oeis.org/A038243}{A038243} & $\ET{5+v,0;1}$ 
& \nom{\frac{1}{6}}{\frac{5}{36}n} &
\href{https://oeis.org/A038244}{A038244} & $\ET{5+2v,0;1}$ 
& \nom{\frac{2}{7}}{\frac{10}{49}n}\\
\href{https://oeis.org/A038245}{A038245} & $\ET{5+3v,0;1}$ 
& \nom{\frac{3}{8}}{\frac{15}{64}n} &
\href{https://oeis.org/A038246}{A038246} & $\ET{5+4v,0;1}$ 
& \nom{\frac{4}{9}}{\frac{20}{81}n}\\
\href{https://oeis.org/A038248}{A038248} & $\ET{5+6v,0;1}$ 
& \nom{\frac{6}{11}}{\frac{30}{121}n} &
\href{https://oeis.org/A038250}{A038250} & $\ET{5+8v,0;1}$ 
& \nom{\frac{8}{13}}{\frac{40}{169}n}\\
\href{https://oeis.org/A038251}{A038251} & $\ET{5+9v,0;1}$ 
& \nom{\frac{9}{14}}{\frac{45}{196}n} &
\href{https://oeis.org/A038252}{A038252} & $\ET{51+2v,0;1}$ 
& \nom{\frac{2}{3}}{\frac{2}{9}n}\\
\href{https://oeis.org/A038253}{A038253} & $\ET{5+11v,0;1}$ 
& \nom{\frac{11}{16}}{\frac{55}{256}n} &
\href{https://oeis.org/A038254}{A038254} & $\ET{5+12v,0;1}$ 
& \nom{\frac{12}{17}}{\frac{60}{289}n}\\
\href{https://oeis.org/A038255}{A038255} & $\ET{6+v,0;1}$ 
& \nom{\frac{1}{7}}{\frac{6}{49}n} &
\href{https://oeis.org/A038256}{A038256} & $\ET{23+v,0;1}$ 
& \nom{\frac{1}{4}}{\frac{3}{16}n}\\
\href{https://oeis.org/A038257}{A038257} & $\ET{32+v,0;1}$ 
& \nom{\frac{1}{3}}{\frac{2}{9}n} &
\href{https://oeis.org/A038258}{A038258} & $\ET{23+2v,0;1}$ 
& \nom{\frac{2}{5}}{\frac{6}{25}n}\\
\href{https://oeis.org/A038259}{A038259} & $\ET{6+5v,0;1}$ 
& \nom{\frac{5}{11}}{\frac{30}{121}n} &
\href{https://oeis.org/A038262}{A038262} & $\ET{23+4v,0;1}$ 
& \nom{\frac{4}{7}}{\frac{12}{49}n}\\
\href{https://oeis.org/A038263}{A038263} & $\ET{32+3v,0;1}$ 
& \nom{\frac{3}{5}}{\frac{6}{25}n} &
\href{https://oeis.org/A038264}{A038264} & $\ET{23+5v,0;1}$ 
& \nom{\frac{5}{8}}{\frac{15}{64}n}\\
\href{https://oeis.org/A038265}{A038265} & $\ET{6+11v,0;1}$ 
& \nom{\frac{11}{17}}{\frac{66}{289}n} &
\href{https://oeis.org/A038266}{A038266} & $\ET{61+2v,0;1}$ 
& \nom{\frac{2}{3}}{\frac{2}{9}n}\\
\href{https://oeis.org/A038268}{A038268} & $\ET{7+2v,0;1}$ 
& \nom{\frac{2}{9}}{\frac{14}{81}n} &
\href{https://oeis.org/A038269}{A038269} & $\ET{7+3v,0;1}$ 
& \nom{\frac{3}{10}}{\frac{21}{100}n}\\
\href{https://oeis.org/A038270}{A038270} & $\ET{7+4v,0;1}$ 
& \nom{\frac{4}{11}}{\frac{28}{121}n} &
\href{https://oeis.org/A038271}{A038271} & $\ET{7+5v,0;1}$ 
& \nom{\frac{5}{12}}{\frac{35}{144}n}\\
\href{https://oeis.org/A038272}{A038272} & $\ET{7+6v,0;1}$ 
& \nom{\frac{6}{13}}{\frac{42}{169}n} &
\href{https://oeis.org/A038274}{A038274} & $\ET{7+8v,0;1}$ 
& \nom{\frac{8}{15}}{\frac{56}{225}n}\\
\href{https://oeis.org/A038275}{A038275} & $\ET{7+9v,0;1}$ 
& \nom{\frac{9}{16}}{\frac{63}{256}n} &
\href{https://oeis.org/A038276}{A038276} & $\ET{7+10v,0;1}$ 
& \nom{\frac{10}{17}}{\frac{70}{289}n}\\
\href{https://oeis.org/A038277}{A038277} & $\ET{7+11v,0;1}$ 
& \nom{\frac{11}{18}}{\frac{77}{324}n} &
\href{https://oeis.org/A038278}{A038278} & $\ET{7+12v,0;1}$ 
& \nom{\frac{12}{19}}{\frac{84}{361}n}\\
\href{https://oeis.org/A038279}{A038279} & $\ET{8+v,0;1}$ 
& \nom{\frac{1}{9}}{\frac{8}{81}n} &
\href{https://oeis.org/A038280}{A038280} & $\ET{24+v,0;1}$ 
& \nom{\frac{1}{5}}{\frac{4}{25}n}\\
\href{https://oeis.org/A038281}{A038281} & $\ET{8+3v,0;1}$ 
& \nom{\frac{3}{11}}{\frac{24}{121}n} &
\href{https://oeis.org/A038282}{A038282} & $\ET{42+v,0;1}$ 
& \nom{\frac{1}{3}}{\frac{2}{9}n}\\
\href{https://oeis.org/A038283}{A038283} & $\ET{8+5v,0;1}$ 
& \nom{\frac{5}{13}}{\frac{40}{169}n} &
\href{https://oeis.org/A038284}{A038284} & $\ET{24+3v,0;1}$ 
& \nom{\frac{3}{7}}{\frac{12}{49}n}\\
\href{https://oeis.org/A038285}{A038285} & $\ET{8+7v,0;1}$ 
& \nom{\frac{7}{15}}{\frac{56}{225}n} &
\href{https://oeis.org/A038287}{A038287} & $\ET{8+9v,0;1}$ 
& \nom{\frac{9}{17}}{\frac{72}{289}n}\\
\href{https://oeis.org/A038288}{A038288} & $\ET{24+5v,0;1}$ 
& \nom{\frac{5}{9}}{\frac{20}{81}n} &
\href{https://oeis.org/A038289}{A038289} & $\ET{8+11v,0;1}$ 
& \nom{\frac{11}{19}}{\frac{88}{361}n}\\
\href{https://oeis.org/A038290}{A038290} & $\ET{42+3v,0;1}$ 
& \nom{\frac{3}{5}}{\frac{6}{25}n} &
\href{https://oeis.org/A038291}{A038291} & $\ET{9+v,0;1}$ 
& \nom{\frac{1}{10}}{\frac{9}{100}n}\\
\href{https://oeis.org/A038292}{A038292} & $\ET{9+2v,0;1}$ 
& \nom{\frac{2}{11}}{\frac{18}{121}n} &
\href{https://oeis.org/A038293}{A038293} & $\ET{33+v,0;1}$ 
& \nom{\frac{1}{4}}{\frac{3}{16}n}\\
\href{https://oeis.org/A038294}{A038294} & $\ET{9+4v,0;1}$ 
& \nom{\frac{4}{13}}{\frac{36}{169}n} &
\href{https://oeis.org/A038295}{A038295} & $\ET{9+5v,0;1}$ 
& \nom{\frac{5}{14}}{\frac{45}{196}n}\\
\href{https://oeis.org/A038296}{A038296} & $\ET{33+2v,0;1}$ 
& \nom{\frac{2}{5}}{\frac{6}{25}n} &
\href{https://oeis.org/A038297}{A038297} & $\ET{9+7v,0;1}$ 
& \nom{\frac{7}{16}}{\frac{63}{256}n}\\
\href{https://oeis.org/A038298}{A038298} & $\ET{9+8v,0;1}$ 
& \nom{\frac{8}{17}}{\frac{72}{289}n} &
\href{https://oeis.org/A038300}{A038300} & $\ET{9+10v,0;1}$ 
& \nom{\frac{10}{19}}{\frac{90}{361}n}\\
\href{https://oeis.org/A038301}{A038301} & $\ET{9+11v,0;1}$ 
& \nom{\frac{11}{20}}{\frac{99}{400}n} &
\href{https://oeis.org/A038302}{A038302} & $\ET{33+4v,0;1}$ 
& \nom{\frac{4}{7}}{\frac{12}{49}n}\\
\href{https://oeis.org/A038303}{A038303} & $\ET{10+v,0;1}$ 
& \nom{\frac{1}{11}}{\frac{10}{121}n} &
\href{https://oeis.org/A038304}{A038304} & $\ET{25+v,0;1}$ 
& \nom{\frac{1}{6}}{\frac{5}{36}n}\\
\href{https://oeis.org/A038305}{A038305} & $\ET{10+3v,0;1}$ 
& \nom{\frac{3}{13}}{\frac{30}{169}n} &
\href{https://oeis.org/A038306}{A038306} & $\ET{25+2v,0;1}$ 
& \nom{\frac{2}{7}}{\frac{10}{49}n}\\
\href{https://oeis.org/A038307}{A038307} & $\ET{52+v,0;1}$ 
& \nom{\frac{1}{3}}{\frac{2}{9}n} &
\href{https://oeis.org/A038308}{A038308} & $\ET{25+3v,0;1}$ 
& \nom{\frac{3}{8}}{\frac{15}{64}n}\\
\href{https://oeis.org/A038309}{A038309} & $\ET{10+7v,0;1}$ 
& \nom{\frac{7}{17}}{\frac{70}{289}n} &
\href{https://oeis.org/A038310}{A038310} & $\ET{25+4v,0;1}$ 
& \nom{\frac{4}{9}}{\frac{20}{81}n}\\
\href{https://oeis.org/A038311}{A038311} & $\ET{10+9v,0;1}$ 
& \nom{\frac{9}{19}}{\frac{90}{361}n} &
\href{https://oeis.org/A038313}{A038313} & $\ET{10+11v,0;1}$ 
& \nom{\frac{11}{21}}{\frac{110}{441}n}\\
\href{https://oeis.org/A038314}{A038314} & $\ET{25+6v,0;1}$ 
& \nom{\frac{6}{11}}{\frac{30}{121}n} &
\href{https://oeis.org/A038315}{A038315} & $\ET{11+v,0;1}$ 
& \nom{\frac{1}{12}}{\frac{11}{144}n}\\
\href{https://oeis.org/A038316}{A038316} & $\ET{11+2v,0;1}$ 
& \nom{\frac{2}{13}}{\frac{22}{169}n} &
\href{https://oeis.org/A038317}{A038317} & $\ET{11+3v,0;1}$ 
& \nom{\frac{3}{14}}{\frac{33}{196}n}\\
\href{https://oeis.org/A038318}{A038318} & $\ET{11+4v,0;1}$ 
& \nom{\frac{4}{15}}{\frac{44}{225}n} &
\href{https://oeis.org/A038319}{A038319} & $\ET{11+5v,0;1}$ 
& \nom{\frac{5}{16}}{\frac{55}{256}n}\\
\href{https://oeis.org/A038320}{A038320} & $\ET{11+6v,0;1}$ 
& \nom{\frac{6}{17}}{\frac{66}{289}n} &
\href{https://oeis.org/A038321}{A038321} & $\ET{11+7v,0;1}$ 
& \nom{\frac{7}{18}}{\frac{77}{324}n}\\
\href{https://oeis.org/A038322}{A038322} & $\ET{11+8v,0;1}$ 
& \nom{\frac{8}{19}}{\frac{88}{361}n} &
\href{https://oeis.org/A038323}{A038323} & $\ET{11+9v,0;1}$ 
& \nom{\frac{9}{20}}{\frac{99}{400}n}\\
\href{https://oeis.org/A038324}{A038324} & $\ET{11+10v,0;1}$ 
& \nom{\frac{10}{21}}{\frac{110}{441}n} &
\href{https://oeis.org/A038326}{A038326} & $\ET{11+12v,0;1}$ 
& \nom{\frac{12}{23}}{\frac{132}{529}n}\\
\href{https://oeis.org/A038327}{A038327} & $\ET{12+v,0;1}$ 
& \nom{\frac{1}{13}}{\frac{12}{169}n} &
\href{https://oeis.org/A038328}{A038328} & $\ET{26+v,0;1}$ 
& \nom{\frac{1}{7}}{\frac{6}{49}n}\\
\href{https://oeis.org/A038329}{A038329} & $\ET{34+v,0;1}$ 
& \nom{\frac{1}{5}}{\frac{4}{25}n} &
\href{https://oeis.org/A038330}{A038330} & $\ET{43+v,0;1}$ 
& \nom{\frac{1}{4}}{\frac{3}{16}n}\\
\href{https://oeis.org/A038331}{A038331} & $\ET{12+5v,0;1}$ 
& \nom{\frac{5}{17}}{\frac{60}{289}n} &
\href{https://oeis.org/A038332}{A038332} & $\ET{62+v,0;1}$ 
& \nom{\frac{1}{3}}{\frac{2}{9}n}\\
\href{https://oeis.org/A038333}{A038333} & $\ET{12+7v,0;1}$ 
& \nom{\frac{7}{19}}{\frac{84}{361}n} &
\href{https://oeis.org/A038334}{A038334} & $\ET{43+2v,0;1}$ 
& \nom{\frac{2}{5}}{\frac{6}{25}n}\\
\href{https://oeis.org/A038335}{A038335} & $\ET{34+3v,0;1}$ 
& \nom{\frac{3}{7}}{\frac{12}{49}n} &
\href{https://oeis.org/A038336}{A038336} & $\ET{26+5v,0;1}$ 
& \nom{\frac{5}{11}}{\frac{30}{121}n}\\
\href{https://oeis.org/A038337}{A038337} & $\ET{12+11v,0;1}$ 
& \nom{\frac{11}{23}}{\frac{132}{529}n} &
\href{https://oeis.org/A038763}{A038763} & $\GT{1}{1+3v,0;1+v}$ 
& \nom{\frac{3}{4}}{\frac{3}{16}n}\\
\href{https://oeis.org/A081277}{A081277} & $\GT{1}{1+2v,0;1+v}$ 
& \nom{\frac{2}{3}}{\frac{2}{9}n} &
\href{https://oeis.org/A120909}{A120909} & $\ET{1+2v,0;3}$ 
& \nom{\frac{2}{3}}{\frac{2}{9}n}\\
\href{https://oeis.org/A120910}{A120910} & $\ET{2+v,0;3}$ 
& \nom{\frac{1}{3}}{\frac{2}{9}n} &
\href{https://oeis.org/A123187}{A123187} & $\ET{1+13v,0;1}$ 
& \nom{\frac{13}{14}}{\frac{13}{196}n}\\
\href{https://oeis.org/A133371}{A133371} & $\ET{13+v,0;1}$ 
& \nom{\frac{1}{14}}{\frac{13}{196}n} &
\href{https://oeis.org/A136158}{A136158} & $\GT{1}{3+v,0;1+v}$ 
& \nom{\frac{1}{4}}{\frac{3}{16}n}\\
\href{https://oeis.org/A147716}{A147716} & $\ET{14+v,0;1}$ 
& \nom{\frac{1}{15}}{\frac{14}{225}n} &
\href{https://oeis.org/A183190}{A183190} & $\GT{1}{2+v,0;1}$ 
& \nom{\frac{1}{3}}{\frac{2}{9}n}\\
\href{https://oeis.org/A193722}{A193722} & $\GT{1}{1+3v,0;1+2v}$ 
& \nom{\frac{3}{4}}{\frac{3}{16}n} &
\href{https://oeis.org/A193723}{A193723} & $\GT{1}{3+v,0;2+v}$ 
& \nom{\frac{1}{4}}{\frac{3}{16}n}\\
\href{https://oeis.org/A193724}{A193724} & $\GT{1}{2+3v,0;1+v}$ 
& \nom{\frac{3}{5}}{\frac{6}{25}n} &
\href{https://oeis.org/A193725}{A193725} & $\GT{1}{3+2v,0;1+v}$ 
& \nom{\frac{2}{5}}{\frac{6}{25}n}\\
\href{https://oeis.org/A193726}{A193726} & $\GT{1}{2+5v,0;1+2v}$ 
& \nom{\frac{5}{7}}{\frac{10}{49}n} &
\href{https://oeis.org/A193727}{A193727} & $\GT{1}{5+2v,0;2+v}$ 
& \nom{\frac{2}{7}}{\frac{10}{49}n}\\
\href{https://oeis.org/A193728}{A193728} & $\GT{1}{4+3v,0;2+v}$ 
& \nom{\frac{3}{7}}{\frac{12}{49}n} &
\href{https://oeis.org/A193729}{A193729} & $\GT{1}{3+4v,0;1+2v}$ 
& \nom{\frac{4}{7}}{\frac{12}{49}n}\\
\href{https://oeis.org/A193730}{A193730} & $\GT{1}{2+3v,0;2+v}$ 
& \nom{\frac{3}{5}}{\frac{6}{25}n} &
\href{https://oeis.org/A193731}{A193731} & $\GT{1}{3+2v,0;1+2v}$ 
& \nom{\frac{2}{5}}{\frac{6}{25}n}\\
\href{https://oeis.org/A193734}{A193734} & $\GT{1}{1+4v,0;1+2v}$ 
& \nom{\frac{4}{5}}{\frac{4}{25}n} &
\href{https://oeis.org/A193735}{A193735} & $\GT{1}{4+v,0;2+v}$ 
& \nom{\frac{1}{5}}{\frac{4}{25}n}\\
\href{https://oeis.org/A200139}{A200139} & $\GT{1}{2+v,0;1+v}$ 
& \nom{\frac{1}{3}}{\frac{2}{9}n} &
\href{https://oeis.org/A201780}{A201780} & $\GT{2}{2+v,0;(1+v)^2}$ 
& \nom{\frac{1}{3}}{\frac{2}{9}n}\\
\href{https://oeis.org/A207628}{A207628} & $\GT{1}{1+2v,0;1+4v}$ 
& \nom{\frac{2}{3}}{\frac{2}{9}n} &
\href{https://oeis.org/A207636}{A207636} & $\GT{1}{2+v,0;3+2v}$ 
& \nom{\frac{1}{3}}{\frac{2}{9}n}\\
\href{https://oeis.org/A208659}{A208659} & $\GT{1}{1+2v,0;2+2v}$ 
& \nom{\frac{2}{3}}{\frac{2}{9}n} &
\href{https://oeis.org/A209149}{A209149} & $\GT{1}{2+v,0;3+v}$ 
& \nom{\frac{1}{3}}{\frac{2}{9}n}\\ \hline
\end{longtable}    
\end{center}
\end{scriptsize}

\paragraph{$a_n(v) = p+qv+rv^2
\Longrightarrow \mathscr{N}\lpa{\frac{q+2r}{p+q+r}n,
\frac{pq+4pr+qr}{(p+q+r)^2}n}$}
\begin{scriptsize}
\begin{center}
\renewcommand{\arraystretch}{1.3}
\begin{longtable}{cll|cll}
\multicolumn{6}{c}{{}} \\
\multicolumn{6}{c}{{}} \\
\multicolumn{1}{c}{OEIS} &
\multicolumn{1}{c}{Type} &
\multicolumn{1}{c}{CLT} &
\multicolumn{1}{c}{OEIS} &
\multicolumn{1}{c}{Type} &
\multicolumn{1}{c}{CLT} \\ \hline
\endhead
\multicolumn{6}{c}{{Continued on next page}} \\
\endfoot
\endlastfoot
\href{https://oeis.org/A152905}{A152905} & $\ET{1+v^2,0;1+v}$ 
& \nom{n}{n} &
\href{https://oeis.org/A249095}{A249095} & $\GT{1}{1+v^2,0;1+v+v^2}$ 
& \nom{n}{n}\\
\href{https://oeis.org/A260492}{A260492} & $\ET{1+v^2,0;1}$ 
& \nom{n}{n} &
\href{https://oeis.org/A249307}{A249307} & $\GT{1}{1+4v^2,0;1+2v+4v^2}$ 
& \nom{\frac{8}{5}n}{\frac{16}{25}n}\\
\href{https://oeis.org/A034870}{A034870} & $\ET{(1+v)^2,0;1}$ 
& \nom{n}{\frac{1}{2}n} &
\href{https://oeis.org/A096646}{A096646} & $\GT{1}{(1+v)^2,0;1+v+v^2}$ 
& \nom{n}{\frac{1}{2}n}\\
\href{https://oeis.org/A139548}{A139548} & $\ET{2(1+v)^2,0;1}$ 
& \nom{n}{\frac{1}{2}n} &
\href{https://oeis.org/A024996}{A024996} & $\GT{2}{1+v+v^2,0;1+2v^2+v^4}$ 
& \nom{n}{\frac{2}{3}n}\\
\href{https://oeis.org/A025177}{A025177} & $\GT{1}{1+v+v^2,0;1+v^2}$ 
& \nom{n}{\frac{2}{3}n} &
\href{https://oeis.org/A025564}{A025564} & $\GT{1}{1+v+v^2,0;1+2v+v^2}$ 
& \nom{n}{\frac{2}{3}n}\\
\href{https://oeis.org/A027907}{A027907} & $\ET{1+v+v^2,0;1}$ 
& \nom{\frac54n}{\frac{11}{16}n} &
\href{https://oeis.org/A084600}{A084600} & $\ET{1+v+2v^2,0;1}$ 
& \nom{n}{\frac{2}{3}n}\\
\href{https://oeis.org/A084602}{A084602} & $\ET{1+v+3v^2,0;1}$ 
& \nom{\frac{7}{5}n}{\frac{16}{25}n} &
\href{https://oeis.org/A084604}{A084604} & $\ET{1+v+4v^2,0;1}$ 
& \nom{\frac{3}{2}n}{\frac{7}{12}n}\\
\href{https://oeis.org/A084606}{A084606} & $\ET{1+2v+2v^2,0;1}$ 
& \nom{\frac{6}{5}n}{\frac{14}{25}n} &
\href{https://oeis.org/A084608}{A084608} & $\ET{1+2v+3v^2,0;1}$ 
& \nom{\frac{4}{3}n}{\frac{5}{9}n}\\
\href{https://oeis.org/A200536}{A200536} & $\ET{1+3v+2v^2,0;1}$ 
& \nom{\frac{7}{6}n}{\frac{17}{36}n} &
\href{https://oeis.org/A272866}{A272866} & $\ET{1+3v+v^2,0;1}$ 
& \nom{n}{\frac{2}{5}n}\\
\href{https://oeis.org/A272867}{A272867} & $\ET{1+4v+v^2,0;1}$ 
& \nom{n}{\frac{1}{3}n}\\ \hline
\end{longtable}    
\end{center}    
\end{scriptsize}

\footnotesize
\bibliographystyle{abbrv}
\bibliography{eulerian-rr}

\end{document}